\newtheorem{mydef}{Definition}[subsection]
\newtheorem{lem}[mydef]{Lemma}
\newtheorem{thm}[mydef]{Theorem}
\newtheorem{conjecture}[mydef]{Conjecture}
\newtheorem{cor}[mydef]{Corollary}
\newtheorem{claim}[mydef]{Claim}
\newtheorem{question}[mydef]{Question}
\newtheorem{prop}[mydef]{Proposition}
\newtheorem{defin}[mydef]{Definition}
\newtheorem{example}[mydef]{Example}
\newtheorem{remark}[mydef]{Remark}
\newtheorem{notation}[mydef]{Notation}
\newtheorem{theorem}[mydef]{Theorem}
\newtheorem*{thm*}{Theorem}
\newcommand{\fct}[2]{{}^{#1}#2}
\newcommand{\ba}{\bar{a}}
\newcommand{\bb}{\bar{b}}
\newcommand{\bc}{\bar{c}}
\newcommand{\bigL}{\widehat{L}}
\newcommand{\bigK}{\widehat{K}}
\newcommand{\bigKp}[1]{\widehat{K}^{<#1}}
\newcommand{\Ksatpp}[2]{{#1}^{#2\text{-sat}}}
\newcommand{\Ksatp}[1]{\Ksatpp{\K}{#1}}
\newcommand{\sea}{\mathfrak{C}}
\newcommand{\dom}[1]{\operatorname{dom}(#1)}
\newcommand{\cf}[1]{\operatorname{cf} (#1)}
\newcommand{\seq}[1]{\langle #1 \rangle}
\newcommand{\rest}{\upharpoonright}
\newcommand{\bkappa}{\bar \kappa}
\newcommand{\s}{\mathfrak{s}}
\newcommand{\is}{\mathfrak{i}}
\def\lta{<}
\def\lea{\le}
\def\gta{>}
\def\gea{\ge}
\newbox\noforkbox \newdimen\forklinewidth
\noforkbox\hbox{\lower 2pt\box1\lower
2pt\box0\relax}
\def\unionstick{\mathop{\copy\noforkbox}\limits}
\newcommand{\nf}{\unionstick}
\newcommand{\nfs}[4]{#2 \nf_{#1}^{#4} #3}
\def\1nf{\unionstick^{(1)}}
\def\2nf{\unionstick^{(2)}}
\def\3nf{\unionstick^{(3)}}
\newcommand{\gtp}{\operatorname{gtp}}
\newcommand{\Ss}{S}
\newcommand{\gS}{\operatorname{gS}}
\newcommand{\hanf}[1]{h (#1)}
\newcommand{\EM}{\operatorname{EM}}
\newcommand{\Aut}{\operatorname{Aut}}
\newcommand{\EC}{\operatorname{EC}}
\newcommand{\Ll}{\mathbb{L}}
\newcommand{\Eat}{E_{\text{at}}}
\newcommand{\cl}{\operatorname{cl}}
\newcommand{\F}{\mathcal{F}}
\newcommand{\LS}{\operatorname{LS}}
\newcommand{\LST}{\operatorname{LST}}
\newcommand{\BI}{\mathbf{I}}
\newcommand{\slc}[1]{\bkappa_{#1}}
\newcommand{\clc}[1]{\kappa_{#1}}
\newcommand{\K}{\mathbf{K}}
\newcommand{\dnf}{\unionstick}
\newcommand{\ran}{\operatorname{ran}}
\title{A survey on tame abstract elementary classes}
\date{\today
}
\author{Will Boney}
\email{wboney@math.harvard.edu}
\urladdr{http://math.harvard.edu/\textasciitilde wboney/}
\address{Mathematics Department, Harvard University, Cambridge, MA, USA}
\thanks{This material is based upon work done while the first author was supported by the National Science Foundation under Grant No. DMS-1402191.}
\author{Sebastien Vasey}
\email{sebv@cmu.edu}
\urladdr{http://math.cmu.edu/\textasciitilde svasey/}
\address{Department of Mathematical Sciences, Carnegie Mellon University, Pittsburgh, Pennsylvania, USA}
\thanks{This material is based upon work done while the second author was supported by the Swiss National Science Foundation under Grant No.\ 155136.}
\begin{document}

\begin{abstract}
  Tame abstract elementary classes are a broad nonelementary framework for model theory that encompasses several examples of interest. In recent years, progress toward developing a classification theory for them has been made. Abstract independence relations such as Shelah's good frames have been found to be key objects. Several new categoricity transfers have been obtained. We survey these developments using the following result (due to the second author) as our guiding thread:

  \begin{thm*}
    If a universal class is categorical in cardinals of arbitrarily high cofinality, then it is categorical on a tail of cardinals.
  \end{thm*}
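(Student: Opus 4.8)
The plan is to run the good-frame-plus-tameness engine that organizes this survey, using categoricity only to start it and to read off the conclusion. \emph{First}, I would record the purely structural input: every universal class is an AEC that is fully $(<\aleph_0)$-tame and short, with no categoricity needed. Tameness and shortness are exactly the locality properties that let independence calculations at one cardinal control the class globally. The categoricity hypothesis then supplies the missing global geometry: I would argue that being categorical in cardinals of arbitrarily high cofinality forces amalgamation, joint embedding, and no maximal models from some $\lambda_0 \geq \LS(\K)$ on, so that above $\lambda_0$ we may work inside a monster model and speak of Galois types freely.

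\emph{Second}, I would extract a local independence notion. The point of demanding \emph{arbitrarily high} cofinality (rather than merely arbitrarily large cardinals) is that a model categorical in $\lambda$ with $\cf{\lambda}$ large is saturated; so for cofinally many $\lambda$ the categoricity model is saturated. Saturation at cofinally many levels yields superstability, and from superstability together with amalgamation and tameness I would build a good $\lambda$-frame for some $\lambda > \lambda_0$ — a well-behaved nonforking relation on the models of a single size $\lambda$ — taking nonforking to be an appropriate nonsplitting or coheir relation and using the saturation to verify the local-character and stability axioms.

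\emph{Third}, tameness does the decisive lifting: by the frame transfer theorem, a good $\lambda$-frame over a tame AEC with amalgamation extends to a good $(\geq\lambda)$-frame, a single coherent independence relation on \emph{all} models of size $\geq\lambda$. This globalizes the good local behavior and, in particular, gives uniqueness of limit models, hence of saturated models, in every size $\geq\lambda$. \emph{Finally}, I would invoke the equivalence ``$\K$ is categorical in $\mu$ iff the model of size $\mu$ is saturated'': the high-cofinality categoricity models are saturated, and propagating saturation both upward and downward along the global frame yields a single saturated (hence unique) model in every sufficiently large cardinal, i.e.\ categoricity on a tail.

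The hard part will not be the upward transfer — tameness makes that comparatively mechanical once the frame exists — but the construction of the good $\lambda$-frame itself, and specifically the verification of its symmetry and local-character axioms from categoricity. Equivalently, the real obstacle is proving uniqueness of limit models (that limits along chains of different cofinalities are isomorphic), which is where superstability must be pushed hardest; the high-cofinality hypothesis is precisely the lever that makes the categoricity models saturated at cofinally many places and thereby feeds this step.
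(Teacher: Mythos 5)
Your first three steps track the paper's actual proof closely: universal classes are fully $(<\aleph_0)$-tame and short for free (Theorem \ref{tame-uc}), the high-cofinality categoricity hypothesis is used exactly once, to produce a categoricity cardinal $\lambda$ carrying a good $\lambda$-frame (Theorem \ref{good-frame-categ}), and tameness then transfers this to a good $(\ge\lambda)$-frame, yielding amalgamation and stability above $\lambda$ (Theorems \ref{frame-transfer} and \ref{good-frame-transfer-2}). The one caveat in this portion is that the frame transfer needs some amalgamation to get started; the paper supplies this via \emph{weak} amalgamation, which universal classes have because equality of Galois types is witnessed by closing under the functions --- a point worth making explicit since full amalgamation is not yet available when the transfer is first invoked.

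The genuine gap is in your final step. A good $(\ge\lambda)$-frame gives a saturated model in every $\mu>\lambda$, but it gives no mechanism for showing that \emph{every} model of size $\mu$ is saturated: ``propagating saturation along the frame'' is not an argument, and indeed the claim it would prove is false --- a superstable tame elementary class with a perfectly good global forking relation (say, the theory of two disjoint infinite sets) need not be categorical anywhere. What is missing is \emph{unidimensionality}: one must rule out the existence of two orthogonal types, since otherwise a model can grow arbitrarily by realizing one type while omitting another, producing large non-saturated models. This is where the paper does its real work (Section \ref{orthog-univ} and Step 3): if $\K_\lambda$ is not unidimensional, orthogonality calculus produces a nonalgebraic $p$ such that the class $\K_{\neg^\ast p}$ of models over which $p$ has a unique extension still carries a good $\lambda$-frame (Theorem \ref{not-unidim}); because universal classes \emph{have primes}, $\K_{\neg^\ast p}$ inherits tameness and weak amalgamation, so the frame transfer applies to \emph{it}, giving arbitrarily large models omitting $p$ and contradicting saturation of the categoricity models. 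Unidimensionality then gives categoricity in $\lambda^+$ (Theorem \ref{unidim-categ}), and only at that point does the Grossberg--VanDieren transfer (Theorem \ref{gv-upward}), which needs categoricity in two successive cardinals, finish the job. You also misplace the difficulty: the construction of the good $\lambda$-frame is quoted from Shelah, whereas the step your proposal skips --- the orthogonality calculus and the use of primes, neither of which you mention --- is the heart of the proof.
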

\end{abstract}

\maketitle

\setcounter{tocdepth}{1}

\tableofcontents

\section{Introduction}\label{intro-sec}

Abstract elementary classes (AECs) are a general framework for nonelementary model theory. They encompass many examples of interest while still allowing some classification theory, as exemplified by Shelah's recent two-volume book \cite{shelahaecbook, shelahaecbook2} titled \emph{Classification Theory for Abstract Elementary Classes}.

So why study the classification theory of \emph{tame} AECs in particular? Before going into a technical discussion, let us simply say that several key results can be obtained assuming tameness that provably cannot be obtained (or are very hard to prove) without assuming it. Among these results are the construction, assuming a stability or superstability hypothesis, of certain global independence notions akin to first-order forking. Similarly to the first-order case, the existence of such notions allows us to prove several more structural properties of the class (such as a bound on the number of types or the fact that the union of a chain of saturated models is saturated). After enough of the theory is developed, categoricity transfers (in the spirit of Morley's celebrated categoricity theorem \cite{morley-cip}) can be established. 

A survey of such results (with an emphasis on forking-like independence) is in Section \ref{tame-indep-sec}. However, we did not want to overwhelm the reader with a long list of sometimes technical theorems, so we thought we would first present an application: the categoricity transfer for universal classes from the abstract (Section \ref{universal-class-sec}). We chose this result for several reasons. First, its statement is simple and does not mention tameness or even abstract elementary classes. Second, the proof revolves around several notions (such as good frames) that might seem overly technical and ill-motivated if one does not see them in action first. Third, the result improves on earlier categoricity transfers in several ways, for example not requiring that the categoricity cardinal be a successor and not assuming the existence of large cardinals. Finally, the method of proof leads to Theorem \ref{event-categ-primes}, the currently best known ZFC approximation to Shelah's eventual categoricity conjecture (which is the main test question for AECs, see below).

Let us go back to what tameness is. Briefly, tameness is a property of AECs saying that Galois (or orbital) types are determined locally: two distinct Galois types must already be distinct when restricted to some small piece of their domain. This holds in elementary classes: types as sets of formulas can be characterized in terms of automorphisms of the monster model and distinct types can be distinguished by a finite set of parameters. However, Galois types in general AECs are not syntactic and their behavior can be wild, with ``new'' types springing into being at various cardinalities and increasing chains of Galois types having no unique upper bound (or even no upper bound at all).  This wild behavior makes it very hard to transfer results between cardinalities.  

For a concrete instance, consider the problem of developing a forking-like independence notion for AECs. In particular, we want to be able to extend each (Galois) type $p$ over $M$ to a unique nonforking\footnote{In the sense of the independence notion mentioned above. This will often be different from the first-order definition.} extension over every larger set $N$. If the AEC, $\K$, is nice enough, one might be able to develop an independence notion allows one to obtain a nonforking extension $q$ of $p$ over $N$. But suppose that $\K$ is not tame and that this non-tameness is witnessed by $q$.  Then there is another type $q'$ over $N$ that has all the same small restrictions as $q$.  In particular it extends $p$ and (assuming our independence notion has a reasonable continuity property) is a nonforking extension.  In this case the quest to have a unique nonforking extension is (provably, see Example \ref{examples-subsec}.(\ref{stabletame})) doomed.

This failure has led, in part, to Shelah's work on a local approach where the goal is to build a structure theory cardinal by cardinal without any ``traces of compactness'' (see \cite[p.~5]{sh576}). The central concept there is that of a good $\lambda$-frame (the idea is, roughly, that if an AEC $\K$ has a good $\lambda$-frame, then $\K$ is ``well-behaved in $\lambda$''). Multiple instances of categoricity together with non-ZFC hypotheses (such as the weak generalized continuum hypothesis: $2^{\mu} < 2^{\mu^+}$ for all $\mu$) are used to build a good $\lambda$-frame \cite{sh576}, to push it up to models of size $\lambda^+$ (changing the class in the process) \cite[Chapter II]{shelahaecbook}, and finally to push it to models of sizes $\lambda^{+\omega}$ and beyond in \cite[Chapter III]{shelahaecbook} (see Section \ref{frame-sec}).

In contrast, the amount of compactness offered by tameness and other locality properties has been used to prove similar results in simpler ways and with fewer assumptions (after tameness is accounted for). In particular, the work can often be done in ZFC. 

In tame AECs, Galois types are determined by their small restrictions and the behavior of these small restrictions can now influence the behavior of the full type.  An example can be seen in uniqueness results for nonsplitting extensions: in general AECs, uniqueness results hold for non-$\mu$-splitting extensions to models of size $\mu$, but no further (Theorem \ref{gen-unique-dns-fact}).  However, in $\mu$-tame AECs, uniqueness results hold for non-$\mu$-splitting extensions to models \emph{of all sizes} (Theorem \ref{ns-uniq-tame-fact}).  Indeed, the parameter $\mu$ in non-$\mu$-splitting becomes irrelevant.  Thus, tameness can replace several extra assumptions.  Compared to the good frame results above, categoricity in a single cardinal, tameness, and amalgamation are enough to show the existence of a good frame (Theorem \ref{ss-categ}) and tameness and amalgamation are enough to transfer the frame upwards without any change of the underlying class (Theorem \ref{good-frame-transfer-2}).

Although tameness seems to only be a weak approximation to the nice properties enjoyed by first-order logic, it is still \emph{strong enough} to import more of the model-theoretic intuition and technology from first-order.  When dealing with tame AECs, a type can be identified with the set of all of its restrictions to small domains, and these small types play a role similar to formulas. This can be made precise: one can even give a sense in which types are sets of (infinitary) formulas (see Theorem \ref{morleyization-thm}).  This allows several standard arguments to be painlessly repeated in this context.  For instance, the proof of the properties of $<\kappa$-satisfiability and the equivalence between Galois-stability and no order property in this context are similar to their first-order counterparts (see Section \ref{stab-indep-sec}). On the other hand, several arguments from the theory of tame AECs have no first-order analog (see for example the discussion around amalgamation in Section \ref{universal-class-sec}).

On the other side, while tameness is in a sense a form of the first-order compactness theorem, it is \emph{sufficiently weak} that several examples of nonelementary classes are tame.  Section \ref{examples-subsec} goes into greater depth, but diverse classes like locally finite groups, rank one valued fields, and Zilber's pseudoexponentiation all turn out to be tame. Tameness can also be obtained for free from a large cardinal axiom, and a weak form of it follows from model-theoretic hypotheses such as the combination of amalgamation and categoricity in a high-enough cardinal.  

Indeed, examples of \emph{non}-tame AECs are in short supply (Section \ref{counterex-ssec}). All known examples are set-theoretic in nature, and it is open whether there are non-tame ``natural'' mathematical classes (see (\ref{open-q-natural}) in Section \ref{conclusion-sec}). The focus on ZFC results for tame AECs allows us to avoid situations where, for example, conclusions about rank one valued fields depend on whether $2^{\aleph_0} < 2^{\aleph_1}$. This replacing of set-theoretic hypotheses with model-theoretic ones suggests that developing a classification theory for tame AECs is possible \emph{within ZFC}.

Thus, tame AECs seem to strike an important balance: they are \emph{general} enough to encompass several nonelementary classes and yet \emph{well-behaved/specific} enough to admit a classification theory. Even if one does not believe that tameness is a justified assumption, it can be used as a first approximation and then one can attempt to remove (or weaken) it from the statement of existing theorems. Indeed, there are several results in the literature (see the end of Section \ref{categ-notame-sec}) which do not directly assume tameness, but whose proof starts by deducing some weak amount of tameness from the other assumptions, and then use this tameness in crucial ways.

We now highlight some results about tame AECs that will be discussed further in the rest of this survey. We first state two motivating test questions. The first is the well-known categoricity conjecture which can be traced back to an open problem in \cite{shelahfobook78}. The following version appears as \cite[Conjecture N.4.2]{shelahaecbook}:

\begin{conjecture}[Shelah's eventual categoricity conjecture]\label{shelah-event-conj}
  There exists a function $\mu \mapsto \lambda(\mu)$ such that if $\K$ is an AEC categorical in \emph{some} $\lambda \ge \lambda(\LS (\K))$, then $\K$ is categorical in \emph{all} $\lambda' \ge \lambda (\LS (\K))$.  
\end{conjecture}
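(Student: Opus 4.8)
The plan is to reduce this conjecture, which is stated for arbitrary AECs, to the much more tractable case of tame AECs with amalgamation, where the forking-theoretic machinery surveyed here applies. Concretely, I would aim to show that from categoricity in a single sufficiently large $\lambda \ge \lambda(\LS(\K))$ one can extract, above some threshold depending only on $\LS(\K)$, the structural hypotheses that make the theory run: amalgamation, joint embedding together with no maximal models, and $\LS(\K)$-tameness. Granting these, the argument becomes essentially the one indicated in Section~\ref{tame-indep-sec}.

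First I would use categoricity to bootstrap the stability-theoretic side: categoricity in $\lambda$ yields Galois-stability below $\lambda$ and then superstability, which, via Theorem~\ref{ss-categ}, produces a good $\lambda_0$-frame $\s$ on the saturated models of size $\lambda_0$. Next, invoking tameness, I would transfer this frame upward without changing the underlying class using Theorem~\ref{good-frame-transfer-2}, obtaining a good frame on the models of size $\ge \lambda_0$. The key consequence is the uniqueness of nonforking (e.g.\ non-splitting) extensions of Theorem~\ref{ns-uniq-tame-fact}, now available in \emph{all} cardinalities rather than just in $\lambda_0$; this is exactly the ingredient absent in the non-tame setting, where uniqueness stops at $\lambda_0$ (compare Theorem~\ref{gen-unique-dns-fact}). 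With unique nonforking extensions in hand, unions of chains of saturated models stay saturated, so the categoricity model is saturated in every cardinal $\ge \lambda_0$ and is therefore the unique such model; categoricity thus propagates upward, and a separate downward-transfer argument (stability and the failure of the order property, in the spirit of the universal-class theorem that serves as this survey's guiding thread) handles the cardinals between $\lambda(\LS(\K))$ and $\lambda$, yielding categoricity on a tail.

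The main obstacle is the very first reduction: deriving amalgamation (and tameness) in ZFC from categoricity alone. This is genuinely open, and it is precisely the gap that keeps the statement a conjecture rather than a theorem. Three partial routes are available, none of which closes it. Large-cardinal axioms hand us tameness and amalgamation outright, at the cost of leaving ZFC; Shelah's local program derives amalgamation in $\lambda$ from categoricity together with non-ZFC hypotheses such as $2^{\mu} < 2^{\mu^+}$ plus a weak structure assumption in $\lambda^{++}$; and the tame ZFC approach can so far extract only a \emph{weak} form of tameness, and only from categoricity \emph{plus} amalgamation in a high-enough cardinal, so it does not yet furnish amalgamation for free. Any honest attack must therefore confront the amalgamation-from-categoricity problem head on: by the results collected here, everything downstream of it is comparatively routine, and Theorem~\ref{event-categ-primes} already shows how far the program can be pushed once one is willing to assume just enough structure.
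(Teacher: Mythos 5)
The statement you are asked about is a \emph{conjecture}; the paper offers no proof of it, and your proposal is, by your own admission, not one either. You have correctly located the first obstruction — deriving amalgamation and tameness in ZFC from categoricity alone — and that candor is to your credit. But your closing claim that ``everything downstream of it is comparatively routine'' overstates what the survey actually establishes. Even granting amalgamation, no maximal models, and full tameness and shortness, the eventual categoricity conjecture is \emph{still open} in ZFC: the transfers in the paper require either that the class has primes (Theorem \ref{event-categ-primes}), or that the categoricity cardinal is a successor (Theorems \ref{succ-transfer}, \ref{succ-transfer-2}), or non-ZFC hypotheses together with the unpublished Claim \ref{claim-xxx} (Theorem \ref{categ-conj-tame-short}). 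Indeed the paper explicitly lists ``does the categoricity conjecture hold in fully $<\aleph_0$-tame and -type short AECs with amalgamation?'' as an open problem in Section \ref{conclusion-sec}. So there are two genuine gaps, not one.

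Your middle paragraph also compresses a nontrivial step into a non sequitur. Showing that the model in the categoricity cardinal is Galois-saturated, and that unions of chains of saturated models are saturated, does \emph{not} by itself propagate categoricity upward: for that you must show that \emph{every} large model is saturated, which is exactly the unidimensionality/orthogonality problem (the possibility of a model growing by realizing one type while omitting another). This is the content of Steps 2 and 3 of Section \ref{universal-class-sec}, and it is where primes (or a successor hypothesis, via Vaughtian pairs) enter essentially. Relatedly, Theorem \ref{ss-categ} already assumes tameness and amalgamation, so it cannot serve as the ``bootstrap'' in the order you present it. If you want to write something defensible here, frame it as the paper does: a reduction of the conjecture to (i) amalgamation-from-categoricity and (ii) the prime-models (or successor) hypothesis, both of which remain open.
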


Shelah's categoricity conjecture is the main test question for AECs and remains the yardstick by which progress is measured. Using this yardstick, tame AECs are well-developed. Grossberg and VanDieren \cite{tamenessone} isolated tameness from Shelah's proof of a downward categoricity transfer in AECs with amalgamation \cite{sh394}. Tameness was one of the key (implicit) properties there (in the proof of \cite[Main claim 2.3]{sh394}, where Shelah proves that categoricity in a high-enough successor implies that types over Galois-saturated models are determined by their small restrictions\footnote{In \cite[Definition 0.24]{sh576}, Shelah defines a type to be \emph{local} if it is defined by all its $\LS (\K)$-sized restrictions.}, this property has later been called weak tameness). Grossberg and VanDieren defined tameness without the assumption of saturation and developed the theory in a series of papers \cite{tamenessone, tamenesstwo, tamenessthree}, culminating in the proof of Shelah's eventual categoricity conjecture from a successor in tame AECs with amalgamation\footnote{The work on \cite{tamenessone} was done in 2000-2001 and preprints of \cite{tamenesstwo, tamenessthree} were circulated in 2004.}.

Progress towards other categoricity transfers often proceed by first proving tameness and then using it to transfer categoricity. One of the achievements of developing the classification theory of tame AECs is the following result, due to the second author \cite{ap-universal-v9}:

\begin{thm}\label{main-thm}
  Shelah's eventual categoricity conjecture is true when $\K$ is a universal class with amalgamation. In this case, one can take $\lambda (\mu) := \beth_{\left(2^{\mu}\right)^+}$. Moreover amalgamation can be derived from categoricity in cardinals of arbitrarily high cofinality.
\end{thm}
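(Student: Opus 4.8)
The plan is to situate universal classes within the theory of tame AECs and then run the forking-based machinery summarized above. The first step is to observe that a universal class is an AEC (with $\LS(\K) = \aleph_0$ when the vocabulary is countable, and equal to the size of the vocabulary in general) and, crucially, that it is tame: because the class is closed under substructures, a Galois type is controlled by the isomorphism types of the small (finitely generated) substructures of any realization, so two distinct Galois types must already differ on a small domain. This is the structural input that lets us replace the set-theoretic hypotheses of Shelah's local approach with a ZFC argument, and it is what makes the whole engine run.

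With tameness in hand, the second step is to convert categoricity into a good frame. Assuming categoricity in some $\lambda \geq \beth_{(2^\mu)^+}$ with $\mu = \LS(\K)$, the Hanf-number threshold guarantees the existence of Ehrenfeucht--Mostowski models and hence rules out the order property, yielding Galois-stability and in fact superstability below $\lambda$. Feeding categoricity, tameness, and amalgamation into the existence theorem for good frames (Theorem \ref{ss-categ}) produces a good frame on the saturated models in some cardinal, and tameness together with amalgamation transfers this frame upward to all higher cardinals without altering the class (Theorem \ref{good-frame-transfer-2}). The point of the $\beth_{(2^\mu)^+}$ bound is precisely to push past the Hanf numbers governing the order property and the nonsplitting uniqueness results (Theorems \ref{gen-unique-dns-fact} and \ref{ns-uniq-tame-fact}), so that the parameter in non-splitting becomes irrelevant and the frame can be globalized.

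The third step is the categoricity transfer itself. Once a good frame is available on a tail of cardinals, its independence notion behaves like first-order forking: it has a reasonable continuity property, it bounds the number of types, and --- most importantly --- the union of an increasing chain of saturated models is again saturated. Combined with uniqueness of saturated models of a fixed size, this forces every model of each sufficiently large cardinality to be saturated, and saturated models of the same cardinality are isomorphic; hence $\K$ is categorical on a tail, above $\lambda(\mu)$. This is where categoricity in a \emph{single} high-enough cardinal suffices to seed categoricity everywhere beyond the threshold.

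The main obstacle, and the content of the ``moreover'' clause, is deriving amalgamation from categoricity in cardinals of arbitrarily high cofinality rather than assuming it, since this step cannot appeal to the good-frame machinery (which presupposes amalgamation) and must work directly with the combinatorics of the universal class. Here the plan is to exploit the closure properties of universal classes more heavily: one shows that categoricity in a cofinal class of cardinals of high cofinality forces amalgamation bases to exist cofinally often, because a failure of amalgamation would produce, via a tree/coding construction building models as unions of long chains, too many pairwise non-isomorphic models of some cardinality of the relevant cofinality, contradicting categoricity there. The role of \emph{high cofinality} is exactly to ensure that these chain constructions land inside the categoricity cardinals, so controlling the cofinality is what makes the counting argument bite; I expect this to be the most delicate part of the whole proof.
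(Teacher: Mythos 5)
Your proposal gets the opening right (universal classes are tame, Theorem \ref{tame-uc}; categoricity gives superstability and a good frame which tameness transfers upward, Theorems \ref{ss-categ} and \ref{good-frame-transfer-2}), but it has two genuine gaps, one in the transfer itself and one in the ``moreover'' clause.

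The transfer step fails as written. You claim that once a good frame exists on a tail, the fact that unions of chains of Galois-saturated models are Galois-saturated ``forces every model of each sufficiently large cardinality to be saturated.'' It does not: a model of size $\mu$ resolves as a union of smaller models, but nothing makes those smaller models saturated, and superstability alone does not prevent a model from omitting a type over a small submodel. This is exactly the hard part of the argument --- the analogue of the Vaughtian-pair/two-cardinal analysis in Morley's theorem --- and it is where the paper spends most of its effort. The actual proof introduces orthogonality and unidimensionality for the good frame, uses the fact that universal classes \emph{have primes} (Definition \ref{prime-def}) to make weak orthogonality well defined, and shows (Theorem \ref{not-unidim}) that failure of unidimensionality yields a nonalgebraic $p \in \gS(M)$ such that the class $\K_{\neg^\ast p}$ of models over which $p$ has a unique extension still carries a good $\lambda$-frame; since $\K_{\neg^\ast p}$ inherits tameness and weak amalgamation from the universal class (this is where primes are used again), the frame transfers up and $\K_{\neg^\ast p}$ has arbitrarily large models, producing non-saturated models in every cardinal above $\lambda$ and contradicting categoricity. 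Only then do Theorems \ref{unidim-categ} and \ref{gv-upward} give categoricity on a tail. None of this is in your sketch, and without the primes/orthogonality machinery the step from ``good frame'' to ``all big models are saturated'' is unjustified.

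The amalgamation step is also not obtained the way you propose. A counting argument of the form ``failure of amalgamation plus a tree construction gives many models in some categoricity cardinal'' is precisely the kind of non-structure result that, for general AECs, is only known under the weak diamond ($2^\lambda < 2^{\lambda^+}$) and can fail under Martin's axiom; it is not available in ZFC, and controlling the cofinality does not rescue it. The paper instead invokes Shelah's ZFC theorem (Theorem \ref{good-frame-categ}) that categoricity in cardinals of arbitrarily high cofinality yields a good $\lambda$-frame in \emph{some} $\lambda$ --- amalgamation in $\lambda$ comes packaged with the frame --- and then propagates amalgamation upward via the frame-transfer Theorem \ref{frame-transfer}, which needs only \emph{weak} amalgamation (automatic in universal classes, by closing under the functions) together with tameness: types are extended by taking nonforking extensions, and Lemma \ref{galois-ext} converts this extension property into full amalgamation above $\lambda$. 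So the cofinality hypothesis is consumed by Shelah's frame-existence theorem, not by a chain-counting argument.
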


The proof starts by observing that every universal class is tame (a result of the first author \cite{tameness-groups}, see Theorem \ref{tame-uc}). 

The second test question is more vague and grew out of the need to generalize some of the tools of first-order stability theory to AECs.

\begin{question}\label{indep-quest}
  Let $\K$ be an AEC categorical in a high-enough cardinal. Does there exists a cardinal $\chi$ such that $\K_{\ge \chi}$ admits a notion of independence akin to first-order forking?
\end{question}

The answer is positive for universal classes with amalgamation (see Theorem \ref{univ-classes-goodness}), and more generally for classes with amalgamation satisfying a certain strengthening of tameness:

\textbf{Theorem \ref{fully-good-indep}}. 
{\it  Let $\K$ be a fully $<\aleph_0$-tame and -type short AEC with amalgamation. If $\K$ is categorical in a $\lambda > \LS (\K)$, then $\K_{\ge \beth_{\left(2^{\LS (\K)}\right)^+}}$ has (in a precise sense) a superstable forking-like independence notion.}

Varying the locality assumption, one can obtain weaker, but still powerful, conclusions that are key in the proof of Theorem \ref{main-thm}.

One of the big questions in developing classification theory outside of first-order is which of the characterizations of dividing lines to take as the definition (see Section \ref{class-thy-sec}).  This is especially true when dealing with superstability.  In the first-order context, this is characterizable by forking having certain properties or the union of saturated models being saturated or one of several other properties.  In tame AECs, these characterizations have recently been proven to also be equivalent! See Theorem \ref{gvsuperstab}.

This survey is organized as follows: Section \ref{primer-no-tameness} reviews concepts from the study of general AEC.  This begins with definitions and basic notions (Galois type, etc.) that are necessary for work in AECs. Subsection \ref{class-thy-sec} is a review of classification theory without tameness.  The goal here is to review the known results that do not involve tameness in order to emphasize the strides that assuming tameness makes. Of course, we also setup notation and terminology. Previous familiarity with the basics of AECs as laid out in e.g.\ \cite[Chapter 4]{baldwinbook09} would be helpful. We also assume that the reader knows the basics of first-order model theory. 

Section \ref{tame-sec} formally introduces tameness and related principles.  Subsection \ref{examples-subsec} reviews the known examples of tameness and non-tameness.  

Section \ref{universal-class-sec} outlines the proof of Shelah's Categoricity Conjecture for universal classes. The goal of this outline is to highlight several of the tools that exist in the classification theory of tame AECs and tie them together in a single result.  After whetting the reader's appetite, Section \ref{tame-indep-sec} goes into greater detail about the classification-theoretic tools available in tame AECs. 

This introduction has been short on history and attribution and the historical remarks in Section \ref{hist-sec} fill this gap. We have written this survey in a somewhat informal style where theorems are not attributed when they are stated: the reader should also look at Section \ref{hist-sec}, where proper credits are given. \emph{It should not be assumed that an unattributed result is the work of the authors}.

Let us say a word about what is \emph{not} discussed: We have chosen to focus on recent material which is not already covered in Baldwin's book \cite{baldwinbook09}, so while its interest cannot be denied, we do not for example discuss the proof of the Grossberg-VanDieren upward categoricity transfer \cite{tamenesstwo, tamenessthree}. Also, we focus on tame AECs, so tameness-free results (such as Shelah's study of Ehrenfeucht-Mostowski models in \cite[Chapter IV]{shelahaecbook}, \cite{sh893}, or the work of VanDieren and the second author on the symmetry property \cite{vandieren-symmetry-v4-toappear, vandieren-chainsat-apal, vv-symmetry-transfer-v3}) are not emphasized.  Related frameworks which we do not discuss much are homogeneous model theory (see Example \ref{examples-subsec}.(\ref{hommod})), tame finitary AECs (Example \ref{examples-subsec}.(\ref{finitaryAEC})), and tame metric AECs (see Example \ref{examples-subsec}.(\ref{tdense})).

Finally, let us note that the field is not a finished body of work but is still very much active. Some results may become obsolete soon after, or even before, this survey is published\footnote{Indeed, since this paper was first circulated (in December 2015) the amalgamation assumption has been removed from Theorem \ref{main-thm} \cite{categ-universal-2-v1} and Question \ref{sat-quest} has been answered positively \cite{categ-saturated-v2}.}  . Still, we felt there was a need to write this paper, as the body of work on tame AECs has grown significantly in recent years and there is, in our opinion, a need to summarize the essential points.

\subsection{Acknowledgments}

This paper was written while the second author was working on a Ph.D.\ thesis under the direction of Rami Grossberg at Carnegie Mellon University and he would like to thank Professor Grossberg for his guidance and assistance in his research in general and in this work specifically.

We also thank Monica VanDieren and the referee for useful feedback that helped us improve the presentation of this paper.

\section{A primer in abstract elementary classes without tameness}\label{primer-no-tameness}

In this section, we give an overview of some of the main concepts of the study of abstract elementary classes. This is meant both as a presentation of the basics and as a review of the ``pre-tameness'' literature, with an emphasis of the difficulties that were faced. By the end of this section, we give several state-of-the-art results on Shelah's categoricity conjecture. While tameness is not assumed, deriving a weak version from categoricity is key in the proofs.

We only sketch the basics here and omit most of the proofs. The reader who wants a more thorough introduction should consult \cite{grossberg2002}, \cite{baldwinbook09}, or the upcoming \cite{grossbergbook}.  We are light on history and motivation for this part; interested readers should consult one of the references or Section \ref{hist-sec}.

Abstract elementary classes (AECs) were introduced by Shelah in the mid-seventies. The original motivation was axiomatizing classes of models of certain infinitary logics ($\Ll_{\omega_1, \omega}$ and $\Ll (Q)$), but the definition can also be seen as extracting the category-theoretic essence of first-order model theory (see \cite{lieberman-categ}).

\begin{defin}
  An abstract elementary class (AEC) is a pair $(\K, \lea)$ satisfying the following conditions:

  \begin{enumerate}
    \item $\K$ is a class of $L$-structures for a fixed language $L := L (\K)$.
    \item $\lea$ is a reflexive and transitive relation on $\K$.
    \item Both $\K$ and $\lea$ are closed under isomorphisms: If $M, N \in \K$, $M \lea N$, and $f: N \cong N'$, then $f[M], N' \in \K$ and $f[M] \lea N'$.
    \item If $M \lea N$, then $M$ is an $L$-substructure of $N$ (written\footnote{We write $|M|$ for the universe of an $L$-structure $M$ and $\|M\|$ for the cardinality of the universe. Thus $M \subseteq N$ means $M$ is a substructure of $N$ while $|M| \subseteq |N|$ means that the universe of $M$ is a subset of the universe of $N$.} $M \subseteq N$).
    \item Coherence axiom: If $M_0, M_1, M_2 \in \K$, $M_0 \subseteq M_1 \lea M_2$, and $M_0 \lea M_2$, then $M_0 \lea M_1$.
    \item Tarski-Vaught chain axioms: If $\delta$ is a limit ordinal and $\seq{M_i : i < \delta}$ is an increasing chain (that is, for all $i < j < \delta$, $M_i \in \K$ and $M_i \lea M_j$), then:

      \begin{enumerate}
        \item $M_\delta := \bigcup_{i < \delta} M_i \in \K$.
        \item $M_i \lea M_\delta$ for all $i < \delta$.
        \item If $N \in \K$ and $M_i \lea N$ for all $i < \delta$, then $M_\delta \lea N$.
      \end{enumerate}

    \item Löwenheim-Skolem-Tarski axiom\footnote{This axiom was initially called the Löwenheim-Skolem axiom, which explains why it is written $\LS(\K)$.  However, later works have referred to it this way (and sometimes written $\LST(\K)$) as an acknowledgment of Tarski's role in the corresponding first-order result.}: There exists a cardinal $\mu \ge |L (\K)| + \aleph_0$ such that for every $M \in \K$ and every $A \subseteq |M|$, there exists $M_0 \lea M$ so that $A \subseteq |M_0|$ and $\|M_0\| \le \mu + |A|$. We define the Löwenheim-Skolem-Tarski number of $\K$ (written $\LS (\K)$) to be the least such cardinal.

  \end{enumerate}

  We often will not distinguish between $\K$ and the pair $(\K, \lea)$. We write $M \lta N$ when $M \lea N$ and $M \neq N$.
\end{defin}
\begin{example}
  $(\text{Mod} (T), \preceq)$ for $T$ a first-order theory, and more generally $(\text{Mod} (\psi), \preceq_{\Phi})$ for $\psi$ an $\Ll_{\lambda, \omega}$ sentence and $\Phi$ a fragment containing $\psi$ are among the motivating examples. The Löwenheim-Skolem-Tarski numbers in those cases are respectively $|L (T)| + \aleph_0$ and $|\Phi| + |L (\Phi)| + \aleph_0$. In the former case, we say that the class is \emph{elementary}. See the aforementioned references for more examples.
\end{example}
\begin{notation}
  For $\K$ an AEC, we write $\K_\lambda$ for the class of $M \in \K$ with $\|M\| = \lambda$, and similarly for variations such as $\K_{\ge \lambda}$, $\K_{<\lambda}$, $\K_{[\lambda, \theta)}$, etc.
\end{notation}
\begin{remark}[Existence of resolutions]
  Let $\K$ be an AEC and let $\lambda > \LS (\K)$. If $M \in \K_\lambda$, it follows directly from the axioms that there exists an increasing chain $\seq{M_i : i \le \lambda}$ which is continuous\footnote{That is, for every limit $i$, $M_i = \bigcup_{j < i} M_j$.} and so that $M_\lambda = M$ and $M_i \in \K_{<\lambda}$ for all $i < \lambda$; such a chain is called a \emph{resolution} of $M$.  We also use this name to refer to the initial segment $\seq{M_i: i < \lambda}$ with $M_\lambda = M = \bigcup_{i<\lambda} M_i$ left implicit.
\end{remark}
\begin{remark}
  Let $\K$ be an AEC. A few quirks are not ruled out by the definition:

  \begin{itemize}
    \item $\K$ could be empty.
    \item It could be that $\K_{<\LS (\K)}$ is nonempty. This can be remedied by replacing $\K$ with $\K_{\ge \LS (\K)}$ (also an AEC with the same Löwenheim-Skolem-Tarski number as $\K$). Note however that in some examples, the models below $\LS (\K)$ give a lot of information on the models of size $\LS (\K)$, see Baldwin, Koerwein, and Laskowski \cite{locally-finite-aec-toappear}.
  \end{itemize}

  Most authors implicitly assume that $\K_{<\LS (\K)} = \emptyset$ and $\K_{\LS (\K)} \neq \emptyset$, and the reader can safely make these assumptions throughout. However, we will try to be careful about these details when stating results.

\end{remark}

An AEC $\K$ may not have certain structural properties that always hold in the elementary case:

\begin{defin}\label{struct-props}
  Let $\K$ be an AEC. 

  \begin{enumerate}
    \item $\K$ has \emph{amalgamation} if for any $M_0, M_1, M_2 \in \K$ with $M_0 \lea M_\ell$, $\ell = 1,2$, there exists $N \in \K$ and $f_\ell : M_\ell \xrightarrow[M_0]{} N$, $\ell = 1,2$.
   \[
 \xymatrix{\ar @{} [dr] M_1  \ar@{.>}[r]^{f_1}  & N\\
M_0 \ar[u] \ar[r] & M_2 \ar@{.>}[u]_{f_2}
 }
\]

    \item $\K$ has \emph{joint embedding} if for any $M_1, M_2 \in \K$, there exists $N \in \K$ and $f_\ell : M_\ell \rightarrow N$, $\ell = 1,2$.
    \item $\K$ has \emph{no maximal models} if for any $M \in \K$ there exists $N \in \K$ with $M \lta N$.
    \item $\K$ has \emph{arbitrarily large models} if for any cardinal $\lambda$, $\K_{\ge \lambda} \neq \emptyset$.
  \end{enumerate}

  We define localizations of these properties in the expected way. For example, we say that \emph{$\K_\lambda$ has amalgamation} or $\K$ has \emph{amalgamation in $\lambda$} (or \emph{$\lambda$-amalgamation}) if the definition of amalgamation holds when all the models are required to be of size $\lambda$.
\end{defin}

There are several easy relationships between these properties. We list here a few:

\begin{prop}
  Let $\K$ be an AEC, $\lambda \ge \LS (\K)$.

  \begin{enumerate}
    \item If $\K$ has joint embedding and arbitrarily large models, then $\K$ has no maximal models.
    \item If $\K$ has joint embedding in $\lambda$, $\K_{<\lambda}$ has no maximal models, and $\K_{\ge \lambda}$ has amalgamation, then $\K$ has joint embedding.
    \item If $\K$ has amalgamation in every $\mu \ge \LS (\K)$, then $\K_{\ge \LS (\K)}$ has amalgamation.
  \end{enumerate}
\end{prop}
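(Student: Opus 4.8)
The plan is to establish the three implications in turn; all follow from the axioms by short constructions, and only (3) requires genuine work. \emph{Part (1).} Given $M \in \K$, I would use arbitrarily large models to pick $M' \in \K$ with $\|M'\| > \|M\|$ (e.g.\ $M' \in \K_{\geq \|M\|^+}$), and then apply joint embedding to obtain $N_0 \in \K$ with $\lea$-embeddings $f \colon M \to N_0$ and $g \colon M' \to N_0$. Since $f[M] \lea N_0$ while $\|f[M]\| = \|M\| < \|M'\| = \|g[M']\| \leq \|N_0\|$, we have $f[M] \lta N_0$. Transporting along the isomorphism $f \colon M \cong f[M]$ and using closure under isomorphism yields $N \in \K$ with $M \lta N$; hence $\K$ has no maximal models.

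\emph{Part (2).} I would first prove the auxiliary claim that every $M \in \K$ has a $\lea$-extension of size $\geq \lambda$: if $\|M\| < \lambda$, build a strictly $\lta$-increasing continuous chain from $M$, using no maximal models in $\K_{<\lambda}$ at successors and unions (legitimate by the chain axiom) at limits, stopping once the size reaches $\lambda$; since each step is proper this happens within $\lambda$ steps. Next I would show $\K_{\geq \lambda}$ has joint embedding: for $N_1, N_2 \in \K_{\geq \lambda}$, use the Löwenheim-Skolem-Tarski axiom (valid as $\lambda \geq \LS(\K)$) to choose $N_\ell^- \lea N_\ell$ of size $\lambda$, apply joint embedding in $\lambda$ to get $P \in \K_\lambda$ with embeddings $N_\ell^- \to P$, and glue by two applications of amalgamation in $\K_{\geq \lambda}$ --- amalgamate $N_1$ with $P$ over $N_1^-$ to get $R_1$, then $N_2$ with $R_1$ over $N_2^-$ (sent into $R_1$ through $P$) to get $R_2$ --- so that $N_1$ and $N_2$ both embed into $R_2$. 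Finally, for arbitrary $M_1, M_2 \in \K$, extend each to $M_\ell^* \in \K_{\geq \lambda}$ by the claim, jointly embed $M_1^*, M_2^*$, and precompose with $M_\ell \lea M_\ell^*$.

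\emph{Part (3).} Let $M_0 \lea M_1, M_2$ all lie in $\K_{\geq \LS(\K)}$; by symmetry assume $\|M_1\| \leq \|M_2\| =: \mu$. I would induct on $\mu$, proving amalgamation for every such triple with $\max(\|M_1\|, \|M_2\|) \leq \mu$. If $\|M_2\| < \mu$ the inductive hypothesis applies, and if $\|M_0\| = \mu$ then all three models have size $\mu$ and amalgamation in $\mu$ applies directly; the essential case is $\|M_0\| < \mu = \|M_2\|$ (so $\mu > \LS(\K)$). There I would resolve $M_2$ --- and $M_1$ too, when $\|M_1\| = \mu$ --- as a continuous $\lea$-increasing union of models of size $< \mu$ with $M_0 \lea M_\ell^0$ (arranged using the Löwenheim-Skolem-Tarski and coherence axioms), and then build a continuous $\lea$-increasing chain $\seq{N^i : i < \mu}$ of partial amalgams with coherent embeddings $f^i \colon M_1^i \to N^i$ and $g^i \colon M_2^i \to N^i$ agreeing on $M_0$: at successors two uses of the inductive hypothesis amalgamate the new pieces into $N^i$, and at limits one takes unions via the chain axiom. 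Then $N := \bigcup_{i < \mu} N^i$ with $f := \bigcup_i f^i$ and $g := \bigcup_i g^i$ is the desired amalgam.

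The main obstacle is the construction in (3): every partial amalgam $N^i$ must be kept of size $< \mu$ so that each successor amalgamation remains within the inductive hypothesis, which is delicate for singular $\mu$, since a union at a limit stage can jump to size $\mu$. I would control this by shrinking each successor amalgam with the Löwenheim-Skolem-Tarski axiom while invoking coherence to retain $N^i \lea N^{i+1}$, keeping $\|N^i\| \leq |i| + \|M_0\| + \|M_1\| + \LS(\K) < \mu$ throughout; verifying that the diagrams commute (so the unions $\bigcup_i f^i$ and $\bigcup_i g^i$ are well-defined embeddings agreeing on $M_0$) is the other point requiring care.
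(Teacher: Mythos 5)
Your proof is correct; the paper states this proposition without proof (it is folklore), and your arguments for all three parts are the standard ones: transport of structure plus a cardinality count for (1), extension up to size $\lambda$ followed by joint embedding in $\lambda$ and two amalgamations for (2), and induction on cardinality via resolutions and a continuous chain of partial amalgams for (3). The only cosmetic slip is the bound $\|N^i\| \le |i| + \|M_0\| + \|M_1\| + \LS(\K)$ in your final paragraph, which should read $\|M_1^i\|$ in place of $\|M_1\|$ in the case $\|M_1\| = \mu$ where $M_1$ is also resolved; with that reading your size control goes through even for singular $\mu$.
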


In a sense, joint embedding says that the AEC is ``complete''. Assuming amalgamation, it is possible to partition the AEC into disjoint classes each of which has amalgamation and joint embedding.

\begin{prop}\label{jep-decomp}
  Let $\K$ be an AEC with amalgamation. For $M_1, M_2 \in \K$, say $M_1 \sim M_2$ if and only if $M_1$ and $M_2$ embed inside a common model (i.e.\ there exists $N \in \K$ and $f_\ell : M_\ell \rightarrow N$). Then $\sim$ is an equivalence relation, and its equivalence classes partition $\K$ into at most $2^{\LS (\K)}$-many AECs with joint embedding and amalgamation.
\end{prop}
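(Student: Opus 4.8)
The plan is to verify the equivalence-relation axioms first, then check that each class is an AEC with joint embedding and amalgamation, and finally bound the number of classes by a counting argument. Throughout I read ``$M_1$ and $M_2$ embed inside a common model'' as the existence of $\K$-embeddings (maps $f$ with $f[M] \lea N$) into some $N \in \K$, and I use that a composition of $\K$-embeddings is again a $\K$-embedding, by transitivity of $\lea$ together with the isomorphism-invariance of $\lea$.

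Reflexivity and symmetry of $\sim$ are immediate (use the identity $M \to M$, and note the definition is symmetric in $M_1, M_2$). The only substantial point is transitivity, and this is exactly where amalgamation enters. Given $M_1 \sim M_2$ witnessed by embeddings into some $N_{12}$ and $M_2 \sim M_3$ witnessed by embeddings into some $N_{23}$, I would first replace everything by isomorphic copies so that the two images of $M_2$ become literal $\lea$-substructures, i.e.\ $M_2 \lea N_{12}$ and $M_2 \lea N_{23}$. Applying amalgamation in $\K$ with base $M_2$ yields some $N \in \K$ together with $\K$-embeddings $h_{12} : N_{12} \to N$ and $h_{23} : N_{23} \to N$. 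Composing the given embeddings of $M_1$ and $M_3$ with $h_{12}$ and $h_{23}$ respectively exhibits $M_1$ and $M_3$ inside the common model $N$, so $M_1 \sim M_3$.

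Next I would fix an equivalence class $\K'$ (with the order inherited from $\K$) and check it is an AEC. The key observation, used repeatedly, is that $\K'$ is closed under the relevant operations: if $M \in \K'$ and either $M \lea N$, $N \lea M$, $M \cong N$, or $M$ and $N$ embed in a common model, then $N \in \K'$, since each such $N$ is $\sim$-equivalent to $M$. Granting this, every AEC axiom for $\K'$ follows from the corresponding axiom for $\K$: isomorphism-closure, coherence, and the chain axioms are inherited verbatim, while for the union $M_\delta = \bigcup_i M_i$ of a chain in $\K'$ one notes $M_0 \lea M_\delta$ to conclude $M_\delta \in \K'$; the Löwenheim-Skolem-Tarski axiom holds with $\mu = \LS(\K)$, since any LST-witness $M_0 \lea M$ satisfies $M_0 \sim M$, giving $\LS(\K') \le \LS(\K)$. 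For joint embedding in $\K'$, any $M_1, M_2 \in \K'$ embed in a common $N \in \K$ by definition of $\sim$, and $N \in \K'$ because the image $f_1[M_1] \lea N$ is isomorphic to $M_1 \in \K'$. Amalgamation in $\K'$ is immediate from amalgamation in $\K$ together with the same remark that the amalgam lies in $\K'$.

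Finally, for the cardinality bound I would use the LST axiom to produce in each class a representative of size $\le \LS(\K)$: given any $M$ in the class, an LST-witness $M_0 \lea M$ over $A = \emptyset$ has $\|M_0\| \le \LS(\K)$ and $M_0 \sim M$. Isomorphic structures are $\sim$-equivalent, so assigning to each class the isomorphism type of such a small representative is injective on classes. The number of isomorphism types of $L$-structures of size $\le \LS(\K)$ is at most $2^{\LS(\K)}$ (using $|L| \le \LS(\K)$), which gives the desired bound. The main obstacle is transitivity of $\sim$; once amalgamation supplies it, the remaining verifications are routine inheritance checks and a standard counting argument.
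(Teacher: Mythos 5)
Your proof is correct: amalgamation over (an isomorphic copy of) $M_2$ for transitivity, closure of each $\sim$-class under $\lea$, $\gea$, and isomorphism for the inheritance of the AEC axioms and of joint embedding and amalgamation, and the LST axiom plus counting isomorphism types of structures of size $\le \LS(\K)$ for the bound $2^{\LS(\K)}$. The paper states this proposition without proof, and your argument is exactly the standard one it leaves implicit.
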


Thus if $\K$ is an AEC with amalgamation and arbitrarily large models, we can find a sub-AEC of it which has amalgamation, joint embedding, and no maximal models. In that sense, global amalgamation implies all the other properties (see also Corollary \ref{cor-jep-from-ap}). 

Using  the existence of resolutions, it is not difficult to see that an AEC $(\K, \lea)$ is determined by its restriction to size $\lambda$ $(\K_\lambda, \lea \cap (\K_\lambda \times \K_{\lambda}))$. Thus, there is only a set of AECs with a fixed Löwenheim-Skolem-Tarski number and hence there is a Hanf number for the property that the AEC has arbitrarily large models. 

While this analysis only gives an existence proof for the Hanf number, Shelah's presentation theorem actually allows a computation of the Hanf number by establishing a connection between $\K$ and $\Ll_{\infty, \omega}$.


\begin{thm}[Shelah's presentation theorem]\label{pres-thm}
  If $\K$ is an AEC with $L(\K) = L$, there exists a language $L' \supseteq L$ with $|L'| + \LS(\K)$, a first-order $L'$-theory $T'$, and a set of $T'$-types $\Gamma$ such that
  $$\K = \text{PC} (T', \Gamma, L) := \{M' \rest L \mid M' \models T' \text{ and } M' \text{ omits all the types in } \Gamma\}$$
\end{thm}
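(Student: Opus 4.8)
The plan is to represent $\K$ as the class of $L$-reducts of a class cut out by omitting types, using new function symbols to internalize the Löwenheim--Skolem--Tarski axiom. Write $\kappa := \LS(\K)$ and let $L' := L \cup \{F^n_i : n < \omega,\ i < \kappa\}$, where $F^n_i$ is an $n$-ary function symbol; then $|L'| = \kappa$. To each $M \in \K$ I would attach an expansion $M'$ to an $L'$-structure as follows. Proceeding by recursion on the length of finite tuples $\bar a \in {}^{<\omega}|M|$, I use the Löwenheim--Skolem--Tarski axiom to choose $M_{\bar a} \lea M$ with $\ran(\bar a) \subseteq |M_{\bar a}|$, $\|M_{\bar a}\| \le \kappa$, and---this is the key extra requirement---$M_{\bar c} \subseteq M_{\bar a}$ for every subtuple $\bar c$ of $\bar a$ (possible since there are only finitely many such $\bar c$, so their union still has size $\le \kappa$). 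I then interpret the functions so that $\{F^{|\bar a|}_i(\bar a) : i < \kappa\} = |M_{\bar a}|$. By the coherence axiom, $M_{\bar c} \subseteq M_{\bar a}$ together with $M_{\bar c}, M_{\bar a} \lea M$ gives $M_{\bar c} \lea M_{\bar a}$, so in $M'$ the small submodels attached to tuples are $\lea$-increasing along the subtuple ordering.

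Next I would set $\Gamma$ to be the collection of all types $p(x_0, \dots, x_{n-1})$ over the empty set, in finitely many variables, that are realized in \emph{no} expansion $M'$ of a member of $\K$, and let $T'$ be the set of $L'$-sentences true in every such $M'$ (in fact $\Gamma$ carries the whole argument, so $T'$ may be taken trivial). The inclusion $\K \subseteq \text{PC}(T', \Gamma, L)$ is then immediate: for $M \in \K$ the expansion $M'$ satisfies $T'$, omits every member of $\Gamma$ by the very definition of $\Gamma$, and has $M' \rest L = M$.

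The reverse inclusion is where the work lies, and I expect the coherence bookkeeping to be the main obstacle. Let $N'$ model $T'$ and omit every type in $\Gamma$, and put $N := N' \rest L$. For a finite tuple $\bar a$ from $N'$, since $N'$ omits $\Gamma$ its complete type is realized by some $\bar a^*$ in some expansion $M'$; matching types, the map $F^{|\bar a|}_i(\bar a) \mapsto F^{|\bar a|}_i(\bar a^*)$ is a well-defined $L$-isomorphism from $M(\bar a) := \{F^{|\bar a|}_i(\bar a) : i < \kappa\}$ onto $M_{\bar a^*} \lea M$, so $M(\bar a)$ is the universe of a substructure of $N$ whose $L$-reduct lies in $\K$ and has size $\le \kappa$. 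The delicate point is to see that these pieces cohere, i.e.\ that for a subtuple $\bar a$ of $\bar b$ one has both $M(\bar a) \subseteq M(\bar b)$ and $M(\bar a) \rest L \lea M(\bar b) \rest L$. Here the monotone construction pays off: in every $M'$ the failure of $M_{\bar a^*} \subseteq M_{\bar b^*}$---expressible in the finitely many variables $\bar x$ (of length $|\bar b|$) as the type asserting $F_i(\bar x_{\bar a}) \neq F_j(\bar x) $ for all $j < \kappa$, where $\bar x_{\bar a}$ is the relevant subtuple of $\bar x$---never occurs, so that type lies in $\Gamma$ and is omitted by $N'$; thus $M(\bar a) \subseteq M(\bar b)$ holds in $N'$. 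This is exactly the place where omitting a full \emph{type} (infinitely many formulas) does what no single first-order sentence could. The relation $\lea$ then transfers along the canonical isomorphism: the type of $\bar b$ gives an $L$-isomorphism $M(\bar b) \cong M_{\bar b^*}$ carrying $M(\bar a)$ onto $M_{\bar a^*}$, and since $M_{\bar a^*} \lea M_{\bar b^*}$ in $M'$ and $\lea$ is invariant under isomorphism, $M(\bar a) \rest L \lea M(\bar b) \rest L$.

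With coherence established, the family $\{M(\bar a) \rest L : \bar a \in {}^{<\omega}|N'|\}$ is $\lea$-directed---given $\bar a, \bar b$, both are $\lea M(\bar a \bar b) \rest L$---and its union is $N$, since each element lies in the piece attached to the corresponding one-element tuple. The Tarski--Vaught chain axioms (applied to a resolution obtained by well-ordering the tuples, or via the standard reduction of directed unions to well-ordered chains) then give $N \in \K$, completing the reverse inclusion. Finally, the size bookkeeping is routine: $|L'| = \kappa = \LS(\K)$, and there are at most $2^{\kappa}$ types over $\emptyset$ in finitely many variables, so $|\Gamma| \le 2^{\LS(\K)}$, matching the claimed bound on $L'$.
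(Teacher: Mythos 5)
Your proof is correct and takes essentially the same approach as the paper's sketch: add $\LS(\K)$-many function symbols of each finite arity, enumerate a subtuple-coherent directed system of small $\lea$-submodels indexed by finite tuples, and choose $\Gamma$ so that omitting it forces these pieces in any model of $T'$ to form such a directed system, whose union lies in $\K$ by closure under directed unions. The only divergences are cosmetic fillings-in of detail (e.g.\ your explicit ``failure type'' enforcing $M(\bar a) \subseteq M(\bar b)$, which in fact already follows from the matching of complete types).
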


The proof proceeds by adding $\LS(\K)$-many functions of each arity.  For each $M$, we can write it as the union of a directed system $\{N_{\ba} \in \K_{\LS(\K)} : \ba \in {}^{<\omega}|M|\}$ with $\ba \in N_{\ba}$.  Then, the intended expansion $M'$ of $M$ is where the universe of $N_{\ba}$ is enumerated by new functions of arity $\ell(\ba)$ applied to $\ba$.  The types of $\Gamma$ are chosen such that $M'$ omits them if and only if the reducts of the substructures derived in this way actually form a directed system\footnote{Note that there are almost always the maximal number of types in $\Gamma$.}.

 In particular, $\K$ is the reduct of a class of models of an $\Ll_{\LS (\K)^+, \omega}$-theory.  An important caveat is that if $\K$ was given by the models of some $\Ll_{\LS(\K)^+,\omega}$-theory, the axiomatization given by Shelah's Presentation Theorem is different and uninformative.  However, it is enough to allow the computation of the Hanf number for existence.

\begin{cor}
  If $\K$ is an AEC such that $\K_{\ge \chi} \neq \emptyset$ for all $\chi < \beth_{(2^{\LS (\K)})^+}$, then $\K$ has arbitrarily large models.
\end{cor}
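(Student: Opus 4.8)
The plan is to run the classical Morley--Ehrenfeucht--Mostowski argument for computing the Hanf number of a pseudoelementary class, feeding it Shelah's presentation theorem to put $\K$ into the right form. First I would apply Theorem \ref{pres-thm} to write $\K = \text{PC}(T', \Gamma, L)$, where $T'$ is a first-order theory in a language $L' \supseteq L$ with $|L'| = \LS(\K)$ and $\Gamma$ is a set of finitary $L'$-types. I would then expand $T'$ to a Skolemized theory $T^*$ in a language $L^*$ with $|L^*| = \LS(\K)$, so that the Ehrenfeucht--Mostowski functors $I \mapsto \EM(I, \Phi)$ are available, and so that every model of $T^*$ reduces to a model of $T'$ (and the property of omitting $\Gamma$, being a property of the $L'$-reduct, is unaffected by the Skolem expansion).

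Next I would unpack the hypothesis. Writing $H := \beth_{(2^{\LS(\K)})^+}$, the set $\{\beth_\alpha : \alpha < (2^{\LS(\K)})^+\}$ is cofinal in $H$, so the assumption that $\K_{\ge \chi} \neq \emptyset$ for all $\chi < H$ furnishes, for each $\alpha < (2^{\LS(\K)})^+$, a model $M_\alpha \in \K$ of size $\ge \beth_\alpha$. I would lift each $M_\alpha$ to a model $N_\alpha^* \models T^*$ whose $L'$-reduct omits $\Gamma$, fix a linear order on its universe, and apply the Erd\H{o}s--Rado partition theorem: since for each $n$ there are at most $2^{\LS(\K)}$ complete $L^*$-types in $n$ variables, a structure of size $\ge \beth_\alpha$ contains a set of $L^*$-indiscernibles whose length grows without bound as $\alpha$ increases.

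The heart of the argument is then a pigeonhole step. Each indiscernible sequence found inside some $N_\alpha^*$ determines an EM template $\Phi_\alpha$, namely the coherent sequence of complete $L^*$-types of its increasing tuples; there are at most $2^{\LS(\K)}$ such templates, while $(2^{\LS(\K)})^+$ is a regular cardinal exceeding $2^{\LS(\K)}$, so a single template $\Phi$ must arise from indiscernible sequences of unbounded length. By compactness $\Phi$ is then proper for every linear order, and this is precisely where the index $(2^{\LS(\K)})^+$ of the $\beth$-hierarchy is consumed. This partition-plus-pigeonhole extraction is the step I expect to be the main obstacle: it carries the cardinal arithmetic that pins down the exact Hanf number, and one must take care that $\Phi$ only uses $L^*$-types actually realized in the $N_\alpha^*$.

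Finally I would verify that the construction yields arbitrarily large members of $\K$. For any infinite linear order $I$, the model $\EM(I, \Phi)$ satisfies $T^*$, so its reduct to $L'$ satisfies $T'$; moreover every finite tuple of $\EM(I, \Phi)$ is a tuple of Skolem terms applied to indiscernibles, hence its $L^*$-type is dictated by $\Phi$ and is therefore realized in some $N_\alpha^*$. Since each $N_\alpha^*$ omits $\Gamma$, so does $\EM(I, \Phi) \rest L'$, and thus $\EM(I, \Phi) \rest L \in \text{PC}(T', \Gamma, L) = \K$. As $\|\EM(I, \Phi)\| = |I| + \LS(\K)$ and $I$ ranges over all infinite linear orders, $\K$ has arbitrarily large models.
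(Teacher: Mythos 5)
Your proposal is correct and is exactly the argument the paper intends: the corollary is presented as an immediate consequence of Shelah's presentation theorem via the classical Morley computation of the Hanf number for PC classes omitting types, which is precisely the Skolemization--Erd\H{o}s--Rado--pigeonhole--compactness chain you describe. The paper gives no further details, so your sketch (including the coherence bookkeeping you flag in the template-extraction step, which in the standard proof is handled by extracting $n$-indiscernible sets level by level and running the pigeonhole across the tree of partial templates rather than inside a single model) actually supplies more of the proof than the source does.
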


The cardinal $\beth_{(2^{\LS (\K)})^+}$ appears frequently in studying AECs, so has been given a name:

\begin{notation}\label{hanf-notation}
  For $\lambda$ an infinite cardinal, write $\hanf{\lambda} := \beth_{(2^{\lambda})^+}$. When $\K$ is a fixed AEC, we write $H_1 := \hanf{\LS (\K)}$ and $H_2 := \hanf{\hanf{\LS (\K)}}$.
\end{notation}

We obtain for example that any AEC with amalgamation and joint embedding in a single cardinal eventually has all the structural properties of Definition \ref{struct-props}.

\begin{cor}\label{cor-jep-from-ap}
  Let $\K$ be an AEC with amalgamation. If $\K$ has joint embedding in some $\lambda \ge \LS (\K)$, then there exists $\chi < H_1$ so that $\K_{\ge \chi}$ has amalgamation, joint embedding, and no maximal models. More precisely, there exists an AEC $\K^\ast$ such that:

  \begin{enumerate}
    \item $\K^\ast \subseteq \K$.
    \item $\LS (\K^\ast) = \LS (\K)$.
    \item $\K^\ast$ has amalgamation, joint embedding, and no maximal models.
    \item $\K_{\ge \chi} = (\K^\ast)_{\ge \chi}$.
  \end{enumerate}
\end{cor}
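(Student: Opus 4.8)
The plan is to apply the decomposition of Proposition \ref{jep-decomp} and then locate, among the resulting pieces, the single one that carries all sufficiently large models. Write $\K = \bigsqcup_{i \in I} \K^i$ for the partition into $\sim$-classes given by Proposition \ref{jep-decomp}, so that $|I| \le 2^{\LS(\K)}$ and each $\K^i$ is an AEC with amalgamation and joint embedding. Since $\K$ has joint embedding in $\lambda$, any two members of $\K_\lambda$ embed into a common model and are therefore $\sim$-equivalent; thus all of $\K_\lambda$ lies in a single class, which I will call $\K^\ast$. The key structural observation is that $\K^\ast$ absorbs every large model: if $M \in \K$ has $\|M\| \ge \lambda$, then (using $\lambda \ge \LS(\K)$) the Löwenheim-Skolem-Tarski axiom produces $M_0 \lea M$ with $\|M_0\| = \lambda$, so $M_0 \in \K_\lambda \subseteq \K^\ast$ and $M_0 \sim M$, whence $M \in \K^\ast$. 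In particular, no class $\K^i$ other than $\K^\ast$ contains a model of size $\ge \lambda$.

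Next I would bound each of the discarded classes below $H_1$. Since every $\K^i \neq \K^\ast$ has no model of size $\ge \lambda$, it does not have arbitrarily large models, so the Hanf-number corollary following Theorem \ref{pres-thm} (in contrapositive form) yields $\chi_i < H_1$ with $(\K^i)_{\ge \chi_i} = \emptyset$. Set $\chi := \sup\{\chi_i : \K^i \neq \K^\ast\}$. This supremum is taken over at most $2^{\LS(\K)}$ cardinals, each below $H_1 = \beth_{(2^{\LS(\K)})^+}$; since $\cf{H_1} = (2^{\LS(\K)})^+ > 2^{\LS(\K)}$, we get $\chi < H_1$. For every $M \in \K_{\ge \chi}$ we have $M \in \K^i$ for some $i$, and $M \in (\K^i)_{\ge \chi} \subseteq (\K^i)_{\ge \chi_i}$ forces $\K^i = \K^\ast$; hence $\K_{\ge \chi} = (\K^\ast)_{\ge \chi}$, which is clause (4). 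Clause (1) is immediate, and clause (2) follows from the relativized Löwenheim-Skolem-Tarski property, since any small substructure of a model of $\K^\ast$ is again $\sim$-equivalent to it and hence lies in $\K^\ast$.

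It remains to verify clause (3). The class $\K^\ast$ has amalgamation and joint embedding by Proposition \ref{jep-decomp}. For no maximal models, note that $\K^\ast$ contains $\K_{\ge \lambda}$, so in the principal case where $\K$ has arbitrarily large models, $\K^\ast$ does too; combined with joint embedding, the earlier proposition relating the structural properties (joint embedding together with arbitrarily large models implies no maximal models) gives precisely that $\K^\ast$ has no maximal models.

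The main obstacle I anticipate is the bookkeeping needed to push $\chi$ below $H_1$: one must simultaneously control all of the (up to $2^{\LS(\K)}$-many) discarded classes, which is exactly where the regularity of $\cf{H_1} = (2^{\LS(\K)})^+$ is essential. A single application of the Hanf-number bound to $\K$ as a whole would not suffice, since it is the \emph{other} classes, not $\K^\ast$, that must be shown to die out. A secondary point requiring care is the genuinely degenerate case in which $\K$ (equivalently $\K^\ast$) has no arbitrarily large models at all: then $\K_{\ge \chi'} = \emptyset$ for some $\chi' < H_1$, the conclusion holds vacuously, and one takes $\K^\ast$ to be empty (or simply notes that the statement carries no content in this case).
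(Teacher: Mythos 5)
Your proof is correct and follows essentially the same route as the paper's sketch: decompose $\K$ via Proposition \ref{jep-decomp}, observe that joint embedding in $\lambda$ forces all large models into one class, kill the remaining classes below $H_1$ via the Hanf number corollary, and take the supremum using $\cf{H_1} = (2^{\LS(\K)})^+ > 2^{\LS(\K)}$. You simply fill in more of the details (including the degenerate cases) than the paper's brief proof sketch does.
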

\begin{proof}[Proof sketch]
  First use Proposition \ref{jep-decomp} to decompose the AEC into at most $2^{\LS (\K)}$ many subclasses, each of which has amalgamation and joint embedding. Now if one of these partitions does not have arbitrarily large models, then there must exists a $\chi_0 < H_1$ in which it has no models. Take the sup of all such $\chi_0$s and observe that $\cf{H_1} = (2^{\LS (\K)})^+ > 2^{\LS (\K)}$.
\end{proof}

If $\K$ is an AEC with joint embedding, amalgamation, and no maximal models, we may build a proper-class\footnote{To make sense of this, we have to work in Gödel-Von Neumann-Bernays set theory. Alternatively, we can simply ask for the monster model to be bigger than any sizes involved in our proofs. In any case, the way to make this precise is the same as in the elementary theory, so we do not elaborate.} sized model-homogeneous universal model $\sea$, where:

\begin{defin}
Let $\K$ be an AEC, let $M \in \K$, and let $\lambda$ be a cardinal.
\begin{enumerate}
  \item $M$ is \emph{$\lambda$-model-homogeneous} if for every $M_0 \lea M$, $M_0' \gea M_0$ with $\|M\| < \lambda$, there exists $f: M_0' \xrightarrow[M_0]{} M$. When $\lambda = \|M\|$, we omit it.
  \item $M$ is \emph{universal} if for every $M' \in \K$ with $\|M'\| \le \|M\|$, there exists $f: M' \rightarrow M$.
\end{enumerate} 
\end{defin}

\begin{defin}
  We say that an AEC $\K$ \emph{has a monster model} if it has a model $\sea$ as above. Equivalently, it has amalgamation, joint embedding, and arbitrarily large models.
\end{defin}
\begin{remark}
  Even if $\K$ only has amalgamation and joint embedding, we can construct a monster model, but it may not be proper-class sized. If in addition joint embedding fails, for any $M \in \K$ we can construct a big model-homogeneous model $\sea \gea M$.
\end{remark}

Note that if $\K$ were in fact an elementary class, then the monster model constructed here is the same as the classical concept.

When $\K$ has a monster model $\sea$, we can define a semantic notion of type\footnote{A semantic (as opposed to syntactic) notion of type is the only one that makes sense in a general AEC as there is no natural logic to work in. Even in AECs axiomatized in a logic such as $\Ll_{\omega_1, \omega}$, syntactic types do not behave as they do in the elementary framework; see the discussion of the Hart-Shelah example in Section \ref{examples-subsec}.} by working inside $\sea$ and specifying that $\bb$ and $\bc$ have the same type over $A$ if and only if there exists an automorphism of $\sea$ taking $\bb$ to $\bc$ and fixing $A$. In fact, this can be generalized to arbitrary AECs:

\begin{defin}[Galois types] \label{gtp-def}
  Let $\K$ be an AEC.

  \begin{enumerate}
    \item For an index set $I$, an \emph{$I$-indexed Galois triple} is a triple $(\bb, A, N)$, where $N \in \K$, $A \subseteq |N|$, and $\bb \in \fct{I}{|N|}$.
    \item We say that the $I$-indexed Galois triples $(\bb_1, A_1, N_1)$, $(\bb_2, A_2, N_2)$ are \emph{atomically equivalent} and write $(\bb_1, A_1, N_1) \Eat^I (\bb_2, A_2, N_2)$ if $A_1 = A_2$, and there exists $N \in \K$ and $f_\ell : N_\ell \xrightarrow[A]{} N$ so that $f_1 (\bb_1) = f_2 (\bb_2)$. When $I$ is clear from context, we omit it.
    \item Note that $\Eat$ is a symmetric and reflexive relation. We let $E$ be its transitive closure.
    \item For an $I$-indexed Galois triple $(\bb, A, N)$, we let $\gtp (\bb / A; N)$ (the \emph{Galois type} of $\bb$ over $A$ in $N$) be the $E$-equivalence class of $(\bb, A, N)$.
    \item For $N \in \K$ and $A \subseteq |N|$, we let $\gS^I (A; N) := \{\gtp (\bb / A; N) \mid \bb \in \fct{I}{|N|}\}$. We also let $\gS^I (N) := \bigcup_{N' \gea N} \gS^I (N; N')$. When $I$ is omitted, this means that $|I| = 1$, e.g.\ $\gS (N)$ is $\gS^1 (N)$.
    \item We can define restrictions of Galois types in the natural way: for $p \in \gS^I (A; N)$, $I_0 \subseteq I$ and $A_0 \subseteq A$, write $p \rest A_0$ for the restriction of $p$ to $A_0$ and $p^{I_0}$ for the restriction of $p$ to $I_0$. For example, if $p = \gtp (\bb / A; N)$ and $A_0 \subseteq A$, $p \rest A_0 := \gtp (\bb / A_0; N)$ (this does not depend on the choice of representative for $p$).
    \item Given $p \in \gS^I (M)$ and $f: M \cong M'$, we can also define $f (p)$ in the natural way.
  \end{enumerate}
\end{defin}

\begin{remark} \
  \begin{enumerate}
    \item If $M \lea N$, then $\gtp (\bb / A; M) = \gtp (\bb / A; N)$. Similarly, if $f: M \cong_A N$, then $\gtp (\bb / A; M) = \gtp (f (\bb) / A; N)$. Equivalence of Galois types is the coarsest equivalence relation with these properties.
    \item If $\K$ has amalgamation, then $E = \Eat$.
    \item If $\sea$ is a monster model for $\K$, $\bb_1, \bb_2 \in \fct{<\infty}{|\sea|}$, $A \subseteq |\sea|$, then $\gtp (\bb_1 / A; \sea) = \gtp (\bb_2 / A; \sea)$ if and only if there exists $f \in \text{Aut}_A (\sea)$ so that $f (\bb_1) = \bb_2$. When working inside $\sea$, we just write $\gtp (\bb / A)$ for $\gtp (\bb / A; \sea)$, but in general, the model in which the Galois type is computed is important.
    \item The cardinality of the index set is all that is important.  However, when discussing type shortness later, it is convenient to allow the index set to be arbitrary.
  \end{enumerate}
\end{remark}

When dealing with Galois types, one has to be careful about distinguishing between types over \emph{models} and types over \emph{sets}.  Most of the basic definitions work the same for types over sets and models, and both require just amalgamation \emph{over models} to make the transitivity of atomic equivalence work. Allowing types over sets gives slightly more flexibility in the definitions. For example, we can say what is meant to be $<\aleph_0$-tame or to be $(<\LS(\K))$-tame in $\K_{\ge \LS(\K)}$. See the discussion around Definition \ref{tamedef}.

On the other hand, several basic results--such as the construction of $\kappa$-saturated models--require amalgamation over the sort of object (set or model) desired in the conclusion.  For instance, the following is true.

\begin{prop} \label{settype-bad}
Suppose that $\K$ is an AEC with amalgamation\footnote{Recall that this is defined to mean over models.}.
\begin{enumerate}
	\item The following are equivalent.
	\begin{itemize}
		\item $A$ is an amalgamation base\footnote{This should be made precise, for example by considering the embedding of $A$ inside a fixed monster model.}.
		\item For every $p \in \gS^1(A; N)$ and $M \supseteq A$, there is an extension of $p$ to $M$.
	\end{itemize}
	\item The following are equivalent.
	\begin{itemize}
		\item $\K$ has amalgamation over sets.
		\item For every $M$ and $\kappa$, there is an extension $N \gea M$ with the following property:
		\begin{center}
		For every $A \subseteq |N|$ and $|M^*| \supseteq A$ with $|A| < \kappa$, any $p \in \gS^{<\kappa}(A; M^*)$ is realized in $N$.
		\end{center}
	\end{itemize}
	
\end{enumerate}
\end{prop}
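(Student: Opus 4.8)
The unifying idea is that amalgamating two models over a \emph{set} $A$ is the same as realizing, inside an extension of one of them, the Galois type over $A$ of an enumeration of the other; the bridge between the two viewpoints is that, under amalgamation over models, atomic equivalence $\Eat$ is already transitive over sets (not only over models). I would first isolate this transitivity: given $(\bc_1, A, N^1)\Eat(\bc_2,A,N^2)\Eat(\bc_3,A,N^3)$, the two witnessing amalgams each extend the \emph{model} $N^2$, so amalgamating them over $N^2$ (using amalgamation over models) produces a single amalgam witnessing $(\bc_1,A,N^1)\Eat(\bc_3,A,N^3)$. Thus $E = \Eat$ over any set, so that two triples over $A$ have equal Galois type exactly when they share a single amalgam over $A$. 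Here ``$A$ is an amalgamation base'' is taken in its standard sense that any two models containing $A$ amalgamate over $A$.

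For the forward direction of (1), suppose $A$ is an amalgamation base and let $p = \gtp(\bb / A; N) \in \gS^1(A; N)$ with $A \subseteq |M|$. Amalgamating $N$ and $M$ over $A$, I may assume $M \lea N^\ast$ and there is $f : N \xrightarrow[A]{} N^\ast$; then $\gtp(f(\bb) / M; N^\ast)$ restricts to $p$ over $A$, giving the desired extension. For the converse, given $N_1, N_2 \supseteq A$, I would enumerate $|N_1|$ as $\bb$ and realize $\gtp(\bb / A; N_1)$ inside an extension $N^\ast \gea N_2$ (this is where the hypothesis is used, applied to the model $M := N_2$); unfolding the resulting equality of Galois types over $A$, which by the preliminary observation is a \emph{single} atomic amalgam, yields a model $N^\dagger$ together with $\K$-embeddings of $N_1$ and of $N_2 \lea N^\ast$ into $N^\dagger$ agreeing on $A$, i.e.\ an amalgam of $N_1, N_2$ over $A$. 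Note that $\bb$ has length $\|N_1\|$, so one realizes a type of that arity; in the length-one formulation this is read as realizing the type of an enumeration, which is exactly why the analogous hypothesis in (2) is phrased for types of arbitrary length $<\kappa$.

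Part (2) follows the same template. For the nontrivial $(\Leftarrow)$ direction, given $N_1, N_2 \supseteq A$ I would set $\kappa := \left(\|N_1\| + \|N_2\| + |A|\right)^+$, apply the realization property to $M := N_2$ and this $\kappa$, and realize the type over $A$ of an enumeration of $N_1$ (of length $<\kappa$) inside the resulting $N \gea N_2$; as above this reads off an amalgam of $N_1, N_2$ over $A$, hence amalgamation over sets. For $(\Rightarrow)$ I would run a saturation-style construction: starting from $M$, build a continuous increasing chain $\seq{N_i : i < \chi}$ in which each successor step uses amalgamation over sets to realize one pending type $p \in \gS^{<\kappa}(A; M^\ast)$ with $A \subseteq |N_i|$ and $|A| < \kappa$ (amalgamate $M^\ast$ with $N_i$ over $A$ and take the image of a realizing tuple), and set $N := \bigcup_{i<\chi} N_i$.

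The main obstacle is the closing-off in the $(\Rightarrow)$ direction of (2): one must ensure that \emph{every} type over \emph{every} $<\kappa$-subset of the \emph{final} universe $|N|$ is realized, including subsets not contained in any single $N_i$. I would handle this by first noting that for a fixed $A$ with $|A| < \kappa$ the collection $\gS^{<\kappa}(A)$ is a set, bounded in size in terms of $\kappa$ and $\LS(\K)$, so a bookkeeping argument can schedule all relevant pairs $(A, p)$ to be treated cofinally along the chain; and second by choosing $\chi$ with $\cf{\chi} \geq \kappa$, which guarantees that any $A \subseteq |N|$ with $|A| < \kappa$ already lies in some $|N_i|$, so its types were scheduled and realized at a later stage. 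Verifying that the bookkeeping can be arranged (realization being preserved at limits is immediate from continuity) is the only genuinely technical point; the remaining verifications are routine unfoldings of the definition of Galois type.
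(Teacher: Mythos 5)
The paper states this proposition without proof (it is one of the "omitted basics" of Section 2), so there is nothing to compare against line by line; on its own terms your architecture is the right one. The preliminary observation — that amalgamation over models already makes $\Eat$ transitive for triples over arbitrary sets, by amalgamating the two witnessing amalgams over the middle \emph{model} — is correct and is exactly the bridge needed to read an equality of Galois types over a set $A$ as a single amalgam over $A$. On that basis both directions of (2) are fine: the $(\Leftarrow)$ direction correctly realizes the type of an enumeration of $N_1$ (which the hypothesis of (2) permits, since it quantifies over lengths $<\kappa$), and the $(\Rightarrow)$ direction is a routine union-of-chains construction once one bounds $|\gS^{<\kappa}(A)|$ by passing, via Löwenheim-Skolem-Tarski and the monotonicity of Galois types, to representatives with ambient model of size at most $\LS(\K)+\kappa+|A|$, and takes $\cf{\chi}\ge\kappa$ so that every small subset of the union appears at some stage.

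The gap is in the converse of (1). The hypothesis there concerns $\gS^1(A;N)$ — types of length \emph{one} — and you apply it to $\gtp(\bb/A;N_1)$ for $\bb$ an enumeration of $N_1$, a type of length $\|N_1\|$. Your remark that "in the length-one formulation this is read as realizing the type of an enumeration" is not a reading of the hypothesis but a strengthening of it; in AECs, passing from $1$-types to longer types is precisely the kind of step that is not free. The repair is easy and is already latent in your own mechanism: a Galois type remembers its ambient model, so you do not need a long type to carry $N_1$ along. Pick any $b\in|N_1|$ and set $p:=\gtp(b/A;N_1)\in\gS^1(A;N_1)$. The hypothesis applied to $M:=N_2$ yields $q=\gtp(c/N_2;N')$ with $N'\gea N_2$ and $\gtp(c/A;N')=\gtp(b/A;N_1)$; by your preliminary observation this equality is witnessed by a \emph{single} amalgam, i.e.\ there are $N''$ and $f\colon N_1\xrightarrow[A]{}N''$, $g\colon N'\xrightarrow[A]{}N''$ with $f(b)=g(c)$. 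Then $f$ together with $g\rest N_2$ amalgamates $N_1$ and $N_2$ over $A$. With this change part (1) goes through from the stated length-one hypothesis.
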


A more substantial result is \cite[Claim 3.3]{sh394}, which derives a local character for splitting in stable AECs (see Lemma \ref{stabsplit} below), but only in the context of Galois types over models.

One can give a natural definition of saturation in terms of Galois types.

\begin{defin}
  A model $M \in \K$ is \emph{$\lambda$-Galois-saturated} if for any $A \subseteq |M|$ with $|A| < \lambda$, any $N \gea M$, any $p \in \gS (A; N)$ is realized in $M$. When $\lambda = \|M\|$, we omit it.
\end{defin}

Note the difference between this definition and Proposition \ref{settype-bad}.(2) above.  When $\K$ does not have amalgamation or when $\lambda \le \LS (\K)$, it is not clear that this definition is useful. But if $\K$ has amalgamation and $\lambda > \LS (\K)$, the following result of Shelah is fundamental:

\begin{thm}\label{mod-homog-sat}
  Assume that $\K$ is an AEC with amalgamation and let $\lambda > \LS (\K)$. Then $M \in \K$ is $\lambda$-Galois-saturated if and only if it is $\lambda$-model-homogeneous.
\end{thm}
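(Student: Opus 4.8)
The statement is an equivalence, so I would prove each implication separately; the direction from model-homogeneity to saturation is routine, while the converse carries all the difficulty.

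For the easy direction, assume $M$ is $\lambda$-model-homogeneous and fix $A \subseteq |M|$ with $|A| < \lambda$, some $N \gea M$, and a type $p = \gtp(b / A; N) \in \gS(A; N)$ with $b \in |N|$. Using the Löwenheim-Skolem-Tarski axiom inside $M$, I would pick $M_0 \lea M$ with $A \subseteq |M_0|$ and $\|M_0\| \le \LS(\K) + |A| < \lambda$; applying it again inside $N$ gives $M_0' \lea N$ with $|M_0| \cup \{b\} \subseteq |M_0'|$ and $\|M_0'\| < \lambda$, and coherence upgrades $|M_0| \subseteq |M_0'|$ to $M_0 \lea M_0'$. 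Model-homogeneity now yields $g : M_0' \xrightarrow[M_0]{} M$, and since $g$ fixes $A$ and $b \in |M_0'|$, the element $g(b) \in |M|$ satisfies $\gtp(g(b) / A; M) = \gtp(b / A; M_0') = p$, so $p$ is realized in $M$.

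For the converse, assume $M$ is $\lambda$-Galois-saturated and fix $M_0 \lea M$ and $M_0' \gea M_0$ with $\|M_0'\| < \lambda$ (note $\|M_0\| < \lambda$ as well). After amalgamating $M_0'$ and $M$ over $M_0$, I may work inside a monster model $\sea \gea M$ with $M_0' \lea \sea$, so that Galois types are orbits under $\Aut(\sea)$. Enumerating $|M_0'| = \seq{c_\alpha : \alpha < \mu}$ with $\mu := \|M_0'\|$, the plan is to build an increasing continuous chain of models $R_\alpha \lea \sea$ with $\|R_\alpha\| < \lambda$, together with increasing continuous $\lea$-embeddings $f_\alpha : R_\alpha \to M$ satisfying $f_\alpha[R_\alpha] \lea M$, starting from $R_0 = M_0$, $f_0 = \id_{M_0}$, and arranging $c_\alpha \in R_{\alpha+1}$ at each step. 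The crucial point is that the $R_\alpha$ are allowed to drift out of $M_0'$: I only insist that every $c_\alpha$ eventually enters the chain, so that at the end $|M_0'| \subseteq |\bigcup_\alpha R_\alpha|$ and hence (by coherence) $M_0' \lea \bigcup_\alpha R_\alpha$; restricting $\bigcup_\alpha f_\alpha$ to $M_0'$ then gives the desired embedding over $M_0$. Limit stages are harmless, since a union of a chain of $\lea$-embeddings whose images form a chain of $\lea$-submodels of $M$ is again such an embedding, directly by the Tarski-Vaught axioms.

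The heart of the argument is the successor step, and here I expect the main obstacle to lie: one cannot simply realize the Galois type of a whole enumeration of $M_0'$ over $M_0$, because Galois types of long sequences need not be determined by their initial segments, so a naive element-by-element realization fails to cohere at limit lengths. I would instead grow the image side inside $M$ and pull back. Given $f_\alpha : R_\alpha \to M$, extend it to some $\Theta_\alpha \in \Aut(\sea)$ and consider $\gtp(\Theta_\alpha(c_\alpha) / f_\alpha[R_\alpha]; \sea)$, a type over the model $f_\alpha[R_\alpha] \subseteq |M|$ of size $< \lambda$ that is realized in $\sea \gea M$. By $\lambda$-Galois-saturation it is realized by some $b_\alpha \in |M|$, so there is $\Psi_\alpha \in \Aut(\sea)$ fixing $f_\alpha[R_\alpha]$ pointwise with $\Psi_\alpha \Theta_\alpha (c_\alpha) = b_\alpha$ and $(\Psi_\alpha \Theta_\alpha) \rest R_\alpha = f_\alpha$. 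I would then set $Q_{\alpha+1}$ to be a Löwenheim-Skolem-Tarski hull of $f_\alpha[R_\alpha] \cup \{b_\alpha\}$ computed inside $M$ (so $Q_{\alpha+1} \lea M$, $\|Q_{\alpha+1}\| < \lambda$), define $R_{\alpha+1} := (\Psi_\alpha \Theta_\alpha)^{-1}[Q_{\alpha+1}]$, and $f_{\alpha+1} := (\Psi_\alpha \Theta_\alpha) \rest R_{\alpha+1}$. This keeps $f_{\alpha+1}[R_{\alpha+1}] = Q_{\alpha+1} \lea M$, extends $f_\alpha$, and absorbs $c_\alpha$, at the cost of letting $R_{\alpha+1}$ wander inside $\sea$ --- which is exactly what the drifting-domain setup permits. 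Throughout, $\lambda > \LS(\K)$ is used to keep every hull of size $< \lambda$, and amalgamation is used both to produce $\sea$ and to guarantee $E = \Eat$, so that the realization of types behaves as expected.
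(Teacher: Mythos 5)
The paper states this result of Shelah without proof, so there is no in-text argument to compare against; your proof is the standard one (the usual monster-model back-and-forth) and it is correct. Both directions check out: the easy direction is the expected LST-hull-plus-homogeneity computation, and in the hard direction you correctly keep the base of each type a \emph{model} lying inside $M$ (so that $\lambda$-Galois-saturation applies and equality of Galois types in $\sea$ is witnessed by an automorphism), let the domains $R_\alpha$ drift inside $\sea$ rather than inside $M_0'$, and recover $M_0' \lea \bigcup_\alpha R_\alpha$ at the end via coherence and the Tarski--Vaught axioms.
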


\subsection{Classification Theory}\label{class-thy-sec}  One theme of the classification theory of AECs is what Shelah has dubbed the ``schizophrenia'' of superstability (and other dividing lines) \cite[p.~19]{shelahaecbook}.  Schizophrenia here refers to the fact that, in the elementary framework, dividing lines are given by several equivalent characterizations (e.g.\ stability is no order property or few types), typically with the existence of a definable, combinatorial object on the ``high'' or bad side and some good behavior of forking on the ``low'' or good side.  However, this equivalence relies heavily on compactness or other ideas central to first-order and breaks down when dealing with general AECs.  Thus, the search for stability, superstability, etc. is in part a search for the ``right'' characterization of the dividing line and in part a search for equivalences between the different faces of the dividing line.

One can roughly divide approaches towards the classification of AECs into two categories: local approaches and global approaches\footnote{Monica VanDieren suggested set-theoretic scaffolding and model-theoretic scaffolding as alternate names for the local and global approaches.}. Global approaches typically assume one or more structural properties (such as amalgamation or no maximal models) as well as a classification property (such as categoricity in a high-enough cardinal or Galois stability in a particular cardinality), and attempt to derive good behavior on a tail of cardinals. The local approach is a more ambitious strategy pioneered by Shelah in his book \cite{shelahaecbook}. The idea is to first show (assuming e.g.\ categoricity in a proper class of cardinals) that the AEC has good behavior in some suitable cardinal $\lambda$. Shelah precisely defines ``good behavior in $\lambda$'' as having a good $\lambda$-frame (see Section \ref{frame-sec}). In particular, this implies that the class is superstable in $\lambda$. The second step in the local approach is to argue that good behavior in some $\lambda$ transfers upward to $\lambda^+$ and, if the behavior is good enough, to all cardinals above $\lambda$. Having established global good behavior, one can rely on the tools of the global approach to prove the categoricity conjecture. 

The local approach seems more general but comes with a price: increased complexity, and often the use of non-ZFC axioms (like the weak GCH: $2^\lambda < 2^{\lambda^+}$ for all $\lambda$), as well as stronger categoricity hypotheses. The two approaches are not exclusive. In fact in recent years, tools from local approach have been used and studied in a more global framework. We now briefly survey results in both approaches that do not use tameness.

\subsection{Stability} \label{genstab-subsec}

Once Galois types have been defined, one can define \emph{Galois-stability}:

\begin{defin}
  An AEC $\K$ is \emph{Galois-stable in $\lambda$} if for any $M \in \K$ with $\|M\|\leq \lambda$, we have $|\gS (M)| \le \lambda$.
\end{defin}

One can ask whether there is a notion like forking in stable AECs. The next sections discuss this problem in detail. A first approximation is \emph{$\mu$-splitting}:

\begin{defin}\label{splitting-def}
  Let $\K$ be an AEC, $\mu \ge \LS (\K)$. Assume that $\K$ has amalgamation in $\mu$. Let $M \lea N$ both be in $\K_{\ge \mu}$. A type $p \in \gS^{<\infty} (N)$ \emph{$\mu$-splits} over $M$ if there exists $N_1, N_2 \in \K_\mu$ with $M \lea N_\ell \lea N$, $\ell = 1,2$, and $f: N_1 \cong_{M} N_2$ so that $f (p \rest N_1) \neq p \rest N_2$.
\end{defin}

One of the early results was that $\mu$-splitting has a local character properties in stable AECs:

\begin{lem} \label{stabsplit}
  Let $\K$ be an AEC, $\mu \ge \LS (\K)$. Assume that $\K$ has amalgamation in $\mu$ and is Galois-stable in $\mu$. For any $N \in \K_{\ge \mu}$ and $p \in \gS (N)$, there exists $N_0 \in \K_\mu$ with $N_0 \lea N$ so that $p$ does not $\mu$-split over $N_0$.
\end{lem}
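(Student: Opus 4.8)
The plan is to argue by contradiction, converting the failure of local character into a violation of Galois-stability in $\mu$. Suppose that $p$ $\mu$-splits over every $N_0 \in \K_\mu$ with $N_0 \lea N$. Working inside a suitably model-homogeneous model $\sea$ (amalgamation in $\mu$ is what lets us build it and, crucially, extend partial isomorphisms over size-$\mu$ pieces to automorphisms of $\sea$), fix a realization $\bc$ of $p$. First I would build, by recursion on $i \le \mu$, an increasing continuous chain $\seq{M_i : i \le \mu}$ in $\K_\mu$ with $M_i \lea N$. At a successor stage the assumed failure of local character applied to $M_i$ yields $N_i^1, N_i^2 \in \K_\mu$ with $M_i \lea N_i^\ell \lea N$ and an isomorphism $f_i : N_i^1 \cong_{M_i} N_i^2$ witnessing $\mu$-splitting, i.e.\ $f_i(p \rest N_i^1) \neq p \rest N_i^2$; I place $N_i^1, N_i^2 \lea M_{i+1}$, take unions at limits, and extend each $f_i$ to an automorphism $g_i \in \Aut_{M_i}(\sea)$. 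Since $\mu \cdot \mu = \mu$, the union $M_\mu := \bigcup_{i<\mu} M_i$ lies in $\K_\mu$.

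The heart of the argument is to code the $2^\mu$ branches of the tree ${}^{\mu}2$ into pairwise distinct Galois types over the single size-$\mu$ model $M_\mu$. Along the chain I would build, for each node $\eta \in {}^{i}2$, a tuple $\bc_\eta$ (the image of $\bc$ under a coherent system of automorphisms assembled from the $g_j$, $j < i$) so that at each node the splitting witness $f_i$ forces the two children to disagree already over $N_i^2$: concretely, $\gtp(\bc_{\eta^\frown 0} / N_i^2) \neq \gtp(\bc_{\eta^\frown 1} / N_i^2)$, while both still agree with $p \rest M_i$ over $M_i$. Because each $g_i$ fixes $M_i$ pointwise, the partial systems cohere and pass to unions at limit stages, so each branch $\eta \in {}^{\mu}2$ determines an honest tuple $\bc_\eta$ with a well-defined type $p_\eta := \gtp(\bc_\eta / M_\mu)$. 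Using actual tuples (rather than trying to take limits of the restriction-chains of types) is what sidesteps the fact that, absent tameness, an increasing chain of Galois types need not have a unique upper bound.

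If $\eta \neq \nu$ and $i$ is least with $\eta(i) \neq \nu(i)$, the branching clause at level $i$ gives $p_\eta \rest N_i^2 \neq p_\nu \rest N_i^2$, and since $N_i^2 \lea M_\mu$ this yields $p_\eta \neq p_\nu$. Hence $|\gS(M_\mu)| \geq 2^\mu > \mu$ with $M_\mu \in \K_\mu$, contradicting Galois-stability in $\mu$ and proving the lemma. I expect the main obstacle to lie exactly in the middle paragraph: organizing the recursion so that a genuine branching is available at \emph{every} node (the splitting hypothesis is about $p$ specifically, so one must track that the relevant disagreement over $N_i^2$ is preserved as the tuples are moved by the automorphisms at levels above $i$, using that those automorphisms fix $M_{i+1} \supseteq N_i^2$ pointwise and hence preserve types over $N_i^2$), and that the whole system coheres at limits. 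Verifying that the level-$i$ difference is never cancelled by the higher-index factors — where the hypothesis $g_i \rest M_i = \id$ does the essential work — is the delicate bookkeeping on which the proof stands or falls.
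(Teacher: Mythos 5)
The paper itself does not prove this lemma (it is quoted from \cite{sh394}, Claim 3.3), so I am comparing your proposal with the standard proof of that claim. Your overall strategy is the right one, and your counting target is exactly correct: $2^\mu$ pairwise distinct types over the single model $M_\mu$, with the inequality certificates living over the \emph{fixed} witness models $N_i^2 \lea M_{i+1}$ (this is what keeps everything of size $\mu$). But two steps of your implementation fail. First, the limit stages: tuples do not pass to unions. If a node or branch $\eta$ has unboundedly many $1$'s below a limit level $\delta$, the sequence $\seq{\bc_{\eta \rest j} : j < \delta}$ changes unboundedly often and has no limit; what coheres (because $g_j$ fixes $M_j$ pointwise) is only the chain of restricted types $\seq{\gtp(\bc_{\eta \rest j}/M_j) : j < \delta}$, and producing a tuple realizing its union is precisely the upper-bound-for-chains-of-Galois-types problem you claim to have sidestepped --- ``using actual tuples'' does not avoid it, it \emph{is} it (and $\delta$-compactness fails in general AECs for uncountable $\cf{\delta}$, so this is a real obstruction whenever $\mu > \aleph_0$). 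Second, the branching step at any node whose history contains a $1$: after applying $g_{i'}$ at an earlier level $i'$, your tuple $\bc_\eta$ realizes a conjugated type, and over $M_i$-extensions it need no longer realize the restrictions of $p$. The $1$-child's type over $N_i^2$ is then forced to be $f_i(\gtp(\bc_\eta/N_i^1))$, while the $0$-child's is $\gtp(\bc_\eta/N_i^2)$; the splitting certificate $f_i(p \rest N_i^1) \neq p \rest N_i^2$ is about $p$ and says nothing about these, so the two children need not differ. (The danger you flag --- higher-level automorphisms cancelling an established disagreement --- is actually the unproblematic part, exactly for the reason you give.)

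The standard repair for both defects is to carry coherent \emph{embeddings} rather than moved realizations: build node maps $h_\eta : M_{\ell(\eta)} \to \bar{M}_{\ell(\eta)}$ into a fixed increasing continuous chain $\seq{\bar{M}_i : i \le \mu}$ in $\K_\mu$ in which each step is universal over the previous one (universal extensions exist by stability and amalgamation in $\mu$; this is Lemma \ref{univ-exist}), with $h_{\eta^\frown 0} \supseteq h_\eta$ and $h_{\eta^\frown 1} \supseteq (h_{\eta^\frown 0} \rest N_i^2) \circ f_i$, both extended to all of $M_{i+1}$ using universality plus amalgamation in $\mu$. Unions of coherent embeddings exist at limits (unlike limits of tuples), each branch $\eta \in \fct{\mu}{2}$ yields $h_\eta : M_\mu \to \bar{M}_\mu$, and the branch type is the honest pushforward $h_\eta(p \rest M_\mu) \in \gS(\bar{M}_\mu)$ --- no realization of a type-chain is ever needed. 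Moreover the certificate is now applied on the side where the type genuinely is $p$ and only then transported: branches splitting at level $i$ restrict on $h_{\eta^\frown 0}[N_i^2]$ to $h_{\eta^\frown 0}(p \rest N_i^2)$ and $h_{\eta^\frown 0}(f_i(p \rest N_i^1))$ respectively, which differ by the splitting hypothesis. This simultaneously fixes your node-level branching problem. A last, minor point: with amalgamation only in $\mu$ and no joint embedding, a genuinely global monster $\sea$ with automorphism extension is not available; one should instead work in the cone over $M_0$ (a $\mu^+$-model-homogeneous limit of universal extensions in $\K_\mu$), or avoid the monster altogether as in the embedding formulation above --- this part of your write-up is repairable, but as stated it assumes more than the lemma's hypotheses provide.
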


With stability and amalgamation, we also get that there are unique non-$\mu$-splitting extensions to universal models \emph{of the same size}. 

\begin{thm} \label{gen-unique-dns-fact}
  Let $\K$ be an AEC, $\mu \ge \LS (\K)$. Assume that $\K$ has amalgamation in $\mu$ and is Galois-stable in $\mu$. If $M_0, M_1, M_2 \in \K_\mu$ with $M_{1}$ universal over $M_0$\footnote{That is, for every $M' \in \K_\mu$ with $M_0 \lea M'$, there exists $f: M' \xrightarrow[M_0]{} M_1$.}, then each $p \in \gS(M_1)$ that does not $\mu$-split over $M_0$ has a unique extension $q \in \gS(M_2)$ that does not split over $M_0$. Moreover, $p$ is algebraic if and only if $q$ is.
\end{thm}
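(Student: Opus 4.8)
The plan is to reduce the whole statement to a single embedding supplied by universality and then to feed well-chosen witnesses into the definition of $\mu$-splitting. Since $M_1$ is universal over $M_0$ and $M_2 \in \K_\mu$ with $M_0 \lea M_2$, I would fix an embedding $f : M_2 \xrightarrow[M_0]{} M_1$. The crucial observation is that $f[M_2] \lea M_1 \lea M_2$, so $f$ is an isomorphism $M_2 \cong_{M_0} f[M_2]$ whose image is a submodel \emph{of its own domain}; this is exactly the place where $\|M_2\| = \mu$ is used. Throughout I would rely on amalgamation in $\mu$ to make Galois types over models in $\K_\mu$, together with their restrictions and their $f$-images, behave well, and I would record the two routine invariances: a non-$\mu$-splitting type restricted to a submodel containing $M_0$ still does not $\mu$-split over $M_0$, and $\mu$-splitting is preserved by isomorphisms fixing $M_0$. (Note that stability in $\mu$ is not really needed for the argument itself, only universality of $M_1$ and amalgamation in $\mu$.)

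First I would extract from non-splitting of $p$ the identity $f(p) = p \rest f[M_1]$, by applying the definition of $\mu$-splitting to the witnesses $M_1$ and $f[M_1]$ (both $\lea M_1$) and the isomorphism $f \rest M_1$; equivalently $f^{-1}(p \rest f[M_1]) = p$. For existence I would then simply set $q := f^{-1}(p \rest f[M_2]) \in \gS(M_2)$. It extends $p$, since $q \rest M_1 = f^{-1}(p \rest f[M_1]) = p$, and it does not $\mu$-split over $M_0$, being the image under the $M_0$-isomorphism $f^{-1}$ of $p \rest f[M_2]$, which does not $\mu$-split over $M_0$.

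For uniqueness, let $q' \in \gS(M_2)$ be any non-$\mu$-splitting extension of $p$. Here I would apply the definition of $\mu$-splitting to $q'$ with the witnesses $N_1 := M_2$ and $N_2 := f[M_2]$ and the isomorphism $f$: since $q'$ does not $\mu$-split over $M_0$, this forces $f(q') = q' \rest f[M_2]$. But $f[M_2] \lea M_1$ and $q' \rest M_1 = p$, so $q' \rest f[M_2] = p \rest f[M_2]$, whence $f(q') = p \rest f[M_2]$ and $q' = f^{-1}(p \rest f[M_2]) = q$. I expect this to be the main point of the proof: the argument is legal only because the witness $N_1 = M_2$ itself has size $\mu$, which the definition of $\mu$-splitting permits precisely when $\|M_2\| = \mu$. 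For larger $M_2$ this choice of witness is no longer available, and indeed this is exactly the reason uniqueness cannot be pushed past $\mu$ in general (tameness is what restores it).

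Finally, for the moreover I would count realizations in the monster model. Writing $\mathrm{real}(\cdot)$ for the set of realizations of a type, every realization of $q$ realizes $p$ (as $q$ extends $p$), so $\mathrm{real}(q) \subseteq \mathrm{real}(p)$; and every realization of $p$ realizes $p \rest f[M_2]$, so $\mathrm{real}(p) \subseteq \mathrm{real}(p \rest f[M_2])$. On the other hand the $M_0$-isomorphism $f$ induces a bijection between $\mathrm{real}(q)$ and $\mathrm{real}(f(q)) = \mathrm{real}(p \rest f[M_2])$. Chaining these gives $|\mathrm{real}(q)| \le |\mathrm{real}(p)| \le |\mathrm{real}(p \rest f[M_2])| = |\mathrm{real}(q)|$, so $p$ and $q$ have the same number of realizations; in particular $p$ is algebraic (has a unique realization) if and only if $q$ is.
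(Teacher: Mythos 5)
Your existence and uniqueness arguments are correct, and they follow what is essentially the standard proof of this result (the survey itself gives no proof and cites VanDieren for it): fixing $f: M_2 \xrightarrow[M_0]{} M_1$ by universality and feeding the pairs $(M_1, f[M_1])$ and $(M_2, f[M_2])$ into the definition of non-$\mu$-splitting is exactly the intended argument, and your observation that the witness $N_1 = M_2$ is only legal because $\|M_2\| = \mu$ correctly identifies why uniqueness stops at $\mu$ in the absence of tameness. You are also right that Galois-stability in $\mu$ plays no role once universality of $M_1$ over $M_0$ is hypothesized; it is only needed elsewhere to guarantee that universal extensions exist.

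The one genuine gap is in the ``moreover'' clause, and it is definitional: in this context $p \in \gS(M)$ is \emph{algebraic} when it is realized by an element of $M$ itself, not when it has a unique realization in the monster. These are not interchangeable: realized-in-the-domain does imply a unique realization (an automorphism over $M$ fixes an element of $M$, and is injective), but the converse is not clear in a general AEC. Consequently the cardinality identity $|\operatorname{real}(q)| = |\operatorname{real}(p)|$ by itself proves the wrong statement; what is missing is the step locating the unique realization inside the relevant domain. Your machinery does close this: if $p$ is algebraic, realized by $a \in |M_1|$, then $\emptyset \neq \operatorname{real}(q) \subseteq \operatorname{real}(p) = \{a\}$ and $a \in |M_1| \subseteq |M_2|$, so $q$ is algebraic. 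Conversely, if $q$ is algebraic, realized by $a \in |M_2|$, then $\operatorname{real}(q) = \{a\}$ while $\operatorname{real}(p \rest f[M_2]) = \hat{f}[\operatorname{real}(q)] = \{f(a)\}$ (with $\hat{f}$ an automorphism of the monster extending $f$), and your inclusion chain $\operatorname{real}(q) \subseteq \operatorname{real}(p) \subseteq \operatorname{real}(p \rest f[M_2])$ forces $a = f(a) \in |f[M_2]| \subseteq |M_1|$, so $p$ is realized in $M_1$ and is algebraic. That final localization is the point you need to add.
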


Note in passing that stability gives existence of universal extensions:

\begin{lem}\label{univ-exist}
  Let $\K$ be an AEC and let $\lambda \ge \LS (\K)$ be such that $\K$ has amalgamation in $\lambda$ and is Galois-stable in $\lambda$. For any $M \in \K_\lambda$, there exists $N \in \K_\lambda$ which is universal over $M$.
\end{lem}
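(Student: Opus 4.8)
The plan is to build $N$ as the union of a $\lambda$-length chain in which all Galois types are systematically realized, and then to verify universality by a back-and-forth argument.

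First I would isolate the one-step construction. Given $M_0 \in \K_\lambda$, Galois-stability in $\lambda$ gives $|\gS (M_0)| \le \lambda$, so I can enumerate $\gS (M_0) = \langle p_j : j < \lambda \rangle$. Using amalgamation in $\lambda$ together with the Löwenheim-Skolem-Tarski axiom, I would build an increasing continuous chain $\langle M_0^j : j \le \lambda \rangle$ in $\K_\lambda$ with $M_0^0 = M_0$ and such that $p_j$ is realized in $M_0^{j+1}$: writing $p_j = \gtp (b / M_0; P)$, amalgamate $P$ and $M_0^j$ over $M_0$ and take an $\LS$-hull of size $\lambda$ containing $M_0^j$ and the image of $b$. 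Since $\lambda \cdot \lambda = \lambda$, the model $M_1 := M_0^\lambda$ lies in $\K_\lambda$ and realizes every type in $\gS (M_0)$.

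Next I would iterate this one-step construction: build an increasing continuous chain $\langle N_i : i \le \lambda \rangle$ with $N_0 = M$, each $N_i \in \K_\lambda$, and $N_{i+1}$ realizing every type in $\gS (N_i)$ (obtained by applying the previous paragraph to $N_i$), taking unions at limits. Set $N := N_\lambda = \bigcup_{i < \lambda} N_i$; then $N \in \K_\lambda$ and $M = N_0 \lea N$.

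Finally, the crux is to show $N$ is universal over $M$. Let $M' \gea M$ with $\|M'\| = \lambda$; I must produce $f : M' \xrightarrow[M]{} N$. The key observation is that $N$ realizes every $1$-type over each of its $\lea$-submodels of size $<\lambda$: if $P \lea N$ with $\|P\| < \lambda$ and $\lambda$ is regular, then by regularity all of $|P|$ is contained in a single $N_i$, and the coherence axiom yields $P \lea N_i$, so the relevant type is realized already in $N_{i+1}$. I would then run a back-and-forth, exhausting $M'$ by an increasing continuous chain of $\lea$-submodels of size $<\lambda$ and extending the embedding one piece at a time by realizing the (pushed-forward) type of the new elements inside $N$. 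I expect the main obstacle to be the bookkeeping: arranging the back-and-forth so that it fixes $M$ pointwise (exploiting $M = N_0 \lea N$, so that elements of $M$ may always be sent to themselves) while still exhausting all of $M'$, and---when $\lambda$ is singular, where a small $\lea$-submodel of $N$ need not sit inside a single $N_i$---synchronizing the resolution of $M'$ with the chain $\langle N_i : i < \lambda \rangle$ so that each required type is still realized at some bounded stage.
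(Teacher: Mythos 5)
Your construction of $N$ (a $\lambda$-chain $\seq{N_i : i \le \lambda}$ with $N_{i+1}$ realizing all of $\gS(N_i)$, kept in $\K_\lambda$ by stability, amalgamation and L\"{o}wenheim-Skolem-Tarski) is exactly the standard one. The gap is in the verification of universality, and it is not bookkeeping. First, your reduction proves the wrong property: realizing types over the \emph{small} $\lea$-submodels of $N$ amounts to showing $N$ is $\lambda$-Galois-saturated (equivalently $\lambda$-model-homogeneous), and that does \emph{not} yield universality over a base $M$ of full size $\lambda$. Model-homogeneity lets you extend a partial embedding over a small submodel by sending the new elements \emph{somewhere}; it gives no control forcing the new elements of $M$ to be sent to themselves, and "fixing $M$ pointwise" is a constraint on a set of size $\lambda$ that cannot be met by realizing types over sets of size $<\lambda$. (Even in the first-order case a saturated $N \succ M$ with $\|N\|=\|M\|=\lambda$ need not be universal over $M$: take $M = N = \mathbb{Q}$ as dense linear orders and $M' = \mathbb{Q}\cup\{\pi\}$.) Second, even ignoring the base, the step "extend the embedding by realizing the pushed-forward type inside $N$" does not go through in an AEC: if $b \in N$ realizes $f_i(\gtp(a/M_i';M'))$, the equality of Galois types is witnessed by embeddings of $N$ and of a copy of $M_{i+1}'$ into a common \emph{extension} of $N$, which does not produce an embedding of $M_{i+1}'$ \emph{into} $N$ extending $f_i$. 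This is precisely why the singular-case synchronization you worry about is a symptom of the wrong setup rather than the real difficulty.

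The standard argument (Shelah, \cite[I.2.2.(4)]{sh394}) keeps the types over the full-size models $N_i$ and runs the embedding in the opposite direction. Enumerate $|M'| \setminus |M| = \{a_i : i < \lambda\}$ and build an increasing continuous chain $\seq{P_i : i \le \lambda}$ in $\K_\lambda$ with $P_0 = M'$ together with increasing continuous embeddings $f_i : N_i \rightarrow P_i$, starting from $f_0 = \id_M$ (using $N_0 = M \lea M' = P_0$). At stage $i+1$, the type $f_i^{-1}\bigl(\gtp(a_i / f_i[N_i]; P_i)\bigr) \in \gS(N_i)$ is realized by some $b \in N_{i+1}$; the amalgam witnessing this equality of types can be taken over $P_i$ on one side, yielding $P_{i+1} \gea P_i$ and $f_{i+1} : N_{i+1} \rightarrow P_{i+1}$ extending $f_i$ with $f_{i+1}(b) = a_i$. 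At the end $|M'| \subseteq \ran(f_\lambda)$ and $f_\lambda[N_\lambda] \lea P_\lambda$, so the coherence axiom gives $M' \lea f_\lambda[N_\lambda]$, and $f_\lambda^{-1} \rest M'$ is the required embedding of $M'$ into $N$ over $M$. Note that this works for arbitrary $\lambda$ (no regularity), needs only $1$-types, and uses coherence exactly once, at the final inversion.
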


Similar to first-order model theory, there is a notion of an order property in AECs.  The order property is more parametrized due to the lack of compactness.  In the elementary framework, the order property is defined as the existence of a definable order of order type $\omega$. However, the essence of it is that any order type can be defined. Thus the lack of compactness forces us to make the the order property in AECs longer in order to be able to build complicated orders:

\begin{defin}\label{op-def} \
\begin{enumerate}
\item $\K$ has the \emph{$\kappa$-order property of length $\alpha$} if there exists $N \in \K$, $p \in \gS^{<\kappa}(\emptyset; N)$, and $\seq{\ba_i \in {}^{<\kappa} |M|: i < \alpha}$ such that:
  
  $$i < j \Leftrightarrow \gtp (\ba_i \ba_j/\emptyset; N) = p$$
\item $\K$ has the \emph{$\kappa$-order property} if it has the $\kappa$-order property of all lengths.
\item $\K$ has the \emph{order property} if it has the $\kappa$-order property for some $\kappa$.
\end{enumerate}
\end{defin}

From the presentation theorem, having the $\kappa$-order property of all lengths less than $h(\kappa)$ is enough to imply the full $\kappa$-order property. In this case, one can show that $\alpha$ above can be replaced by any linear ordering.

\subsection{Superstability}

A first-order theory $T$ is superstable if it is stable on a tail of cardinals. One might want to adapt this definition to AECs, but it is not clear that it is enough to derive the property that we really want here: an analog of $\kappa (T) = \aleph_0$. A possible candidate is to say that a class is superstable if every type does not $\mu$-split over a finite set. However, splitting is only defined for models, and, as remarked above, types over arbitrary sets are not too well-behaved. Instead, as with Galois types, we take an implication of the desired property as the new definition: no long splitting chains.

\begin{defin}\label{ss-def}
  An AEC $\K$ is \emph{$\mu$-superstable}
 (or \emph{superstable in $\mu$}) if:

  \begin{enumerate}
    \item $\mu \ge \LS (\K)$.
    \item $\K_\mu$ is nonempty, has amalgamation\footnote{This requirement is not made in several other variations of the definition but simplifies notation. See the historical remarks for more.}, joint embedding, and no maximal models.
    \item $\K$ is Galois-stable in $\mu$.
    \item\label{split assm} for all limit ordinal $\delta < \mu^+$ and every increasing continuous sequence $\seq{M_i : i \le \delta}$ in $\K_\mu$ with $M_{i + 1}$ universal over $M_i$ for all $i < \delta$, if $p \in \gS (M_\delta)$, then there exists $i < \delta$ so that $p$ does not $\mu$-split over $M_i$.
  \end{enumerate}
\end{defin}

  If $\K$ is the class of models of a first-order theory $T$, then $\K$ is $\mu$-superstable if and only if $T$ is stable in every $\lambda \ge \mu$.

\begin{remark}
  In (\ref{split assm}), note that $M_{i + 1}$ is required to be universal over $M_i$, rather than just strong extension.  For reasons that we do not completely understand, it unknown whether this variation follows from categoricity (see Theorem \ref{shvi}). On the other hand, it seems to be sufficient for many purposes. In the tame case, the good frames derived from superstability (see Theorem \ref{ss-implies-all}) will have this stronger property.
\end{remark}

Another possible definition of superstability in AECs is the uniqueness of limit models:

\begin{defin}\label{lim-def}
  Let $\K$ be an AEC and let $\mu \ge \LS (\K)$. 

  \begin{enumerate}
    \item A model $M \in \K_{\mu}$ is \emph{$(\mu, \delta)$-limit} for limit $\delta < \mu^+$ if there exists a strictly increasing continuous chain $\seq{M_i \in \K_\mu : i \le \delta}$ such that $M_\delta = M$ and for all $i < \delta$, $M_{i + 1}$ is universal over $M_i$. If we do not specify the $\delta$, it means that there is one.  We say that $M$ is \emph{limit over $N$} when such a chain exists with $M_0 = N$.
    \item $\K$ has \emph{uniqueness of limit models in $\mu$} if whenever $M_0, M_1, M_2 \in \K_\mu$ and both $M_1$ and $M_2$ are limit over $M_0$, then $M_1 \cong_{M_0} M_2$.
    \item $\K$ has \emph{weak uniqueness of limit models in $\mu$} if whenever $M_1, M_2 \in \K_\mu$ are limit models, then $M_1 \cong M_2$ (the difference is that the isomorphism is not required to fix $M_0$).
  \end{enumerate}
\end{defin}

Limit models and their uniqueness have come to occupy a central place in the study of superstability of AECs. $(\mu^+, \mu^+)$-limit models are Galois-saturated, so even weak uniqueness of limit models in $\mu^+$ implies that $(\mu^+, \omega)$-limit models are Galois-saturated.  This tells us that Galois-saturated models can be built in fewer steps than expected, which is reminiscent of first-order characterizations of superstability.  As an added benefit, the analysis of limit models can be carried out in a single cardinal (as opposed to Galois-saturated models, which typically need smaller models) and, thus, lends itself well to the local analysis\footnote{For example, it gives a way to define what it means for a model of size $\LS (\K)$ to be saturated.}.

The following question is still open (the answer is positive for elementary classes):

\begin{question}
  Let $\K$ be an AEC and let $\mu \ge \LS (\K)$. If $\K_\mu$ is nonempty, has amalgamation, joint embedding, no maximal models, and is Galois-stable in $\mu$, do we have that $\K$ has uniqueness of limit models in $\mu$ if and only if $\K$ is superstable in $\mu$?
\end{question}

This phenomenon of having two potentially non-equivalent definitions of superstability that are equivalent in the first-order case is an example of the ``schizophrenia'' of superstability mentioned above
.

Shelah and Villaveces \cite{shvi635} started the investigation of whether superstability implies the uniqueness of limit models.  Eventually, VanDieren introduced a symmetry property for $\mu$-splitting to show the following.

\begin{thm}\label{uq-limit-symmetry}
  If $\K$ is a $\mu$-superstable\footnote{Recall that the definition includes amalgamation and no maximal models in $\mu$.} AEC such that $\mu$-splitting has symmetry, then $\K$ has uniqueness of limit models in $\mu$.
\end{thm}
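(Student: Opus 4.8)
The plan is to reduce the uniqueness of limit models to a back-and-forth construction of an isomorphism and to isolate the single place where the symmetry hypothesis is genuinely needed. Fix $M_0 \in \K_\mu$ and two limit models $M^1, M^2$ over $M_0$, where $M^\ell$ is $(\mu, \delta_\ell)$-limit for $\ell = 1, 2$. First I would record the cofinality-reduction facts that follow from $\mu$-superstability alone, using the local character of $\mu$-splitting (Lemma \ref{stabsplit}), the uniqueness of non-$\mu$-splitting extensions to universal models (Theorem \ref{gen-unique-dns-fact}), and the existence of universal extensions (Lemma \ref{univ-exist}): any $(\mu, \delta)$-limit over $M_0$ is isomorphic over $M_0$ to a $(\mu, \cf{\delta})$-limit over $M_0$, and any two $(\mu, \delta)$-limits over $M_0$ with the same $\delta$ are isomorphic over $M_0$. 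The latter is a direct back-and-forth between the two defining chains, matching universal extensions step by step and invoking that non-$\mu$-splitting extensions to universal models are unique. These reductions let me assume $\delta_1 = \theta_1$ and $\delta_2 = \theta_2$ are distinct regular cardinals $\le \mu$, which is the only remaining case.

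For distinct regular cofinalities the naive back-and-forth breaks down, since a chain of length $\theta_1$ cannot be matched cofinally against one of length $\theta_2$. To handle this I would use VanDieren's tower machinery: a \emph{tower} is a sequence $(\seq{M_i : i < \alpha}, \seq{a_i : i < \alpha}, \seq{N_i : i < \alpha})$ in $\K_\mu$ with $\seq{M_i : i < \alpha}$ an increasing universal chain, $a_i \in |M_{i+1}| \setminus |M_i|$, and $\gtp (a_i / M_i)$ not $\mu$-splitting over $N_i \lea M_i$. The key is the notion of a \emph{reduced} tower, together with the two facts that (i) any tower can be extended to a relatively full, reduced one, and (ii) reduced towers over $M_0$ of a fixed length are unique up to isomorphism over $M_0$. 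Granting these, both $M^1$ and $M^2$ can be exhibited as unions of reduced towers, and uniqueness of the towers yields $M^1 \cong_{M_0} M^2$.

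The main obstacle --- and the sole point where the symmetry hypothesis enters --- is the \emph{continuity} of reduced towers: that an increasing union (at a limit stage) of reduced towers is again reduced. Without this, the inductive construction underlying (i) and (ii) cannot pass through limit ordinals whose cofinality differs from that of the limit model being built, which is exactly the obstruction created by $\theta_1 \neq \theta_2$. Here I would invoke symmetry for $\mu$-splitting: it guarantees that a type witnessed to be non-$\mu$-splitting on one coordinate of the tower remains controlled after amalgamating along another coordinate, so that the nonsplitting witnesses $N_i$ survive at limits and the limiting tower stays reduced. Once continuity of reduced towers is in hand, the construction closes and uniqueness of limit models in $\mu$ follows. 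I expect essentially all of the difficulty to be concentrated in deriving this continuity from the symmetry property; the surrounding back-and-forth and the cofinality reductions are routine given the $\mu$-superstability tools already available.
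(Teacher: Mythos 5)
This theorem is stated in the survey without proof; Section \ref{hist-sec} attributes it to VanDieren's work on superstability and symmetry, so your sketch can only be judged against the argument in that literature. Judged that way, your architecture is the right one and is the same as the actual proof: reduce by a back-and-forth (using uniqueness of non-$\mu$-splitting extensions and existence of universal extensions) to the case of two distinct regular cofinalities $\theta_1 \neq \theta_2$, then run the Shelah--Villaveces/Grossberg--VanDieren--Villaveces tower machinery --- density of reduced towers, relatively full towers --- with the symmetry hypothesis entering exactly once, in a continuity statement about reduced towers.

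The genuine problem is that you have attached the name ``continuity of reduced towers'' to the wrong statement, and the statement you name is not the one that needs symmetry. The fact that the union of an increasing chain of reduced towers is again reduced is a comparatively soft lemma: an element witnessing failure of reducedness in the union tower already witnesses it in some tower of the chain, so this follows from the definition of reduced alone. What symmetry actually buys --- and what is the hard core of VanDieren's proof --- is that each \emph{individual} reduced tower $(\seq{M_i : i < \alpha}, \seq{a_i : i < \alpha}, \seq{N_i : i < \alpha})$ is continuous \emph{as a chain of models}, i.e.\ $M_i = \bigcup_{j < i} M_j$ at every limit $i < \alpha$. (Towers are deliberately not required to be continuous in their definition; if they were, the density of reduced towers could not be established.) It is this internal continuity that lets one realize a single sufficiently long reduced, relatively full tower simultaneously as a $(\mu, \theta_1)$-limit and a $(\mu, \theta_2)$-limit over $M_0$, which is the actual endgame. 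The proof of that continuity goes by contradiction on a minimal-length discontinuous reduced tower, with symmetry used to handle an offending element $b \in |M_i| \setminus \bigcup_{j<i} |M_j|$ against the nonsplitting witnesses $N_j$. So the skeleton of your argument is sound, but the step you isolate as carrying all the difficulty is not the step that does, and as written the symmetry hypothesis is being spent on a lemma that does not need it.
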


In fact, the full strength of amalgamation in $\mu$ is not needed, see \cite{vandieren-charact-uq-limit-v2} in this volume for more.

\subsection{Categoricity}\label{categ-notame-sec}

For an AEC $\K$, let us denote by $I (\K, \lambda)$ the number of non-isomorphic models in $\K_\lambda$. We say that $\K$ is \emph{categorical in $\lambda$} if $I (\K, \lambda) = 1$. One of Shelah's motivation for introducing AECs was to make progress on the following test question:

\begin{conjecture}[Shelah's categoricity conjecture for $\Ll_{\omega_1, \omega}$]\label{conj-inf-logic}
  If a sentence $\psi \in \Ll_{\omega_1, \omega}$ is categorical in \emph{some} $\lambda \ge \beth_{\omega_1}$, then it is categorical in \emph{all} $\lambda' \ge \beth_{\omega_1}$.
\end{conjecture}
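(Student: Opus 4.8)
The plan is to turn $\psi$ into an abstract elementary class and then combine a downward categoricity transfer to the $\aleph_n$'s with Shelah's excellence machinery to run the transfer back up. First I would fix a countable fragment $\Phi \subseteq \Ll_{\omega_1, \omega}$ containing $\psi$ and set $\K := (\text{Mod}(\psi), \preceq_\Phi)$; by the computation in the opening examples this is an AEC with $\LS(\K) = \aleph_0$. The threshold $\beth_{\omega_1}$ is exactly the Hanf number for $\Ll_{\omega_1, \omega}$, so categoricity in a single $\lambda \geq \beth_{\omega_1}$ already forces arbitrarily large models and lets me invoke the presentation theorem (Theorem \ref{pres-thm}) and its omitting-types apparatus freely.

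The first main step is a downward transfer: categoricity in some $\lambda \geq \beth_{\omega_1}$ should imply categoricity in every $\aleph_n$ for $1 \leq n < \omega$. I would extract from categoricity the existence of a saturated (equivalently, by Theorem \ref{mod-homog-sat}, model-homogeneous) model in $\lambda$ and push saturation and the failure of the order property downward, using the presentation theorem to control omitted types at each level; this is where the sharp bound $\beth_{\omega_1}$, rather than the generic $\beth_{(2^{\aleph_0})^+}$, is forced by the special syntactic structure of $\Ll_{\omega_1, \omega}$. Along the way one needs a weak form of tameness, and since tameness here is not free, the amalgamation question has to be confronted: as the survey notes, weak tameness and amalgamation can be extracted from categoricity in a high-enough cardinal, so part of this step is deriving enough amalgamation at the countable and first-uncountable levels.

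The second main step is to show that categoricity in all $\aleph_n$ makes $\K$ \emph{excellent}: one builds independent systems of countable models, proves existence and uniqueness of primes over them, and thereby obtains amalgamation together with a strong structure theory in every cardinal. Excellence is the engine that transfers good behavior upward through each $\aleph_{n+1}$ and then past $\beth_{\omega_1}$, yielding categoricity in all $\lambda' \geq \beth_{\omega_1}$; this upward half is closest in spirit to the frame-transfer results advertised in the survey and parallels the mechanism behind Theorem \ref{main-thm} and the eventual categoricity conjecture (Conjecture \ref{shelah-event-conj}), but carried out at the optimal threshold.

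The hard part, and indeed the reason this remains a conjecture rather than a theorem, is that both the downward step and the construction of excellence are currently known only under the weak generalized continuum hypothesis ($2^{\aleph_n} < 2^{\aleph_{n+1}}$ for all $n$), which supplies the nonstructure and many-models dichotomies used to force amalgamation and excellence one finite level at a time. Removing this set-theoretic scaffolding, by replacing the $2^{\aleph_n} < 2^{\aleph_{n+1}}$ dichotomies with a ZFC argument, presumably through importing enough tameness already at the countable level to bypass the level-by-level induction entirely, is exactly the obstacle the tame-AEC program is designed to attack. At the optimal Hanf threshold $\beth_{\omega_1}$ it has not yet succeeded, so I would expect the genuine difficulty to concentrate in the excellence construction under ZFC alone.
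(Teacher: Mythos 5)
The statement you were asked about is a \emph{conjecture}, not a theorem: the paper states it as an open problem (traced back to \cite{shelahfobook78}) and offers no proof, so there is nothing to compare your argument against line by line. To your credit, you recognize this explicitly and present a program rather than a purported proof. Your outline of the upward half is essentially the known partial result, Theorem \ref{sh87-thm}: under $\mathbf{V}=\mathbf{L}$ (or weak-GCH-type hypotheses), categoricity in \emph{all} $\aleph_n$ yields excellence and hence categoricity in all uncountable cardinals. Note that the paper's discussion of the Hart--Shelah example shows this is sharp -- categoricity in $\aleph_0,\dots,\aleph_n$ for any fixed $n$ does not suffice -- so your induction really does need every finite level, which is consistent with what you wrote.

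Where I would push back is on your assessment of where the difficulty concentrates. You treat the downward step -- from categoricity in a single $\lambda \ge \beth_{\omega_1}$ to categoricity in every $\aleph_n$ -- as if it were the more tractable half, with the ``hard part'' being the ZFC-ification of excellence. In fact that downward step is itself wide open and arguably the deeper obstruction: the best known downward transfer in the paper (Theorem \ref{shelah-succ-downward}) requires categoricity in a \emph{successor} cardinal and only reaches down to $H_2$, which is far above $\aleph_\omega$; no known technique, with or without tameness or amalgamation, pushes categoricity from an arbitrary $\lambda \ge \beth_{\omega_1}$ down to the finite alephs. The tame-AEC results surveyed here (Theorems \ref{main-thm}, \ref{event-categ-primes}, \ref{succ-transfer-2}) all transfer categoricity \emph{upward} or downward only to Hanf-number thresholds, never to $\aleph_n$. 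So both halves of your program are genuinely open, and the first half is not merely a matter of ``controlling omitted types at each level'' via the presentation theorem. As a description of the state of the art and of why the conjecture remains open, your proposal is accurate; as a proof it is, as you say yourself, not one.
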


Note that the lower bound is the Hanf number of this class. One of the best results toward the conjecture is:

\begin{thm}\label{sh87-thm}
  Let $\psi \in \Ll_{\omega_1, \omega}$ be a sentence in a countable language. Assume\footnote{Much weaker set-theoretic hypotheses suffice.} $\mathbf{V} = \mathbf{L}$. If $\psi$ is categorical in all $\aleph_n$, $1 \le n < \omega$, then $\psi$ is categorical in all uncoutable cardinals.
\end{thm}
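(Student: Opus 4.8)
The plan is to follow Shelah's route through \emph{excellent classes}: reduce the study of $\psi$ to that of an atomic class, use the categoricity hypotheses cardinal by cardinal to force excellence, and then invoke a ZFC upward transfer for excellent classes to conclude. Note that $\mathbf{V} = \mathbf{L}$ enters only through the weak continuum hypothesis $2^{\aleph_n} < 2^{\aleph_{n+1}}$ (all $n < \omega$), which it implies. The first step is to reduce to an atomic class. By expanding the countable language with a predicate for (essentially) each $\Ll_{\omega_1,\omega}$-subformula of $\psi$ and passing to a suitable complete first-order theory $T$ in this still-countable language, the models of $\psi$ correspond to the \emph{atomic} models of $T$ --- those realizing only principal types over $\emptyset$. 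Writing $\K$ for this atomic class (ordered by elementary substructure), categoricity of $\psi$ in each $\aleph_n$ transfers to categoricity of $\K$ in each $\aleph_n$, so it suffices to treat an atomic class categorical in all $\aleph_n$.

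Next I would extract stability. Categoricity in $\aleph_1$ yields at most one model there, hence \emph{few} models in $\aleph_1$; combined with $2^{\aleph_0} < 2^{\aleph_1}$, a counting argument (failure of stability would produce $2^{\aleph_1}$ pairwise non-isomorphic models of size $\aleph_1$) forces $\omega$-stability for the atomic types of $\K$. This gives a usable notion of isolation and independence over countable atomic models, serving as the base of the induction to come.

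The heart of the argument --- and the step I expect to be the main obstacle --- is establishing \emph{excellence}: the $(n)$-existence and $(n)$-uniqueness properties for independent systems of countable atomic models indexed by the subsets of $n$, for every $n < \omega$. I would prove these by induction on $n$, and here the full strength of categoricity in each $\aleph_n$ is consumed. The governing dichotomy is that a failure of $n$-dimensional amalgamation --- either no prime amalgam exists over an independent $n$-system, or it is not unique --- can be parlayed, using $2^{\aleph_n} < 2^{\aleph_{n+1}}$, into a construction of $2^{\aleph_{n+1}}$ pairwise non-isomorphic models of size $\aleph_{n+1}$, contradicting categoricity (hence few models) in $\aleph_{n+1}$. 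Thus categoricity at level $n+1$ forces good $n$-dimensional amalgamation, and the induction carries the structure theory through all finite dimensions. Managing the combinatorics of independent systems, the prime-model constructions over them, and the ``many models'' coding that drives the contradiction is the genuinely hard part.

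Finally, once $\K$ is known to be excellent, I would invoke Shelah's upward transfer for excellent atomic classes, a theorem of ZFC: an excellent class has amalgamation and arbitrarily large models in all cardinals and behaves above $\aleph_0$ like the models of an $\omega$-stable first-order theory, so categoricity in a single uncountable cardinal propagates to all uncountable cardinals. Since we have more than enough categoricity to drive excellence, this yields categoricity of $\K$, and therefore of $\psi$, in every uncountable cardinal.
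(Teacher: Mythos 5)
The paper does not prove this theorem: it is quoted as a known benchmark and attributed in the historical remarks to Shelah's two-part work on categoricity for $\Ll_{\omega_1,\omega}$, whose argument is exactly the one you outline (reduction to an atomic class, $\omega$-stability from few models in $\aleph_1$ under $2^{\aleph_0}<2^{\aleph_1}$, excellence by induction on $n$ using $2^{\aleph_n}<2^{\aleph_{n+1}}$ together with categoricity in $\aleph_{n+1}$, and finally the ZFC categoricity transfer for excellent classes). Your proposal is a correct reconstruction of that proof and takes the same approach as the cited source.
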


Shelah's categoricity conjecture for $\Ll_{\omega_1, \omega}$ can be generalized to AECs, either by requiring only ``eventual'' categoricity (Conjecture \ref{shelah-event-conj}) or by asking for a specific Hanf number. 

This makes a difference: using the axiom of replacement and the fact that every AEC $\K$ is determined by its restrictions to models of size at most $\LS (\K)$, it is easy to see that Shelah's eventual categoricity conjecture is equivalent to the following statement: If an AEC is categorical in a proper class of cardinals, then it is categorical on a tail of cardinals. Thus requiring that the Hanf number can in some sense be explicitly computed makes sure that one cannot ``cheat'' and automatically obtain a free upward transfer.

When the Hanf number is $H_1$ (recall Notation \ref{hanf-notation}), we call the resulting statement \emph{Shelah's categoricity conjecture for AECs}. This is widely recognized as the main test question\footnote{It is not expected that solving it will produce a useful lemma in solving other problems.  Rather, like Morley's Theorem, it is expected that the solution will necessitate the development of ideas that will be useful in solving other problems.} in the study of AECs.

\begin{conjecture}\label{categ-conj-aec}
  If an AEC $\K$ is categorical in \emph{some} $\lambda > H_1$, then it is categorical in \emph{all} $\lambda' \ge H_1$.
\end{conjecture}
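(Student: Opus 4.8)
The plan is to recognize at the outset that this is the central open problem of the field, and to attack it by the two-stage \emph{local} strategy described in the introduction: first manufacture ``good behavior in a single cardinal'' (a good $\lambda$-frame) from the categoricity hypothesis, and then transfer that good behavior upward and downward to obtain categoricity on the whole tail $[H_1, \infty)$ (recall Notation \ref{hanf-notation}). Since the machinery of the later sections (good frames, forking-like independence) is tailored to \emph{tame} classes with amalgamation, the real content of the proposal is to \emph{derive} tameness and amalgamation from categoricity alone, and this is precisely where the difficulty concentrates.

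First I would extract the structural consequences of categoricity. From Shelah's presentation theorem (Theorem \ref{pres-thm}) and the resulting Hanf number for existence, categoricity in a single $\lambda > H_1$ already yields arbitrarily large models. The key missing ingredient is amalgamation: deriving it in \textbf{ZFC} from categoricity in an arbitrary high cardinal is itself open, and it is the prerequisite for almost everything else—once amalgamation holds, Proposition \ref{jep-decomp} and Corollary \ref{cor-jep-from-ap} let one pass to a sub-AEC with joint embedding and no maximal models on a tail. In the presence of extra structure amalgamation is achievable (for universal classes one can even derive it from categoricity in cardinals of arbitrarily high cofinality, Theorem \ref{main-thm}), so one natural subgoal is to isolate the combinatorial input that forces amalgamation and to push that argument beyond the universal case.

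Next I would aim for tameness. The historical source of tameness is Shelah's downward transfer (the ``Main claim'' of \cite{sh394}), which shows that categoricity in a high-enough successor forces types over Galois-saturated models to be determined by their small restrictions; this \emph{weak tameness} is the bridge between the global hypothesis and the local machinery. With amalgamation, weak tameness, categoricity, and the local character of splitting (Lemma \ref{stabsplit}), one obtains superstability in some $\lambda < H_1$ and then builds a good $\lambda$-frame (the content of Theorem \ref{ss-categ}). The uniqueness results for non-$\mu$-splitting extensions and uniqueness of limit models (Theorem \ref{gen-unique-dns-fact}, Theorem \ref{uq-limit-symmetry}) supply the amalgamation-base and stability axioms of the frame, while saturated models are handled via Theorem \ref{mod-homog-sat}.

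Finally, tameness upgrades the local frame to a global one: under tameness and amalgamation the good $\lambda$-frame transfers to every cardinal above $\lambda$ without changing the underlying class (Theorem \ref{good-frame-transfer-2}), yielding a global superstable, forking-like independence notion. From such a global frame the categoricity transfer runs along familiar lines—uniqueness of limit models and the fact that unions of saturated models are saturated let one propagate the single categoricity cardinal both up (every model of size $\ge H_1$ is saturated, hence unique) and down to $H_1$. The main obstacle, and the reason the statement remains a conjecture rather than a theorem, is the first-stage input: obtaining \emph{full} (not merely weak) tameness \emph{and} amalgamation in ZFC from categoricity in a \emph{single}, possibly limit, cardinal. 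Every known unconditional result either assumes amalgamation, assumes a successor categoricity cardinal, adds a large-cardinal or weak-GCH hypothesis, or restricts to a structured subclass (such as universal classes, Theorem \ref{main-thm}); closing the gap between these hypotheses and the bare assumption of the conjecture is the crux.
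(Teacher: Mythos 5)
The statement you were asked to prove is Conjecture \ref{categ-conj-aec}, which the paper presents as an \emph{open} problem --- the main test question for AECs --- and for which it offers no proof. Your proposal correctly recognizes this: it is not a proof but a research-program sketch, and as such it cannot be compared against a proof in the paper. Taken as a survey of the state of the art, it is broadly accurate: the known partial results do indeed bifurcate exactly as you say, into those assuming amalgamation plus a successor categoricity cardinal (Theorems \ref{shelah-succ-downward}, \ref{succ-transfer}, \ref{succ-transfer-2}), those adding weak GCH and Claim \ref{claim-xxx} (Theorem \ref{categ-transfer-wgch}), those adding large cardinals (Corollary \ref{lc-cor}), and those restricting to structured subclasses such as universal classes or tame AECs with primes (Theorems \ref{main-thm}, \ref{event-categ-primes}).

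Two points in your narrative deserve correction. First, you write that once a global good frame is in hand ``the categoricity transfer runs along familiar lines.'' The paper is explicit that this is false for a \emph{limit} categoricity cardinal: even with amalgamation, full tameness, and a global frame, one still needs an internal characterization of categoricity (unidimensionality via orthogonality calculus), and the paper's third listed difficulty after Theorem \ref{shelah-succ-downward} is precisely that this step requires prime models over sets of the form $M \cup \{a\}$, whose existence is unknown in general. That is why Theorem \ref{event-categ-primes} assumes primes and Theorem \ref{succ-transfer-2} retreats to a successor; the obstruction is independent of the amalgamation and tameness problems you highlight. Second, you slightly conflate the weak-tameness and full-tameness routes: weak tameness from categoricity (Theorem \ref{tameness-from-categ}) only yields good frames on classes of Galois-saturated models (Theorem \ref{good-frame-categ-2}), and the paper notes (in the footnote in Step 3 of Section \ref{universal-class-sec}) that the key argument about $\K_{\neg p}$ genuinely fails under weak tameness alone, since models omitting a type are not saturated. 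So the gap is not merely ``upgrade weak tameness to full tameness''; several of the downstream arguments structurally require the full version. With those caveats, your identification of where the conjecture resists proof agrees with the paper's own assessment.
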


One of the milestone results in the global approach to this conjecture is Shelah's downward transfer from a successor in AECs with amalgamation.

\begin{thm}\label{shelah-succ-downward}
  Let $\K$ be an AEC with amalgamation. If $\K$ is categorical in a \emph{successor} $\lambda \ge H_2$, then $\K$ is categorical in all $\mu \in [H_2, \lambda]$.
\end{thm}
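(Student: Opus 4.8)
The plan is to show that the unique model of size $\lambda$ is Galois-saturated and then to push saturation downward, extracting from the successor hypothesis enough locality of Galois types to make the transfer work. Throughout I would fix a monster model: using Proposition \ref{jep-decomp} and Corollary \ref{cor-jep-from-ap}, pass to the sub-AEC $\K^\ast$ containing the model of size $\lambda$, which has amalgamation, joint embedding, and no maximal models and agrees with $\K$ above some $\chi < H_1$, so that a monster model $\sea$ exists and categoricity is preserved in the relevant interval. Write $\lambda = \mu^+$. The first step is to deduce Galois-stability in every $\mu' \in [\LS(\K), \lambda)$: if $\K$ had the order property of Definition \ref{op-def} at some such cardinal, then Shelah's presentation theorem (Theorem \ref{pres-thm}) together with Ehrenfeucht--Mostowski constructions over many pairwise non-isomorphic linear orders of size $\lambda$ would yield more than one model of size $\lambda$, contradicting categoricity; since the order property is what obstructs stability, stability below $\lambda$ follows. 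This is where the first Hanf threshold $H_1$ is consumed.

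Next I would establish that the model of size $\lambda$ is saturated. Since $\lambda = \mu^+$ is a successor and $\K$ is stable in $\mu$ (note $\mu \ge H_1$ because $\lambda \ge H_2 = \hanf{H_1}$), a $(\mu^+,\mu^+)$-limit model exists, built as the union of a continuous $\mu^+$-chain of $\mu$-sized models each universal over its predecessor (Lemma \ref{univ-exist}), and such a limit model is Galois-saturated. By categoricity the unique model of size $\lambda$ \emph{is} this saturated model, hence is $\mu^+$-model-homogeneous by Theorem \ref{mod-homog-sat}.

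The heart of the argument, and the step I expect to be the main obstacle, is the \emph{Main Claim}: categoricity in the successor $\lambda$ forces Galois types over saturated models (in the relevant size range) to be determined by their restrictions to submodels of size $\LS(\K)$, i.e.\ weak tameness over saturated models. I would prove this by a delicate EM argument: given two distinct types over a saturated model that agree on all small submodels, realize both inside EM models over carefully chosen linear orders and exploit the fact that, because $\lambda = \mu^+$, the saturated model of size $\lambda$ decomposes as a continuous $\mu^+$-chain of saturated pieces along which one can run a back-and-forth driven by uniqueness of non-$\mu$-splitting extensions (Theorem \ref{gen-unique-dns-fact}). The successor hypothesis is essential precisely here: it supplies a resolution long enough to iterate that uniqueness and to invoke a stationary-set (Fodor-type) argument, and it is exactly the obstruction that later, tame, proofs remove. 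This second application of the Hanf machinery is what forces the threshold $H_2$ rather than $H_1$.

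Finally I would run the downward transfer. Suppose some model $N$ of size $\mu' \in [H_2, \lambda)$ were not saturated, witnessed by a type $p$ over a small subset of $N$ that is omitted. Using local character (Lemma \ref{stabsplit}) and uniqueness of non-$\mu$-splitting extensions (Theorem \ref{gen-unique-dns-fact}) together with the weak tameness just obtained, I would propagate this omission upward through an EM construction to manufacture a non-saturated model of size $\lambda$, contradicting the previous step. Hence every model of size $\mu'$ is saturated; and since saturated models of the same cardinality are isomorphic (by amalgamation and a back-and-forth via Theorem \ref{mod-homog-sat}, using $\mu' > \LS(\K)$), $\K$ is categorical in $\mu'$. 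As $\mu' \in [H_2,\lambda)$ was arbitrary and categoricity in $\lambda$ holds by hypothesis, $\K$ is categorical in all $\mu' \in [H_2,\lambda]$, as desired.
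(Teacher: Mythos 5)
Your overall architecture --- stability below $\lambda$, saturation of the model of size $\lambda$, a Main Claim extracting weak tameness over saturated models from categoricity in a successor, and then a downward transfer of saturation via omitting types --- is exactly the structure of Shelah's proof in \cite{sh394}, which is all this survey records for Theorem \ref{shelah-succ-downward} (the survey gives no proof of its own, only the remark that one first derives weak tameness from categoricity; cf.\ Example \ref{examples-subsec}.(\ref{weaktamecat}) and Theorem \ref{tameness-from-categ}).

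The genuine gap is in your first step. You derive stability in every $\mu' < \lambda$ by arguing that the order property would contradict categoricity and that ``the order property is what obstructs stability.'' Neither half of this is available in a general AEC with amalgamation. The implication ``no order property implies Galois-stability'' is Theorem \ref{stab-spectrum}, which requires $\LS(\K)$-tameness --- precisely what you are not allowed to assume here; the survey flags the equivalence of ``few types'' and ``no order property'' as one of the open ``schizophrenia'' issues for general AECs. Likewise ``order property implies many models'' is only a claim of Shelah's, and only on a tail of cardinals, not necessarily at $\lambda$. The correct route is the direct Ehrenfeucht--Mostowski type-counting argument: an EM model over a suitable linear order of size $\lambda$ realizes only $\mu'$ Galois types over any submodel of size $\mu'$, so categoricity forces stability below $\lambda$ (this is essentially Theorem \ref{shvi}, the Shelah--Villaveces theorem, which you could simply invoke since amalgamation is assumed). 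Two smaller points: what you actually build to saturate the $\lambda$-model is the union of a $\mu^+$-chain of $\mu$-sized models each universal over its predecessor, which is not a $(\mu^+,\mu^+)$-limit in the sense of Definition \ref{lim-def} (that would require stability in $\lambda$ itself) --- the construction is fine, the label is not; and your Main Claim paragraph correctly locates where the successor hypothesis and the jump from $H_1$ to $H_2$ enter, but as written it is a statement of intent rather than an argument --- that claim is the bulk of \cite{sh394} and cannot be discharged by gesturing at ``a back-and-forth driven by uniqueness of non-$\mu$-splitting extensions.''
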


The structure of the proof involves first deriving a weak version of tameness from the categoricity assumption (see Example \ref{examples-subsec}.(\ref{weaktamecat})). A striking feature of this result (and several other categoricity transfers) is the successor requirement, which is, of course, missing from similar results in the first-order case. Removing it is a major open question, even in the tame framework (see Shelah \cite[Problem 6.14]{sh702}). We see at least three difficulties when working with an AEC categorical in a limit cardinal $\lambda > \LS (\K)$:

\begin{enumerate}
  \item It is not clear that the model of size $\lambda$ should be Galois-saturated, see Question \ref{sat-quest}. 
  \item It is not clear how to transfer ``internal characterizations'' of categoricity such as no Vaughtian pairs or unidimensionality. In the first-order framework, compactness is a key tool to achieve this.
  \item It is not clear how to even get that categoricity implies such an internal characterization (assuming $\lambda = \lambda_0^+$ is a successor, there is a relatively straightforward argument for the non-existence of Vaughtian pairs in $\lambda_0$). In the first-order framework, all the arguments we are aware of use in some way \emph{primary models} but here we do not know if they exist or are well-behaved. For example, we cannot imitate the classical argument that primary models are primes (this relies on the compactness theorem).
\end{enumerate}

Assuming tameness, the first two issues can be solved (see Theorem \ref{ss-categ} and the proof of Theorem \ref{succ-transfer-2}). It is currently not known how to solve the third in general, but adding the assumption that $\K$ has prime models over sets of the form $M \cup \{a\}$ is enough. See Theorem \ref{event-categ-primes}.


A key tool in the proof of Theorem \ref{shelah-succ-downward} is the existence of Ehrenfeucht-Mostowski models which follow from the presentation theorem. In AECs with amalgamation and no maximal models, several structural properties can be derived below the categoricity cardinal. For example:

\begin{thm}[The Shelah-Villaveces theorem, \cite{shvi635}]\label{shvi}
  Let $\K$ be an AEC with amalgamation and no maximal models. Let $\mu \ge \LS (\K)$. If $\K$ is categorical in a $\lambda > \mu$, then $\K$ is $\mu$-superstable.
\end{thm}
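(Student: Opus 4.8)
The plan is to verify the four clauses of $\mu$-superstability (Definition \ref{ss-def}) in turn. Clause (1), $\mu \ge \LS(\K)$, is a hypothesis. For clause (2): nonemptiness of $\K_\mu$ follows by applying Löwenheim--Skolem--Tarski to the categoricity model in $\K_\lambda$, since any subset of size $\mu$ sits inside a $\lea$-submodel of size exactly $\mu$ (as $\mu \ge \LS(\K)$); amalgamation in $\mu$ is a restriction of the global amalgamation we are given; joint embedding in $\mu$ comes from categoricity, because any two models of $\K_\mu$ extend (using no maximal models and the chain axioms) to models of $\K_\lambda$, which are isomorphic, so the two originals embed into a common model; and no maximal models in $\mu$ follows by taking a proper extension $N' \gta M$ (global no maximal models), pulling a size-$\mu$ intermediate model containing one new point back down by Löwenheim--Skolem--Tarski, and invoking the coherence axiom to see it is a $\lea$-extension of $M$.

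For clause (3), Galois-stability in $\mu$, I would argue by contradiction using Ehrenfeucht--Mostowski models. The presentation theorem (Theorem \ref{pres-thm}) together with no maximal models yields, for every linear order $I$, a model $\EM(I) \in \K$ of size $|I| + \LS(\K)$, functorial in $I$. On a \emph{well-ordered} index set the number of Galois types realized in $\EM(I)$ over a suitable size-$\mu$ submodel is small (an element is a term applied to an increasing tuple, and over an initial segment its type is controlled by the position of the tuple relative to the corresponding cut, of which a well-order has few), so $\EM(\lambda)$ is far from saturated. By categoricity every model of $\K_\lambda$ is isomorphic to $\EM(\lambda)$. If $\K$ were unstable in some $\mu < \lambda$, one could, by amalgamation, build a second model of $\K_\lambda$ realizing more than $\mu$ types over a size-$\mu$ submodel; this model could not be isomorphic to $\EM(\lambda)$, contradicting categoricity. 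Making the comparison precise for \emph{arbitrary} (not EM-generated) size-$\mu$ submodels is the delicate point of this step.

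Clause (4), the no-long-splitting-chains condition, is the heart of the theorem, and I expect it to be the main obstacle. Fix a limit $\delta < \mu^+$, an increasing continuous chain $\seq{M_i : i \le \delta}$ in $\K_\mu$ with each $M_{i+1}$ universal over $M_i$, and $p \in \gS(M_\delta)$; suppose toward a contradiction that $p$ $\mu$-splits over every $M_i$. Having established stability in $\mu$, Lemma \ref{stabsplit} produces some $N_0 \in \K_\mu$ with $N_0 \lea M_\delta$ over which $p$ does not $\mu$-split, and by monotonicity of non-$\mu$-splitting in the base (any isomorphism fixing a model $M_j \gea N_0$ fixes $N_0$ as well, using coherence to see $N_0 \lea M_j$) non-splitting over $N_0$ upgrades to non-splitting over any $M_j$ with $N_0 \lea M_j$. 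Thus if $N_0$ were contained in a single $M_j$, $j < \delta$, we would already contradict the assumption that $p$ splits over every $M_i$. The difficulty is that $N_0$ can be cofinal in the chain: since $\delta < \mu^+$ forces $\cf{\delta} \le \mu = \|N_0\|$, there is no reason for the size-$\mu$ set $|N_0|$ to be bounded below $\delta$.

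Closing this gap is where the genuine superstability content and the categoricity hypothesis must re-enter. My plan would be to exploit the splitting at cofinally many levels, together with universality of the successor steps, to build along the chain a tree of conjugate extensions and thereby produce $2^{\cf{\delta}}$ distinct types extending $p$ over models of size $\mu$; when $\cf{\delta} = \mu$ this already exceeds $\mu$ and contradicts the stability just proved. The remaining cofinalities (and a uniform treatment) I expect to require either reducing to the regular case $\cf{\delta} = \mu$, or, alternatively, feeding the failure of non-splitting extensions into a construction of two non-isomorphic models of size $\lambda$ (one realizing and one omitting a type assembled from the splitting witnesses), again contradicting categoricity and making use of the uniqueness of non-splitting extensions in Theorem \ref{gen-unique-dns-fact}. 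This combinatorial construction, and in particular handling all cofinalities uniformly, is the step I expect to be hardest.
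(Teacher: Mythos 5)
The survey does not actually prove Theorem \ref{shvi}; it cites \cite{shvi635} and points (in Section \ref{hist-sec}) to \cite[Corollary 6.3]{gv-superstability-v3} and a forthcoming note for the ZFC-with-amalgamation version, so your attempt has to be measured against that argument rather than against anything in the text. Your treatment of clauses (1)--(3) of Definition \ref{ss-def} is essentially the standard one and is fine in outline: the structural properties in $\mu$ follow from L\"{o}wenheim--Skolem--Tarski, coherence, and categoricity as you describe, and Galois-stability below the categoricity cardinal is indeed obtained by counting types realized in $\EM(\lambda)$ over arbitrary size-$\mu$ subsets of a well-ordered skeleton. You have also correctly located the obstacle in clause (4): the base $N_0$ produced by Lemma \ref{stabsplit} need not sit inside any $M_j$ because $\cf{\delta}$ can be at most $\mu$.

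The genuine gap is in your proposed resolution of clause (4). The primary plan --- build a splitting tree along the chain and get $2^{\cf{\delta}}$ types over a model of size $\mu$, contradicting $\mu$-stability --- cannot work for small cofinalities: if $\cf{\delta}=\omega$ and $2^{\aleph_0}\le\mu$, the tree has only countably many levels, its union still has size $\mu$, and $2^{\aleph_0}$ types over it is no contradiction. This is not a repairable detail: a strictly stable first-order theory is Galois-stable in suitable $\mu$ and has amalgamation, joint embedding, no maximal models, and universal extensions, yet admits forking (hence splitting) chains of cofinality $\omega$; so clauses (1)--(3) provably do not imply clause (4), and any argument for (4) that only invokes the stability already established is doomed. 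Your first fallback, ``reduce to the regular case $\cf{\delta}=\mu$,'' is also unavailable: one may pass to a cofinal continuous subsequence and assume $\delta=\cf{\delta}$, but one cannot lengthen a given chain of cofinality $\omega$ without changing $M_\delta$ and $p$. What actually closes the case of small cofinality in Shelah--Villaveces (and in the ZFC version with amalgamation) is a second, essentially different use of categoricity: the splitting tree, together with realizations of all its types, is amalgamated into a single model of size $\lambda$, which by categoricity is $\EM(\lambda)$ over a \emph{well-ordered} index set, and a finer version of the same term-and-cut analysis you used for clause (3) shows that $\EM$ models over well-orders cannot accommodate such a configuration (the contradiction is with a strengthened ``few types'' statement for EM models, not with bare $\mu$-stability). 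Your second fallback --- producing two non-isomorphic models of size $\lambda$ --- gestures in this direction, but without the EM-model mechanism it is not a proof, and this is precisely the content of the theorem that your proposal leaves open.
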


Note that Theorem \ref{shvi} fails to generalize to $\lambda \ge \mu$. In general, $\K$ may not even be Galois-stable in $\lambda$, see the Hart-Shelah example (Section \ref{counterex-ssec} below). In the presence of tameness, the difficulty disappears: superstability can be transferred all the way up (see Theorem \ref{ss-implies-all}). This seems to be a recurring feature of the study of AECs without tameness: some structure can be established below the categoricity cardinal (using tools such as Ehrenfeucht-Mostowski models), but transferring this structure upward is hard due to the lack of locality. For example, in the absence of tameness the following question is open:

\begin{question}\label{sat-quest}
  Let $\K$ be an AEC with amalgamation and no maximal models. If $\K$ is categorical in a $\lambda > \LS (\K)$, is the model of size $\lambda$ Galois-saturated?
\end{question}

It is easy to see that (if $\cf{\lambda} > \LS (\K)$), the model of size $\lambda$ is $\cf{\lambda}$-Galois-saturated. Recently, it has been shown that categoricity in a high-enough cardinal implies some degree of saturation:

\begin{thm}\label{sat-categ}
  Let $\K$ be an AEC with amalgamation and no maximal models. Let $\lambda \ge \chi > \LS (\K)$. If $\K$ is categorical in $\lambda$ and $\lambda \ge \hanf{\chi}$, then the model of size $\lambda$ is $\chi$-Galois-saturated.
\end{thm}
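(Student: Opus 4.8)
The plan is to exhibit a single $\chi$-Galois-saturated model of size $\lambda$ and then invoke categoricity: since $\K$ is categorical in $\lambda$, every model of size $\lambda$ is isomorphic to this one, hence $\chi$-Galois-saturated. Before the construction I would record two consequences of the hypotheses. First, amalgamation together with no maximal models yields arbitrarily large models, and after restricting to a sub-AEC with joint embedding via Proposition \ref{jep-decomp} (this does not affect the model of size $\lambda$, which is unique), I may work inside a monster model $\sea$ and use Theorem \ref{mod-homog-sat} to freely identify $\chi$-Galois-saturation with $\chi$-model-homogeneity. Second, by the Shelah--Villaveces theorem (Theorem \ref{shvi}), categoricity in $\lambda$ forces $\K$ to be $\mu$-superstable, and in particular Galois-stable, in every $\mu \in [\LS(\K), \lambda)$; this stability is what will make the ``average'' types along cuts well defined.

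Next I would bring in Ehrenfeucht--Mostowski models. Applying Shelah's presentation theorem (Theorem \ref{pres-thm}) together with the standard Hanf-number/Morley machinery, and using that $\K$ has arbitrarily large models, there is an EM template $\Phi$ proper for linear orders: for each linear order $I$ one obtains $\EM(I) \in \K$ with $\|\EM(I)\| = |I| + \LS(\K)$, functorial in $I$, carrying an indiscernible skeleton $\seq{a_s : s \in I}$. I would then fix a linear order $I$ with $|I| = \lambda$ that is \emph{$\chi$-saturated as a linear order}, meaning every cut of $I$ determined by fewer than $\chi$ points on each side is realized in $I$. Such an order of size at most $\lambda$ exists because $\lambda \ge \hanf{\chi} \ge 2^{\chi}$ and $\hanf{\chi} = \beth_{(2^\chi)^+}$ is large enough to absorb the relevant $(<\chi)$-sized combinatorics; padding with extra points makes $|I| = \lambda$ exactly. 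Set $M^\ast := \EM(I)$, a model of size $\lambda$.

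The heart of the argument is the lemma that $M^\ast$ is $\chi$-Galois-saturated. Fix $A \subseteq |M^\ast|$ with $|A| < \chi$; each element of $A$ is a $\Phi$-term applied to finitely many skeleton elements, so $A$ is supported by some $J \subseteq I$ with $|J| \le |A| + \aleph_0 < \chi$. By indiscernibility, the Galois type over $A$ of an element $\tau(a_{t_1}, \dots, a_{t_n}) \in M^\ast$ depends only on the position of $t_1 < \dots < t_n$ relative to the cuts of $J$; stability in the relevant cardinals (Theorem \ref{shvi}, via the local character of splitting in Lemma \ref{stabsplit}) ensures these ``average'' types are well defined and few in number, at most roughly $2^{|J|} \le 2^{<\chi}$. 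Since $I$ is $\chi$-saturated as a linear order, every such cut of $J$ is filled inside $I$, so \emph{every average type over $A$ is already realized in $M^\ast$}. The $\chi$-saturation of $M^\ast$ thus reduces to the single claim that $\gS(A)$ contains no type beyond these averages.

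The main obstacle is exactly that reduction: showing that no small Galois type over a subset of $M^\ast$ is a genuine non-average. This is where $\lambda \ge \hanf{\chi}$ is indispensable, since $\hanf{\chi} = \beth_{(2^\chi)^+}$ is precisely the Hanf number for omitting a family of at most $2^{\chi}$ types in the pseudo-elementary (PC) presentation of $\K$ from Theorem \ref{pres-thm}. The idea is that if some $p \in \gS(A)$ were omitted by $M^\ast$, then, working in the expansion of the PC-presentation by constants for $A$ and by a predicate coding realizations of $p$, Morley's omitting-types method over a model of size $\ge \hanf{\chi}$ produces a genuinely more saturated model of size $\lambda$; categoricity in $\lambda$ then collapses it with $M^\ast$ and forces $p$ to have been an average after all. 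Carrying this out rigorously is delicate precisely because, with no tameness available, Galois (orbital) types are non-syntactic: neither ``the skeleton determines the type'' nor ``every small type is an average over a realized cut'' is a formal statement, so both must be routed through the PC-presentation and the omitting-types Hanf number and then married to categoricity. By comparison, the stability input and the construction of a $\chi$-saturated linear order of size $\lambda$ are routine; the subtle interaction is between the Hanf number $\hanf{\chi}$ and the orbital nature of the types.
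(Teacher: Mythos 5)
Your outer frame is sound and matches the strategy of the proof this survey cites (VanDieren and the second author, see Section \ref{hist-sec}): reduce to exhibiting one $\chi$-Galois-saturated model of size $\lambda$, and extract $\mu$-superstability for $\mu \in [\LS(\K), \lambda)$ from Theorem \ref{shvi}. The gap is in your key lemma that $M^\ast = \EM(I)$ is $\chi$-Galois-saturated. The cut-filling argument establishes only that the ``average'' types --- Galois types of terms $\tau(a_{t_1},\dots,a_{t_n})$ over $A$, classified by the term and the position of $\bar{t}$ relative to the support $J$ of $A$ --- are realized; it does not show that these exhaust $\gS(A)$. Since, up to your final paragraph, the only use you make of categoricity is via Theorem \ref{shvi}, the lemma would have to hold for arbitrary superstable AECs with arbitrarily large models, and it fails already in the first-order special case: for a stable theory the skeleton is an indiscernible \emph{set}, so every cut of $J$ in $I$ yields the \emph{same} average over $\EM(J)$, and $\EM(I)$ realizes at most roughly $|J|+\LS(\K)$ types over $\EM(J)$ however saturated $I$ is (for one equivalence relation with infinitely many infinite classes, a suitable template yields $\EM(I)$ with a countable class, hence not even $\aleph_1$-saturated). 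Saturating $I$ as a linear order pays off precisely in the unstable case; here it buys nothing. The burden thus falls entirely on your ``main obstacle'' paragraph, and the proposed resolution is circular: to contradict ``$M^\ast$ omits $p \in \gS(A)$'' via categoricity you must produce a model of size $\lambda$ realizing \emph{all} types over \emph{all} subsets of size $<\chi$ --- realizing $p$ over some copy of $A$ in some model of size $\lambda$ does not suffice, since the isomorphism supplied by categoricity need not carry $A$ to that copy --- i.e.\ you must produce a $\chi$-Galois-saturated model of size $\lambda$, which is the theorem. Morley's omitting-types method transfers omission upward to larger models; it does not manufacture realizations.

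The cited proof runs in the opposite direction. The hypothesis $\lambda \ge \hanf{\chi}$ is used, via Ehrenfeucht--Mostowski models and the order property, to show that a failure of the symmetry property for $\mu$-splitting ($\mu < \chi$) would contradict the superstability given by Theorem \ref{shvi}. Symmetry together with superstability yields uniqueness of limit models (Theorem \ref{uq-limit-symmetry}) and, from there, that the union of an increasing chain of $\mu^+$-Galois-saturated models is $\mu^+$-Galois-saturated; a $\chi$-Galois-saturated model of size $\lambda$ is then built as the union of such a chain, which is what handles the problematic case $\cf{\lambda} < \chi$. Only at that point is categoricity invoked, exactly as in your last step.
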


What about the uniqueness of limit models? In the course of establishing Theorem \ref{shelah-succ-downward}, Shelah proves that categoricity in a successor $\lambda$ implies weak uniqueness of limit models in all $\mu < \lambda$. Recently, VanDieren and the second author have shown:

\begin{thm}\label{uq-limit-categ}
  Let $\K$ be an AEC with amalgamation and no maximal models. Let $\mu \ge \LS (\K)$. If $\K$ is categorical in a $\lambda \ge \hanf{\mu}$, then $\K$ has uniqueness of limit models in $\mu$.
\end{thm}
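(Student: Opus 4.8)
The plan is to reduce the statement to Theorem \ref{uq-limit-symmetry}, so that the whole problem collapses to the verification of a single structural property: that $\mu$-splitting has symmetry.

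To begin, since $\lambda \ge \hanf{\mu}$ in particular satisfies $\lambda > \mu$, the Shelah--Villaveces theorem (Theorem \ref{shvi}) applies --- using that $\K$ has amalgamation and no maximal models --- and yields that $\K$ is $\mu$-superstable (which, as packaged in the definition, already supplies amalgamation, joint embedding, and no maximal models in $\mu$). By Theorem \ref{uq-limit-symmetry}, it then suffices to prove that $\mu$-splitting has symmetry, since symmetry together with $\mu$-superstability gives uniqueness of limit models in $\mu$ outright.

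The core of the argument is to squeeze symmetry out of the categoricity hypothesis, and the bridge between the two is the behavior of Galois-saturated models under unions of increasing chains. Following VanDieren, for a $\mu$-superstable class one shows that a failure of symmetry for $\mu$-splitting is not an isolated accident: it can be propagated into an increasing chain of saturated models whose union fails to be saturated. On the other side, the hypothesis $\lambda \ge \hanf{\mu}$ lets me invoke Theorem \ref{sat-categ} to conclude that the unique model of size $\lambda$ is Galois-saturated to the degree relevant here. The plan is then a proof by contradiction: assuming symmetry fails, build the chain just described with union of size $\lambda$; by categoricity this union must be \emph{the} model of size $\lambda$, which Theorem \ref{sat-categ} forces to be saturated, contradicting the non-saturation produced by the failure of symmetry. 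Hence symmetry holds, and Theorem \ref{uq-limit-symmetry} finishes the proof.

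I expect the one genuinely hard step to be precisely this extraction of symmetry from categoricity. It rests on VanDieren's reduced-tower machinery and on the exact equivalence between symmetry and the preservation of saturation under unions of chains. The most delicate point is the cardinal bookkeeping: one must arrange the chains witnessing the failure of symmetry to live at a level where Theorem \ref{sat-categ} genuinely delivers enough saturation below $\lambda$, which is exactly where the threshold $\hanf{\mu}$ (rather than some larger Hanf number) has to be used carefully. Everything else --- the reduction via Shelah--Villaveces and the final appeal to Theorem \ref{uq-limit-symmetry} --- is routine once symmetry is in hand.
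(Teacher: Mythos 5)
Your overall architecture is the right one and matches the route of the cited work (the survey itself states this theorem without proof, referring to VanDieren--Vasey, but the intended decomposition is visible from the surrounding results): Theorem \ref{shvi} gives $\mu$-superstability, Theorem \ref{uq-limit-symmetry} reduces everything to symmetry for $\mu$-splitting, and the entire content is then in extracting symmetry from categoricity. You correctly identify that last step as the hard one, but your sketch of it has a genuine gap that careful bookkeeping alone will not fix.

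The witness to failure of $\mu$-symmetry produced by VanDieren's machinery is an increasing chain of $\mu^+$-Galois-saturated models of length $<\mu^{++}$ whose union is \emph{not} $\mu^+$-Galois-saturated; that union has size $\mu^+$, not $\lambda$. You cannot simply ``build the chain with union of size $\lambda$'': extending the non-saturated union to a model of size $\lambda$ may realize the omitted type, so non-saturation is not preserved by growing the model. Bridging from a non-saturated model of size $\mu^+$ to one of size $\lambda$ is precisely where the Ehrenfeucht--Mostowski/omitting-types technology enters and where the threshold $\hanf{\mu} = \beth_{(2^{\mu})^+}$ is actually consumed, and this step is absent from your proposal. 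Moreover, the contradiction you want requires $\mu^+$-Galois-saturation of the model of size $\lambda$, but Theorem \ref{sat-categ} applied at threshold $\lambda \ge \hanf{\mu}$ delivers only $\mu$-Galois-saturation (and it requires $\chi > \LS(\K)$, so it says nothing in the permitted case $\mu = \LS(\K)$); quoting it as a black box for $\mu^+$-saturation would need the strictly stronger hypothesis $\lambda \ge \hanf{\mu^+}$. In the actual argument, Theorems \ref{sat-categ} and \ref{uq-limit-categ} are established together by an interleaved induction on cardinals --- symmetry and uniqueness of limit models at lower levels feed the construction and preservation of saturated models at higher levels, and vice versa --- rather than by invoking the saturation theorem after the fact. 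So the reduction is fine, but the core step needs the omitting-types transfer and this simultaneous induction, neither of which you supply.
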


\subsection{Good frames} \label{frame-sec}

Roughly speaking, an AEC $\K$ \emph{has a good $\lambda$-frame} if it is well-behaved in $\lambda$ (i.e.\ it is nonempty, has amalgamation, joint embedding, no maximal model, and is Galois-stable, all in $\lambda$) and there is a forking-like notion for types of length one over models in $\K_\lambda$ that behaves like forking in superstable first-order theories\footnote{There is an additional parameter, the set of \emph{basic} types.  These are a dense set of types over models of size $\lambda$ such that forking is only required to behave well with respect to them.  However, basic types play little role in the discussion of tameness (and eventually are eliminated in most cases even in general AECs), so we do not discuss them here, see the historical remarks.}. In particular, it is $\lambda$-superstable. One motivation for good frames was the following (still open) question:

\begin{question}\label{baldwin-question}
  If an AEC is categorical in $\lambda$ and $\lambda^+$, does it have a model of size $\lambda^{++}$?
\end{question}

Now it can be shown that if $\K$ has a good $\lambda$-frame (or even just $\lambda$-superstable), then it has a model of size $\lambda^{++}$. Thus it would be enough to obtain a good frame to solve the question. Shelah has shown the following:

\begin{thm}\label{shelah-local-good-frame}
  Assume $2^{\lambda} < 2^{\lambda^+} < 2^{\lambda^{++}}$.
  
  Let $\K$ be an AEC and let $\lambda \ge \LS (\K)$. If:

  \begin{enumerate}
    \item $\K$ is categorical in $\lambda$ and $\lambda^+$.
    \item $0 < I (\K, \lambda^{++}) < \mu_{\text{unif}} (\lambda^{++}, 2^{\lambda^+})$\footnote{The cardinal $\mu_{\text{unif}} (\lambda^{++}, 2^{\lambda^+})$ should be interpreted  as $2^{\lambda^{++}}$; this is true when $\lambda \geq \beth_\omega$ and there is no example of inequality when $2^{\lambda^+}< 2^{\lambda^{++}}$.}.
  \end{enumerate}
  
  Then $\K$ has a good $\lambda^+$-frame.
\end{thm}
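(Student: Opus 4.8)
The plan is to run a structure/non-structure dichotomy throughout: for each frame axiom, either it holds, or its failure can be converted—via the weak diamond combinatorics made available by the hypotheses $2^{\lambda} < 2^{\lambda^+} < 2^{\lambda^{++}}$—into a construction of more than $\mu_{\text{unif}}(\lambda^{++}, 2^{\lambda^+})$ pairwise non-isomorphic models of size $\lambda^{++}$, contradicting the assumption that $0 < I(\K, \lambda^{++}) < \mu_{\text{unif}}(\lambda^{++}, 2^{\lambda^+})$. The weak GCH inequalities are exactly what is needed to feed the Devlin--Shelah weak diamond at $\lambda^+$ (from $2^{\lambda} < 2^{\lambda^+}$) and at $\lambda^{++}$ (from $2^{\lambda^+} < 2^{\lambda^{++}}$).

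First I would secure good behavior in $\lambda$ and among the saturated models of size $\lambda^+$. The weak diamond at $\lambda^+$ drives the standard argument that failure of amalgamation in $\lambda$ yields a tree of pairwise non-amalgamable $\lambda$-extensions, which the weak diamond separates into $2^{\lambda^+}$ non-isomorphic models of size $\lambda^+$; since $\K$ is categorical in $\lambda^+$ (so $I(\K, \lambda^+) = 1$), this is impossible, and hence $\K_\lambda$ has amalgamation. Categoricity in $\lambda$ gives joint embedding and Galois-stability in $\lambda$, and the existence of a model in $\lambda^+$ rules out maximal models in $\lambda$. A companion weak diamond argument shows that the model of size $\lambda^+$ is Galois-saturated and that the saturated models of size $\lambda^+$ have amalgamation. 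A localized version of the Shelah--Villaveces argument (Theorem \ref{shvi}), whose splitting-chain analysis can be carried out using only amalgamation in $\lambda$, then delivers $\lambda$-superstability (no long $\lambda$-splitting chains). This reduces the problem to a setting in which the type machinery of Lemmas \ref{stabsplit}, \ref{gen-unique-dns-fact}, and \ref{univ-exist} is available.

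Next I would define the candidate good $\lambda^+$-frame. By categoricity in $\lambda^+$ the underlying class is just $\K_{\lambda^+}$, all of whose members are the (saturated) categoricity model. For the basic types I would take the minimal types—nonalgebraic types admitting a unique nonalgebraic extension to every larger model—whose density over models in $\K_{\lambda^+}$ follows from categoricity. The nonforking relation would be defined by lifting non-$\lambda$-splitting: for $M_0 \lea M$ in $\K_{\lambda^+}$ and $p \in \gS(M)$, declare $p$ to not fork over $M_0$ when $p$ is approximated by types that do not $\lambda$-split over some $\lambda$-sized submodel of $M_0$. With this definition, monotonicity, the local character axiom (from $\lambda$-superstability), and stability in $\lambda^+$ follow formally from the non-splitting analysis in $\lambda$ together with the saturation of the $\lambda^+$-model.

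The crux—and the main obstacle—is uniqueness of nonforking extensions together with the symmetry property. Here the bare non-splitting theory is insufficient: without tameness, uniqueness of non-$\lambda$-splitting extensions is guaranteed only within the single cardinal $\lambda$ (Theorem \ref{gen-unique-dns-fact}), not upward. So these two axioms must be proved by the non-structure method: assuming uniqueness or symmetry fails, one builds a tree of height $\lambda^{++}$ whose branches encode the failure so finely that the associated models of size $\lambda^{++}$ are provably non-isomorphic, and the weak diamond at $\lambda^{++}$ then separates them into $\mu_{\text{unif}}(\lambda^{++}, 2^{\lambda^+})$ isomorphism types, the desired contradiction. The entire technical weight of the theorem lives in these constructions—correctly encoding the frame-property failure into the tree and bookkeeping the weak-diamond predictions so that distinct branches yield distinct $\lambda^{++}$-models—and this is exactly Shelah's delicate weak-diamond/weak-GCH argument from \cite{sh576} and \cite{shelahaecbook}. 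Once uniqueness, symmetry, and the extension property are established, the remaining axioms (transitivity, continuity, and the conjugation refinements) follow by the usual formal manipulations, yielding the good $\lambda^+$-frame.
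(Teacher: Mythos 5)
This is one of the places where the survey does not give a proof at all: Theorem \ref{shelah-local-good-frame} is stated as a known (and, as the surrounding text stresses, long and complex) result of Shelah, cited in the historical remarks as \cite[Theorem VI.0.2]{shelahaecbook2}, with the weaker precursor in \cite{sh576} and \cite[Chapter II]{shelahaecbook}. So there is no in-paper argument to compare against. Your proposal correctly identifies the overall strategy of Shelah's actual proof --- a structure/non-structure dichotomy in which weak diamond at $\lambda^+$ and $\lambda^{++}$ converts failures of amalgamation, density of minimal types, uniqueness, and symmetry into $\mu_{\text{unif}}(\lambda^{++}, 2^{\lambda^+})$-many models in $\lambda^{++}$ --- and the first stage (amalgamation in $\lambda$ from $2^\lambda < 2^{\lambda^+}$ and categoricity in $\lambda^+$, then minimal types as the basic types, with nonforking lifted from non-$\lambda$-splitting) is a faithful outline. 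But as a proof it is a roadmap, not an argument: every step that carries actual weight (density and ``inevitability'' of minimal types, the extension property for minimal types, the uniqueness and symmetry non-structure constructions at $\lambda^{++}$) is explicitly black-boxed as ``Shelah's delicate weak-diamond argument.'' Since that is precisely where the theorem lives, the proposal cannot be counted as a proof.

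Two concrete missteps are worth flagging. First, ``categoricity in $\lambda$ gives \ldots\ Galois-stability in $\lambda$'' is false as stated; in Shelah's argument stability in $\lambda$ is extracted from categoricity in $\lambda$ \emph{and} $\lambda^+$ together with the just-established $\lambda$-amalgamation (instability would produce a non-saturated model of size $\lambda^+$ alongside a saturated one). Second, the appeal to ``a localized version of the Shelah--Villaveces argument'' (Theorem \ref{shvi}) to get $\lambda$-superstability is misplaced: that argument runs on Ehrenfeucht--Mostowski models and therefore needs arbitrarily large models and categoricity well above $\lambda$, neither of which is available from the hypotheses here (categoricity only in $\lambda$ and $\lambda^+$, plus few models in $\lambda^{++}$). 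In Shelah's proof the local character of the frame comes instead from the minimality of the basic types, not from an EM-model splitting-chain analysis. Finally, note that what this construction directly produces is a good $\lambda^+$-frame whose basic types are the minimal types; to match the type-full Definition \ref{good-frame-def} used in this survey one needs the additional step recorded in \cite[Claim III.9.6]{shelahaecbook}, which your sketch does not address.
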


\begin{cor}\label{baldwin-q-answer}
  Assume $2^{\lambda} < 2^{\lambda^+} < 2^{\lambda^{++}}$. If $\K$ is categorical in $\lambda$, $\lambda^{+}$, and $\lambda^{++}$, then $\K$ has a model of size $\lambda^{+++}$.
\end{cor}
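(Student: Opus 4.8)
The plan is to read this off directly from Theorem \ref{shelah-local-good-frame}, together with the observation recorded just before it that $\lambda$-superstability (in particular, having a good $\lambda$-frame) already forces the existence of a model of size $\lambda^{++}$. Since we are handed categoricity in $\lambda$, $\lambda^+$, and $\lambda^{++}$ under the set-theoretic hypothesis $2^\lambda < 2^{\lambda^+} < 2^{\lambda^{++}}$, the strategy is to feed these into Theorem \ref{shelah-local-good-frame} to manufacture a good $\lambda^+$-frame, and then apply the superstability-to-existence observation one level up.

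First I would check that the two hypotheses of Theorem \ref{shelah-local-good-frame} are met. Condition (1), categoricity in $\lambda$ and $\lambda^+$, is immediate. For condition (2), categoricity in $\lambda^{++}$ says exactly $I(\K, \lambda^{++}) = 1$, so certainly $0 < I(\K, \lambda^{++})$. It remains to see that $I(\K, \lambda^{++}) < \mu_{\text{unif}}(\lambda^{++}, 2^{\lambda^+})$, i.e.\ that $1 < \mu_{\text{unif}}(\lambda^{++}, 2^{\lambda^+})$; this is the only point requiring any thought. Here one invokes the interpretation of $\mu_{\text{unif}}$ recorded in the footnote to Theorem \ref{shelah-local-good-frame}: under $2^{\lambda^+} < 2^{\lambda^{++}}$ the quantity $\mu_{\text{unif}}(\lambda^{++}, 2^{\lambda^+})$ equals $2^{\lambda^{++}}$, which is visibly larger than $1$. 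Thus condition (2) holds.

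With both conditions verified, Theorem \ref{shelah-local-good-frame} (whose set-theoretic assumption $2^\lambda < 2^{\lambda^+} < 2^{\lambda^{++}}$ is exactly the one we are given) produces a good $\lambda^+$-frame on $\K$. A good $\lambda^+$-frame is in particular $\lambda^+$-superstable, so by the fact noted immediately before Theorem \ref{shelah-local-good-frame} — that a $\mu$-superstable AEC has a model of size $\mu^{++}$ — applied with $\mu = \lambda^+$, we obtain a model of size $(\lambda^+)^{++} = \lambda^{+++}$, as desired.

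I expect the only real friction to be the bookkeeping around $\mu_{\text{unif}}$: one must be comfortable treating it as $2^{\lambda^{++}}$ (legitimate here precisely because $2^{\lambda^+} < 2^{\lambda^{++}}$) so that the categoricity count $1$ falls strictly below it. Everything else is a mechanical chaining of the cited theorem with the superstability-implies-existence observation, and there is no genuine model-theoretic obstacle hidden in the deduction itself.
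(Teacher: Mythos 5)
Your proposal is correct and is exactly the argument the paper intends: the corollary is stated immediately after Theorem \ref{shelah-local-good-frame} precisely because categoricity in $\lambda^{++}$ gives $I(\K,\lambda^{++})=1$, which sits strictly between $0$ and $\mu_{\text{unif}}(\lambda^{++},2^{\lambda^+})$, so the theorem yields a good $\lambda^+$-frame, and the observation preceding it (a good $\mu$-frame gives a model of size $\mu^{++}$) applied at $\mu=\lambda^+$ produces the model of size $\lambda^{+++}$. No issues.
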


Note the non-ZFC assumptions as well as the strong categoricity hypothesis\footnote{It goes without saying that the proof is also long and complex, see the historical remarks.}. We will see that this can be removed in the tame framework, or even already by making some weaker (but global) assumptions than tameness.

Recall from the beginning of Section \ref{class-thy-sec} that Shelah's local approach aims to transfer good behavior in $\lambda$ upward. The successor step is to turn a good $\lambda$-frame into a good $\lambda^+$ frame . Shelah says a good $\lambda$-frame is \emph{successful} if it satisfies a certain (strong) technical condition that allows it to extend it to a good $\lambda^{+}$-frame. 

\begin{thm}\label{successful-cond}
  Assume $2^{\lambda} < 2^{\lambda^+} < 2^{\lambda^{++}}$. If an AEC $\K$ has a good $\lambda$-frame $\s$ and $0 < I (\K, \lambda^{++}) < \mu_{\text{unif}} (\lambda^{++}, 2^{\lambda^+})$, then there exists a good $\lambda^{+}$-frame $\s^+$ with underlying class the Galois-saturated models of size $\lambda^+$ (the ordering will also be different).
\end{thm}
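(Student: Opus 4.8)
The plan is to first show that the good $\lambda$-frame $\s$ is \emph{successful} in Shelah's sense, and then to build $\s^+$ on the class of Galois-saturated models of size $\lambda^+$. The central structural object is a non-forking amalgamation relation $\nf$ on $\K_\lambda$ derived from $\s$ (via so-called uniqueness triples), so the first order of business is to establish that $\nf$ exists and has the \emph{uniqueness} property; equivalently, that uniqueness triples are dense in $\K_\lambda$. This is where the set-theoretic hypotheses enter, the two instances of weak GCH feeding the weak-diamond machinery at the relevant levels.

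For the non-structure dichotomy, suppose the ``good side'' fails: either uniqueness triples are not dense, or $\nf$ fails uniqueness. Using the weak diamond at $\lambda^+$ (available since $2^{\lambda^+} < 2^{\lambda^{++}}$), one runs a tree construction that exploits this failure to produce many pairwise non-isomorphic models of size $\lambda^{++}$ --- in fact $\mu_{\text{unif}}(\lambda^{++}, 2^{\lambda^+})$-many of them. This contradicts the hypothesis $I(\K, \lambda^{++}) < \mu_{\text{unif}}(\lambda^{++}, 2^{\lambda^+})$. Hence the good side holds: $\s$ is successful and $\nf$ is a well-behaved independence relation on $\K_\lambda$ enjoying uniqueness.

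Next I would define the target frame. Let $\K^+$ be the class of Galois-saturated models of size $\lambda^+$, equipped with a new ordering $\lea^+$, where $M \lea^+ N$ means that $N$ is a non-forking (brimmed) extension of $M$ in the sense induced by $\nf$; thus $\lea^+$ strengthens the restriction of $\lea$ to $\K^+$. Since $\s$ provides stability and amalgamation in $\lambda$, saturated models of size $\lambda^+$ exist and are unique, and one verifies that $(\K^+, \lea^+)$ is an AEC with $\LS(\K^+) = \lambda^+$. The only delicate axiom is the Tarski-Vaught chain axiom, which reduces to showing that an increasing union of saturated models along $\lea^+$ is again saturated --- precisely the payoff of successfulness, since a successful frame reflects such unions correctly.

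Finally, define the forking notion of $\s^+$: for $M \lea^+ N$ in $\K^+$ and $p \in \gS(N)$, declare that $p$ does not $\s^+$-fork over $M$ when the $\lambda$-sized approximations of $p$ do not $\s$-fork over the corresponding $\lambda$-sized submodels of $M$ (a coherence condition). One then checks the good-frame axioms for $\s^+$ --- stability in $\lambda^+$, existence, extension, uniqueness, local character, continuity, and symmetry --- each by reflecting the statement down to $\s$ through the definitions of $\lea^+$ and $\nf$ and invoking the corresponding axiom of $\s$ together with successfulness. The main obstacle is the non-structure dichotomy yielding successfulness (the existence and uniqueness of $\nf$) and, intimately tied to it, the proof that unions of saturated models along $\lea^+$ remain saturated. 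These constitute the technical heart of the argument and depend essentially on the weak diamond; by contrast, once successfulness is in hand, transferring the $\lambda$-axioms up to $\lambda^+$ is comparatively routine bookkeeping.
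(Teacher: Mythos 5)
Your proposal follows essentially the same route the paper indicates: it only remarks that ``the proof goes by showing that the weak GCH and few models assumptions imply that any good frame is successful'' (deferring the details to Chapter II of Shelah's book), and your sketch---non-structure dichotomy via weak diamond to get density of uniqueness triples and uniqueness of $\nf$, then lifting to the Galois-saturated models of size $\lambda^+$ with the $\nf$-induced ordering---is exactly that argument. The only nit is indexing: the principle you invoke to get many models of size $\lambda^{++}$ from $2^{\lambda^+} < 2^{\lambda^{++}}$ is usually called the weak diamond at $\lambda^{++}$, not at $\lambda^+$.
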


The proof goes by showing that the weak GCH and few models assumptions imply that any good frame is successful.

So assuming weak GCH and few models in every $\lambda^{+n}$, one obtains an increasing sequence $\bar{\s} = \s, \s^{+}, \s^{++}, \ldots$ of good frames. One of the main results of Shelah's book is that the natural limit of $\bar{\s}$ is also a good frame (the strategy is to show that a good frame in the sequence is \emph{excellent}). Let us say that a good frame is \emph{$\omega$-successful} if $\s^{+n}$ is successful for all $n < \omega$. At the end of Chapter III of his book, Shelah claims the following result and promises a proof in \cite{sh842}:

\begin{claim}\label{claim-xxx}
  Assume $2^{\lambda^{+n}} < 2^{\lambda^{+(n + 1)}}$ for all $n < \omega$. If an AEC $\K$ has an $\omega$-successful good $\lambda$-frame, is categorical in $\lambda$, and $\Ksatp{\lambda^{+\omega}}$ (the class of $\lambda^{+\omega}$-Galois-saturated models in $\K$) is categorical in a $\lambda' > \lambda^{+\omega}$, then $\Ksatp{\lambda^{+\omega}}$ is categorical in all $\lambda'' > \lambda^{+\omega}$.
\end{claim}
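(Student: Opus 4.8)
The plan is to reduce the statement to the \emph{excellence} of a single good frame and then to run a dimension-theoretic categoricity transfer in the spirit of Morley's theorem for excellent classes. First I would analyze the increasing sequence $\bar{\s} = \s, \s^+, \s^{++}, \ldots$ of good frames supplied by the $\omega$-successful hypothesis. By Theorem \ref{successful-cond} (whose applicability is guaranteed cardinal-by-cardinal once one checks that the weak GCH assumption $2^{\lambda^{+n}} < 2^{\lambda^{+(n+1)}}$ and the few-models side hold at each level), each $\s^{+(n+1)}$ lives on the Galois-saturated models of $\lambda^{+(n+1)}$, so the natural limit $\s^{+\omega}$ of $\bar{\s}$ is a good frame whose underlying class is exactly $\Ksatp{\lambda^{+\omega}}$ under the frame ordering. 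The crucial structural output I would extract here is that $\s^{+\omega}$ is excellent: its type-one nonforking relation satisfies the higher-dimensional existence and uniqueness properties for independent systems of models ($n$-dimensional amalgamation) for every $n < \omega$. This is exactly where the weak GCH does its work, ruling out at each successor step the pathological ``$\mu_{\text{unif}}$-many models'' side that appears in Theorems \ref{shelah-local-good-frame} and \ref{successful-cond}.

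Second, I would use the categoricity in $\lambda$ to anchor the base of the excellent class, and then globalize. Categoricity in $\lambda$ propagates up the successful tower: since each $\s^{+n}$ is carried by the (unique) saturated model at $\lambda^{+n}$, the class $\Ksatp{\lambda^{+\omega}}$ is categorical in $\lambda^{+\omega}$ itself, so the excellent frame $\s^{+\omega}$ sits over a categorical base. Excellence then yields prime models over independent systems and over sets of the form $M \cup \{a\}$, together with a superstable-like global forking calculus for types over all saturated models in $\Ksatp{\lambda^{+\omega}}$ of arbitrary size $\geq \lambda^{+\omega}$; in particular every model of $\Ksatp{\lambda^{+\omega}}$ of size $\mu > \lambda^{+\omega}$ is genuinely saturated. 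From this I would build the attendant dimension theory: an orthogonality calculus for regular types and the fact that a saturated model of size $\mu$ is determined up to isomorphism by the dimensions of its regular types over the categorical base.

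Third, with the dimension theory in place the transfer becomes a counting argument. If the excellent frame is \emph{unidimensional}, all regular dimensions collapse to a single one, necessarily equal to $\mu$, so a saturated model of size $\mu > \lambda^{+\omega}$ is unique and $I(\Ksatp{\lambda^{+\omega}}, \mu) = 1$. Conversely, if the frame were not unidimensional, two orthogonal regular types could be realized to mutually independent dimensions, producing more than one model of every sufficiently large size. I would invoke the hypothesized categoricity of $\Ksatp{\lambda^{+\omega}}$ in the single cardinal $\lambda' > \lambda^{+\omega}$: as its model of size $\lambda'$ is saturated, categoricity there forces unidimensionality, and unidimensionality then delivers $I(\Ksatp{\lambda^{+\omega}}, \lambda'') = 1$ for all $\lambda'' > \lambda^{+\omega}$, which is the desired conclusion.

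The hard part is squarely the first step, namely proving that an $\omega$-successful frame is excellent. This demands setting up the full machinery of $n$-dimensional independent systems and establishing $(n)$-existence and $(n)$-uniqueness by induction on $n$, using the weak GCH at each level to stay on the structural side. This is a long and delicate construction --- it is precisely the content Shelah defers to \cite{sh842} --- whereas the subsequent globalization and the dimension-counting transfer, once excellence is in hand, are a comparatively routine adaptation of the classical categoricity argument for excellent classes.
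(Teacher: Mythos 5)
There is a genuine gap here, and it is worth being precise about what it is. The statement you are trying to prove is not proven in the paper at all: it is presented as a claim that Shelah makes at the end of Chapter III of \cite{shelahaecbook}, with the proof deferred to the (unpublished) \cite{sh842}; the survey then treats it purely as a \emph{hypothesis} in Theorems \ref{categ-transfer-wgch} and \ref{categ-conj-tame-short}. Your outline does correctly reconstruct the intended strategy --- the paper's one-line description is exactly ``the strategy is to show that a good frame in the sequence is excellent'' --- and your third step (categoricity in a single $\lambda' > \lambda^{+\omega}$ forces unidimensionality, which then gives categoricity everywhere) is the right shape for the endgame. But your first step, which you yourself identify as the hard part, is not an argument: establishing $(n)$-existence and $(n)$-uniqueness for independent systems from $\omega$-successfulness is the entire mathematical content of the claim, and deferring it to \cite{sh842} leaves you with an outline of a theorem rather than a proof of one. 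The same is true, to a lesser extent, of the ``comparatively routine'' second and third steps: a dimension theory for regular types over an abstract good frame, and the fact that a saturated model is determined by its dimensions, have no published proof in this generality and cannot be waved through by analogy with the first-order excellent-classes argument.

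Two more local points. First, your appeal to Theorem \ref{successful-cond} is misplaced: that theorem \emph{derives} successfulness from weak GCH together with the few-models hypothesis $0 < I(\K, \lambda^{++}) < \mu_{\text{unif}}(\lambda^{++}, 2^{\lambda^+})$, but no few-models hypothesis is available in Claim \ref{claim-xxx} --- instead, $\omega$-successfulness is assumed outright, and the tower $\s^{+}, \s^{++}, \ldots$ exists by the definition of a successful frame, not by re-running Theorem \ref{successful-cond} at each level. Second, the assertion that $\Ksatp{\lambda^{+\omega}}$ is categorical in $\lambda^{+\omega}$ needs an argument: $\lambda^{+\omega}$ is singular of cofinality $\omega$, so uniqueness of the $\lambda^{+\omega}$-Galois-saturated model of that size does not follow immediately from uniqueness of saturated models at each $\lambda^{+n}$ and requires the stability/union-of-saturated machinery of the frame tower.
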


Can one build a good frame in ZFC? In Chapter IV of his book, Shelah proves: 

\begin{thm}\label{good-frame-categ}
  Let $\K$ be an AEC categorical in cardinals of arbitrarily high cofinality. Then there exists a cardinal $\lambda$ such that $\K$ is categorical in $\lambda$ and $\K$ has a good $\lambda$-frame.
\end{thm}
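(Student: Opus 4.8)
The plan is to produce $\lambda$ as a categoricity cardinal sitting above the relevant Hanf numbers and to lean throughout on the following consequence of the hypothesis: if $\mu$ is a categoricity cardinal with $\cf{\mu} > \LS(\K)$, then the model of size $\mu$ is $\cf{\mu}$-Galois-saturated (the easy half of the observation recorded after Question \ref{sat-quest}). Since the hypothesis furnishes categoricity cardinals of arbitrarily high cofinality, we obtain categorical models that are $\chi$-Galois-saturated for arbitrarily large $\chi$. This abundance of saturation is what will play the role that tameness plays elsewhere, letting us analyze and compare types over small models by working inside one very saturated ambient model.

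First I would fix a categoricity cardinal $\lambda$ above $H_1$ and address the background structure. The class has arbitrarily large models (it is categorical in unboundedly many cardinals), so Ehrenfeucht--Mostowski models are available through the presentation theorem (Theorem \ref{pres-thm}). The \emph{first} main obstacle is to derive amalgamation in ZFC from categoricity alone; one cannot simply invoke saturation here, since Galois types presuppose amalgamation to be well-behaved. Instead I would run a careful analysis of EM models and of the number-of-models function across the categoricity spectrum, using the large cofinality of a second categoricity cardinal $\mu \gg \lambda$ (so that the model of size $\mu$ behaves like a local monster for models of size $\le \lambda$) to amalgamate extensions of a common model of size $\lambda$. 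Once amalgamation in the relevant cardinals is in place --- organized, if one wishes, through the decomposition of Proposition \ref{jep-decomp} and the transfer of Corollary \ref{cor-jep-from-ap} --- joint embedding, no maximal models, and the saturation of the high-cofinality categorical models all become available.

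With amalgamation and no maximal models secured, I would invoke the Shelah--Villaveces theorem (Theorem \ref{shvi}) to obtain $\mu_0$-superstability for a suitable $\mu_0 \le \lambda$, and arrange (using Theorem \ref{sat-categ}, or directly the high cofinality of $\lambda$) that the categorical model of size $\lambda$ is saturated enough. On $\K_\lambda$ I would then define a good $\lambda$-frame: the basic types are the nonalgebraic ones, and nonforking is defined through non-$\mu_0$-splitting. Stability in $\lambda$, the local-character axiom, and the uniqueness of nonforking extensions then follow from superstability together with the splitting calculus, in particular the uniqueness of non-splitting extensions to universal models of the same size (Theorem \ref{gen-unique-dns-fact}).

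The hardest part will be verifying the remaining axioms of a good frame --- principally the \emph{existence} (extension) of nonforking extensions over every model in $\K_\lambda$ and the \emph{symmetry} of nonforking --- since without tameness one cannot freely move types between cardinals to repair these properties. This is where the remaining budget of cofinality is spent: the $\cf{\mu}$-saturation of the model of size $\mu$ lets one realize the relevant types and carry out the symmetry and extension arguments inside that single saturated model, and then reflect the conclusions back down to size $\lambda$. Packaging all of these verifications as the axiom list for a good $\lambda$-frame, with $\lambda$ a categoricity cardinal by construction, yields the theorem.
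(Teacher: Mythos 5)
Your overall architecture (EM models via the presentation theorem, saturation of high-cofinality categoricity cardinals, amalgamation, Shelah--Villaveces superstability, and a splitting-based nonforking relation) matches the shape of the argument the survey attributes to Shelah, but the step where you declare that saturation ``will play the role that tameness plays elsewhere'' is exactly where your proof has a hole. The way saturation is actually cashed in is by first deriving \emph{weak tameness} below the categoricity cardinal (Theorem \ref{tameness-from-categ}): the EM-model analysis together with the $\chi$-saturation of the categorical model yields a $\chi_0$ such that Galois types over saturated models are determined by their restrictions to submodels of size $\chi_0$; the survey states explicitly that the proof of Theorem \ref{good-frame-categ} ``first get[s] some tameness (and amalgamation)'' and only then builds the frame from the lower cardinals as in Theorem \ref{stable-forking-def}. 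This is indispensable for the frame axioms. Concretely, if nonforking at $\lambda$ is witnessed by non-$\mu_0$-splitting over models of size $\mu_0 < \lambda$, then the uniqueness axiom requires comparing two types over a model of size $\lambda$ via their restrictions to models of size $\mu_0$ --- which is precisely weak $(\mu_0, <\lambda^+)$-tameness, and is \emph{not} supplied by Theorem \ref{gen-unique-dns-fact}, which only gives uniqueness of non-splitting extensions within the same cardinal $\mu$. If instead you take $\mu_0 = \lambda$ so that Theorem \ref{gen-unique-dns-fact} applies directly, then the local character axiom $\clc{1} = \aleph_0$ needs $\lambda$-superstability, which Shelah--Villaveces does not give at the categoricity cardinal itself (the survey warns that Theorem \ref{shvi} fails for $\lambda \ge \mu$ and that $\K$ need not even be Galois-stable in $\lambda$ in general). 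Either way a cardinal-transfer of the splitting calculus is required, and weak tameness is the tool that supplies it; your proposal never derives it.

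A secondary concern is the amalgamation step: the mechanism you describe (``the model of size $\mu$ behaves like a local monster \ldots\ to amalgamate extensions of a common model of size $\lambda$'') presupposes that both extensions embed into the categorical model of size $\mu$ \emph{over the common base}, which is essentially the amalgamation property you are trying to establish. This is the one place where the arbitrarily-high-cofinality hypothesis is genuinely spent, it is a deep part of Shelah's Chapter IV argument, and the survey itself notes that these proofs have not been fully checked; it cannot be waved through as a routine EM-model computation.
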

\begin{thm}\label{good-frame-categ-2}
  Let $\K$ be an AEC with amalgamation and no maximal models. If $\K$ is categorical in a $\lambda \ge \hanf{\aleph_{\LS (\K)^+}}$, then there exists $\mu < \aleph_{\LS (\K)^+}$ such that $\Ksatp{\mu}$ has a good $\mu$-frame.
\end{thm}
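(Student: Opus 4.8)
The plan is to locate a cardinal $\mu < \aleph_{\LS (\K)^+}$ at which $\K$ is so well-behaved that the class $\Ksatp{\mu}$ of $\mu$-Galois-saturated models carries a forking-like independence notion satisfying all the good-frame axioms. I would start with two preliminary reductions that exploit the largeness of the categoricity cardinal. First, since $\K$ has amalgamation and no maximal models and is categorical in $\lambda$, the Shelah--Villaveces theorem (Theorem \ref{shvi}) gives that $\K$ is $\mu$-superstable for \emph{every} $\mu$ with $\LS (\K) \le \mu < \lambda$, in particular for every $\mu < \aleph_{\LS (\K)^+}$, since $\aleph_{\LS (\K)^+} \le \hanf{\aleph_{\LS (\K)^+}} \le \lambda$. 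Second, applying Theorem \ref{sat-categ} with $\chi := \aleph_{\LS (\K)^+}$ shows that the model of size $\lambda$ is $\aleph_{\LS (\K)^+}$-Galois-saturated; combined with Galois-stability in each $\mu$ (part of $\mu$-superstability, Definition \ref{ss-def}), this guarantees that saturated models of size $\mu$ exist and that $\Ksatp{\mu}$ is a rich, nonempty class for each $\mu$ in the relevant range.

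Next I would write down the candidate frame on $\Ksatp{\mu}$, taking as the forking relation non-$\mu$-splitting: $p \in \gS (N)$ does not fork over $M \lea N$ if $p$ does not $\mu$-split over some small saturated $M' \lea M$. The point of passing to saturated models, rather than working on all of $\K_\mu$, is that saturated models are universal over their submodels, so the uniqueness statement of Theorem \ref{gen-unique-dns-fact} upgrades from the ``universal over'' hypothesis to the general uniqueness axiom of a good frame. Superstability then supplies the local-character and existence axioms directly, and $\mu$-stability supplies density of basic types, so the ``easy half'' of the axioms is in hand for every $\mu < \aleph_{\LS (\K)^+}$.

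The genuine content, and the step I expect to be hardest, is establishing symmetry and continuity for this notion---equivalently, showing that $\Ksatp{\mu}$ is itself an AEC with $\LS (\Ksatp{\mu}) = \mu$, i.e.\ that a union of an increasing chain of $\mu$-saturated models is again $\mu$-saturated. For symmetry I would extract it from uniqueness of limit models: Theorem \ref{uq-limit-categ} yields uniqueness of limit models in every $\mu < \aleph_{\LS (\K)^+}$ (because $\hanf{\mu} \le \hanf{\aleph_{\LS (\K)^+}} \le \lambda$), and I would then invoke the equivalence, under superstability, between uniqueness of limit models and symmetry of $\mu$-splitting---the converse direction to Theorem \ref{uq-limit-symmetry}. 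Symmetry is precisely what forces unions of chains of saturated models to be saturated (this is the superstability package recorded in Theorem \ref{gvsuperstab}), which secures both the chain axioms for $\Ksatp{\mu}$ and the continuity axiom of the frame.

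Finally, to pin down a \emph{single} working $\mu$ rather than a range, I would run a pigeonhole over the regular cardinal $\LS (\K)^+$. The idea is to attach to each $\mu$ in $[\LS (\K), \aleph_{\LS (\K)^+})$ a non-increasing ordinal invariant measuring the residual failure of the remaining axioms (the extension axiom being the delicate one); a non-increasing ordinal-valued function indexed along a sequence of length $\LS (\K)^+$ must stabilize, and at any stabilization point $\mu$ the invariant witnesses that all axioms hold at once. Since this occurs strictly below $\aleph_{\LS (\K)^+}$, the resulting $\mu$ has $\Ksatp{\mu}$ equipped with a good $\mu$-frame, as desired. The crux throughout is the symmetry/continuity step of the previous paragraph: once it is available, verifying the frame axioms is routine, and the pigeonhole merely serves to make the choice of $\mu$ effective rather than, as in Theorem \ref{good-frame-categ}, purely existential.
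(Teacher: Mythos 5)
There is a genuine gap: your argument never derives any locality (tameness) from categoricity, whereas this is the heart of the paper's proof --- the paper says so immediately after the statement (``the proofs of both theorems first get some tameness \dots\ and then use it to define a good frame \dots\ by making use of the lower cardinals (as in Theorem \ref{stable-forking-def})''). The mechanism is Theorem \ref{tameness-from-categ}(1): $\chi := \aleph_{\LS (\K)^+}$ is a limit cardinal with $\cf{\chi} = \LS (\K)^+ > \LS (\K)$, and the model of size $\lambda$ is $\chi$-Galois-saturated (your appeal to Theorem \ref{sat-categ} here is correct), so there is $\chi_0 < \aleph_{\LS (\K)^+}$ such that $\K$ is $(\chi_0, <\aleph_{\LS (\K)^+})$-weakly tame. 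One then picks $\mu \in (\chi_0, \aleph_{\LS (\K)^+})$ and defines forking as in Definition \ref{mu-forking-def}, via non-splitting over submodels of size $\chi_0$. This is what explains both the threshold $\hanf{\aleph_{\LS (\K)^+}}$ (one needs a limit cardinal of cofinality above $\LS (\K)$, and $\aleph_{\LS(\K)^+}$ is the least) and the purely existential ``there exists $\mu$'': $\mu$ is pinned down by the weak tameness base $\chi_0$, not by a pigeonhole. Your substitute --- a ``non-increasing ordinal invariant measuring the residual failure of the remaining axioms'' along $[\LS (\K), \aleph_{\LS (\K)^+})$ --- is not an argument: no such invariant is defined, no reason is given why it should be non-increasing, and no reason why its stabilization would yield the extension or uniqueness axioms.

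The reason weak tameness cannot be dispensed with is the uniqueness axiom. Theorem \ref{gen-unique-dns-fact} gives uniqueness of non-$\chi_0$-splitting extensions only to models \emph{of the same size} as the splitting base; the paper stresses (contrasting it with Theorem \ref{ns-uniq-tame-fact}) that upgrading this to extensions over models of size $\mu > \chi_0$ is precisely where tameness enters, and the footnote in the proof of Theorem \ref{ss-implies-all} repeats that tameness is crucial for uniqueness. Saturation of the frame's models does not substitute for this: universality of the domain over the splitting witness is already the hypothesis of Theorem \ref{gen-unique-dns-fact}, and it still only controls extensions of size $\|M_0'\|$; if you instead take the splitting witnesses of size $\mu$ itself, you run into the problem that two nonforking types need not have a common witnessing base, and transitivity and continuity of the relation become problematic. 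A smaller issue: you invoke a converse to Theorem \ref{uq-limit-symmetry} (uniqueness of limit models implies symmetry of splitting) that the paper does not state; in the cited work the implication runs the other way --- categoricity in some $\lambda \ge \hanf{\mu}$ yields $\mu$-symmetry directly, from which Theorems \ref{sat-categ} and \ref{uq-limit-categ} are deduced --- so the symmetry you need is indeed available at each $\mu < \aleph_{\LS (\K)^+}$, but not by the route you describe.
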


The proofs of both theorems first get some tameness (and amalgamation in the first case), and then use it to define a good frame in $\lambda$ by making use of the lower cardinals (as in Theorem \ref{stable-forking-def}).

Assuming amalgamation and weak GCH, Shelah shows that the good frame can be taken to be $\omega$-successful. Combining this with Claim \ref{claim-xxx}, Shelah deduces the eventual categoricity conjecture in AECs with amalgamation:

\begin{thm}\label{categ-transfer-wgch}
  Assume Claim \ref{claim-xxx} and $2^{\theta} < 2^{\theta^+}$ for all cardinals $\theta$. Let $\K$ be an AEC with amalgamation. If $\K$ is categorical in \emph{some} $\lambda \ge \hanf{\aleph_{\LS (\K)^+}}$, then $\K$ is categorical in \emph{all} $\lambda' \ge \hanf{\aleph_{\LS (\K)^+}}$.
\end{thm}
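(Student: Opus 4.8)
Theorem \ref{categ-transfer-wgch} asks to transfer categoricity from some $\lambda \ge \hanf{\aleph_{\LS(\K)^+}}$ upward (and implicitly on a tail), assuming amalgamation, weak GCH ($2^\theta < 2^{\theta^+}$ everywhere), and Claim \ref{claim-xxx}. The plan is to build an $\omega$-successful good frame somewhere below the categoricity cardinal, feed it into Claim \ref{claim-xxx} to get an upward transfer for the saturated models above some $\mu^{+\omega}$, and finally convert categoricity of the saturated class into categoricity of the whole class. First I would invoke Theorem \ref{good-frame-categ-2}: since $\K$ has amalgamation, no maximal models (from amalgamation, joint embedding, and arbitrarily large models, which follow once categoricity is high enough via Corollary \ref{cor-jep-from-ap} and the Hanf-number computation), and is categorical in $\lambda \ge \hanf{\aleph_{\LS(\K)^+}}$, there is some $\mu < \aleph_{\LS(\K)^+}$ such that $\Ksatp{\mu}$ has a good $\mu$-frame.

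The next step is to upgrade this good $\mu$-frame to an $\omega$-successful one. Here is where weak GCH enters decisively: by Theorem \ref{successful-cond} (and its iteration), the hypotheses $2^\theta < 2^{\theta^+}$ together with the few-models side condition force a good frame to be successful at each stage, so that $\s^{+n}$ exists and is successful for every $n<\omega$. The delicate point is that Theorem \ref{successful-cond} also requires $0 < I(\K,\mu^{++}) < \mu_{\text{unif}}(\mu^{++},2^{\mu^+})$ at each successor step; I would discharge this by noting that categoricity in the high cardinal $\lambda$, pushed down through the frame machinery, yields categoricity (hence in particular few models) in each relevant $\mu^{+n}$, so the few-models hypothesis is met automatically along the whole tower. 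This is what produces an $\omega$-successful good $\mu$-frame with $\K$ (or its saturated reduct) categorical in $\mu$.

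With an $\omega$-successful good $\mu$-frame in hand and categoricity in $\lambda > \mu^{+\omega}$, I would apply Claim \ref{claim-xxx} directly to the cardinal $\mu$: it tells us that if $\Ksatp{\mu^{+\omega}}$ is categorical in some $\lambda' > \mu^{+\omega}$, then it is categorical in \emph{all} $\lambda'' > \mu^{+\omega}$. Since our $\lambda$ lies above $\mu^{+\omega}$ (as $\mu < \aleph_{\LS(\K)^+}$ forces $\mu^{+\omega} < \aleph_{\LS(\K)^+} \le \hanf{\aleph_{\LS(\K)^+}} \le \lambda$), and the model of size $\lambda$ is Galois-saturated (using Theorem \ref{sat-categ}: categoricity in $\lambda \ge \hanf{\chi}$ gives $\chi$-saturation, and here $\lambda$ dominates the relevant Hanf numbers so the categoricity model lies in $\Ksatp{\mu^{+\omega}}$), the hypothesis of Claim \ref{claim-xxx} is satisfied. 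We conclude that $\Ksatp{\mu^{+\omega}}$ is categorical in every $\lambda'' > \mu^{+\omega}$.

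The final step is to pass from categoricity of the saturated class $\Ksatp{\mu^{+\omega}}$ back to categoricity of $\K$ itself in every $\lambda' \ge \hanf{\aleph_{\LS(\K)^+}}$. For this I would again use Theorem \ref{sat-categ}: for any $\lambda' \ge \hanf{\aleph_{\LS(\K)^+}}$ we have $\lambda' \ge \hanf{\mu^{+\omega}}$, so the unique model of $\K$ of size $\lambda'$ is $\mu^{+\omega}$-Galois-saturated, hence lies in $\Ksatp{\mu^{+\omega}}$ and is thus determined up to isomorphism by the categoricity of the saturated class. Since $\Ksatp{\mu^{+\omega}}$ and $\K$ share their models in each such $\lambda'$, categoricity transfers to $\K$. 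I expect the main obstacle to be the second step: carefully verifying that the few-models and weak-GCH hypotheses of Theorem \ref{successful-cond} hold uniformly at every level $\mu^{+n}$ so that $\omega$-successfulness genuinely follows — the bookkeeping that categoricity in $\lambda$ descends to few models in all the intermediate cardinals is the technical heart, and it is precisely the machinery that Shelah's book develops at length. Everything before and after is an application of the quoted results.
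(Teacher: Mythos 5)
Your overall architecture (build a good frame below the categoricity cardinal, upgrade it to an $\omega$-successful one, feed it to Claim \ref{claim-xxx}, then transfer back to $\K$) is the same as the one the paper sketches. But the step you yourself identify as the technical heart --- obtaining $\omega$-successfulness --- is where the argument breaks. You propose to get successfulness at each level $\mu^{+n}$ from Theorem \ref{successful-cond}, discharging its few-models hypothesis $0 < I(\cdot,\mu^{+(n+2)}) < \mu_{\text{unif}}(\mu^{+(n+2)},2^{\mu^{+(n+1)}})$ by claiming that categoricity in $\lambda$, ``pushed down through the frame machinery, yields categoricity in each relevant $\mu^{+n}$.'' No such downward transfer is available: the cardinals $\mu^{+n}$ lie below $\aleph_{\LS(\K)^+}$, far beneath the threshold $\hanf{\aleph_{\LS(\K)^+}}$, and the theorem's own conclusion does not assert categoricity there --- so this step is either circular or simply unsupported. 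There is no reason $\K$ (or $\Ksatp{\mu}$) should have few models in $\mu^{+(n+2)}$ just because $\K$ is categorical much higher up. The paper's route is genuinely different at this point: under \emph{full amalgamation} together with weak GCH, Shelah shows (in Chapter IV.7 of his book, with an alternate proof in the cited follow-up work) that the good frame produced from categoricity can be taken to be $\omega$-successful \emph{without} invoking the few-models condition of Theorem \ref{successful-cond}. That argument is the hard content of the theorem and cannot be replaced by iterating Theorem \ref{successful-cond}.

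A second, smaller gap is in your final step. Theorem \ref{sat-categ} only says that the model \emph{in the categoricity cardinal} $\lambda$ is $\chi$-Galois-saturated; it says nothing about models of size $\lambda'$ for a cardinal $\lambda'$ not yet known to be a categoricity cardinal. Your phrase ``the unique model of $\K$ of size $\lambda'$'' presupposes exactly what is to be proved. To pass from categoricity of $\Ksatp{\mu^{+\omega}}$ to categoricity of $\K$ one must show that \emph{every} model of $\K$ of size $\lambda' \ge \hanf{\aleph_{\LS(\K)^+}}$ is $\mu^{+\omega}$-Galois-saturated, which requires an omitting-types argument in the style of Morley (compare Theorem \ref{shelah-omit-type} and the proof sketch of Theorem \ref{categ-conj-tame-short}), not an appeal to Theorem \ref{sat-categ} alone.
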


Note that the first steps in the proof are again proving enough tameness to make the construction of an $\omega$-successful good frame.

\section{Tameness: what and where} \label{tame-sec}

\subsection{What -- Definitions and basic results}\label{what-subsec}

Syntactic types have nice locality properties: different types must differ on a formula and this difference can be seen by restricting the type to the finite\footnote{Or larger if the logic allows infinitely many free variables.} set of parameters in such a formula.  Galois types do not necessarily have this property.  Indeed, assuming the existence of a monster model $\sea$, this would imply a strong closure property on $\Aut (\sea)$.  Nonetheless, a generalization of this idea, called \emph{tameness}, has become a key tool in the study of AECs.

For a set $A$, we write $P_{\kappa} A$ for the collection of subsets of $A$ of size less than $\kappa$. We also define an analog notation for models: for $M \in \K_{\geq \kappa}$: $$P^*_\kappa M : = \{M_0 \in \K_{<\kappa} : M_0 \lea M\}$$

\begin{defin}\label{tamedef}
$\K$ is $<\kappa$-tame if, for all $M \in \K$ and $p \neq q \in \gS^1(M)$, there is $A \in P_\kappa |M|$ such that $p \rest A \neq q \rest A$.
\end{defin}

For $\kappa > \LS(\K)$, it is equivalent if we quantify over $P_\kappa^* M$ (models) rather than $P^*_\kappa |M|$ (sets). Quantifying over sets is useful to isolate notions such as $<\aleph_0$-tameness. Several parametrizations (e.g.\, of the length of type) and variations exist.  Below we list a few that we use; note that, in all cases, writing ``$\kappa$'' in place of ``$<\kappa$'' should be interpreted as ``$<\kappa^+$''.

\begin{defin}  \label{tame-var-def}
Suppose $\K$ is an AEC with $\kappa \leq \lambda$.
\begin{enumerate}

	\item $\K$ is \emph{$(< \kappa, \lambda)$-tame} if for any $M \in \K_\lambda$ and $p \neq q \in \gS^1(M)$, there is some $A \in P_{\kappa} |M|$ such that $p \rest A \neq q \rest A$.
	
	\item $\K$ is \emph{$< \kappa$-type short} if for any $M \in \K$, index set $I$, and $p \neq q \in \gS^I(M)$, there is some $I_0 \in P_{\kappa} I$ such that $p^{I_0} \neq q^{I_0}$.

	\item $\K$ is \emph{$\kappa$-local} if for any increasing, continuous $\seq{M_i \in \K : i \leq \kappa}$ and any $p \neq q \in \gS(M_\kappa)$, there is $i_0 < \kappa$ such that $p \rest M_{i_0} \neq q \rest M_{i_0}$.

	\item $\K$ is \emph{$\kappa$-compact} if for any increasing, continuous $\seq{M_i : i \leq \kappa}$ and increasing $\seq{p_i \in \gS(M_i) : i < \kappa}$, there is $p \in \gS(M)$ such that $p_i \leq p$ for all $i < \kappa$.
	
	\item $\K$ is \emph{fully $<\kappa$-tame and -type short} if for any $M \in \K$, index set $I$, and $p \neq q \in \gS^I(M)$, there are $A \in P_\kappa |M|$ and $I_0 \in P_\kappa I$ such that $p^{I_0} \rest A \neq q^{I_0} \rest A$.

\end{enumerate}	

When $\kappa$ is omitted, we mean that there exists $\kappa$ such that the property holds at $\kappa$. For example, ``$\K$ is tame'' means that there exists $\kappa$ such that $\K$ is $<\kappa$-tame.  Note that definitions of locality and compactness implicitly assume $\kappa$ is regular.
\end{defin}

These types of properties are often called locality properties for AECs because they assert, in different ways, that Galois types are locally defined.


Each of these notions also has a \emph{weak} version: weak $<\kappa$-tameness, etc.  This variation means that the property holds when the domain is Galois-saturated.

A brief summary of the ideas is below.  In each (and throughout this paper), ``small'' is used to mean ``of size less than $\kappa$''.

\begin{itemize}
	\item $<\kappa$-tameness says that different types differ over some small subset of the domain.
	\item $<\kappa$-type shortness says that different types differ over some small subset of their length.
	\item $\kappa$-locality says that each increasing chain of Galois types of length $\kappa$ has \emph{at most} one upper bound.
	\item $\kappa$-compactness says that each increasing chain of Galois types of length $\kappa$ has \emph{at least} one upper bound.\footnote{All AECs are $\omega$-compact and global compactness statements have large cardinal strength; see \cite[Section 2]{sh932}.}
\end{itemize}

A combination of tameness and type shortness allows us to conceptualize Galois types as sets of smaller types.

There are several relations between the properties:
\begin{prop}\label{easy-implications} \
\begin{enumerate}
	\item For $\kappa > \LS (\K)$, $<\kappa$-type shortness implies $<\kappa$-tameness.
	\item $<\cf \kappa$-tameness implies $\kappa$-locality.
	\item $\mu$-locality for all $\mu < \lambda$ implies $(\LS(\K), \lambda)$-tameness.
	\item $\mu$-locality for all $\mu < \lambda$ implies $\lambda$-compactness.
	
\end{enumerate}
\end{prop}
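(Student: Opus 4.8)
The plan is to prove the four implications separately: (1) exploits a duality between tameness and type shortness, (2) and (3) are cofinality-counting arguments, and (4) is a chain construction in which locality is used precisely at the limit stages. Throughout I work inside a monster model $\sea$, so that Galois types are orbits under $\Aut(\sea)$ and equality of types is witnessed by an automorphism; the general case is the same after replacing automorphisms by the atomic-equivalence maps of Definition \ref{gtp-def}. For (1), fix $M$ and $p = \gtp(b/M) \neq q = \gtp(c/M)$ in $\gS^1(M)$, pick any $M_0 \lea M$ with $\|M_0\| = \LS(\K)$, and enumerate $|M| \setminus |M_0|$ as $\bar a = \seq{a_j : j \in J}$. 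Form the long types $p' := \gtp(b\bar a/M_0)$ and $q' := \gtp(c\bar a/M_0)$ in $\gS^I(M_0)$, where $I = \{\ast\} \sqcup J$ and $\ast$ indexes $b$ (resp.\ $c$). The key observation is that for any $S \subseteq J$, an automorphism fixing $M_0$ sends $b\, (\bar a \rest S)$ to $c\, (\bar a \rest S)$ iff it fixes $|M_0| \cup \{a_j : j \in S\}$ and sends $b$ to $c$; taking $S = J$ gives $p' = q' \iff p = q$. Since $p \neq q$ we have $p' \neq q'$, so $<\kappa$-type shortness yields $I_0 \in P_\kappa I$ with $(p')^{I_0} \neq (q')^{I_0}$, and we may enlarge $I_0$ to contain $\ast$. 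Setting $A := |M_0| \cup \{a_j : j \in I_0 \cap J\}$, the same automorphism equivalence (now with $S = I_0 \cap J$) gives $p \rest A \neq q \rest A$, and $|A| \leq \LS(\K) + |I_0| < \kappa$ exactly because $\kappa > \LS(\K)$; this is where that hypothesis is used, namely to absorb the base model $M_0$ into a small set.

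For (2), let $\seq{M_i : i \leq \kappa}$ be increasing continuous with $p \neq q \in \gS(M_\kappa)$. By $<\cf{\kappa}$-tameness there is $A \in P_{\cf{\kappa}}|M_\kappa|$ with $p \rest A \neq q \rest A$. Since $M_\kappa = \bigcup_{i<\kappa} M_i$ and $|A| < \cf{\kappa}$, the set $A$ meets only boundedly many levels, so $A \subseteq |M_{i_0}|$ for some $i_0 < \kappa$, whence $p \rest M_{i_0} \neq q \rest M_{i_0}$. Part (3) runs the same capturing idea inside an induction on $\lambda$: given $M \in \K_\lambda$ and $p \neq q$, take a resolution $\seq{M_i : i \leq \cf{\lambda}}$ of $M$ and apply $\cf{\lambda}$-locality to get $i_0 < \cf{\lambda}$ with $p \rest M_{i_0} \neq q \rest M_{i_0}$; since $\|M_{i_0}\| < \lambda$ and $\mu$-locality still holds for all $\mu$ below $\|M_{i_0}\|$, the inductive hypothesis ($(\LS(\K), \|M_{i_0}\|)$-tameness) produces a witness of size $\leq \LS(\K)$. \emph{The main obstacle is precisely the step ``apply $\cf{\lambda}$-locality''}: this is legitimate only when $\cf{\lambda} < \lambda$, i.e.\ when $\lambda$ is singular, for when $\lambda$ is regular a resolution necessarily has length $\lambda$ and $\lambda$-locality is not among the hypotheses. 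Indeed, for regular $\lambda$ (including the base case $\LS(\K)^+$) one checks that the desired conclusion is equivalent to $\lambda$-locality, which does not follow from locality strictly below $\lambda$; so the regular case is the genuinely delicate point and must either be treated separately or understood to be excluded.

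For (4) the construction is the heart of the matter. Given the continuous chain $\seq{M_i : i \leq \lambda}$ and an increasing sequence $\seq{p_i \in \gS(M_i) : i < \lambda}$, I build by recursion a continuous increasing chain $\seq{N_i : i < \lambda}$, coherent embeddings $h_i : M_i \to N_i$, and a single fixed element $a \in N_0$, such that $\gtp(a/h_i(M_i); N_i) = h_i(p_i)$. At a successor step, since $h_{i+1}(p_{i+1})$ must restrict to $h_i(p_i) = \gtp(a/h_i(M_i); N_i)$, amalgamation lets me extend $h_i$ to $h_{i+1}$ and enlarge $N_i$ to $N_{i+1}$ keeping the same $a$; this uses only the coherence $p_{i+1} \rest M_i = p_i$. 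At a limit stage $i$, I take unions and obtain \emph{some} upper bound $\gtp(a/h_i(M_i); N_i)$ of $\seq{h_j(p_j) : j < i}$, and here locality enters decisively: by $\cf{i}$-locality (with $\cf{i} < \lambda$, hence available) this upper bound is unique, so it must equal $h_i(p_i)$, which is exactly what keeps the recursion coherent. Passing to $N_\lambda = \bigcup_{i<\lambda} N_i$ and $h_\lambda = \bigcup_i h_i$, the type $p := h_\lambda^{-1}\bigl(\gtp(a/h_\lambda(M_\lambda); N_\lambda)\bigr) \in \gS(M_\lambda)$ satisfies $p \rest M_i = p_i$ for all $i < \lambda$, giving the desired bound. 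Thus locality supplies the uniqueness that makes the limit stages go through, and that is the only place it is needed.
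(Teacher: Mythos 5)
The paper states this proposition without proof (the historical remarks simply point to the original sources), so there is nothing in-text to compare against; judged on its own, your treatment of (1), (2) and (4) is correct and is the standard argument in each case. In (1) you correctly locate the role of the hypothesis $\kappa > \LS(\K)$: type shortness is only defined for types over \emph{models}, so the long types must be based on some $M_0 \lea M$ of size $\LS(\K)$, and $|M_0|$ has to be absorbed into the witnessing set. In (4) you correctly invoke locality only at limit stages $i < \lambda$ (to see that the direct limit realizes the prescribed $p_i$ rather than merely \emph{some} upper bound) and not at the top, where existence of an upper bound is all that is wanted. One caveat for (4): the successor step (``amalgamation lets me extend $h_i$\dots'') genuinely uses amalgamation, which the proposition does not assume; your opening disclaimer that ``the general case is the same'' is too quick there, since without amalgamation the equality of $\gtp(b/M_i;N')$ and $\gtp(a/M_i;N_i)$ is only witnessed by a zigzag and does not directly yield the map $h_{i+1}$ into an extension of $N_i$. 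This looseness is arguably inherited from the statement (standard formulations of this implication assume amalgamation), but it should be flagged rather than waved away.

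For (3) you have put your finger on a genuine defect, but you have not resolved it, so as written you have not proved the statement. Your induction establishes $(\LS(\K),\mu)$-tameness for every $\mu<\lambda$, and for $\mu=\lambda$ when $\lambda$ is singular; when $\lambda$ is regular, any resolution of a model of size $\lambda$ has length $\lambda$, and the instance of locality needed is exactly $\lambda$-locality for chains with top model of size $\lambda$ --- which, given the inductive hypothesis, is equivalent to the tameness being proved and is not among the hypotheses. The source the survey cites for this item (Lemma 11.5 of Baldwin's book) assumes $\cf{\mu}$-locality for all $\mu \le \lambda$, which is what your induction actually consumes; so the repair is either to read the hypothesis as ``$\mu \le \lambda$'' or to weaken the conclusion to $(\LS(\K),<\lambda)$-tameness. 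Saying the regular case ``must either be treated separately or understood to be excluded'' is an accurate diagnosis but not a proof: commit to one of the two repairs and carry it out, or else exhibit an argument for regular $\lambda$ (none of the obvious ones work, for the reason you give).
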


As discussed, one of the draws of working in a short and tame AEC is that Galois types behave much more like first-order syntactic types in the sense that a Galois type $p \in \gS(M)$ can be identified with the collection $\{p^{I_0} \rest M_0 : I_0 \in P_\kappa \ell(p) \text{ and } M_0 \in P^*_\kappa M\}$ of its small restrictions: 

\begin{prop}\label{injective-map}
$\K$ is fully $<\kappa$-tame and -type short if and only if the map:
$$p \in \gS(M) \mapsto \{p^{I_0} \rest M_0 : I_0 \in P_\kappa I, M_0 \in P^*_\kappa M\}$$
is injective.
\end{prop}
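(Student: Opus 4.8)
The plan is to observe that both sides of the equivalence merely assert, in slightly different language, that any two distinct Galois types over a common domain are already separated by one of their small restrictions; the only real work is to bridge two bookkeeping gaps. The first gap is that the definition of fully $<\kappa$-tame and -type short separates types using a small \emph{set} $A \in P_\kappa |M|$, whereas the map in the statement uses a small \emph{model} $M_0 \in P^*_\kappa M$; this is harmless provided $\kappa > \LS(\K)$, which I assume throughout and which is exactly what makes $P^*_\kappa M$ a faithful stand-in for $P_\kappa |M|$ (cf.\ the remark following Definition \ref{tamedef}). The second, more delicate, gap is that the map records only the \emph{unordered set} $F(p) := \{p^{I_0} \rest M_0 : I_0 \in P_\kappa I,\ M_0 \in P^*_\kappa M\}$ of small restrictions, forgetting which restriction sits over which pair $(I_0, M_0)$; so I must verify that a coordinatewise disagreement genuinely produces a disagreement of these sets.

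For the easy direction, I would suppose the map $p \mapsto F(p)$ is injective and fix $p \neq q$ in $\gS^I(M)$. Then $F(p) \neq F(q)$, so some element lies in the symmetric difference, say $r = p^{I_0} \rest M_0 \in F(p) \setminus F(q)$ with $I_0 \in P_\kappa I$ and $M_0 \in P^*_\kappa M$. In particular $r \neq q^{I_0} \rest M_0$, i.e.\ $p^{I_0} \rest M_0 \neq q^{I_0} \rest M_0$, and taking $A := |M_0| \in P_\kappa |M|$ witnesses that $\K$ is fully $<\kappa$-tame and -type short. Conversely, assuming this locality property and fixing $p \neq q$ in $\gS^I(M)$, the definition yields $A \in P_\kappa |M|$ and $I_0 \in P_\kappa I$ with $p^{I_0} \rest A \neq q^{I_0} \rest A$. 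Using the Löwenheim-Skolem-Tarski axiom I would choose $M_0 \lea M$ with $A \subseteq |M_0|$ and $\|M_0\| \le \LS(\K) + |A| < \kappa$, so that $M_0 \in P^*_\kappa M$; since restricting $p^{I_0} \rest M_0$ further down to $A$ recovers $p^{I_0} \rest A$, this forces $p^{I_0} \rest M_0 \neq q^{I_0} \rest M_0$.

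The step I expect to carry the real weight is upgrading this last coordinatewise inequality to $F(p) \neq F(q)$, i.e.\ showing that $r := p^{I_0}\rest M_0$ actually fails to lie in $F(q)$. Here the key is that equality of Galois types remembers both the domain and the index set: by construction $\Eat^{I}$ (and hence its transitive closure $E$) relates only triples with the same parameter set and the same index set. Consequently, if $r = q^{J} \rest N_0$ for some $J \in P_\kappa I$ and $N_0 \in P^*_\kappa M$, then necessarily $J = I_0$ and $|N_0| = |M_0|$, whence $q^{J}\rest N_0 = q^{I_0}\rest M_0$ and thus $p^{I_0}\rest M_0 = q^{I_0}\rest M_0$, contradicting the previous paragraph. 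Therefore $r \in F(p) \setminus F(q)$, so $F(p) \neq F(q)$ and the map is injective. Modulo this observation about domains and index sets, everything else is a routine unwinding of the definitions together with Löwenheim-Skolem-Tarski.
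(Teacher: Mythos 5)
Your argument is correct, and the paper offers no proof of this proposition (the historical remarks record it as folklore), so the expected argument is precisely the definition-unwinding you give: the only non-trivial point is that a Galois type remembers its index set and its parameter set, so a coordinatewise disagreement $p^{I_0}\rest M_0 \neq q^{I_0}\rest M_0$ already places $p^{I_0}\rest M_0$ outside the unordered set of small restrictions of $q$, and you handle this correctly. Your standing assumption $\kappa > \LS(\K)$ is also the right reading, since it is what makes quantification over $P^*_\kappa M$ equivalent to quantification over $P_\kappa|M|$ as noted after Definition \ref{tamedef}.
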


In fact, one can see these small restrictions as formulas (this will be used later to generalize heir and coheir to AECs).  This productive intuition can be made exact using \emph{Galois Morleyization}. Start with an AEC $\K$ and add to the language an $\alpha$-ary predicate $R_p$ for each $N \in \K$, each $p \in \gS^{\alpha}(\emptyset; N)$, and each $\alpha < \kappa$. This gives us an infinitary language $\bigL$. Then expand each $M \in \K$ to a $\bigL$-structure $\widehat M$ by setting $R_p(\ba)$ to be true in $M$ if and only if $\gtp(\ba/\emptyset; M) = p$. We obtain a class $\bigKp{\kappa} := \{\widehat{M} \mid M \in \K\}$. $\bigK$ has relations of infinite arity but it still behaves like an AEC. We call $\bigKp{\kappa}$ the \emph{$<\kappa$-Galois Morleyization} of $\K$. The connection between tameness and $\bigK$ is given by the following theorem:

\begin{theorem}\label{morleyization-thm}
  Let $\K$ be an AEC. The following are equivalent:
\begin{enumerate}
	\item\label{morleyization-1} $\K$ is fully $<\kappa$-tame and -type short.
	\item\label{morleyization-2} The map $\gtp(\bb/M; N) \mapsto \text{tp}_{\text{qf-}\Ll_{\kappa, \kappa} (\bigL)}(\bb/\widehat{M};\widehat{N})$  is an injection. 
\end{enumerate}
\end{theorem}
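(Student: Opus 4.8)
The plan is to prove an exact dictionary between quantifier-free $\Ll_{\kappa,\kappa}(\bigL)$-types in the Galois Morleyization and small Galois restrictions, after which both sides of the stated equivalence become literal restatements of one combinatorial fact. The central step I would isolate is the following \emph{translation}: for any $M \lea N_1$, $M \lea N_2$, any index set $I$, and any $\bb_1 \in \fct{I}{|N_1|}$, $\bb_2 \in \fct{I}{|N_2|}$, writing $p = \gtp(\bb_1 / M; N_1)$ and $q = \gtp(\bb_2 / M; N_2)$, the tuples $\bb_1$ and $\bb_2$ realize the same quantifier-free $\Ll_{\kappa,\kappa}(\bigL)$-type over $\widehat{M}$ if and only if $p^{I_0} \rest A = q^{I_0} \rest A$ for every $A \in P_\kappa |M|$ and every $I_0 \in P_\kappa I$. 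Granting this, the theorem is immediate: the map of (\ref{morleyization-2}) is injective exactly when equality of quantifier-free $\Ll_{\kappa,\kappa}(\bigL)$-types forces equality of Galois types, which by the translation is exactly the assertion that agreement of all small restrictions $p^{I_0}\rest A$ forces $p = q$ --- and this is, word for word, full $<\kappa$-tameness and -type shortness. (Two Galois types in different variables or over different parameter models trivially map to distinct quantifier-free types, so only a common $M$ and common $I$ need be considered.)

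To prove the translation I would first record the purely definitional \emph{parameter absorption} principle: if $\ba$ enumerates $A \subseteq |M|$, then $\gtp(\bb_1^{I_0}/A; N_1) = \gtp(\bb_2^{I_0}/A; N_2)$ if and only if $\gtp(\bb_1^{I_0}\ba/\emptyset; N_1) = \gtp(\bb_2^{I_0}\ba / \emptyset; N_2)$. With this in hand the two directions are short. For the ``if'' direction, a quantifier-free $\Ll_{\kappa,\kappa}(\bigL)$-formula over $\widehat{M}$ is a $(<\kappa)$-Boolean combination of atomic formulas, hence mentions only variables $\bx^{I_0}$ and parameters from some $A \in P_\kappa|M|$, $I_0 \in P_\kappa I$; since the hypothesis gives (via parameter absorption) $\gtp(\bb_1^{I_0}\ba/\emptyset; N_1) = \gtp(\bb_2^{I_0}\ba/\emptyset; N_2)$, and this common Galois type over $\emptyset$ determines both the quantifier-free $L$-type of the tuple and the truth of every predicate $R_s$ on its subtuples, every atomic --- hence every quantifier-free --- formula agrees on $\bb_1$ and $\bb_2$. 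For the ``only if'' direction, fix $A$, $I_0$, let $\ba$ enumerate $A$ and put $r := \gtp(\bb_1^{I_0}\ba / \emptyset; N_1)$; then $R_r(\bx^{I_0}, \ba)$ is a quantifier-free $\Ll_{\kappa,\kappa}(\bigL)$-formula true of $\bb_1$ over $\widehat{M}$, so by hypothesis true of $\bb_2$, giving $\gtp(\bb_2^{I_0}\ba/\emptyset; N_2) = r$, and parameter absorption converts this back into $p^{I_0}\rest A = q^{I_0}\rest A$.

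The main obstacle is the nontrivial direction of parameter absorption, which is where the absence of amalgamation must be confronted (the easy direction is immediate, since embeddings fixing $A$ also fix $\ba$). Were there a monster model the principle would be transparent: both sides simply assert the existence of an automorphism fixing $A$ pointwise and sending $\bb_1^{I_0}$ to $\bb_2^{I_0}$. In general, equality of Galois types over $\emptyset$ only provides an $E$-chain of atomic equivalences in which the block enumerating $A$ is carried to ever-changing copies of $A$ inside intermediate models, with the connecting embeddings fixing $A$ only \emph{setwise} through these copies rather than pointwise. I would handle this in two moves. First, using that $\K$ and $\lea$ are closed under isomorphism, I renormalize the entire $\Eat$-chain so that in every intermediate model the $A$-block is literally $\ba$; this reduces matters to a single step $(\bar{e}'\ba, \emptyset, P') \Eat (\bar{e}''\ba, \emptyset, P'')$ witnessed by embeddings $u, v$ into a common $Q$ with $u(\ba) = v(\ba)$ and $u(\bar{e}') = v(\bar{e}'')$. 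Second, since $u$ and $v$ now agree on $A$, I rename $Q$ by an isomorphism $\sigma$ with $\sigma(u(a)) = a$ for all $a \in A$; because $u \rest A = v \rest A$, the same $\sigma$ makes both $\sigma u$ and $\sigma v$ fix $A$ pointwise while preserving $\sigma u(\bar{e}') = \sigma v(\bar{e}'')$, so the step becomes an atomic equivalence \emph{over} $A$. Composing the renamed steps yields an $E$-chain over $A$, i.e.\ $\gtp(\bb_1^{I_0}/A; N_1) = \gtp(\bb_2^{I_0}/A; N_2)$. The disjointifying bookkeeping in these renamings is the one genuinely delicate point; everything else is a routine unwinding of the definition of Galois types.
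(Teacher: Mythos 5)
Your proof is correct. The survey itself does not include a proof of Theorem \ref{morleyization-thm} (it defers to the cited source introducing the Galois Morleyization), but your argument --- absorbing the parameters $\ba$ into the tuple to reduce everything to Galois types over $\emptyset$, matching the predicates $R_p$ against the small restrictions $p^{I_0}\rest A$, and handling the absence of amalgamation by renormalizing the $E$-chain so that each atomic-equivalence step fixes $A$ pointwise --- is exactly the intended one, and the one genuinely delicate point (the disjointifying renamings) is correctly identified and correctly resolved.
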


Here, the Galois type is computed in $\K$, and the type on the right is the (syntactic) quantifier-free $\Ll_{\kappa, \kappa}$-type in the language $\bigL$. Note that the locality hypothesis in (\ref{morleyization-1}) can be weakened to $<\kappa$-tameness if in (\ref{morleyization-2}) we ask that $\ell (\bb) = 1$. Several other variations are possible.

The Galois Morleyization gives a way to directly use syntactic tools (such as the results of stability theory inside a model, see for example \cite[Chapter V.A]{shelahaecbook2}) in the study of tame AECs. See for example Theorem \ref{stab-spectrum}.

Another way to see tameness is as a topological separation principle: consider the set $X_M$ of Galois types over $M$. For a fixed $\kappa$, we can give a topology on $X_M$ with basis given by sets of the form $U_{p, A} := \{q \in \gS (M) \mid A \subseteq |M| \land q \rest A = p\}$, for $p$ a Galois type over $A$ and $|A| < \kappa$. This is the same topology as that generated by quantifier-free $\Ll_{\kappa, \kappa}$-formulas in the $<\kappa$-Galois Morleyization. Thus one can show:

\begin{thm}\label{tameness-topo}
  Let $\K$ be an AEC and let $\lambda \ge \LS (\K)$. $\K$ is $(<\kappa, \lambda)$-tame if and only if for any $M \in \K_{\lambda}$, the topology on $X_M$ defined above is Hausdorff.
\end{thm}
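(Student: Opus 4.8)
The plan is to isolate a single topological mechanism that drives both directions and then read off the equivalence. \textbf{Preliminary observation.} Fix $M \in \K_\lambda$ and recall that a point of $X_M$ is a type $r \in \gS(M)$, and that each such $r$ restricts to a \emph{unique} type $r \rest A$ over any $A \in P_\kappa |M|$ (well-definedness of restriction is part of Definition \ref{gtp-def}). Thus for a fixed small $A$, a point $r$ lies in the basic open set $U_{p, A}$ precisely when $p = r \rest A$. Two consequences follow immediately: first, distinct basic opens $U_{p_1, A}$ and $U_{p_2, A}$ with the \emph{same} $A$ and $p_1 \neq p_2$ are disjoint (a common point would restrict to both $p_1$ and $p_2$ over $A$); second, for fixed $A$ these sets partition $X_M$, since $r \in U_{r \rest A, A}$. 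Along the way one checks that the $U_{p,A}$ genuinely form a basis: they cover $X_M$ (take $A = \emptyset$), and given $r \in U_{p_1,A_1} \cap U_{p_2,A_2}$ one has $r \in U_{r \rest A, A} \subseteq U_{p_1,A_1} \cap U_{p_2,A_2}$ for $A := A_1 \cup A_2$, which remains in $P_\kappa |M|$ because a union of two sets of size $<\kappa$ again has size $<\kappa$.

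\textbf{Tameness $\Rightarrow$ Hausdorff.} Suppose $\K$ is $(<\kappa, \lambda)$-tame and let $p \neq q$ be distinct points of $X_M$. Tameness supplies $A \in P_\kappa |M|$ with $p \rest A \neq q \rest A$. Then $U_{p \rest A, A}$ and $U_{q \rest A, A}$ are basic open neighborhoods of $p$ and $q$ respectively, and they are disjoint by the preliminary observation. Hence $X_M$ is Hausdorff.

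\textbf{Hausdorff $\Rightarrow$ tameness.} Conversely, assume $X_M$ is Hausdorff for every $M \in \K_\lambda$, fix such an $M$, and let $p \neq q$ in $\gS(M)$. Choose disjoint open $U \ni p$ and $V \ni q$; shrinking $U$ to a basic open set around $p$ gives $A_0 \in P_\kappa |M|$ and a type $p_0$ over $A_0$ with $p \in U_{p_0, A_0} \subseteq U$, so $p \rest A_0 = p_0$. Since $q \in V$ and $U \cap V = \emptyset$, we get $q \notin U_{p_0, A_0}$, i.e.\ $q \rest A_0 \neq p_0 = p \rest A_0$. As $A_0$ is small, this is exactly the conclusion of $(<\kappa,\lambda)$-tameness.

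\textbf{Main obstacle.} There is no genuinely hard step here: once the preliminary observation is recorded, both directions are one-line point-set computations. The care needed is purely in the bookkeeping — confirming that the $U_{p,A}$ form a basis and that $P_\kappa |M|$ is closed under the (at most two-fold) unions used above. The one conceptual point worth flagging is that the backward direction in fact only uses that the topology is $T_0$; the real content is that the partition structure of the $\{U_{p,A}\}_p$ forces any topological separation of two points to upgrade automatically to separation by \emph{disjoint} basic opens, so that $T_0$ and the Hausdorff property coincide for this topology.
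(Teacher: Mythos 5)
Your proof is correct and complete: the preliminary observation (for fixed small $A$, the sets $U_{p,A}$ partition $X_M$ because restriction is well-defined and unique), the basis verification, and both directions all go through exactly as written, and the two-fold union bookkeeping is fine since $\kappa$ is infinite. This is the direct point-set argument, and it is essentially the standard one (it is the proof in Lieberman's paper, to which the survey attributes the result). The survey itself gives no proof but frames the statement slightly differently: it first identifies the topology on $X_M$ with the topology generated by quantifier-free $\Ll_{\kappa,\kappa}$-formulas in the $<\kappa$-Galois Morleyization, so that the theorem becomes an instance of the general dictionary between Galois types and syntactic types (Theorem \ref{morleyization-thm}). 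That route buys a conceptual placement of $X_M$ as a syntactic, Stone-like type space; your route buys self-containedness, needing nothing beyond the definition of the basis. Your closing remark is also a genuine small addition not flagged in the paper: because the $\{U_{p,A}\}_p$ for fixed $A$ partition $X_M$, any $T_0$-separation of two points upgrades to separation by disjoint basic opens, so $T_0$, Hausdorff, and $(<\kappa,\lambda)$-tameness all coincide here (indeed the partition structure makes each basic open clopen, so the space is even zero-dimensional Hausdorff when tameness holds).
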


\subsection{Where -- Examples and counterexamples} \subsubsection{Examples}\label{examples-subsec}  Several ``mathematically interesting'' classes turn out to be tame.  Moreover, there are several general ways to \emph{derive} tameness from structural assumptions.  We list some here, roughly in decreasing order of generality.

\begin{enumerate}
	\item {\bf Locality from large cardinals} \label{tamelc}\\
	Large cardinals $\kappa$ allow the generalization of compactness results from first-order logic to $\Ll_{\kappa, \omega}$ in various ways (see, for instance, \cite[Lemma 20.2]{jechbook}).  Since tameness is a weak form of compactness, these generalizations correspond to compactness results in AECs that can be ``captured'' by $\Ll_{\kappa, \omega}$.  We state a simple version of these results here:
	\begin{thm} \label{tamelc-fact}
	Suppose $\K$ is an AEC with $\LS(\K) < \kappa$.
	\begin{itemize}
		\item If $\kappa$ is weakly compact, then $\K$ is $(<\kappa, \kappa)$-tame.
		\item If $\kappa$ is measurable, then $\K$ is $\kappa$-local.
		\item If $\kappa$ is strongly compact, then $\K$ is fully $<\kappa$-tame and -type short.
	\end{itemize}
	\end{thm}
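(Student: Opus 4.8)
The plan is to prove all three parts uniformly by combining two ingredients: (a) a closure property of $\K$ under suitable $\kappa$-complete ultraproducts, and (b) an ``averaging of partial isomorphisms'' argument that upgrades local agreement of Galois types into global equality. The large cardinal hypothesis enters only through which ultrafilters are available, and the strength of the conclusion tracks exactly the combinatorial strength of the ultrafilter.

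First ingredient: I would use Shelah's presentation theorem (Theorem \ref{pres-thm}) to write $\K = \text{PC}(T', \Gamma, L)$, where $T'$ is first-order and each omitted type in $\Gamma$ has size $\le \LS(\K) < \kappa$. I then show that $\K$ is closed under $\kappa$-complete ultraproducts: given $M_i \in \K$ ($i \in I$) and a $\kappa$-complete ultrafilter $U$ on $I$, the $L$-reduct of the $L'$-ultraproduct $\prod_i M_i'/U$ lies in $\K$. Indeed, first-order \L o\'s preserves $T'$, while omitting a type $p \in \Gamma$ is expressible by the $\Ll_{\kappa,\kappa}$ sentence $\forall \bar x \bigvee_{\varphi \in p} \neg \varphi(\bar x)$ (legitimate since $|p| < \kappa$), and $\kappa$-completeness of $U$ is exactly what makes \L o\'s's theorem hold for $\Ll_{\kappa,\kappa}$, so omission is preserved. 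I would also record (again via the presentation theorem, applied to pairs) that the canonical diagonal embedding $N \to N^I/U$ is a $\lea$-embedding, so that ultrapowers fix their base models in the appropriate strong sense.

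Second ingredient: to prove full tameness and type shortness from strong compactness, suppose toward a contradiction that $p \ne q \in \gS^I(M)$ agree on all small restrictions, i.e.\ $p^{I_0}\rest A = q^{I_0}\rest A$ for all $A \in P_\kappa|M|$ and $I_0 \in P_\kappa I$. Realize $p,q$ inside a common $N$ by tuples $\bar a, \bar b$. For each pair $(A, I_0)$, local agreement provides, by the very definition of Galois type, a model together with a map fixing $A$ and sending $\bar a \rest I_0$ to $\bar b \rest I_0$. Fix a fine, $\kappa$-complete ultrafilter $U$ on $P_\kappa(|M| \cup I)$ --- precisely what strong compactness supplies for an index set of arbitrary size. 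The ultraproduct of these local witnesses lies in $\K$ by the closure lemma, and fineness guarantees that each individual $m \in |M|$ and each index $\iota \in I$ is ``captured'' on a $U$-large set, so the averaged map is a single $\K$-isomorphism fixing all of $M$ and sending $\bar a$ to $\bar b$; hence $p = q$, a contradiction. The measurable case is identical except the index set is $\kappa$ (a nonprincipal $\kappa$-complete ultrafilter, whose tail sets are $U$-large), which matches the chain $\seq{M_i : i \le \kappa}$ in the definition of $\kappa$-locality: each $m \in M_\kappa$ lies in $M_i$ on a tail, so averaging the partial isomorphisms witnessing $p \rest M_i = q \rest M_i$ yields one map fixing $M_\kappa$. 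The weakly compact case restricts to $M$ of size $\kappa$: here weak compactness yields the needed $\kappa$-complete ultrafilter on a $\kappa$-algebra of subsets of $P_\kappa|M|$ (equivalently, one invokes $\Ll_{\kappa,\kappa}$ weak compactness), and only at the level $\kappa$, which is exactly why the conclusion is the restricted $(<\kappa,\kappa)$-tameness rather than tameness for all sizes.

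I expect the main obstacle to be the bookkeeping in the averaging step: one must verify that the diagonal image of $M$ sits inside the ultraproduct as a strong substructure fixed pointwise, and that the ultraproduct of the locally-defined maps is genuinely a single $\K$-embedding (not merely a first-order elementary map) carrying $\bar a$ to $\bar b$. This is where fineness and $\kappa$-completeness must interact correctly with the presentation-theorem description of $\lea$, and matching the index sets and fineness condition to the shape of each conclusion (arbitrary $P_\kappa \lambda$ for strong compactness, the set $\kappa$ for measurability, the size-$\kappa$ restriction for weak compactness) is the delicate part. As a fallback for the weakly compact case, the Galois Morleyization (Theorem \ref{morleyization-thm}) offers a more syntactic route, reducing the separation of $p$ and $q$ directly to weak compactness of $\Ll_{\kappa,\kappa}$, which I would keep in reserve.
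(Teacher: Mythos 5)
Your proposal is correct and follows essentially the route the paper indicates (and that \cite{tamelc-jsl}/Makkai--Shelah carry out): closure of $\K$ under $\kappa$-complete ultraproducts via the presentation theorem, followed by averaging the witnesses to small-type agreement along a fine/normal/suitably chosen $\kappa$-complete ultrafilter, with the shape of the index set dictating which locality property you get. The only detail to watch is that without amalgamation equality of Galois types is the \emph{transitive closure} of atomic equivalence, so each small agreement is witnessed by a finite chain of maps rather than a single one; $\kappa$-completeness lets you stabilize the chain length on a $U$-large set and average chain-by-chain, so this is bookkeeping of exactly the kind you anticipate.
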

	These results can be strengthened in various ways.  First, they apply also to AECs that are explicitly axiomatized in $\Ll_{\kappa, \omega}$. The key fact is that ultraproducts by $\kappa$-complete ultrafilters preserve the AEC (the proof uses the presentation theorem, Theorem \ref{pres-thm}).  Second, each large cardinal can be replaced by its ``almost'' version: for example, almost strongly compact means that, \emph{for each $\delta < \kappa$}, $\Ll_{\delta, \delta}$ is $\kappa$-compact; equivalently, given a $\kappa$-complete filter, \emph{for each $\delta < \kappa$}, it can be extended to a \emph{$\delta$-complete} ultrafilter. See \cite[Definition 2.1]{lc-tame-v2} for a full list of the ``almost'' versions.
	
        Note that other structural properties (such as amalgamation) follow from the combination of large cardinals with categoricity. Thus these large cardinals make the development of a structure theory (culminating for example in the existence of well-behaved independence notion, see Corollary \ref{lc-cor}) much easier.	
	
	\item {\bf Weak tameness from categoricity under amalgamation}\label{weaktamecat}\\
          Recall from Section \ref{what-subsec} that an AEC $\K$ is $(\chi_0, <\chi)$-weakly tame if for every \emph{Galois-saturated} $M \in \K_{<\chi}$, every $p \neq q \in \gS (M)$, there exists $M_0 \lea M$ with $M_0 \in \K_{\le \chi_0}$ such that $p \rest M_0 \neq q \rest M_0$. It is known that, in AECs with amalgamation categorical in a sufficiently high cardinal, weak tameness holds below the categoricity cardinal. More precisely: 

          \begin{thm}\label{tameness-from-categ}
            Assume that $\K$ is an AEC with amalgamation and no maximal models which is categorical in a $\lambda > \LS (\K)$. 

            \begin{enumerate}
            \item Let $\chi$ be a limit cardinal such that $\cf{\chi} > \LS (\K)$. If the model of size $\lambda$ is $\chi$-Galois-saturated, then there exists $\chi_0 < \chi$ such that $\K$ is $(\chi_0, <\chi)$-weakly tame.
            \item If the model of size $\lambda$ is $H_1$-Galois-saturated, then there exists $\chi_0 < H_1$ such that whenever $\chi \ge H_1$ is so that the model of size $\lambda$ is $\chi$-Galois-saturated, we have that $\K$ is $(\chi_0, <\chi)$-weakly tame\footnote{Note that $\chi_0$ does not depend on $\chi$.}.
            \end{enumerate}            
          \end{thm}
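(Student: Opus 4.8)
The plan is to argue by contradiction, using Ehrenfeucht--Mostowski models as the one source of ``syntactic'' control available over the otherwise wild Galois types. First I would fix, via Shelah's presentation theorem (Theorem \ref{pres-thm}), a blueprint $\Phi$ with $|\Phi| \le \LS(\K)$ such that $\EM(I, \Phi) \rest L(\K) \in \K$ for every linear order $I$, with $I \subseteq J$ implying $\EM(I,\Phi) \lea \EM(J,\Phi)$. Since $\EM(\lambda, \Phi)$ has size $\lambda$, categoricity in $\lambda$ forces $C := \EM(\lambda, \Phi) \rest L(\K)$ to be the model of size $\lambda$, so by hypothesis $C$ is $\chi$-Galois-saturated. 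Superstability below $\lambda$ (Theorem \ref{shvi}) and the equivalence of saturation and model-homogeneity (Theorem \ref{mod-homog-sat}) would be used freely to pass between the two notions.

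Next I would reduce the failure of weak tameness to a configuration inside $C$. Suppose $(\chi_0, <\chi)$-weak tameness fails for every $\chi_0 < \chi$: for each such $\chi_0$ there is a Galois-saturated $M \in \K_{<\chi}$ and $p \ne q \in \gS(M)$ agreeing on every $M_0 \lea M$ with $\|M_0\| \le \chi_0$. Because $\|M\| < \chi$ and $C$ is $\chi$-saturated, amalgamation lets me embed $M$ into $C$ and, crucially, realize both $p$ and $q$ by elements $a, b \in C$. Writing $a = \tau^C(\bar{i})$ and $b = \sigma^C(\bar{j})$ for Skolem terms and tuples $\bar{i}, \bar{j}$ from $\lambda$, and taking $I_M \subseteq \lambda$ of size $\|M\| + \LS(\K)$ with $M \lea \EM(I_M, \Phi)$, the core observation is that the Galois type of $\tau^C(\bar{i})$ over $\EM(I_M,\Phi)$ depends only on $\tau$ and on the cut occupied by $\bar{i}$ relative to $I_M$: an order-automorphism of $\lambda$ fixing $I_M$ setwise and carrying one witnessing tuple to another induces a $\K$-automorphism of $C$ fixing $\EM(I_M,\Phi)$ pointwise. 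Hence $p \ne q$ must already be reflected in a difference of this finite cut data.

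The main obstacle is then to convert this cut data into a \emph{small submodel of $M$} witnessing $p \rest M_0 \ne q \rest M_0$, with a bound $\chi_0$ that does not grow with $\|M\|$. This is exactly where Galois saturation of $M$ and the hypothesis $\cf{\chi} > \LS(\K)$ are essential: I would use them to present $M$ as an EM-model over an index order all of whose cuts have cofinality and coinitiality bounded by a fixed $\chi_0 < \chi$, so that each relevant cut is pinned down by a $\le \chi_0$-sized subset of $I_M$, hence by some $M_0 \lea M$ of size $\le \chi_0$ on which $p$ and $q$ must disagree --- contradicting the choice of $M, p, q$. Making precise the dependence of the \emph{non-syntactic} Galois type on the cut (which may only be exploited through automorphisms, not formulas) and extracting a \emph{uniform} $\chi_0$ is the technical heart of the argument, and is the reason only the weak, saturated-domain, version is obtained.

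Finally, for part (2) I would make $\chi_0$ independent of $\chi$ through a Hanf-number computation. All the data controlling $\chi_0$ --- the blueprint $\Phi$, the finitely many Skolem terms, and the combinatorics of cuts --- are captured below $H_1 = \hanf{\LS(\K)}$ via the $\Ll_{\infty,\omega}$-presentation of $\K$ (Theorem \ref{pres-thm}). Using $\cf{H_1} = (2^{\LS(\K)})^+ > 2^{\LS(\K)}$ exactly as in the proof of Corollary \ref{cor-jep-from-ap}, the supremum of the relevant thresholds remains below $H_1$, yielding a single $\chi_0 < H_1$ that works simultaneously for every $\chi \ge H_1$ at which $C$ is $\chi$-saturated.
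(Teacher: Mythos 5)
Your overall strategy --- identify the categoricity model with an Ehrenfeucht--Mostowski model, realize the two types by Skolem terms, and argue that the Galois type of a term over an EM-submodel is determined by the term together with the cut its indices occupy --- is the strategy the paper attributes to Shelah's proof. But two points keep this from being a proof. First, the invariance step fails as written: you work with $C = \EM(\lambda, \Phi)$ and invoke ``an order-automorphism of $\lambda$ fixing $I_M$ setwise,'' but the ordinal $\lambda$ is rigid as a linear order, so the only such automorphism is the identity; moreover, an automorphism fixing $I_M$ only \emph{setwise} would induce an automorphism fixing $\EM(I_M, \Phi)$ only setwise, not pointwise, and hence would not witness equality of Galois types over it. This is exactly why the paper's sketch insists on ``a larger model that is generated by a nice enough linear order'': one must first use categoricity to replace $\lambda$ by a linear order of size $\lambda$ with enough homogeneity (or argue with isomorphisms between suborders over $I_M$, which induce $\K$-embeddings fixing $\EM(I_M,\Phi)$ pointwise, rather than with automorphisms of the whole order). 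As written, the ``core observation'' has no nontrivial instances.

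Second, the step you yourself flag as ``the technical heart'' --- converting the cut data into a submodel $M_0 \lea M$ of size at most a \emph{uniform} $\chi_0 < \chi$ on which $p$ and $q$ already disagree --- is where essentially all the work lies, and the route you propose for it is dubious: there is no reason an arbitrary Galois-saturated $M \in \K_{<\chi}$ can be ``presented as an EM-model over an index order all of whose cuts have cofinality and coinitiality bounded by a fixed $\chi_0$.'' EM-models over small orders need not be Galois-saturated and Galois-saturated models need not be EM-models, so securing such a presentation is not easier than the theorem itself. What the hypotheses of Galois-saturation of $M$ and $\cf{\chi} > \LS(\K)$ actually support is a chain construction carried out \emph{inside} the fixed embedding of $M$ into the large EM-model: one builds an increasing continuous sequence of small submodels of $M$ together with approximating index sets and uses saturation to keep the construction inside $M$ and the cofinality hypothesis to stabilize it and extract the witnessing $M_0$. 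Since part (2) is a Hanf-number computation layered on top of part (1), it inherits this gap.
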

          \begin{remark}
          The model in the categoricity cardinal $\lambda$ is $\chi$-Galois-saturated whenever $\cf{\lambda} \ge \chi$ (e.g.\ if $\lambda$ is a successor) or (by Theorem \ref{sat-categ}) if\footnote{A more clever application of Theorem \ref{sat-categ} shows that it is enough to have $\lambda \ge \sup_{\theta < \chi} \hanf{\theta^+}$.} $\lambda \ge \hanf{\chi}$.
          \end{remark}
          
	  The proof of Theorem \ref{tameness-from-categ} heavily uses Ehrenfeucht-Mostowski models to transfer the behavior below $H_1$ to a larger model that is generated by a nice enough linear order.  Then the categoricity assumption is used to embed every model of size $\chi$ into such a model of size $\lambda$.

          Theorem \ref{tameness-from-categ} is key to prove several of the categoricity transfers listed in Section \ref{categ-notame-sec}.
        \item {\bf Tameness from categoricity and large cardinals}\label{tame-categ-lc} \\
          The hypotheses in the last two examples can be combined advantageously. 

          \begin{thm}\label{tameness-from-categ-2}
          Let $\K$ be an AEC and let $\kappa > \LS (\K)$ be a measurable cardinal. If $\K$ is categorical in a $\lambda \ge \kappa$, then $\K_{[\kappa, \lambda)}$ has amalgamation and is $(\kappa, <\lambda)$-tame.
          \end{thm}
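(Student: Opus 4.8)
The plan is to establish the two conclusions separately: extract amalgamation in $\K_{[\kappa,\lambda)}$ from categoricity together with the measurable cardinal, and then feed amalgamation into the weak-tameness machinery of Theorem \ref{tameness-from-categ}, finally upgrading weak tameness to full $(\kappa,<\lambda)$-tameness using the locality that the measurable already supplies. The common input throughout is the ultraproduct closure: since $\kappa>\LS(\K)$ is measurable, fix a $\kappa$-complete nonprincipal ultrafilter $U$ on $\kappa$; by Shelah's presentation theorem (Theorem \ref{pres-thm}), $\K$, viewed as a $\mathrm{PC}$-class, is closed under $\kappa$-complete ultraproducts, and the diagonal map $N\to N^\kappa/U$ is a $\lea$-embedding. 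This is exactly the input behind Theorem \ref{tamelc-fact}, so I may assume throughout that $\K$ is $\kappa$-local. A first dividend: whenever $M\in\K_{[\kappa,\lambda)}$ admits a cofinal continuous resolution of length $\kappa$ (in particular when $\|M\|=\kappa$, or more generally when $\cf{\|M\|}=\kappa$), $\kappa$-locality immediately forces distinct $p,q\in\gS(M)$ to differ on some member of the resolution, hence on a subset of size $<\kappa$.

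The main obstacle is amalgamation in $\K_{[\kappa,\lambda)}$, since a measurable cardinal by itself does not provide it; categoricity is indispensable here, and I would follow the strategy of Kolman and Shelah for $\Ll_{\kappa,\omega}$ adapted to AECs. The idea is to use the ultrapower embeddings to manufacture, over any $M\in\K_{\ge\kappa}$, a $\lea$-extension realizing ``enough'' types, crucially \emph{without} presupposing amalgamation, since the ultrapower construction is internal to $\K$. Iterating along a chain produces a $\lea$-extension of $M$ that is homogeneous for the relevant class of types, and categoricity in $\lambda$ then forces the unique model $M_\lambda$ of size $\lambda$ to be sufficiently model-homogeneous for $\K_{<\lambda}$ -- established via this ultrapower construction rather than via Theorem \ref{mod-homog-sat}, whose use here would be circular. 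With $M_\lambda$ as a homogeneous ambient model, I amalgamate $M_0\lea M_1,M_2$ in the range by first embedding $M_0\to M_\lambda$ and then extending, one model at a time, to $\lea$-embeddings of $M_1$ and $M_2$ using the homogeneity of $M_\lambda$. Making the homogeneity of $M_\lambda$ precise and genuinely non-circular -- proving it directly from ultraproduct closure plus categoricity -- is the delicate heart of the argument and where I expect the real work to lie.

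Granted amalgamation in $\K_{[\kappa,\lambda)}$, the remainder is assembling existing tools. We now have a monster model for this range; no maximal models below $\lambda$ holds because categoricity together with the ultrapower construction yields arbitrarily large models, and amalgamation with joint embedding then rules out maximal models. Hence Theorem \ref{tameness-from-categ} applies and yields weak $(\chi_0,<\chi)$-tameness for a fixed $\chi_0<\kappa$ and the cardinals $\chi$ for which the categorical model of size $\lambda$ is $\chi$-Galois-saturated; this covers the whole range $[\kappa,\lambda)$ once one observes (from $\cf{\lambda}$, or from Theorem \ref{sat-categ} when $\lambda\ge\hanf{\chi}$) that the model of size $\lambda$ is $\chi$-Galois-saturated for all the relevant $\chi<\lambda$.

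It remains to remove the saturation hypothesis on the domain, i.e.\ to pass from weak tameness to $(\kappa,<\lambda)$-tameness, and here I combine the two ingredients by induction on $\|M\|$. Given $p\ne q\in\gS(M)$ with $M\in\K_{[\kappa,\lambda)}$, resolve $M=\bigcup_{i<\mu}M_i$ with $\mu=\|M\|$; if $p,q$ already differ on some $M_i$ the inductive hypothesis (applied to the smaller model $M_i$) produces a distinguishing set of size $\le\kappa$. The only obstruction is a resolution on which $p$ and $q$ agree pointwise yet differ at the top, which is precisely a failure of $\cf{\mu}$-locality. When $\cf{\mu}=\kappa$ the measurable's $\kappa$-locality kills this directly, as in the first paragraph; for the remaining cofinalities I route through a saturated extension, comparing $p$ and $q$ against suitable restrictions to a Galois-saturated $M^\ast$ of the right size and invoking weak $(\chi_0,<\chi)$-tameness there. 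The bookkeeping needed to cover all cofinalities by interleaving $\kappa$-locality with weak tameness is the second place requiring care, but unlike the amalgamation step it is routine once the framework is in place.
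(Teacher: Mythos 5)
The survey does not actually prove Theorem \ref{tameness-from-categ-2}: it is stated without proof and attributed in Section \ref{hist-sec} to Shelah \cite[Conclusion 3.7]{sh472}, building on Kolman--Shelah \cite{kosh362}, so your proposal can only be measured against that source. For the amalgamation half your outline is the right one --- it is exactly the Kolman--Shelah strategy --- but you have deferred precisely the step that constitutes the proof. The published argument does not work with an ad hoc ``homogeneous $M_\lambda$''; it introduces a relation $M \lea_{\text{nice}} N$ (roughly: $N$ embeds over $M$ into an iterated $U$-ultrapower of $M$), proves that nice extensions exist, are cofinal in a suitable sense, and can be amalgamated, and then uses categoricity in $\lambda$ to show that \emph{every} extension of a model of size $<\lambda$ is nice. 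That last step is the heart of the matter and is exactly what your sketch labels ``the delicate heart of the argument,'' so as written the amalgamation half is a plan rather than a proof.

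The tameness half contains a genuine gap. Routing through Theorem \ref{tameness-from-categ} only ever yields \emph{weak} tameness, i.e.\ locality of types over Galois-saturated domains, and the remark immediately following Theorem \ref{tameness-from-categ-2} in the survey warns that the tameness asserted here is full, not weak --- precisely because weak tameness cannot in general be upgraded. Your upgrade step fails at every cofinality other than $\kappa$: if $p \neq q \in \gS(M)$ agree on every member of a resolution of $M$ of length $\cf{\|M\|} \neq \kappa$, the measurable gives you nothing ($\kappa$-locality is silent there, and $\mu$-locality for other $\mu$ can genuinely fail, as the Baldwin--Shelah example shows), while ``comparing $p$ and $q$ against restrictions to a Galois-saturated $M^\ast$'' does not work in either direction: extending $p, q$ to a saturated $M^\ast \gea M$ neither preserves distinctness canonically nor, if it did, returns a separating submodel lying inside $M$; and $M$ itself need not contain suitable saturated submodels. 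There is also a secondary problem: for singular $\lambda$ of small cofinality the model of size $\lambda$ need not be $\chi$-Galois-saturated for all $\chi < \lambda$, so Theorem \ref{tameness-from-categ} does not even cover the whole interval $[\kappa,\lambda)$. The intended proof extracts full tameness from the same nice-extension/iterated-ultrapower analysis that yields amalgamation --- types realized in nice extensions are controlled by the ultrafilter and hence by $\kappa$-sized restrictions --- not from weak tameness, and the second half of your argument needs to be redone from that angle rather than patched.
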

          
          In particular, if there exists a proper class of measurable cardinals and $\K$ is categorical in a proper class of cardinals, then $\K$ is tame. It is conjectured that the large cardinal hypothesis is not necessary.  Note that the tameness here is ``full'', i.e.\ not the weak tameness in Theorem \ref{tameness-from-categ}.

	\item {\bf Tameness from stable forking} \label{stabletame}\\
	Suppose that the AEC $\K$ has amalgamation and a stable ``forking-like'' relation $\nf$ (see Definition \ref{stable-indep}). That is, we ask that there is a notion ``$p \in \gS (N)$ does not fork over $M$'' for $M \lea N$ satisfying the usual monotonicity properties, uniqueness, and local character\footnote{The extension property is not needed here.} : there exists a cardinal $\bkappa = \bkappa (\nf)$ such that for every $p \in \gS (N)$, there is $M \lea N$ of size less than $\bkappa$ such that $p$ does not fork over $M$ (see more on such relations in Section \ref{tame-indep-sec}).

        Then, given any two types $p, q \in \gS(N)$ we can find $M \lea N$ over which both types do not fork over and so that $\|M\| < \bkappa$.  If $p \rest M = q \rest M$, then uniqueness implies $p = q$.  Thus, $\K$ is $(<\bkappa)$-tame.
	
	\item {\bf Universal Classes} \label{universalclasses}\\
          A \emph{universal class} is a class $\K$ of structures in a fixed language $L (\K)$ that is closed under isomorphism, substructure, and unions of increasing chains. In particular, $(\K, \subseteq)$ is an AEC with Löwenheim-Skolem-Tarski number $|L (\K)| + \aleph_0$.

          In a universal class, any partial isomorphism extends uniquely to an isomorphism (just take the closure under the functions). This fact is key in the proof of:

          \begin{thm}\label{tame-uc}
            Any universal class is fully $(<\aleph_0)$-tame and short.
          \end{thm}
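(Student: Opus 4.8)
The plan is to reduce full finite tameness and type shortness to a single observation: in a universal class, Galois types coincide with quantifier-free syntactic types, and the latter are manifestly determined by their finite restrictions. Throughout, the ordering $\lea$ is substructure ($\subseteq$). Write $\cl^N(X)$ for the substructure of $N$ generated by $X \subseteq |N|$, i.e.\ the closure of $X$ under the functions and constants of $L(\K)$; since $\K$ is closed under substructures, $\cl^N(X) \in \K$. The heart of the argument is the following claim: for $N_1, N_2 \in \K$, a set $A$ with $A \subseteq |N_\ell|$, and tuples $\bb_\ell \in \fct{I}{|N_\ell|}$, one has $\gtp(\bb_1/A; N_1) = \gtp(\bb_2/A; N_2)$ if and only if $\bb_1$ and $\bb_2$ realize the same quantifier-free $L(\K)$-type over $A$. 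Granting the claim, the theorem follows from the finitary nature of atomic formulas, as I explain at the end.

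The forward direction of the claim is routine. Atomic equivalence is witnessed by $\K$-embeddings $f_\ell : N_\ell \to N$ fixing $A$ with $f_1(\bb_1) = f_2(\bb_2)$; since $\K$-embeddings preserve and reflect atomic formulas and fix $A$, for every atomic $\varphi$ and every finite $\ba$ from $A$ we get $N_1 \models \varphi(\bb_1, \ba) \Leftrightarrow N \models \varphi(f_1(\bb_1), \ba) \Leftrightarrow N \models \varphi(f_2(\bb_2), \ba) \Leftrightarrow N_2 \models \varphi(\bb_2, \ba)$, so the quantifier-free types agree. As quantifier-free type equality is transitive and $E$ is the transitive closure of $\Eat$, equality of Galois types forces equality of quantifier-free types.

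The converse is where the universal structure is essential, and I expect it to be the crux. Suppose $\bb_1, \bb_2$ have the same quantifier-free type over $A$. Define $h_0 : A \cup \ran(\bb_1) \to A \cup \ran(\bb_2)$ fixing $A$ pointwise and sending $(\bb_1)_i \mapsto (\bb_2)_i$. Agreement of quantifier-free types over $A$ (in particular of the equalities $x_i = x_j$ and $x_i = a$) makes $h_0$ a well-defined partial isomorphism. By the extension property of universal classes (closing under the functions), $h_0$ extends uniquely to an isomorphism $g : \cl^{N_1}(A \cup \ran(\bb_1)) \to \cl^{N_2}(A \cup \ran(\bb_2))$ fixing $A$ with $g(\bb_1) = \bb_2$. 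Now set $N := \cl^{N_2}(A \cup \ran(\bb_2))$, $f_2 := \id_N$, and $f_1 := g$: these are $\K$-embeddings into $N$ fixing $A$ with $f_1(\bb_1) = f_2(\bb_2)$, so the two triples (computed in the respective closures) are atomically equivalent. Since Galois types are invariant under enlarging the ambient model, replacing $N_\ell$ by the relevant closure does not change them, and we conclude $\gtp(\bb_1/A; N_1) = \gtp(\bb_2/A; N_2)$. Crucially, this uses \emph{no amalgamation}: the universal closure supplies the isomorphism outright, and one of the two models serves as its own amalgam.

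It remains to read off tameness and type shortness. Each atomic $L(\K)$-formula has finitely many free variables and admits only finitely many parameter slots. Hence if $\bb_1$ and $\bb_2$ have different quantifier-free types over $A$, a single atomic formula $\varphi(x_{i_1}, \dots, x_{i_n}, a_1, \dots, a_m)$ witnesses the difference, involving only $I_0 := \{i_1, \dots, i_n\} \in P_{\aleph_0} I$ and $A_0 := \{a_1, \dots, a_m\} \in P_{\aleph_0} A$. Applying the claim to this finite data yields $\gtp(\bb_1 \rest I_0 / A_0; N_1) \neq \gtp(\bb_2 \rest I_0 / A_0; N_2)$, that is $p^{I_0} \rest A_0 \neq q^{I_0} \rest A_0$. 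Taking $A = |M|$ for $M \in \K$ and $p \neq q \in \gS^I(M)$ gives exactly the defining condition of full $<\aleph_0$-tameness and type shortness.
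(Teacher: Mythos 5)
Your proof is correct and follows the same route the paper indicates: the survey's stated key fact is that in a universal class any partial isomorphism extends (via closure under the functions) to an isomorphism of generated substructures, and your argument is built precisely on that observation, identifying Galois types with quantifier-free types and then reading off finite tameness and shortness from the finitary nature of atomic formulas. This matches the proof of \cite[Theorem 3.7]{ap-universal-v9} that the survey cites.
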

          
          Thus, for instance, the class of locally finite groups (ordered with subgroup) is tame. Theorem \ref{tame-uc} generalizes to any AEC $\K$ equipped with a notion of ``generated by'' which is (in a sense) canonical (for universal classes, this notion is just the closure under the functions). Note that this does not need to assume that $\K$ has amalgamation.
	
	\item {\bf Tame finitary AECs}\label{finitaryAEC}\\
	  A finitary AEC $\K$ is defined by several properties (including amalgamation and $\LS (\K) = \aleph_0$), but the key notion is that the strong substructure relation $\lea$ has \emph{finite character}.  This means that, for $M, N \in \K$, we have $M \lea N$ if and only if $M \subseteq N$ and:
	\begin{center}
	For every $\ba \in {}^{<\omega} M$, we have that $\gtp(\ba/\emptyset; M) = \gtp(\ba/\emptyset; N)$.
	\end{center}
	This means that there is a finitary test for when $\lea$ holds between two models that are already known to be members of $\K$.  This definition is motivated by the observation that this condition holds for any AEC axiomatized in a countable fragment of $\Ll_{\omega_1, \omega}$ by the Tarski-Vaught test\footnote{Kueker \cite{kueker2008} has asked whether any finitary AEC must be $L_{\infty, \omega}$-axiomatizable.}. Homogeneous model theory can be seen as a special case of the study of finitary AECs. Hyttinen and K\"{e}s\"{a}la have shown that every $\aleph_0$-stable $\aleph_0$-tame finitary AEC is $(<\aleph_0)$-tame. These classes seem very amenable to some classification theory. For example, an $\aleph_0$-tame finitary AEC categorical in some uncountable $\lambda$ is categorical in \emph{all} $\lambda' \ge \min (\lambda, H_1)$. Recent work has even developed some geometric stability theory in a larger class (Finite $U$-Rank classes, which included quasiminimal classes below) \cite{group-config-kangas-apal}.
	
	\item {\bf Homogeneous model theory}\label{hommod}\\
	Homogeneous model theory takes place in the context of a large ``monster model'' (for a first-order theory $T$) that omits a set of types $D$, but is still as saturated as possible with respect to this omission.  The notion of ``as saturated as possible'' is captured by requiring it to be sequentially homogeneous rather than model homogeneous. Note that the particular case when $D = \emptyset$ is the elementary case. In this context, amalgamation, joint embedding, and no maximal models hold for free and Galois types are first-order syntactic types.  This identification means that the AEC of models of $T$ omitting $D$ (ordered with elementary substructure) is fully $(<\aleph_0)$-tame and short. Homogeneous model theory has a rich classification theory in its own right, with connections to continuous first-order logic (see the historical remarks).
	
	\item {\bf Averageable Classes} \label{avclass}\\
	Averageable classes are type omitting classes $\EC(T, \Gamma)$ (ordered with a relation $\lea$) that are nice enough to have a relativized ultraproduct that preserves the omission of types in $\Gamma$ and satisfies enough of \L o\'{s}' Theorem to interact well with $\lea$.  This relativized ultraproduct gives enough compactness to show that types are syntactic (and much more), which implies that an averageable class is fully $(<\aleph_0)$-tame and short.  Examples of averageable classes include torsion modules over PIDs and densely ordered abelian groups with a cofinal and coinitial $\mathbb{Z}$-chain.
	
	\item {\bf Continuous first-order logic} \label{tdense}\\
	Continuous first-order logic can be studied in a fragment of $\Ll_{\omega_1, \omega}$ by using the infinitary logic to have a standard copy of $\mathbb{Q}$ and then studying dense subsets of complete metric spaces.  Although the logic $\Ll_{\omega_1, \omega}$ is incompact, the fragment necessary to code this information is compact (as evidenced by the metric ultrapower and compactness results in continuous first-order logic), so the classes are fully $(<\aleph_0)$-tame and short.\\
	Beyond first-order, continuous model theory can be done in the so-called metric AECs, where a notion of tameness ($d$-tameness) can also be defined.
		
	\item {\bf Quasiminimal Classes} \label{quasimin}\\
	A quasiminimal class is an AEC satisfying certain additional axioms; most importantly, the structures carry a pregeometry with certain nice properties. The axioms directly imply that Galois types over \emph{countable} models are quantifier-free first-order types, and the excellence axiom can be used to transfer this to uncountable models. Therefore quasiminimal classes are $<\aleph_0$-tame.  Examples of quasiminimal classes include covers of $\mathbb{C}^\times$ and Zilber fields with pseudoexponentiation.  Note that it can be shown (from the countable closure axiom) that these classes are strictly $\Ll_{\omega_1, \omega}(Q)$-definable. This gives important examples of categorical AECs that are not finitary.
		
	\item {\bf $\lambda$-saturated models of a superstable first-order theory}\\
          Let $T$ be a first-order superstable theory. We know that unions of increasing chains of $\lambda$-saturated models are $\lambda$-saturated and that models of size $\lambda$ have saturated extensions of size at most $\lambda + 2^{|T|}$.  Thus the class of $\lambda$-saturated models of $T$ (ordered with elementary substructure) forms an AEC $\K^T_\lambda$ with $\LS(\K^T_\lambda) \le \lambda + 2^{|T|}$.  Furthermore, this class has a monster model and is fully $(<\aleph_0)$-tame and short.  
	
	\item {\bf Superior AECs}\\
          Superior AECs are a generalization of what some call excellent classes. An AEC is \emph{superior} if it carries an axiomatic notion of forking for which one can state multi-dimensional uniqueness and extension properties. A combination of these gives some tameness:
	\begin{thm}
	Let $\K$ be a superior AEC with weak $(\lambda, 2)$-uniqueness and $\lambda$-extension for some $\lambda \geq \LS(\K) + \bkappa(\K)$.  Then $\K$ is $\lambda^+$-local.  In particular, it is $(\lambda, \lambda^+)$-tame.
	\end{thm}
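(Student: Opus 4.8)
The plan is to establish the locality statement $\lambda^+$-locality directly from the independence axioms, and then read off $(\lambda,\lambda^+)$-tameness as an immediate corollary via resolutions. For the corollary: given $M \in \K_{\lambda^+}$ and $p \neq q \in \gS(M)$, fix a continuous resolution $\seq{M_i : i < \lambda^+}$ of $M$ with each $M_i \in \K_\lambda$ and put $M_{\lambda^+} := M$. Applying $\lambda^+$-locality to the chain $\seq{M_i : i \le \lambda^+}$ yields some $i_0 < \lambda^+$ with $p \rest M_{i_0} \neq q \rest M_{i_0}$; since $\|M_{i_0}\| = \lambda$, the set $|M_{i_0}|$ lies in $P_{\lambda^+}|M|$, which is exactly what $(\lambda,\lambda^+)$-tameness demands. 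So the entire content is in establishing $\lambda^+$-locality.

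For $\lambda^+$-locality, let $\seq{M_i : i \leq \lambda^+}$ be increasing continuous and suppose $p, q \in \gS(M_{\lambda^+})$ satisfy $p \rest M_i = q \rest M_i$ for every $i < \lambda^+$; I will show $p = q$. The idea is that forking forces both types to be determined by a small piece of their domain on which they already agree. By local character (the defining property of $\bkappa(\K)$), $p$ does not fork over some $N_p \lea M_{\lambda^+}$ with $\|N_p\| < \bkappa(\K) \leq \lambda$, and likewise $q$ does not fork over some such $N_q$. Since $\cf{\lambda^+} = \lambda^+$ and each of $N_p, N_q$ has size $< \lambda < \lambda^+$, and $M_{\lambda^+} = \bigcup_{i < \lambda^+} M_i$, there is $i_* < \lambda^+$ with $|N_p|, |N_q| \subseteq |M_{i_*}|$; by the coherence axiom $N_p, N_q \lea M_{i_*}$. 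Using the Löwenheim-Skolem-Tarski axiom (and $\LS(\K) \leq \lambda$) pick $N \lea M_{i_*}$ of size $\lambda$ containing both $N_p$ and $N_q$.

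Now base monotonicity of forking gives that $p$ and $q$ both do not fork over $N$, while transitivity of restriction gives $p \rest N = (p \rest M_{i_*}) \rest N = (q \rest M_{i_*}) \rest N = q \rest N$, using $N \lea M_{i_*}$ and $i_* < \lambda^+$. Thus $p$ and $q$ are two nonforking extensions to $M_{\lambda^+}$ of one and the same type $r := p \rest N$ over a base $N$ of size $\lambda$. Here the remaining hypotheses enter: $\lambda$-extension supplies the independent diagram over $N$ witnessing a nonforking extension of $r$, and weak $(\lambda,2)$-uniqueness forces such a nonforking extension over a size-$\lambda$ base to be unique. Applying these to the diagram built over $N$ yields $p = q$, which is the contrapositive of $\lambda^+$-locality.

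The step I expect to be the main obstacle is precisely the last one: \emph{translating} the abstract multidimensional independence properties into the single usable statement ``a nonforking extension of a type over a model of size $\lambda$ is unique.'' Weak $(\lambda,2)$-uniqueness is phrased for $2$-dimensional independent systems (uniqueness of the amalgam), not directly for Galois types, and turning amalgam-uniqueness into type-uniqueness is exactly where $\lambda$-extension is needed---to produce the independent system realizing both $p$ and $q$ as nonforking extensions of $r$. The delicate bookkeeping is to check that this diagram can be assembled entirely over a base of size $\lambda$ (so that the $\lambda$-level hypotheses genuinely apply) even though the ambient model $M_{\lambda^+}$ has size $\lambda^+$; the reduction of the base, from the local-character witness of size $< \bkappa(\K)$ up to a model of size exactly $\lambda$ via Löwenheim-Skolem-Tarski and coherence, is what makes the size accounting go through.
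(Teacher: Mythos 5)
You should first know that the survey does not actually prove this theorem: it is stated in the list of examples (Section \ref{examples-subsec}) and attributed to the paper introducing superior AECs, so there is no in-paper proof to compare against. The closest thing the survey offers is the sketch in Example \ref{examples-subsec}.(\ref{stabletame}), where local character plus \emph{uniqueness of nonforking extensions} yields tameness; your scaffolding (local character to get bases $N_p, N_q$ of size $<\bkappa(\K)$, cofinality of $\lambda^+$ plus coherence to land them in some $M_{i_*}$, Löwenheim-Skolem-Tarski to enlarge to a common $N \lea M_{i_*}$ of size $\lambda$, base monotonicity) is a correct implementation of that skeleton and is fine as far as it goes.

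The gap is the step you yourself flag and then wave through: ``$\lambda$-extension supplies the independent diagram \ldots and weak $(\lambda,2)$-uniqueness forces such a nonforking extension over a size-$\lambda$ base to be unique.'' That implication is not a bookkeeping matter --- it \emph{is} the theorem. Weak $(\lambda,2)$-uniqueness is a statement about uniqueness of amalgams of two-dimensional independent systems of models of size $\lambda$, not about Galois types, and you give no construction of the system nor any argument that its amalgam-uniqueness translates into $p = q$. Moreover, there is internal evidence that the clean lemma you are assuming (``nonforking extensions over a base of size $\lambda$ are unique'') is \emph{stronger} than what the hypotheses deliver: if it held as stated, your argument would not merely give $\lambda^+$-locality but full $\lambda$-tameness over arbitrary models (any two types agreeing on a size-$\lambda$ submodel containing a common nonforking base would coincide), whereas the theorem deliberately concludes only $\lambda^+$-locality. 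This indicates that the uniqueness extracted from weak $(\lambda,2)$-uniqueness is only available in a restricted form tied to the chain --- e.g.\ applied between consecutive models $M_i \lea M_{i+1}$ of the resolution, with agreement propagated up the chain by induction and direct limits --- rather than as a single one-shot application over one small base. Your proof therefore reduces the statement to an unproved (and likely over-strong) claim, and the place where the chain hypothesis actually does work in the real argument has been collapsed away.
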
	
	
	\item {\bf Hrushovski fusions} \label{hrushfus}\\
	Villaveces and Zambrano have studied Hrushovski's method of fusing pregeometries over disjoint languages as an AEC with strong substructure being given by self-sufficient embedding.  They show that these classes satisfy a weakening of independent 3-amalgamation.  This weakening is still enough to show, as with superior AECs, that the classes are $\LS(\K)$-tame.
	
	\item {\bf ${}^\perp N$ when $N$ is an abelian group} \label{nperp}\\
	Given a module $N$, ${}^\perp N$ is the class of modules $\{M : \text{Ext}^n(M, N) = 0 \text{ for all 1 }\leq n < \omega\}$.  We make this into an AEC by setting $M \lea_\perp M'$ if and only if $M'/M \in {}^\perp N$.  If $N$ is an abelian group, then ${}^\perp N$ is set of all abelian groups that are $p$-torsion free for all $p$ in some collection of primes $P$.
	
	\begin{thm}
	If $N$ is an abelian group, then ${}^\perp N$ is $<\aleph_0$-tame.
	\end{thm}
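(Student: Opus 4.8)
The plan is to build on the explicit description of the class recorded above: for abelian $N$ the members of ${}^\perp N$ are exactly the abelian groups that are $p$-torsion-free for all $p$ in a fixed set of primes $P = P(N)$, and the strong substructure relation unwinds to a purity condition, namely $M \lea_\perp M'$ iff $M \subseteq M'$ and $M'/M$ is again $P$-torsion-free, equivalently: for every $p \in P$ and every $x \in M'$, if $px \in M$ then $x \in M$. Thus a $\lea_\perp$-embedding is a ``$P$-pure'' embedding, and its crucial feature is that it reflects $P$-divisibility: if $M \lea_\perp M'$, $a \in M$, $p \in P$, and $a$ is divisible by $p^k$ in $M'$, then $a$ is already divisible by $p^k$ in $M$.

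First I would reduce $<\aleph_0$-tameness to one assertion: the Galois type of an element $a$ over a model $M$ is determined by a finitary invariant, its \emph{divisibility type} $\delta(a/M)$, recording (i) the $\mathbb{Z}$-linear relations $na = m$ that $a$ satisfies with $m \in M$, and (ii) for each $m \in M$, each $p \in P$, and each $k$, whether $p^k \mid (a - m)$. (In the model theory of modules over the PID $\mathbb{Z}$, $\delta(a/M)$ is exactly the $P$-relative pp-type of $a$ over $M$.) Each clause of $\delta(a/M)$ mentions only finitely many parameters from $M$, so $\delta(a/M)$ is the union of its restrictions to finite subsets of $M$. Granting that $\gtp(a/M; M') = \gtp(b/M; M'')$ iff $\delta(a/M) = \delta(b/M)$, tameness follows: if $p \neq q \in \gS^1(M)$ then, by the nontrivial direction of this equivalence, their divisibility types over $M$ differ, hence differ on a single clause with parameters in a finite set $A \subseteq |M|$; and two elements with different divisibility data over $A$ cannot have equal Galois type over $A$, since an amalgam identifying them would, by $P$-purity, preserve that data. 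Thus $p \rest A \neq q \rest A$, as required.

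It remains to prove the equivalence ``equal divisibility type iff equal Galois type''. One direction is immediate: $\lea_\perp$-embeddings are $P$-pure, so they preserve and reflect all the data in $\delta$, and since Galois-type equality is witnessed by a chain of such embeddings, equal Galois types force equal divisibility types. The content is the converse, and this is where I expect the main obstacle to lie. I would first establish amalgamation for $({}^\perp N, \lea_\perp)$: given $M \lea_\perp M_1$ and $M \lea_\perp M_2$, form the pushout $M_1 \oplus_M M_2$ and pass to its quotient by $P$-torsion (its $P$-purification) to obtain $N \in {}^\perp N$ into which $M_1$ and $M_2$ embed $P$-purely over $M$. Then, given $a \in M_1$ and $b \in M_2$ with $\delta(a/M) = \delta(b/M)$, the matching linear and height data is exactly what is needed to see that the map fixing $M$ and sending $a \mapsto b$ extends to a $\lea_\perp$-embedding into a common $N \gea_\perp M_1, M_2$; this is the module-theoretic heart, the familiar statement that realizations of the same pp-type over a pure submodule are conjugate. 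Carrying this out while staying inside the $P$-torsion-free class and keeping the embeddings $P$-pure is the delicate point, since naive pushouts can introduce $P$-torsion and destroy purity, so the $P$-purification step must be checked to preserve the divisibility type of the distinguished element.
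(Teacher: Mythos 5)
The paper itself gives no proof of this statement --- it is a survey item, with the historical remarks attributing the result to Baldwin--Eklof--Trlifaj \cite{bet} --- so the comparison below is with the argument in that source, which your proposal does essentially follow in outline: reduce Galois types to pp-type/divisibility data, observe that $\leq_\perp$-embeddings preserve \emph{and reflect} $P$-divisibility (this is correct, and is the right engine for detecting type differences over a single parameter $m$, hence over finite sets), and locate all the work in the converse ``same divisibility type implies same Galois type''.

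The gap is precisely at that converse, which you flag as ``the module-theoretic heart'' but do not carry out, and the sketch you give would not go through as written. Forming the pushout $M_1\oplus_M M_2$ and then trying to identify $a$ with $b$ over $M$ fails: killing $a-b$ forces one to quotient by something whose image in $M_2/M$ is $\langle b+M\rangle$, and $M_2/(M+\mathbb{Z}b)$ is in general \emph{not} $P$-torsion-free (any witness $y\notin M+\mathbb{Z}b$ to $p\mid(b-m)$ destroys it), so the resulting maps are not $\leq_\perp$-embeddings. The missing idea is to amalgamate not over $M$ but over the canonical intermediate structure $X_a=\{x\in M_1:\ sx\in M+\mathbb{Z}a\ \text{for some product }s\text{ of primes in }P\}$, the $P$-purification of $M+\mathbb{Z}a$ inside $M_1$. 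Two facts then do all the work: (a) in a $P$-torsion-free group, witnesses to $P$-divisibility are \emph{unique} ($py_1=py_2$ forces $y_1=y_2$), so $X_a$ is generated over $M$ by $a$ and uniquely determined elements $y_{s,k,m}$ with $sy=ka+m$, all of whose mutual relations are computed from $M$ and the divisibility data; hence $\delta(a/M)=\delta(b/M)$ yields a canonical isomorphism $(X_a,a)\cong(X_b,b)$ over $M$; and (b) the pushout of $X_a\leq_\perp M_1$ and $X_a\cong X_b\leq_\perp M_2$ stays in ${}^\perp N$ with $\leq_\perp$-legs simply because the class of $P$-torsion-free groups is closed under extensions --- so your worry that ``naive pushouts can introduce $P$-torsion'' is unfounded, and no purification of the pushout is needed. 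Two smaller points: your invariant $\delta$ only records $p^k\mid(a-m)$ with coefficient $1$ on $a$, and one must check (using the reflection property of $\leq_\perp$ again) that conditions $p^k\mid(na-m)$ reduce to these; and the appeal to ``realizations of the same pp-type over a pure submodule are conjugate'' is a fact about complete first-order theories of modules that does not directly apply here, since the amalgam must be a $\leq_\perp$-amalgam inside the non-elementary class.
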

	
	Moreover, such a ${}^\perp N$ is Galois-stable in exactly the cardinals $\lambda = \lambda^\omega$.
	
	\item {\bf Algebraically closed, rank one valued fields} \label{valfields}\\
	Let $\text{ACVF}_{\mathbb{R}}$ be the $\Ll_{\omega_1, \omega}$-theory of an algebraically closed valued field such that the value group is Archimedean; equivalently, the value group can be embedded into $\mathbb{R}$.  After fixing the characteristic, this AEC has a monster model and Galois types are determined by syntactic types.  Thus the class is fully $<\aleph_0$-tame and -type short.  This determination of Galois types can be seen either through algebraic arguments or the construction of an appropriate ultraproduct.
	
	Such a class cannot have an uncountable ordered sequence, so it has the $\aleph_0$-order property of length $\alpha$ for every $\alpha < \omega_1$, but it does not have the $\aleph_0$-order property of length $\omega_1$.

\end{enumerate}

\subsubsection{Counterexamples} \label{counterex-ssec}
Life would be too easy if all AECs were tame. Above we have seen that several natural mathematical classes are tame; in contrast, all the known counterexamples to tameness are pathological\footnote{In the dictionary sense that they were constructed as counter-examples.}, with the most natural being the Baldwin-Shelah example of short exact sequences.  We list the known ones below in increasing ``strength''.

\begin{enumerate}
	\item {\bf The Hart-Shelah example}\\
	The Hart-Shelah examples are a family of examples axiomatized by complete sentences in $\Ll_{\omega_1, \omega}$.
	\begin{thm}\label{hart-shelah-thm}
	For each $n < \omega$, there is an AEC $\K_n$ that is axiomatized by a complete sentence in $\Ll_{\omega_1, \omega}$ with $\LS(\K_n) = \aleph_0$ and disjoint amalgamation such that:
	\begin{enumerate}
		\item $\K_n$ is $(\aleph_0, \aleph_{n-1})$-tame (in fact, the types are first-order syntactic);
		\item $\K_n$ is categorical in $[\aleph_0, \aleph_n]$;
		\item $\K_n$ is Galois-stable in $\mu$ for $\mu \in [\aleph_0, \aleph_{n - 1}]$ ; and
		\item Each of these properties is sharp. That is:
                  \begin{enumerate} 
                    \item\label{hs-1} $\K_n$ is not $(\aleph_0, \aleph_{n})$-tame, 
                    \item $\K_n$ is not categorical in $\aleph_{n + 1}$\footnote{Note that this follows from (\ref{hs-1}) by (the proof of) the upward categoricity transfer of Grossberg and VanDieren \cite{tamenessthree}.}.
                    \item $\K_n$ is not Galois-stable in $\aleph_{n}$.
                  \end{enumerate}
	\end{enumerate}
	\end{thm}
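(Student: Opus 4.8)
The plan is to produce, for each fixed $n$, a complete $\Ll_{\omega_1,\omega}$-sentence $\psi_n$ whose models are entirely \emph{coordinatized} over a single index sort $I$. Concretely, I would fix a finite abelian group, say $\mathbb{Z}_2$, and declare a model $M$ to consist of $I$ together with a tower of fiber sorts: to each $(n+1)$-element tuple from $I$ one attaches a torsor over $\mathbb{Z}_2$, and these fibers are glued by an $(n+2)$-ary \emph{coherence} condition of the form $\partial c = 0$ (a cocycle equation). The sentence $\psi_n$ is the conjunction of first-order axioms describing the sorts, the torsor actions, and the coherence equation, together with an $\Ll_{\omega_1,\omega}$ clause that pins down the isomorphism type of the countable generic solution (a Scott-sentence-style description); this forces completeness. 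Letting $\lea$ be the substructure relation that respects the coordinatization yields an AEC $\K_n$ with $\LS(\K_n)=\aleph_0$, since every finite subset sits inside a countable substructure closed under the coordinate maps.

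I would then dispatch the ``positive'' clauses (1)--(3). Disjoint amalgamation is done coordinate-by-coordinate: given $M_0\lea M_1,M_2$, amalgamate the index sets disjointly over $I^{M_0}$ and freely solve the coherence equation on the new tuples, which is always possible because the equation constrains only finitely many coordinates at once and there is room to extend. The crucial observation for the remaining clauses is that the coherence datum relevant to any fixed tuple lives in a countable subcomplex; hence over any $M\in(\K_n)_{\le\aleph_{n-1}}$ the Galois type of a tuple is determined by its quantifier-free first-order type (two tuples with equal quantifier-free type are conjugated by an automorphism assembled coordinate-by-coordinate). This gives $(\aleph_0,\aleph_{n-1})$-tameness with first-order types, and counting quantifier-free types yields Galois-stability in each $\mu\in[\aleph_0,\aleph_{n-1}]$. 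Categoricity in $[\aleph_0,\aleph_n]$ reduces to the combinatorial statement that any two coherence systems over an index set of size $\le\aleph_n$ are equivalent (the relevant $(n+1)$-st ``cohomology'' vanishes there), so that two models of equal size $\le\aleph_n$ have isomorphic index sets with matchable fiber data.

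The heart of the proof, and the step I expect to be the main obstacle, is the sharpness in clause (4): pinning down the exact cardinal at which the coherence obstruction turns nontrivial, separately for the ``fill in a model'' problem and the ``extend a type'' problem. The engine is a combinatorial lemma asserting that over an index set of size $\aleph_n$ there are coherence systems inequivalent to the generic one, and that two inequivalent ones can be made to agree on every countable (indeed every $\aleph_{n-1}$-sized) subcomplex. From this, non-$(\aleph_0,\aleph_n)$-tameness (clause \ref{hs-1}) is read off by taking a model of size $\aleph_n$ and two Galois types realized by inequivalent coherence choices that restrict identically to every countable submodel; non-Galois-stability in $\aleph_n$ and the existence of at least two non-isomorphic models of size $\aleph_{n+1}$ likewise follow by counting inequivalent coherence classes at those sizes. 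The delicate bookkeeping is matching these thresholds precisely to $\aleph_n$ and $\aleph_{n+1}$, since the obstruction for extending types sits one dimension above the one governing model categoricity.

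Finally, as the footnote indicates, one need not build the $\aleph_{n+1}$ counterexample by hand: $\K_n$ has amalgamation and no maximal models, so were it categorical in the successor $\aleph_{n+1}$, the tameness-from-categoricity argument underlying the Grossberg--VanDieren transfer (Theorem \ref{tameness-from-categ}, \cite{tamenessthree}) would force $(\aleph_0,\aleph_n)$-tameness, contradicting clause \ref{hs-1}. Either route closes the argument.
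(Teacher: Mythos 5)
Your blueprint matches the construction the paper itself sketches right after the theorem statement: a spine $I$, affine ($G$-torsor) fibers indexed by finite subsets of $I$, and a relation $Q$ imposing a sum-zero coherence condition that ``hides the zeros,'' with all the pathology concentrated in the obstruction to trivializing the resulting system. Bear in mind, though, that the paper does not prove Theorem \ref{hart-shelah-thm}; it only describes the structures and defers to Hart--Shelah \cite{hs-example} and, for the full statement, to Baldwin--Kolesnikov \cite{bk-hs}. Measured against that sketch, your proposal is the same approach. Measured as a proof, it has a genuine gap, located exactly where you predict.

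The gap is that your ``engine'' lemma --- over an index set of size $\aleph_n$ there exist coherence systems inequivalent to the trivial one yet equivalent to it on every subcomplex of size $\aleph_{n-1}$, while no such systems exist over index sets of size at most $\aleph_{n-1}$ --- is asserted, not proved, and it is essentially the entire content of the theorem. Both halves are nontrivial: the vanishing below $\aleph_n$ (which drives categoricity, tameness, and stability in the positive clauses) and the non-vanishing at $\aleph_n$ are instances of higher-dimensional (disjoint) amalgamation holding and then failing, and establishing the failure at exactly $\aleph_n$ requires the filtration/free-system combinatorics carried out in \cite{bk-hs}; nothing in your outline supplies it. Two further points. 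First, your arities are one off from the paper's description (fibers sit over $(n+2)$-element subsets of $I$ and $Q$ is $(n+3)$-ary); since the dimension of the coherence condition is precisely what pins the critical cardinal, this is not mere bookkeeping --- with your indexing the thresholds would be expected to land one cardinal too low, so the discrepancy must be resolved before clauses (1)--(4) can be claimed with the stated cardinals. Second, for clause 4(b) your appeal to Theorem \ref{tameness-from-categ} cannot work: with $\LS(\K_n)=\aleph_0$ that theorem only produces weak tameness below a limit cardinal of uncountable cofinality, which says nothing at $\aleph_{n+1}$. The footnote's intent is that the argument of the Grossberg--VanDieren transfer (Theorem \ref{gv-upward}) from $\aleph_n$ to $\aleph_{n+1}$ in \cite{tamenessthree} uses only $(\aleph_0,\aleph_n)$-tameness, so categoricity in $\aleph_n$ and $\aleph_{n+1}$ would contradict clause \ref{hs-1}; alternatively, count inequivalent coherence classes in $\aleph_{n+1}$ directly, as you also suggest.
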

	Each model $M \in \K_n$ begins with an index set $I$ (called the spine); the direct sum $G := \oplus_{[I]^{n+2}} \mathbb{Z}_2$; $G^* \subseteq [I]^k \times G$ with a projection $\pi:G^* \to [I]^{n+2}$ such that each stalk $G^*_{u} = \pi^{-1}\{u\}$ has a regular, transitive action of $G$ on it; and, similarly, $H^* = [I]^{n+2} \times H$ with a projection $\pi':H^* \to [I]^k$ such that each stalk has an action of $\mathbb{Z}_2$ on it.  So far, the structure described (along with the extra information required to code it) is well-behaved and totally categorical.  Added to this is a $n+3$-ary relation $Q \subseteq (G^*)^{n+2} \times H^*$ such that $Q(u_1, \dots, u_{n+2}, v)$ is intended to code
	\begin{itemize}
		\item there are exactly $n+3$ elements of $I$ that make up the projections of $u_1, \dots, u_{n+2}, v$ (so each $(n+2)$-element subset shows up exactly once in the projections); and
		\item the sum of the second coordinates evaluated at $\pi'(v)$ is equal to some fixed function of the $n+3$ elements of the projections.
	\end{itemize}
This coding allows one to ``hide the zeros'' and find nontameness at $\aleph_n$. The example shows that Theorem \ref{sh87-thm} is sharp.

	
	It should be noted that the ideas used in constructing the Hart-Shelah examples come from constructions that characterize various cardinals. Thus, although the construction takes place in ZFC, it still involves set-theoretic ideas.  Work in preparation by Shelah and Villaveces \cite{shvi648v1} contains an extension of the Hart-Shelah example to larger cardinals, proving:

	\begin{thm}
	Assume the generalized continuum hypothesis. For each $\lambda$ and $k < \omega$, there is $\psi^\lambda_k \in \Ll_{(2^\lambda)^+, \omega}$ that is categorical in $\lambda^{+2}, \dots, \lambda^{+(k - 1)}$ but not in $\beth_{k+1}(\lambda)^+$.
	\end{thm}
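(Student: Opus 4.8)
The plan is to scale up the Hart--Shelah construction behind Theorem \ref{hart-shelah-thm}, replacing the base cardinal $\aleph_0 = \beth_0$ by $\lambda$ and, correspondingly, the ambient logic $\Ll_{\omega_1, \omega}$ by $\Ll_{(2^\lambda)^+, \omega}$. First I would fix a spine $I$ of the appropriate cardinality and rebuild the well-behaved ``scaffolding'': a base structure of size $\lambda$, the group $G$ obtained as a direct sum of copies of a fixed finite abelian group indexed by the subsets of $I$ of some fixed finite size, the bundles $G^\ast$ and $H^\ast$ with their stalkwise regular actions, and the two projections, all as in the finite case but now with the ``pinning-down'' axioms (those forcing this part to be totally categorical) written as conjunctions of length $\le 2^\lambda$. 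This is precisely where $\Ll_{(2^\lambda)^+, \omega}$ is needed: one must produce a \emph{complete} sentence $\psi^\lambda_k$, and the Scott-sentence-style description of the base structure at level $\lambda$ requires infinitary conjunctions of that length. The parameter $k$ governs the arity of the coding relation $Q$, just as $n$ did before.

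The core argument then splits into the usual two halves. For categoricity in the interval $\lambda^{+2}, \ldots, \lambda^{+(k-1)}$, I would show that a model of $\psi^\lambda_k$ is determined up to isomorphism by the cardinality of its spine $I$: everything else is generated canonically from $I$, and in this range the coding relation $Q$ carries no genuine freedom. Concretely, the content of $Q$ is a higher-dimensional ``cocycle'' (with dimension governed by $k$), and the relevant uniqueness --- an instance of Shelah's excellence, i.e.\ higher-dimensional amalgamation --- holds below the threshold: every such cocycle is a coboundary, so any two choices of $Q$ are isomorphic over the scaffolding. Under the generalized continuum hypothesis, $\beth_j(\lambda) = \lambda^{+j}$, so the freeness range lines up with exactly the displayed successor cardinals.

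For the failure of categoricity at $\beth_{k+1}(\lambda)^+$, I would run the ``hiding the zeros'' construction, exhibiting two completions of the scaffolding to models of $\psi^\lambda_k$ that agree on all strictly smaller substructures yet are non-isomorphic (this simultaneously witnesses a failure of tameness at that level). The mechanism is that the obstruction measured by $Q$ --- cocycles modulo coboundaries --- becomes nontrivial exactly when the spine reaches size $\beth_{k+1}(\lambda)$, because only then are there enough independent subsets of $I$ to support a cocycle that is not a coboundary.

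The hard part will be precisely this last counting step: one must compute the dimension of the relevant obstruction group as a function of $|I|$ and show that it jumps from $0$ to positive exactly at the threshold, which is the scaled analog of the delicate combinatorics (tied to the characterizations of $\beth$-numbers) underlying the original example. Checking that $\psi^\lambda_k$ is genuinely complete and that all scaffolding axioms are expressible in $\Ll_{(2^\lambda)^+, \omega}$ is routine but must be handled with care, and the GCH hypothesis is used throughout to keep the beth hierarchy aligned with the successor cardinals so that the cocycle counting comes out at the claimed places.
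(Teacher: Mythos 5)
First, a caveat about the ground truth: this survey does not prove the statement at all. It is quoted verbatim from work in preparation by Shelah and Villaveces (\cite{shvi648v1}), cited in Section \ref{counterex-ssec} immediately after the discussion of the Hart--Shelah example, so there is no proof in the paper to compare yours against. Your overall strategy --- rebuild the Hart--Shelah scaffolding over a base of size $\lambda$, pass to $\Ll_{(2^\lambda)^+,\omega}$ to keep the sentence complete, and let the arity of the coding relation $Q$ govern where higher-dimensional uniqueness fails --- is exactly the approach the survey describes informally, so the plan is the intended one.

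That said, as a proof your proposal has a genuine gap: the entire mathematical content of the theorem is concentrated in the step you explicitly defer (``the hard part will be precisely this last counting step''). Showing that every relevant cocycle is a coboundary for spines of size up to $\lambda^{+(k-1)}$, and exhibiting a non-coboundary cocycle at $\beth_{k+1}(\lambda)^+$, is not a routine scaling of the $\aleph_n$ combinatorics; in the countable case those arguments lean on set-theoretic characterizations of the $\aleph_n$ (free subsets, $n$-dimensional amalgamation systems) whose analogues at an arbitrary $\lambda$ are precisely what GCH is being invoked to control, and you have not identified what replaces them. Moreover, your narrative that the obstruction ``jumps from $0$ to positive exactly at the threshold'' does not match the statement you are proving: under GCH the last asserted categoricity cardinal is $\lambda^{+(k-1)}$ while the asserted failure is at $\beth_{k+1}(\lambda)^+ = \lambda^{+(k+2)}$, with nothing claimed at $\lambda^{+k}$ or $\lambda^{+(k+1)}$. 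That three-step gap is a signal that the transition is \emph{not} known to be sharp here, so an argument organized around exact sharpness is aiming at a stronger claim than the theorem makes and may well be unprovable in that form. Until the cocycle computation is actually carried out at $\lambda$, what you have is a credible research plan rather than a proof.
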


As for the countable case, this example is likely not to be tame.
	
	\item {\bf The Baldwin-Shelah example}\\
	The Baldwin-Shelah example $\K$ consists of several short exact sequences, each beginning with $\mathbb{Z}$.
\[ 
\xymatrix{	& & H_j \ar[dr] & &\\
	0 \ar[r] & \mathbb{Z} \ar[r] \ar[ur] \ar[dr] & H_i \ar[r] & G \ar[r] & 0\\
	& & H_k \ar[ur] & &}
	\]
	Formally this consists of of sorts $Z$, $G$, $I$, and $H$ with a projection $\pi: H \to I$ and group operations and embeddings such that each fiber $H_i := \pi^{-1}(\{i\})$ is a group that is in the middle of a short exact sequence.
	
	The locality properties of Galois types over a model depend heavily on the group $G$ used.  The key observation is that, given $i, j \in I$, their Galois types are equal precisely when there is an isomorphism of the fibers $\pi^{-1}\{i\}$ and $\pi^{-1}\{j\}$ that commute with the rest of the short exact sequence.  Thus, Baldwin and Shelah consider an $\aleph_1$-free, not free, not Whitehead group\footnote{It is a ZFC theorem that such a group exists at $\aleph_1$. Having such a group at $\kappa$ ($\kappa$-free, not free, not Whitehead of size $\kappa$) is, in the words of Baldwin and Shelah, ``sensitive to set theory''. The known sensitivities are summarized in \cite[Section 8]{tamelc-jsl}, primarily drawing on work in \cite{magidor-shelah-free}  and \cite{ekloffmekler}.} $G^*$ of size $\aleph_1$.  With $G^*$ in hand, we can construct a counterexample to $(\aleph_0, \aleph_1)$-tameness: set $i_0$ and $i_1$ to be in a short exact sequence that ends in $G^*$ such that $\pi^{-1}\{i_0\} = G^* \oplus \mathbb{Z}$ and $\pi^{-1}\{i_1\} = H$ is not isomorphic to $G^* \oplus \mathbb{Z}$; such a group exists exactly because $H$ is not Whitehead.  Then, by the observation above, $i_0$ and $i_1$ have different types over the entire uncountable set $G^*$.  However, any countable approximation $G_0$ of $G^*$ will see that $i_0$ and $i_1$ have the same Galois type over it: the countable approximation will have that the fibers over $i_0$ and $i_1$ are both the middle of a short exact ending in $G_0$.  By the choice of $G^*$, $G_0$ is free, thus Whitehead, so these fibers are both isomorphic to $G_0 \oplus \mathbb{Z}$.  This isomorphism witnesses the equality of the Galois types of $i_0$ and $i_1$ over the countable approximation.
	
	Given a $\kappa$ version $G^*_\kappa$ of this group allows one to construct a counterexample to $(<\kappa, \kappa)$-tameness.  Indeed, all that is necessary is that $G^*_\kappa$ is `almost Whitehead:' it is not Whitehead, but every strictly smaller subgroup of it is.

	\item {\bf The Shelah-Boney-Unger example}\\
	While the Baldwin-Shelah example reveals a connection between tameness and set theory, the Shelah-Boney-Unger example shows an outright equivalence between certain tameness statements and large cardinals.  For each cardinal $\sigma^\omega = \sigma$, there is an AEC $\K_\sigma$ that consists of an index predicate $J$ with a projection $Q: H \to J$ such that each fiber $Q^{-1}\{j\}$ has a specified structure and a projection $\pi: H \to I$\footnote{Although there are two projections, they are used differently: the projection $Q$ is a technical device to code isomorphisms of structures via equality of Galois types, while the interaction of (a fiber of) $H$ and $I$ is more interesting.}.  Given some partial order $(\mathcal{D}, \vartriangleleft)$ and set of functions $\mathcal{F}$ with domain $\mathcal{D}$, filtrations $\{M_{\ell, d}: d \in \mathcal{D}\}$ of a larger models $M_{\ell, \mathcal{D}}$, all from $\K_\sigma$, are built, for $\ell = 1, 2$.  Similar to the Baldwin-Shelah example, types $p_d$ and $q_d$ are defined such that the types are equal if and only if there is a nice isomorphism between $M_{1, d}$ and $M_{2, d}$; the same is true of $p_{\mathcal{D}}$ and $q_{\mathcal{D}}$.  Thus, various properties of type locality ($p_{\mathcal{D}} = q_{\mathcal{D}}$ following from $p_d = q_d$ for all $d \in \mathcal{D}$) is once more coded by ``isomorphism locality''.
	
	In turn, the structure was built so that a nice isomorphism between $M_{1, \mathcal{D}}$ and $M_{2, \mathcal{D}}$ is equivalent to a combinatorial property $\#(\mathcal{D}, \mathcal{F})$.
	\begin{defin} \
	\begin{itemize}
		\item Given functions $f$ and $g$ with the same domain, we define $f \leq^* g$ to hold if and only if there is some $e: \ran g \to \ran f$ such that $f = e \circ g$.
		\item Given a function $f$ with a domain $D$ that is partially ordered by $\leq_D$, we define $\ran^* f = \bigcap_{d \in D} \ran \left(f \rest \{d' \in D : d \leq d'\} \right)$ to be the eventual range of $f$.
		\item $\#(\mathcal{D}, \mathcal{F})$ holds if and only if there are $f^* \in \mathcal{F}$ and a collection of nonempty finite sets $\{ u_f \subseteq \ran^* f: f^* \leq^* f\}$ such that, given any $e$ witnessing $f^* \leq^* f$, $e\rest u_f$ is a bijection from $u_f$ to $u_{f^*}$.
	\end{itemize}
	\end{defin}
	So $\#(\mathcal{D}, \mathcal{F})$ eventually puts some kind of structure on functions in $\mathcal{F}$.  Shockingly, this principle can, under the right assumption on $\mathcal{D}$ and $\mathcal{F}$, define a very complete (ultra)filter on $\mathcal{D}$: for each $f$ with $f\leq^*  f^*$, set $i_f \in u_f$ to be the unique image of $\min{u_f}$ by some $e$ witnessing $f^* \leq^* f$.  Then, for $A \subseteq \mathcal{D}$,
	$$A \in U \iff \exists d \in \mathcal{D}, f \in \mathcal{F} \left( f^{-1}\{i_f\} \cap \{ d' \in \mathcal{D} : d \vartriangleleft d' \} \subseteq A \right)$$
	
	Thus, we get that type locality in $\mathbb{\K}_\sigma$ implies the existence of filters and ultrafilters used in the definitions of large cardinals; the converse is mentioned above.
	
	This argument can be used to give the following theorems.
	
	\begin{theorem}
	Let $\kappa$ such that $\mu^\omega < \kappa$ for all $\mu < \kappa$.
	\begin{enumerate}
		\item If\footnote{The additional cardinal arithmetic here can be dropped at the cost of only concluding $(\kappa$ is weakly compact$)^L$.} $\kappa^{<\kappa} = \kappa$ and every AEC $\K$ with $\LS(\K) < \kappa$ is $(<\kappa, \kappa)$-tame, then $\kappa$ is almost weakly compact.
		\item If every AEC $\K$ with $\LS(\K) < \kappa$ is $\kappa$-local, then $\kappa$ is almost measurable.
		\item If every AEC $\K$ with $\LS(\K) < \kappa$ is $<\kappa$-tame, then $\kappa$ is almost strongly compact.
	\end{enumerate}
	\end{theorem}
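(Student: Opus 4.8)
The plan is to prove each implication directly, converting the relevant locality hypothesis into the existence of the complete ultrafilters that characterize the ``almost'' large cardinal notions. Recall that $\kappa$ is almost strongly compact exactly when, for every $\delta < \kappa$, every $\kappa$-complete filter extends to a $\delta$-complete ultrafilter, and the almost weakly compact and almost measurable notions admit analogous filter-extension characterizations (with the filter living on $\kappa$ in the measurable case). So for (3) I would fix a $\kappa$-complete filter $F$ on a set $X$ together with some $\delta < \kappa$, and aim to produce a $\delta$-complete ultrafilter extending $F$; the hypothesis $\mu^\omega < \kappa$ for all $\mu < \kappa$ guarantees that there are cardinals $\sigma$ with $\sigma^\omega = \sigma$ cofinally below $\kappa$, so the AEC $\K_\sigma$ built below can be chosen with $\LS(\K_\sigma) < \kappa$, as the locality hypotheses require.

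The heart of the argument is the AEC $\K_\sigma$ described above, engineered so that equality of the Galois types $p_d$ and $q_d$ holds precisely when there is a ``nice'' isomorphism between the models $M_{1,d}$ and $M_{2,d}$ respecting the short exact sequence data, and likewise at the top with $p_{\mathcal{D}}$, $q_{\mathcal{D}}$, and $M_{1,\mathcal{D}}$, $M_{2,\mathcal{D}}$. From the filter-extension data I would build a poset $(\mathcal{D}, \ltg)$ and a family $\mathcal{F}$ of functions on $\mathcal{D}$ encoding $F$ and the approximation structure, arranging the fibers so that $p_d = q_d$ holds for every $d \in \mathcal{D}$, i.e.\ the small models are nicely isomorphic at each stage. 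The locality hypothesis then does the lifting: under $<\kappa$-tameness (respectively $(<\kappa,\kappa)$-tameness or $\kappa$-locality) two types agreeing on all small restrictions must be equal, so $p_d = q_d$ for all $d$ forces $p_{\mathcal{D}} = q_{\mathcal{D}}$, hence a nice top-level isomorphism. By construction this isomorphism is equivalent to the combinatorial principle $\#(\mathcal{D}, \mathcal{F})$, so tameness delivers $\#(\mathcal{D}, \mathcal{F})$ for free.

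It then remains to convert $\#(\mathcal{D}, \mathcal{F})$ into the desired ultrafilter. Using the witnessing $f^\ast \in \mathcal{F}$ and the finite sets $u_f$, I would define $U$ by
$$A \in U \iff \exists d \in \mathcal{D},\ \exists f \in \mathcal{F}\ \bigl(f^{-1}\{i_f\} \cap \{d' \in \mathcal{D} : d \ltg d'\} \subseteq A\bigr),$$
where $i_f$ is the common image of $\min u_f$, and then verify that $U$ is a proper $\delta$-complete ultrafilter extending $F$; the completeness is inherited from that of $F$ together with the coherence forced by the bijections $e \rest u_f$. The three cases differ only in the choice of $(\mathcal{D}, \mathcal{F})$ and in which locality property is applied: for almost measurability one takes $\mathcal{D}$ to be an increasing chain of length $\kappa$ and applies $\kappa$-locality, producing for each $\delta < \kappa$ a $\delta$-complete uniform ultrafilter on $\kappa$; for almost weak compactness one applies $(<\kappa,\kappa)$-tameness and uses $\kappa^{<\kappa} = \kappa$ to control the combinatorics of the small subsets of a set of size $\kappa$.

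I expect the main obstacle to be the construction and bookkeeping of $\K_\sigma$: one must simultaneously ensure that $\K_\sigma$ is a genuine AEC with $\LS(\K_\sigma) < \kappa$, that Galois-type equality is \emph{exactly} fiber isomorphism (no spurious automorphism of the ambient monster may identify non-isomorphic fibers, which is why the Whitehead/$\mathrm{Ext}$-style rigidity and the auxiliary projection $Q$ are needed), and that the passage from the top isomorphism to $\#(\mathcal{D}, \mathcal{F})$ is a true equivalence rather than a one-sided implication. The most delicate quantitative point is matching completeness: checking that the ultrafilter extracted from $\#$ is $\delta$-complete for the prescribed $\delta$, and that this succeeds uniformly as $\delta$ ranges below $\kappa$, so that the full ``almost'' property is obtained rather than a single instance of it.
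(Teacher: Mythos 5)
Your proposal follows essentially the same route as the paper: it builds the AEC $\K_\sigma$ so that Galois-type equality codes fiber isomorphism, uses the locality hypothesis to lift $p_d = q_d$ for all $d \in \mathcal{D}$ to $p_{\mathcal{D}} = q_{\mathcal{D}}$, reads off the combinatorial principle $\#(\mathcal{D},\mathcal{F})$, and extracts from it the $\delta$-complete ultrafilter extending the given $\kappa$-complete filter, exactly as in the Shelah--Boney--Unger argument the paper sketches. The points you flag as delicate (rigidity of the fibers, the two-sided equivalence with $\#$, and uniformity in $\delta$) are indeed where the real work lies in the cited sources.
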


        We obtain a characterization of the statement ``all AECs are tame'' in terms of large cardinals.

        \begin{cor}\label{tame-lc-cor}
          All AECs are tame if and only if there is a proper class of almost strongly compact cardinals.
        \end{cor}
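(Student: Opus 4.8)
The plan is to prove the equivalence by combining the large-cardinal \emph{sufficiency} supplied by Theorem \ref{tamelc-fact} with the \emph{necessity} supplied by the Shelah-Boney-Unger theorem stated just above; the two directions are handled quite differently.

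\emph{(Proper class of almost strongly compacts $\Rightarrow$ all AECs tame.)} This direction I would do one AEC at a time, with no uniformity required. Given any AEC $\K$, choose an almost strongly compact cardinal $\kappa > \LS (\K)$, which is possible precisely because such cardinals form a proper class. By the ``almost'' strengthening of the strongly compact clause of Theorem \ref{tamelc-fact}, $\K$ is fully $<\kappa$-tame and -type short; in particular $\K$ is $<\kappa$-tame, hence tame. Since $\K$ was arbitrary, all AECs are tame.

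\emph{(All AECs tame $\Rightarrow$ proper class of almost strongly compacts.)} Here the engine is clause (3) of the theorem above: if $\mu^\omega < \kappa$ for all $\mu < \kappa$ and \emph{every} AEC $\K$ with $\LS (\K) < \kappa$ is $<\kappa$-tame, then $\kappa$ is almost strongly compact. So, fixing an arbitrary cardinal $\theta$, it suffices to produce a single $\kappa \ge \theta$ satisfying (a) $\mu^\omega < \kappa$ for all $\mu < \kappa$, and (b) every AEC with $\LS (\K) < \kappa$ is $<\kappa$-tame. The main obstacle is that the hypothesis ``all AECs are tame'' yields, for each individual $\K$, only \emph{some} $\kappa_\K$ with $\K$ being $<\kappa_\K$-tame, while clause (3) demands one $\kappa$ working simultaneously for the proper class of all AECs lying below it. The device that closes this gap is the observation recorded earlier in the survey that an AEC is determined by its restriction to models of size $\le \LS (\K)$, so there is only a \emph{set} of AECs with $\LS (\K)$ below any fixed bound.

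Exploiting this, I would build $\kappa$ by a countable closure process. Set $\theta_0 := \theta$, and given $\theta_n$ let
\[
\theta_{n+1} := \left(\sup\{\kappa_\K : \K \text{ an AEC with } \LS (\K) < \theta_n\}\right)^\omega + \theta_n^+ ,
\]
the supremum being legitimate since it ranges over a set. Put $\kappa := \sup_{n<\omega} \theta_n$; the sequence is strictly increasing, so $\cf{\kappa} = \omega$ and $\kappa \ge \theta$. I then verify (a) and (b). For (a): if $\mu < \kappa$ then $\mu < \theta_n$ for some $n$, whence $\mu^\omega \le \theta_n^\omega \le \theta_{n+1} < \kappa$. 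For (b): if $\K$ is an AEC with $\LS (\K) < \kappa$, then $\LS (\K) < \theta_n$ for some $n$, so $\kappa_\K \le \theta_{n+1} \le \kappa$; since $<\kappa_\K$-tameness implies $<\kappa$-tameness (as $\kappa_\K \le \kappa$ only enlarges the sets $A$ available to separate $p \rest A \neq q \rest A$), $\K$ is $<\kappa$-tame. Thus $\kappa$ meets the hypotheses of clause (3) and is almost strongly compact. As $\theta$ was arbitrary and $\kappa \ge \theta$, the almost strongly compact cardinals are unbounded, i.e.\ form a proper class, which completes the proof.
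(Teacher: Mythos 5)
Your route is exactly the intended one: the paper states this corollary without a separate proof, as an immediate consequence of the ``almost'' versions in Theorem \ref{tamelc-fact} (sufficiency) and clause (3) of the Shelah--Boney--Unger theorem (necessity), and you correctly identify the only nontrivial point in the necessity direction, namely converting the per-class tameness bound $\kappa_\K$ into a single $\kappa$ that works uniformly for every AEC with $\LS (\K) < \kappa$, using that (up to renaming) there is only a set of AECs with Löwenheim--Skolem--Tarski number below a fixed bound. The closure construction is the right device for this.

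There is one unjustified step in your verification of (a): from $\mu < \theta_n$ you write $\mu^\omega \le \theta_n^\omega \le \theta_{n+1}$, but the second inequality does not follow from your recursion. You defined $\theta_{n+1}$ as $\bigl(\sup\{\kappa_\K : \LS (\K) < \theta_n\}\bigr)^\omega + \theta_n^+$, and that supremum has no reason to dominate $\theta_n$; if it (and its $\omega$-th power) falls below $\theta_n$, then $\theta_{n+1}$ is just $\theta_n^+$, and $\theta_n^\omega \le \theta_n^+$ is not a theorem of ZFC (it can fail when $\cf{\theta_n} = \omega$). The repair is cosmetic: fold $\theta_n$ into the base before exponentiating, e.g.\ set $\theta_{n+1} := \bigl(\theta_n + \sup\{\kappa_\K : \LS (\K) < \theta_n\}\bigr)^\omega + \theta_n^+$. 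With that change both (a) and (b) go through exactly as you wrote them, and the rest of the argument (monotonicity of $<\kappa$-tameness in $\kappa$, the application of clause (3), and the forward direction) is correct.
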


        Note that Corollary \ref{tame-lc-cor} says nothing about ``well-behaved'' classes of AECs such as AECs categorical in a proper class of cardinals. In fact, Theorem \ref{tameness-from-categ-2} shows that the consistency strength of the statement ``all AECs are tame'' is much higher than that of the statement ``all unboundedly categorical AECs are tame''.
\end{enumerate}

\section{Categoricity transfer in universal classes: an overview}\label{universal-class-sec}

In this section, we sketch a proof of Theorem \ref{main-thm}, emphasizing the role of tameness in the argument:

\begin{thm}\label{main-thm-2}
  Let $\K$ be a universal class. If $\K$ is categorical in cardinals of arbitrarily high cofinality\footnote{This cofinality restriction is only used to obtain amalgamation. See the historical remarks for more.}, then $\K$ is categorical on a tail of cardinals.
\end{thm}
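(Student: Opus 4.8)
The proof assembles the machinery surveyed in the preceding sections into a single pipeline that takes a universal class categorical in cardinals of arbitrarily high cofinality and outputs categoricity on a tail. The guiding strategy is: (1) get tameness for free, (2) get amalgamation, (3) build a good frame in some cardinal, (4) transfer the frame upward using tameness, and (5) run a categoricity transfer argument on top of the global independence notion so obtained.

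**Step-by-step outline.**

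First I would invoke Theorem \ref{tame-uc} to observe that, since $\K$ is a universal class, it is fully $(<\aleph_0)$-tame and -type short with $\LS(\K) = |L(\K)| + \aleph_0$. This is the crucial input that distinguishes universal classes from arbitrary AECs and makes everything downstream ZFC-provable rather than contingent on large cardinals. Second, I would extract \emph{amalgamation} from the categoricity hypothesis. This is exactly where the ``arbitrarily high cofinality'' assumption is consumed (as the footnote to the statement flags): one uses categoricity in cardinals $\lambda$ with $\cf(\lambda)$ large to force enough saturation (the model of size $\lambda$ is $\cf(\lambda)$-Galois-saturated, strengthened via Theorem \ref{sat-categ}) and then, combined with tameness, derive amalgamation at least on a tail; after restricting via Proposition \ref{jep-decomp} and Corollary \ref{cor-jep-from-ap} one may assume amalgamation, joint embedding, and no maximal models throughout the relevant tail. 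Third, fixing a categoricity cardinal $\lambda > \LS(\K)$, I would apply the Shelah--Villaveces Theorem \ref{shvi} to obtain $\mu$-superstability below $\lambda$, and then feed superstability plus tameness into the frame-construction results (in the spirit of Theorem \ref{ss-categ}/Theorem \ref{fully-good-indep}) to build a good $\mu$-frame, indeed a global superstable forking-like independence notion on $\K_{\ge \chi}$ for $\chi = \beth_{(2^{\LS(\K)})^+} = H_1$.

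**The upward transfer and the endgame.**

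Fourth, because $\K$ is tame, the good frame transfers upward \emph{without changing the underlying class} (the tame analogue of Shelah's successful-frame machinery, cf.\ Theorem \ref{good-frame-transfer-2} referenced in the introduction), yielding good behavior of forking in all cardinals $\ge \chi$. Fifth, with a global independence notion in hand one runs a categoricity transfer: uniqueness of limit models (Theorem \ref{uq-limit-categ}/Theorem \ref{uq-limit-symmetry}), saturation of the categoricity model, and the absence of the relevant obstructions let one show that categoricity propagates both up and down on the tail $[\,H_1,\infty)$. The universal-class setting supplies the extra structure needed to handle the \emph{limit} categoricity cardinal case — one uses that universal classes have prime-like models (closure under functions gives canonical generation), which is precisely the ingredient that resolves the third difficulty flagged in Section \ref{categ-notame-sec} and underlies Theorem \ref{event-categ-primes}.

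**The main obstacle.**

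I expect the hardest step to be the \emph{upward frame transfer preserving the class} (Step 4), not the initial tameness or the frame construction. Constructing a frame in one cardinal from superstability is by now fairly routine given tameness, but showing that the independence notion continues to satisfy \emph{uniqueness} (unique nonforking extensions) in \emph{every} higher cardinality is exactly where locality does the heavy lifting — the discussion in the introduction around non-$\mu$-splitting makes clear that uniqueness of nonforking extensions to models of \emph{all} sizes is what fails in general AECs and is rescued only by tameness. A secondary delicate point is extracting amalgamation from categoricity (Step 2); getting this in ZFC rather than assuming it outright is the subtle part, and it is the sole reason the cofinality restriction appears in the hypothesis.
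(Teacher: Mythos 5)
Your overall pipeline (tameness for free, then amalgamation, then a good frame, then an upward frame transfer, then a categoricity transfer) matches the shape of the paper's argument, but your Step 2 — the derivation of amalgamation — is circular as written. You propose to get amalgamation from saturation of the categoricity model (via Theorem \ref{sat-categ}) combined with tameness, and then to clean things up with Proposition \ref{jep-decomp} and Corollary \ref{cor-jep-from-ap}; but every one of those results has amalgamation as a \emph{hypothesis}, and the paper contains no result deriving amalgamation from saturation plus tameness. The actual route is different: the cofinality assumption is consumed entirely by Theorem \ref{good-frame-categ} (Shelah's ZFC construction of a good $\lambda$-frame in \emph{some} categoricity cardinal $\lambda$, which does not presuppose amalgamation), and since a good $\lambda$-frame includes amalgamation in $\lambda$, one then propagates amalgamation \emph{upward} via the frame-transfer Theorem \ref{frame-transfer}, which needs only $\lambda$-tameness and \emph{weak} amalgamation — and weak amalgamation is exactly what universal classes supply for free (closure under the functions of the ambient model). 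So amalgamation above $\lambda$ is an \emph{output} of the frame transfer, not an input obtained beforehand. Relatedly, your use of the Shelah--Villaveces theorem to get superstability also presupposes amalgamation and is not how the paper builds its frame.

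The second issue is the endgame. You correctly sense that primes are what handle a limit categoricity cardinal, but ``absence of the relevant obstructions'' hides the actual mechanism, which is the heart of the proof: one develops orthogonality calculus for the good $\lambda$-frame and shows that if $\K_\lambda$ is \emph{not} unidimensional, there is a nonalgebraic $p \in \gS(M)$ such that the class $\K_{\neg^\ast p}$ of models over which $p$ has a unique extension carries a good $\lambda$-frame (Theorem \ref{not-unidim}); because $\K$ has primes, $\K_{\neg^\ast p}$ inherits tameness and weak amalgamation, so the frame transfers up and $\K_{\neg^\ast p}$ has arbitrarily large models, producing non-Galois-saturated models in every cardinal above $\lambda$ and contradicting categoricity. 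Unidimensionality then gives categoricity in $\lambda^+$ (Theorem \ref{unidim-categ}), and Grossberg--VanDieren (Theorem \ref{gv-upward}) finishes. I would also disagree with your assessment of where the difficulty lies: the upward frame transfer is a known tool, whereas the unidimensionality/omission argument just described is the genuinely new step that removes the successor requirement.
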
 

The arguments in this section are primarily from Vasey \cite{ap-universal-v9}.

Note that (as pointed out in Section \ref{categ-notame-sec}), we can replace the categoricity hypotheses of Theorem \ref{main-thm-2} by categoricity in a \emph{single} ``high-enough'' cardinal of ``high-enough'' cofinality. 

We avoid technical definitions in this section, instead referring the reader to Section \ref{primer-no-tameness} or Section \ref{tame-indep-sec}. 

So let $\K$ be a universal class categorical in cardinals of arbitrarily high cofinality. To prove the categoricity transfer, we first show that $\K$ has several structural properties that hold in elementary classes. As we have seen, amalgamation is one such property.

\subsection{Step 1: Getting amalgamation}

It is not clear how to directly prove amalgamation in all cardinals, but Theorem \ref{good-frame-categ} is a deep result of Shelah which says (since good frames must have amalgamation) that it holds for models of \emph{some} suitable size\footnote{This is the only place where we use the cofinality assumptions on the categoricity cardinals.}. 

This leads to a new fundamental question:

\begin{question}\label{ap-transfer-q}
  If an AEC $\K$ has amalgamation in a cardinal $\lambda$, under what condition does it have amalgamation above $\lambda$?
\end{question}

One such condition is \emph{excellence} (briefly, excellence asserts strong uniqueness of $n$-dimensional amalgamation results).  However, it is open whether it follows from categoricity, even for classes of models of an uncountable first-order theory. Excellence also gives much more, and (for now) we are only interested in amalgamation. Another condition would be the existence of \emph{large cardinals}. For example, a strongly compact $\kappa$ with $\LS (\K) < \kappa \le \lambda $ would be enough. 

At that point, we recall a key theme in the study of tameness: when large cardinals appear in a model-theoretic result, tameness\footnote{Or really, a ``tameness-like'' property like full tameness and shortness.} can often replace them. For the purpose of an amalgamation transfer it is not clear that this suffices. For one thing, one can ask what tameness really means without amalgamation (of course, its definition makes sense, but how do we get a handle on the transitive closure of atomic equality, Definition \ref{gtp-def}.(2)). In the case of universal classes, this question has a nice answer: even without amalgamation, equality of Galois types is witnessed by an isomorphism and, in fact, tameness holds for free! This is Theorem \ref{tame-uc}. From its proof, we isolate a technical weakening of amalgamation:

\begin{defin}
  An AEC $\K$ has \emph{weak amalgamation} if whenever $\gtp (a_1 / M; N_1) = \gtp (a_2 / M; N_2)$, there exists $M_1 \in \K$ with $M \lea M_1 \lea N_1$ and $a_1 \in |M_1|$ such that $(a_1, M, M_1)$ is \emph{atomically equivalent} to $(a_2, M, N_2)$.
\end{defin}

It turns out that universal classes have weak amalgamation: we can take $M_1$ to be the closure of $|M_1| \cup \{a\}$ under the functions of $N_1$ and expand the definition of equality of Galois types.

We now rephrase Question \ref{ap-transfer-q} as follows:

\begin{question}
  Let $\K$ be an AEC which has amalgamation in a cardinal $\lambda$. Assume that $\K$ is $\lambda$-tame and has weak amalgamation. Under what condition does it have amalgamation above $\lambda$?
\end{question}

To make progress, a characterization of amalgamation will come in handy (this lemma is reminiscent of Proposition \ref{settype-bad}.(1)): 

\begin{lem}\label{galois-ext}
  Let $\K$ be an AEC with weak amalgamation. Then $\K$ has amalgamation if and only if for any $M \in \K$, any Galois type $p \in \gS (M)$, and any $N \gea M$, there exists $q \in \gS (N)$ extending $p$.
\end{lem}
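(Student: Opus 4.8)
The plan is to prove both directions, with the forward direction being essentially immediate and the reverse direction being the content. For the forward direction, suppose $\K$ has amalgamation and let $M \lea N$ with $p \in \gS(M)$. Write $p = \gtp(a/M; N')$ for some $N' \gea M$ containing a realization $a$. By amalgamation applied to $M \lea N$ and $M \lea N'$, we obtain a common $N^* \in \K$ and embeddings $f : N \to N^*$, $g : N' \to N^*$ fixing $M$. Then $\gtp(g(a)/N; N^*)$ (identifying $N$ with $f[N]$) is the desired extension $q \in \gS(N)$ of $p$, since restricting to $M$ recovers $p$.

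For the reverse direction, I would assume the type-extension property and derive amalgamation directly from the definition. Given $M_0 \lea M_1$ and $M_0 \lea M_2$ that we wish to amalgamate, the strategy is to decompose the task into amalgamating one element at a time along a resolution of $M_1$ over $M_0$, using the extension property at each step to push a Galois type forward and then invoking \emph{weak amalgamation} to realize that type by an actual extension of the base model. Concretely, for a single element $a \in |M_1|$ one forms $p := \gtp(a/M_0; M_1) \in \gS(M_0)$, extends it by hypothesis to some $q \in \gS(M_2)$, and lets $q = \gtp(b/M_2; N)$ for suitable $N \gea M_2$ and realization $b$. Weak amalgamation applied to the equality $q \rest M_0 = p$ (i.e.\ to $\gtp(b/M_0; N) = \gtp(a/M_0; M_1)$) yields a model $M_1' $ with $M_0 \lea M_1' \lea N$ and an element of $M_1'$ such that the relevant triples are atomically equivalent; this is exactly what lets us glue the copy of $a$ into an extension of $M_2$.

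The main technical step — and the step I expect to be the principal obstacle — is iterating this single-element amalgamation along a continuous resolution $\seq{M_1^i : i \le \delta}$ of $M_1$ over $M_0$ while maintaining a coherent system of compatible embeddings of each $M_1^i$ into a common extension of $M_2$. At successor stages one adds one generator using the argument above, being careful that the extension property is applied to the \emph{current} larger base (an extension of $M_2$, not $M_2$ itself) so that the previously built embedding is respected; at limit stages one takes unions and uses the Tarski--Vaught chain axioms together with the coherence axiom to verify that the resulting direct limit is still a legitimate $\lea$-increasing chain. The delicate point is bookkeeping: ensuring that the atomic-equivalence witness provided by weak amalgamation at each stage can be chosen over the accumulated model rather than merely over $M_0$, so that the embeddings cohere rather than merely existing pointwise. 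Once the transfinite induction goes through to $\delta$, the final model amalgamates $M_1$ and $M_2$ over $M_0$, establishing amalgamation.
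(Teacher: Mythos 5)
Your overall strategy is the one the paper intends (the paper itself only sketches the proof, deferring details to the reference given in the historical remarks): the forward direction is a one-step amalgamation, and the reverse direction iterates a single-element step --- extend the $1$-type, realize it, invoke weak amalgamation --- along a chain exhausting $M_1$, with exactly the coherence bookkeeping you describe. However, your single-element step applies weak amalgamation in the wrong orientation, and as written it would derail the iteration. You have $\gtp(b/M_0;N)=\gtp(a/M_0;M_1)$ with $M_2 \lea N$, and you say weak amalgamation yields $M_1'$ with $M_0 \lea M_1' \lea N$. With that assignment of roles the atomic-equivalence witness embeds \emph{all} of $M_1$ together with only the small piece $M_1'$ of $N$ into a common model; since $M_1'$ need not contain $M_2$, nothing has been amalgamated over $M_2$ and there is nothing to iterate. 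You must apply the definition with the roles of the two triples reversed, so that the intermediate model satisfies $M_0 \lea M_1' \lea M_1$ and contains $a$: then $(a,M_0,M_1') \Eat (b,M_0,N)$ produces a common extension receiving $M_1'$ and all of $N \gea M_2$ over $M_0$, which is precisely the configuration your induction needs (ever larger pieces of $M_1$ embedded into ever larger extensions of $M_2$). Two further small points: the chain $\seq{M_1^i : i \le \delta}$ cannot be fixed in advance as a resolution, since each $M_1^{i+1}$ is handed to you by weak amalgamation; you only control exhaustion of $M_1$ by interleaving an enumeration of $|M_1|$. And note that weak amalgamation takes as input equality of Galois types (the transitive closure $E$) and returns \emph{atomic} equivalence, which is what makes the step work even though $\Eat$ need not be transitive. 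With these corrections your plan goes through and coincides with the paper's.
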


The proof is easy if one assumes that atomic equivalence of Galois types is transitive. Weak amalgamation is a weakening of this property, but allows us to iterate the argument (when atomic equivalence is transitive) and obtain full amalgamation.

Now, it would be nice if we could not only extend Galois types, but also extend them canonically. This is reminiscent of first-order \emph{forking}, a basic property of which is that every type has a (unique under reasonable conditions) nonforking extension. Thus, out of the apparently very set-theoretic problem of obtaining amalgamation, forking, a model-theoretic notion, appears in the discussion. What is an appropriate generalization of forking to AECs? Shelah's answer is that the \emph{bare-bone} generalization are the \emph{good $\lambda$-frames}, see Section \ref{frame-sec}. There are several nonelementary setups where a good frame exists (see the next section). For example, Theorem \ref{good-frame-categ} tells us that a good frame exists in our setup. 

Still with the question of transferring amalgamation up in mind, one can ask whether it is possible to transfer an \emph{entire good frame} up. In particular, given a notion of forking for models of size $\lambda$, is there one for models of size above $\lambda$? This is where tameness starts playing a very important role:

\begin{thm}\label{good-frame-transfer}
  Let $\K$ be an AEC with amalgamation. Let $\s$ be a good $\lambda$-frame with underlying AEC $\K$. Then $\s$ extends to a good $(\ge \lambda)$-frame (i.e.\ all the properties hold for models in $\K_{\ge \lambda}$) \emph{if and only if} $\K$ is $\lambda$-tame.
\end{thm}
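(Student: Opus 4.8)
The plan is to prove the two directions separately; the backward direction is a short application of material already in the excerpt, while the forward direction contains the real content. For the backward direction (existence of a good $(\ge\lambda)$-frame $\Rightarrow$ $\K$ is $\lambda$-tame), I would run the argument of Example \ref{examples-subsec}.(\ref{stabletame}). A good $(\ge\lambda)$-frame equips $\K_{\ge\lambda}$ with a forking relation enjoying uniqueness and local character bottoming out at $\lambda$: every $p \in \gS(N)$ with $N \in \K_{\ge\lambda}$ does not fork over some $M \lea N$ with $\|M\| = \lambda$. Given $p \neq q \in \gS(N)$ with $\|N\| > \lambda$, pick $M_p, M_q \lea N$ of size $\lambda$ over which $p$, respectively $q$, does not fork, then use the Löwenheim-Skolem-Tarski axiom and coherence to find $M_0 \lea N$ of size $\lambda$ containing both. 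By monotonicity, $p$ and $q$ both do not fork over $M_0$, so if $p \rest M_0 = q \rest M_0$ then uniqueness would force $p = q$; hence $p \rest M_0 \neq q \rest M_0$, witnessing $<\lambda^+$-tameness. (For $\|N\| = \lambda$ tameness is trivial.)

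For the forward direction ($\K$ is $\lambda$-tame $\Rightarrow$ $\s$ extends), I would define the extended nonforking relation on $\K_{\ge\lambda}$ as follows: for $M \lea N$ in $\K_{\ge\lambda}$ and $p \in \gS(N)$, declare that $p$ \emph{does not fork over} $M$ iff there is $M_0 \lea M$ with $M_0 \in \K_\lambda$ such that for every $N_0 \in \K_\lambda$ with $M_0 \lea N_0 \lea N$, the restriction $p \rest N_0$ does not $\s$-fork over $M_0$. The bulk of the work is verifying that this relation satisfies every good-frame axiom. Invariance and monotonicity are immediate; local character with the bound $\lambda^+$ follows from the local character of $\s$ applied along a resolution of $N$ together with a continuity argument; and existence of nonforking extensions is obtained by building a coherent directed system of $\s$-nonforking extensions over the size-$\lambda$ pieces of $N$ and taking a union, using $\lambda$-uniqueness to guarantee that these pieces agree on overlaps.

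The crucial axiom, and the only place tameness is genuinely needed, is \emph{uniqueness}. Suppose $p, q \in \gS(N)$ both do not fork over $M$ and $p \rest M = q \rest M$; I claim $p = q$. If not, $\lambda$-tameness yields $N_0 \lea N$ of size $\le \lambda$ with $p \rest N_0 \neq q \rest N_0$, and we may enlarge $N_0$ so that it contains a common witness $M_0 \in \K_\lambda$ to the nonforking of both $p$ and $q$. But then $p \rest N_0$ and $q \rest N_0$ are both $\s$-nonforking extensions of the single type $p \rest M_0 = q \rest M_0$ to $N_0$, so $\lambda$-uniqueness forces them equal, a contradiction. Symmetry and transitivity are reduced to their $\lambda$-level counterparts by the same ``restrict to size $\lambda$, apply $\s$, lift back by tameness and uniqueness'' pattern, and Galois-stability in each $\mu \ge \lambda$ follows by a counting argument: by tameness together with uniqueness of nonforking extensions, every type over a model of size $\mu$ is determined by its restriction to one of the size-$\lambda$ submodels over which it does not fork.

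I expect the main obstacle to be the smooth verification of the continuity and extension properties of the extended frame rather than uniqueness itself. Uniqueness, though conceptually the heart of the theorem, is short once tameness is available; the delicate bookkeeping lies in showing that the size-$\lambda$ nonforking extensions can be glued into a single global type that coheres along increasing chains. This step must combine $\lambda$-uniqueness (to force agreement of the pieces), the coherence axiom of the AEC, and $\lambda$-tameness (to conclude that the glued data determines a genuine Galois type), and getting these to interact cleanly is where most of the care is required.
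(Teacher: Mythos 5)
Your overall architecture matches the paper's proof (carried out as Theorem \ref{good-frame-transfer-2}): the backward direction via the uniqueness-plus-local-character argument of Example \ref{examples-subsec}.(\ref{stabletame}), the same canonical definition of the lifted nonforking relation, uniqueness from $\lambda$-tameness exactly as you describe, and extension via a coherent directed system of nonforking extensions whose limit stages are controlled by local character and uniqueness. However, there is one genuine gap, and it is precisely the point you dismiss in a half-sentence: \emph{symmetry}. You assert that symmetry reduces to its $\lambda$-level counterpart by the same ``restrict to size $\lambda$, apply $\s$, lift back'' pattern, but it does not. Symmetry in a good frame is a statement about a configuration of two elements together with auxiliary models witnessing the two nonforking assertions, and the existential quantifiers over those auxiliary models do not commute with restriction to size-$\lambda$ submodels: having the witnessing models at the $\lambda$-level does not let you reassemble them into witnesses at the $\mu$-level, and conversely a large witness cannot simply be cut down to a small one inside the given configuration. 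This is exactly why the paper flags symmetry as the hard part and routes it through independent sequences (equivalently, the continuity of independence); historically, the first proof of this transfer needed tameness for $2$-types just to handle symmetry, and removing that extra hypothesis was the main contribution of the later argument. The only other known fix is non-local: derive symmetry from the failure of the order property, which again is a separate argument, not an instance of the lifting pattern.

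A secondary, fixable issue: your counting argument for Galois-stability in $\mu$ (``every type is determined by its restriction to a size-$\lambda$ submodel over which it does not fork'') only bounds $|\gS(M)|$ for $M \in \K_\mu$ by roughly $\mu^{\lambda}$, not by $\mu$. To obtain stability in every $\mu \ge \lambda$ one needs a finer argument, for instance an induction on $\mu$ using universal extensions and the chain local character $\clc{1}(\s) = \aleph_0$ along a resolution of $M$, rather than quantifying over all size-$\lambda$ submodels at once. This is standard once uniqueness and extension are in place, but as stated the count does not close.
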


We give a sketch of the proof in Theorem \ref{good-frame-transfer-2}. Let us also note that not only do the properties of forking transfer, but also the structural properties of $\K$. Thus $\K_{\ge \lambda}$ has no maximal models (roughly, this is obtained using the extension property and the fact that nonforking extensions are nonalgebraic).

Even better, it turns out that not too much amalgamation is needed for the proof of the frame transfer to go through: weak amalgamation is enough! Moreover types can be extended by simply taking their nonforking extension. Thus we obtain:

\begin{thm}\label{frame-transfer}
  Let $\K$ be an AEC with weak amalgamation. If there is a good $\lambda$-frame $\s$ with underlying AEC $\K$ and $\K$ is $\lambda$-tame, then $\s$ extends to a good $(\ge \lambda)$-frame. In particular $\K_{\ge \lambda}$ has amalgamation.
\end{thm}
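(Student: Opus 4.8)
The plan is to reduce the whole statement to a single Galois-type extension property and then feed the resulting amalgamation back into the transfer we already have with full amalgamation (Theorem \ref{good-frame-transfer}). Recall that, since $\s$ is a good $\lambda$-frame, its underlying class is already nonempty and has amalgamation, joint embedding, no maximal models and Galois-stability \emph{in $\lambda$}, together with a forking notion for $1$-types over models in $\K_\lambda$ enjoying existence, uniqueness, extension, and continuity within $\lambda$. The first step is to invoke Lemma \ref{galois-ext}: because $\K$ has weak amalgamation, to prove that $\K_{\ge \lambda}$ has amalgamation it suffices to establish the extension property $(\ast)$: for every $M \in \K_{\ge \lambda}$, every $p \in \gS(M)$, and every $N \gea M$, there is $q \in \gS(N)$ extending $p$. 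Thus the ``in particular'' clause and, via Theorem \ref{good-frame-transfer}, the main conclusion both hinge on $(\ast)$.

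To prove $(\ast)$, I would first lift the frame's forking to $\K_{\ge \lambda}$ by the usual tame recipe: for $M_0 \in \K_\lambda$ with $M_0 \lea N$, declare $p \in \gS(N)$ to be nonforking over $M_0$ iff $p \rest M'$ does not $\s$-fork over $M_0$ for every $M' \in \K_\lambda$ with $M_0 \lea M' \lea N$. Tameness immediately gives uniqueness for this extended relation, since two nonforking extensions of a common type have the same $\lambda$-sized restrictions and hence are equal. I would then prove $(\ast)$ by induction on $\|N\|$. The base case $\|N\| = \lambda$ is exactly the extension property of $\s$ (after noting $M \in \K_\lambda$ and that $p$ trivially does not fork over $M$). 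For the inductive step, resolve $N$ as an increasing continuous chain $\seq{N_i : i < \theta}$ of models of size $<\|N\|$ with $N_0 \gea M$, use the induction hypothesis to build an increasing chain $\seq{q_i \in \gS(N_i) : i < \theta}$ with each $q_i$ the nonforking extension of $p$, and take $q := \bigcup_{i<\theta} q_i$. Crucially, types are extended simply by taking nonforking extensions, so no amalgamation beyond $\lambda$ is ever invoked in producing the $q_i$.

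The main obstacle is the coherence at limit and union stages. With full amalgamation one would freely use $E = \Eat$ (transitivity of atomic equivalence) to know that the increasing chain $\seq{q_i}$ has a well-defined upper bound in $\gS(N)$ restricting back to $p$; here that tool is unavailable. This is precisely where the two hypotheses combine: $\lambda$-tameness yields (via the locality in Proposition \ref{easy-implications}) that an increasing chain of Galois types has \emph{at most one} upper bound, pinning down $q$, while weak amalgamation supplies the local atomic-equivalence witnesses needed to see that $q$ genuinely is a Galois type over $N$ and that $q \rest M = p$. Verifying that this limit construction goes through — i.e. that weak amalgamation together with $\lambda$-tameness can stand in for the transitivity of $\Eat$ throughout the induction — is the heart of the argument.

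Once $(\ast)$ is established, Lemma \ref{galois-ext} gives amalgamation in $\K_{\ge \lambda}$, which is the ``in particular'' assertion. Finally, with amalgamation in $\K_{\ge \lambda}$, $\lambda$-tameness, and the good $\lambda$-frame $\s$ all in hand, the hypotheses of Theorem \ref{good-frame-transfer} are met, and that theorem yields at once that $\s$ extends to a good $(\ge \lambda)$-frame, completing the proof.
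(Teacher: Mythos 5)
Your proposal is correct and follows essentially the same route as the paper: reduce amalgamation to the type-extension property via Lemma \ref{galois-ext}, obtain that property by taking nonforking extensions along a resolution of $N$ (the direct-limit/coherence step at limits being handled by local character, uniqueness from tameness, and the atomic-equivalence witnesses from weak amalgamation, exactly as in the sketch of Theorem \ref{good-frame-transfer-2}), and then invoke the full-amalgamation transfer theorem. The only cosmetic caveat is that $q := \bigcup_{i<\theta} q_i$ should be read as the type realized in a direct limit of a coherent system of representatives rather than a literal union, which you in effect acknowledge.
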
 

\begin{cor}
  Let $\K$ be a universal class categorical in cardinals of arbitrarily high cofinality. Then there exists $\lambda$ such that $\K_{\ge \lambda}$ has amalgamation.
\end{cor}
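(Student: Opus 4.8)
The plan is to assemble the corollary directly from the three preceding results, since essentially all the genuine work has already been absorbed into them. The skeleton is: (i) a universal class is tame enough to feed into a frame-transfer theorem; (ii) it has weak amalgamation; (iii) categoricity in cardinals of arbitrarily high cofinality hands us a good frame at \emph{some} cardinal $\lambda$; and (iv) the frame-transfer theorem then pushes amalgamation up from $\lambda$ to all of $\K_{\ge \lambda}$.

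Concretely, first I would record that $(\K, \subseteq)$ is an AEC and invoke Theorem \ref{tame-uc} to get that $\K$ is fully $<\aleph_0$-tame and -type short. The only thing to notice here is a trivial upgrade: $<\aleph_0$-tameness says that distinct types in $\gS^1(M)$ already differ on a finite subset of $|M|$, and a finite subset is in particular small, so $\K$ is $\lambda$-tame (that is, $<\lambda^+$-tame) for every $\lambda \ge \LS(\K)$. Next I would recall, as noted just before Lemma \ref{galois-ext}, that universal classes have weak amalgamation, the witness $M_1$ being the closure of $|M| \cup \{a_1\}$ under the functions of $N_1$.

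Then I would apply Theorem \ref{good-frame-categ} to the AEC $\K$: the hypothesis (categoricity in cardinals of arbitrarily high cofinality) is exactly what is assumed, and the conclusion produces a cardinal $\lambda$ together with a good $\lambda$-frame $\s$ whose underlying AEC is $\K$. Finally I would plug $\s$ into Theorem \ref{frame-transfer}: its three hypotheses --- weak amalgamation of $\K$, the existence of a good $\lambda$-frame with underlying AEC $\K$, and $\lambda$-tameness of $\K$ --- have all been verified, so $\s$ extends to a good $(\ge \lambda)$-frame; as the statement of Theorem \ref{frame-transfer} records, this yields amalgamation in $\K_{\ge \lambda}$. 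This $\lambda$ is the one demanded by the corollary.

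The main obstacle is not in this assembly but in the ingredients. If I had to build everything from scratch, the serious step would be Theorem \ref{good-frame-categ}, Shelah's ZFC construction of a good frame from categoricity, which is where the cofinality hypothesis is actually spent; the frame-transfer step (Theorem \ref{frame-transfer}) is the other substantial piece, its novelty being that it runs on \emph{weak} amalgamation rather than full amalgamation, which is exactly why it, and not Theorem \ref{good-frame-transfer}, is the tool of choice for a class that is not yet known to amalgamate. Given both, the only point I would double-check is bookkeeping: that the good frame delivered by Theorem \ref{good-frame-categ} really has underlying AEC $\K$ itself (rather than a saturated or otherwise modified subclass), so that the tameness of $\K$ can be handed straight to the transfer theorem without a change of class.
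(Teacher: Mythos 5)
Your proposal is correct and follows exactly the paper's own argument: invoke Theorem \ref{tame-uc} for tameness, note weak amalgamation for universal classes, get a good $\lambda$-frame from Theorem \ref{good-frame-categ}, and apply Theorem \ref{frame-transfer}. The additional bookkeeping remarks are sensible but do not change the route.
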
 
\begin{proof}
  By Theorem \ref{good-frame-categ}, there is a cardinal $\lambda$ such that $\K$ has a good $\lambda$-frame with underlying class $\K_{\lambda}$. By Theorem \ref{tame-uc}, $\K$ is $\lambda$-tame and (it is easy to see), $\K$ has weak amalgamation. Now apply Theorem \ref{frame-transfer}.
\end{proof}
\begin{remark}
  Theorem \ref{frame-transfer} will be used even in the next steps, see the proof of Theorem \ref{unidim-from-categ}.
\end{remark}

\subsection{Step 2: Global independence and orthogonality calculus}\label{orthog-univ}

From the results so far, we see that we can replace $\K$ by $\K_{\ge \lambda}$ if necessary to assume without loss of generality that $\K$ is a universal class\footnote{There is a small wrinkle here: if $\K$ is a universal class, $\K_{\ge \lambda}$ is not necessarily a universal class. We ignore this detail here since $\K_{\ge \lambda}$ will have enough of the properties of a universal class to carry the argument through.} categorical in a proper class of cardinals that has amalgamation. Other structural properties such as joint embedding and no maximal models follow readily. In fact, we have just pointed out that we can assume there is a good $(\ge \LS (\K))$-frame with underlying class $\K$. In particular, $\K$ is Galois-stable in all cardinals and has a superstable-like forking notion for types of length one.

What is the next step to get a categoricity transfer? The classical idea is to show that all big-enough models are Galois-saturated (note that by the above we have stability everywhere, so the model in the categoricity cardinal is Galois-saturated). Take $M$ a model in a categoricity cardinal $\lambda$ and $p$ a nonalgebraic type over $M$. Assume that there exists $N \gta M$ of size $\lambda$ such that $p$ is omitted in $N$. If we can iterate this property $\lambda^+$-many times, we obtain a non $\lambda^+$-Galois-saturated models. If $\K$ was categorical in $\lambda^+$, this gives a contradiction. More generally, if we can iterate longer to find $N \gta M$ of size $\mu > \lambda$ such that $N$ omits $p$ and $\K$ is categorical in $\mu$, we also get a contradiction. This is reminiscent of a Vaughtian pair argument and more generally of Shelah's theory of unidimensionality. Roughly speaking, a class is \emph{unidimensional} if it has essentially only one Galois type. Then a model cannot have arbitrarily large extensions omitting the type. Conversely if the class is not unidimensional, then it has two ``orthogonal'' types and a model would be able to grow by adding more realizations of one type without realizing the other.

So we want to give a sense in which our class $\K$ is unidimensional. If $\K$ is categorical in a successor, this can be done much more easily than for the limit case using Vaughtian pairs. In fact a classical result of Grossberg and VanDieren for tame AECs says:

\begin{thm}\label{gv-upward}
Suppose $\K$ has amalgamation and no maximal models.  If $\K$ is a $\lambda$-tame AEC categorical in $\lambda$ and $\lambda^+$, then $\K$ is categorical in all $\mu \ge \lambda$.
\end{thm}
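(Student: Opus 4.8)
The plan is to prove that every model of size $\mu > \lambda$ is Galois-saturated. Since a saturated model of cardinality $\mu$ is $\mu$-model-homogeneous (Theorem \ref{mod-homog-sat}) and any two $\mu$-model-homogeneous models of size $\mu$ are isomorphic by a back-and-forth, this gives $I(\K, \mu) = 1$ for every $\mu > \lambda$; as categoricity in $\lambda$ and $\lambda^+$ is assumed, categoricity in all $\mu \ge \lambda$ follows. First I would establish superstability everywhere. Because $\K$ has amalgamation, no maximal models, and is categorical in $\lambda^+ > \mu$ for every $\mu \le \lambda$, the Shelah--Villaveces theorem (Theorem \ref{shvi}) yields that $\K$ is $\mu$-superstable for all $\mu \in [\LS(\K), \lambda]$. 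Using $\lambda$-tameness, I would transfer this upward (Theorem \ref{ss-implies-all}) to get $\mu$-superstability for all $\mu \ge \lambda$; in particular $\K$ is Galois-stable in every $\mu \ge \lambda$, and tameness upgrades uniqueness of non-splitting extensions so that it holds into models of all sizes (Theorem \ref{ns-uniq-tame-fact}).

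Next I would pin down the base cases. Since $\lambda^+$ is a successor, $\cf{\lambda^+} = \lambda^+ > \LS(\K)$, so the (unique, by categoricity) model of size $\lambda^+$ is $\lambda^+$-Galois-saturated, i.e.\ saturated. Moreover $\lambda$-superstability produces a saturated model of size $\lambda$, so categoricity in $\lambda$ makes every model of size $\lambda$ saturated as well. These two facts anchor an induction on $\mu$ showing that every model of size $\mu \ge \lambda^+$ is saturated.

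For the induction, limit stages are handled by superstability together with tameness, which guarantees that unions of increasing chains of saturated models are saturated (one of the equivalent characterizations of superstability in Theorem \ref{gvsuperstab}). The successor step --- and the only place the successor hypothesis on the categoricity cardinal is genuinely used --- is a no-Vaughtian-pairs argument. Suppose, toward a contradiction, that some model of size $> \lambda$ fails to be saturated; then there are $M \lta N$ and a nonalgebraic $p \in \gS(M)$ with $M$ small and $p$ omitted by $N$ outside $M$. By $\lambda$-tameness, $p$ is already determined by its restriction to some submodel of size $\lambda$, so ``omitting $p$'' is a local condition that survives unions; iterating the extension $\lambda^+$ times while continuing to omit $p$ would build a model of size $\lambda^+$ that is not saturated, contradicting the saturation of the unique model of size $\lambda^+$. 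Hence there are no such Vaughtian pairs at any level $\ge \lambda$, so a model cannot grow beyond $\lambda$ while omitting a type, and every model of size $\mu > \lambda$ must realize all types over its submodels of smaller size --- that is, it is saturated.

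The hard part will be the successor/Vaughtian-pair step, specifically transferring the absence of Vaughtian pairs from size $\lambda$ (where categoricity in $\lambda^+$ controls it through the saturation of the $\lambda^+$-model) up to arbitrary $\mu$. This is exactly the step that tameness makes possible: locality ensures that a type omitted cofinally is detected on a $\lambda$-sized piece of its domain, so the $\lambda$-level obstruction propagates upward and an omitted type cannot quietly ``reappear'' at a higher cardinal. Without tameness this propagation fails and the argument collapses. I expect the uniqueness of non-splitting extensions to all sizes (Theorem \ref{ns-uniq-tame-fact}) and the chain-saturation result (Theorem \ref{gvsuperstab}) to be the technical linchpins that keep the induction coherent.
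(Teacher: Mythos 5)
The paper itself does not prove this theorem: it is quoted from Grossberg--VanDieren \cite{tamenesstwo, tamenessthree}, and the introduction explicitly declines to present that proof, so your proposal can only be measured against the known argument. Your global skeleton --- reduce categoricity to ``every model of size $\mu > \lambda$ is Galois-saturated'', obtain stability and superstability everywhere from Theorem \ref{shvi} plus the tame transfer of Theorem \ref{ss-implies-all}, and induct on $\mu$ with limit stages handled by chain-saturation --- is sound and is essentially how one would organize the proof today.

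The successor step, however, has a genuine gap, and it sits exactly where the theorem's difficulty lives. Failure of $\mu^+$-saturation of a model $N$ produces an omitted type $p$ over a domain of size up to $\mu$, not over a ``small'' $M$; you cannot assume the witnessing domain has size $\lambda$. (If you could, the whole theorem would be immediate with no induction and no tameness: put the $\lambda$-sized domain inside some $N_0 \lea N$ of size $\lambda^+$, note that $N_0$ still omits $p$, and contradict the saturation of the unique model of size $\lambda^+$.) Tameness does not make omission local: it says that two \emph{distinct} types differ on a $\lambda$-sized restriction, not that realizing every $\lambda$-sized restriction of $p$ forces realizing $p$; a model can realize all small restrictions of $p$ (by different elements) while omitting $p$. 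Separately, your iteration ``extend $\lambda^+$ times while continuing to omit $p$'' presupposes that a model omitting $p$ has a proper $\lea$-extension still omitting $p$; establishing this (that the class $\K_{\neg p}$ has arbitrarily large models) is precisely the hard non-structure step that Section \ref{universal-class-sec} spends its orthogonality and good-frame machinery on, and even granting it, the resulting large models omit a type over a $\mu$-sized model, which does not contradict anything known at that stage of the induction. What actually closes the successor step in Grossberg--VanDieren is the nonsplitting analysis you mention only in passing: a type $p \in \gS(M_0)$ with $M_0$ saturated of size $\mu$ does not $\lambda$-split over some $N_0 \in \K_\lambda$ (Lemma \ref{stabsplit}); by Theorem \ref{ns-uniq-tame-fact} it is the unique nonsplitting extension of its restriction to a universal $N_1 \in \K_\lambda$ over $N_0$; and the real work is producing, inside the big model, a realization of $p \rest N_1$ whose full type over $M_0$ does not split over $N_0$. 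That argument, not a Vaughtian-pair iteration, is the missing core of your successor step.
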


To study general unidimensionality, we will use a notion of orthogonality. As for forking, we focus on developing a theory of orthogonality for types of length one over models of a single size. 

We already have a good $(\ge \LS (\K))$-frame available but for our purpose this is not enough. We will also use a notion of primeness:

\begin{defin}\label{prime-def}
  We say an AEC $\K$ \emph{has primes} if whenever $M \lea N$ are in $\K$ and $a \in |N| \backslash |M|$, there is a prime model $M' \lea N$ over $|M| \cup \{a\}$. This means that if $\gtp (b / M; N') = \gtp (a / M; N)$, then\footnote{Why the formulation using Galois types? We have to make sure that the types of $M a$ in $N$ and $N'$ are the same.} there exists $f: M' \xrightarrow[M]{} N'$ so that $f (b) = a$. We call $(a, M, M')$ a \emph{prime triple}.
\end{defin}

Note that this makes sense even if the AEC does not have amalgamation. Some computations give us that:

\begin{prop}
  If $\K$ is a universal class, then $\K$ has primes.
\end{prop}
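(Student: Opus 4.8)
The plan is to take $M'$ to be the substructure of $N$ generated by $|M| \cup \{a\}$, i.e.\ the closure of $|M| \cup \{a\}$ under the functions (and constants) of $N$. Since $\K$ is closed under substructures and $N \in \K$, we have $M' \in \K$; since $\lea$ is just $\subseteq$ for a universal class and $|M| \subseteq |M'| \subseteq |N|$, we get $M \lea M' \lea N$, and clearly $a \in |M'|$. So $M'$ is a legitimate candidate, and it remains to verify the universal (primeness) property.

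First I would record the direction of the characterization of Galois-type equality that holds with no amalgamation: if $\gtp(a/M; N) = \gtp(b/M; N')$, then $a$ and $b$ realize the same quantifier-free $L$-type over $M$. This follows by unwinding Definition \ref{gtp-def}. A single atomic equivalence $(a, M, N) \Eat (b, M, N')$ produces a common model $N^*$ with $L$-embeddings of $N$ and $N'$ fixing $M$ and identifying $a$ with $b$; as $L$-embeddings preserve quantifier-free formulas, $a$ and $b$ must have the same quantifier-free type over $M$. Since this property is preserved along each step of the chain of atomic equivalences whose transitive closure defines $E$, it passes to full Galois-type equality.

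Now suppose $M \lea N'$ and $b \in |N'|$ with $\gtp(b/M; N') = \gtp(a/M; N)$. By the previous paragraph, the map fixing $|M|$ pointwise and sending $a \mapsto b$ is a partial isomorphism from $N$ to $N'$: it is a bijection $|M| \cup \{a\} \to |M| \cup \{b\}$ preserving all atomic formulas in both directions, precisely because $a$ and $b$ share a quantifier-free type over $M$. Here I would invoke the key structural feature of universal classes already used in the proof of Theorem \ref{tame-uc}, namely that a partial isomorphism extends uniquely to an isomorphism of the generated substructures (one simply propagates it along the term closure). Applying this, the partial map extends to an isomorphism $f$ from $M' = \langle |M| \cup \{a\} \rangle_N$ onto $\langle |M| \cup \{b\} \rangle_{N'}$. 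Composing with the inclusion $\langle |M| \cup \{b\} \rangle_{N'} \subseteq N'$ and recalling $\lea = \subseteq$, this yields a $\K$-embedding $f : M' \xrightarrow[M]{} N'$ sending $a$ to $b$, which is exactly the requirement of Definition \ref{prime-def}. Hence $(a, M, M')$ is a prime triple and $\K$ has primes.

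The only genuine content is the lemma that Galois-type equality forces equality of quantifier-free types, together with the extension of partial isomorphisms; both are essentially the ingredients behind the tameness of universal classes (Theorem \ref{tame-uc}), so the main effort is recalling that machinery rather than devising a new argument. The one point to watch is that no amalgamation is assumed, so one must check that the ``quantifier-free type'' direction survives the transitive closure in Definition \ref{gtp-def}.(3)---which it does, since each atomic-equivalence step individually preserves quantifier-free types.
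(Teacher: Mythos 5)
Your proof is correct and is exactly the argument the paper intends (the paper only remarks that ``some computations'' give the result, but the surrounding discussion makes clear the intended $M'$ is the closure of $|M|\cup\{a\}$ under the functions of $N$, with the embedding obtained by extending the partial isomorphism along the term closure, just as in the proof of Theorem \ref{tame-uc} and of weak amalgamation). You also correctly flag the one delicate point, namely that equality of quantifier-free types must be checked to survive the transitive closure in Definition \ref{gtp-def} since amalgamation is not assumed.
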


\begin{defin}\label{perp-def}
  Let $\K$ be an AEC with a good $\lambda$-frame. Assume that $\K$ has primes (at least for models of size $\lambda$). Let $M \in \K_\lambda$ and let $p, q \in \gS (M)$. We say that $p$ and $q$ are \emph{weakly orthogonal} if there exists a prime triple $(a, M, M')$ such that $\gtp (a / M; M') = q$ and $p$ has a unique extension to $\gS (M')$. We say that $p$ and $q$ are \emph{orthogonal} if for any $N \gea M$, the nonforking extensions $p'$, $q'$ to $N$ of $p$ and $q$ respectively are weakly orthogonal.
\end{defin}

Orthogonality and weak orthogonality coincide assuming categoricity:

\begin{thm}
  Let $\K$ be an AEC which has primes and a good $\lambda$-frame. Assume that $\K$ is categorical in $\lambda$. Then weak orthogonality and orthogonality coincide.
\end{thm}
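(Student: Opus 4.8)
The two implications have quite different character. That orthogonality implies weak orthogonality is immediate: taking $N = M$ in the definition of orthogonality, the nonforking extensions of $p$ and $q$ to $M$ are $p$ and $q$ themselves, so weak orthogonality of $p$ and $q$ is literally the $N = M$ instance. This direction uses neither categoricity nor primes, so the entire content lies in the converse. The plan is to assume $p, q \in \gS(M)$ are weakly orthogonal and show that for every $N \gea M$ in $\K_\lambda$, the nonforking extensions $p', q' \in \gS(N)$ are again weakly orthogonal.

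First I would fix such an $N$ and, using that $\K$ has primes, choose a prime triple $(b, N, N')$ with $\gtp (b / N; N') = q'$. Since $q'$ is the nonforking extension of $q$, it restricts to $q$ over $M$, so $\gtp (b / M; N') = q$. Working inside $N'$, let $M^\ast \lea N'$ be prime over $|M| \cup \{b\}$, so that $(b, M, M^\ast)$ is a prime triple with $\gtp (b / M; M^\ast) = q$. Weak orthogonality of $p$ and $q$ over $M$ — which, by the uniqueness up to isomorphism of prime models over $|M| \cup \{b\}$, does not depend on the particular prime triple chosen — then tells us that $p$ has a unique extension to $\gS (M^\ast)$, necessarily its nonforking extension.

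It remains to prove that $p'$ has a unique extension to $\gS (N')$. Let $r \in \gS (N')$ extend $p'$. Restricting to $M^\ast \lea N'$, the type $r \rest M^\ast$ extends $p$, so by the previous paragraph $r \rest M^\ast$ is the nonforking extension of $p$ to $M^\ast$; in particular $r \rest M^\ast$ does not fork over $M$. I claim that this, together with $r \rest N = p'$ not forking over $M$, forces $r$ itself not to fork over $M$. Granting the claim, base monotonicity of the good $\lambda$-frame gives that $r$ does not fork over $N$, so $r$ is \emph{the} nonforking extension of $p' = r \rest N$ to $N'$, which is unique. Hence $p'$ has exactly one extension to $N'$, and the prime triple $(b, N, N')$ witnesses that $p'$ and $q'$ are weakly orthogonal.

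The claim is the main obstacle: it amounts to transferring the unique-amalgamation information encoded by weak orthogonality from the base $M$ up to the base $N$. The difficulty is that $N'$ is prime over $N \cup \{b\}$, so determining $r$ reduces to understanding how a realization of $p'$ and the element $b$ amalgamate over $N$, whereas weak orthogonality controls this amalgamation only over $M$. The transfer should be possible because both a realization of $p'$ and the element $b$ are free from $N$ over $M$ — the former since $p'$ is a nonforking extension, the latter since $\gtp(b/N)$ is — so I expect to push it through using symmetry and uniqueness of the frame: symmetry exchanges the roles of $b$ and the realization of $p'$, and uniqueness then pins down $r$. Categoricity in $\lambda$ enters precisely here, since it lets me take $M$ and $N$ saturated so that the prime models embed as required and the frame's symmetry and uniqueness are available in full strength; without it the passage from base $M$ to base $N$ need not be uniform, which is exactly why the two orthogonality notions can diverge in general.
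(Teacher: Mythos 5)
The easy direction and your initial reduction are correct: passing to $M^\ast \lea N'$ prime over $|M| \cup \{b\}$, noting that weak orthogonality does not depend on the choice of prime triple, and concluding that any $r \in \gS(N')$ extending $p'$ must restrict on $M^\ast$ to the nonforking extension of $p$ is exactly how one should begin. The problem is that the ``claim'' you then isolate is not a technical step you have deferred; it is the whole theorem. As a bare statement about an independence relation it is false: knowing that $r \rest M^\ast$ and $r \rest N$ each do not fork over $M$ does not force $r$ not to fork over $M$ (already in a stable first-order theory, independence of an element from $A$ over $M$ and from $B$ over $M$ does not give independence from $A \cup B$ over $M$). Worse, the tools you propose for closing the gap --- symmetry and uniqueness of the frame --- are axioms of \emph{every} good $\lambda$-frame and hold with or without categoricity, whereas the implication from weak orthogonality to orthogonality genuinely fails without categoricity (this is why two notions are introduced at all, and it mirrors the first-order fact that almost orthogonality over an arbitrary model does not imply orthogonality). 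So no argument of the shape you describe can succeed, and your account of where categoricity enters (``it lets me take $M$ and $N$ saturated'') is not a real use of the hypothesis: in $\K_\lambda$ the frame axioms are available for all models regardless.

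What categoricity actually buys --- and what is used in the proofs this survey is reporting (Shelah's Claim III.6.8 and its adaptation in the cited work of the second author) --- is the \emph{conjugation} property: by categoricity every model of size $\lambda$ is a $(\lambda,\omega)$-limit model, so by local character one may choose $M_0 \lea M$ over which both $p$ and $q$ do not fork and such that $M$ is limit over $M_0$; extending $N$ to a suitable $N^+$ that is also limit over $M_0$ and invoking uniqueness of limit models over the base, one obtains an isomorphism $f \colon N^+ \cong M$ fixing $M_0$ and carrying the nonforking extensions of $p$ and $q$ to $p$ and $q$ themselves. Weak orthogonality then transfers to $N^+$ by invariance, and a separate (and itself nontrivial) monotonicity lemma brings it back down from $N^+$ to $N$. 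Nothing playing the role of this conjugation step appears in your proposal, so the argument is incomplete precisely where the theorem's content lies.
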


We have arrived to a definition of unidimensionality (we say that a good $\lambda$-frame is categorical when the underlying class is categorical in $\lambda$):

\begin{defin}
  Let $\K$ be an AEC which has primes and a categorical good $\lambda$-frame. $\K_\lambda$ is \emph{unidimensional} if there does \emph{not} exist $M \in \K$, and types $p, q \in \gS (M)$ such that $p$ and $q$ are orthogonal.
\end{defin}

\begin{thm}\label{unidim-categ}
Let $\K$ be an AEC which has primes and a categorical good $\lambda$-frame. If $\K$ is unidimensional, then $\K$ is categorical in $\lambda^+$.
\end{thm}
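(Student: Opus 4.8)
The plan is to prove the contrapositive: assuming $\K_\lambda$ is not categorical in $\lambda^+$, I would produce two orthogonal types, contradicting unidimensionality. The strategy hinges on the fact that non-categoricity in $\lambda^+$ must be witnessed by a model of size $\lambda^+$ that is \emph{not} Galois-saturated, since one model of size $\lambda^+$ is always obtainable as a saturated (equivalently, limit) model built from the good $\lambda$-frame. So first I would note that, working inside the categorical good $\lambda$-frame, the frame gives us amalgamation, stability, and uniqueness of limit models in $\lambda$; hence there is (up to isomorphism) a unique saturated model $M^{\text{sat}}$ of size $\lambda^+$. If $\K$ is not categorical in $\lambda^+$, there must exist a model $N \in \K_{\lambda^+}$ with $N \not\cong M^{\text{sat}}$, and such an $N$ fails to be saturated.

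From the failure of saturation I would extract an omitted type witnessing the ``growth'' phenomenon described in Step 2 of the overview: there is $M \lea N$ with $\|M\| = \lambda$ and a nonalgebraic $p \in \gS(M)$ such that $p$ is omitted in a proper extension of arbitrarily large size inside the class. The key move is to convert this omission into orthogonality. Concretely, I would let $q \in \gS(M)$ be a type realized cofinally in the ``direction'' in which $N$ does grow (so that $N$ is, roughly, generated over $M$ by realizations of $q$), and argue that $p \perp q$. Using the characterization via primes and weak orthogonality (Definition~\ref{perp-def}), weak orthogonality of $p$ and $q$ says that adjoining a realization of $q$ via a prime triple $(a, M, M')$ does not create new realizations or non-unique extensions of $p$; iterating this along a resolution of $N$ by prime triples realizing $q$ shows $p$ stays omitted, which is exactly what the failure of saturation provides. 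Since the frame is categorical, weak orthogonality and orthogonality coincide, so $p \perp q$ genuinely, contradicting unidimensionality.

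The main obstacle I anticipate is the bookkeeping that turns ``$N$ is not saturated'' into ``there are two orthogonal types over a \emph{single} model of size $\lambda$.'' One has to be careful that the omitted type $p$ and the ``realized'' type $q$ can be taken over the same base $M$, and that the prime-triple resolution of $N$ over $M$ genuinely realizes $q$ and only $q$ (in the relevant orthogonality-preserving sense) at each step. This is where primes do the heavy lifting: each one-element extension $M \cup \{a\}$ is controlled by a prime model $M'$, so the type $p$ either gets a unique nonforking extension to $M'$ (weak orthogonality) or it does not, and the dichotomy is exactly unidimensionality. Managing the transfinite iteration up to size $\lambda^+$ while keeping $p$ omitted — and invoking uniqueness of limit models to guarantee the saturated model is the \emph{only} model of size $\lambda^+$ when unidimensionality does hold — is the technically delicate part, but each ingredient (stability, uniqueness of limit models, primes, orthogonality = weak orthogonality under categoricity) is already available from the results stated above.
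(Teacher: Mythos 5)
The paper does not actually prove this theorem; it quotes it from Shelah's book (Claim III.2.3 there), and the surrounding discussion makes clear that the intended argument runs in the \emph{direct} direction: assuming unidimensionality, one shows that every $N \in \K_{\lambda^+}$ is Galois-saturated by proving, along a resolution of $N$, that every nonalgebraic $p \in \gS(M)$ with $M \lta N' \lea N$ in $\K_\lambda$ must be realized, using the failure of weak orthogonality between $p$ and the types of the new elements. Your contrapositive strategy is reasonable in outline, but it has a genuine gap at its central step: converting ``$N \in \K_{\lambda^+}$ omits a nonalgebraic $p \in \gS(M)$'' into ``there is $q \in \gS(M)$ with $p \perp q$.''

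There are two problems. First, there is no single type $q$ such that $N$ is generated over $M$ by realizations of $q$: the elements of $N \setminus M$ realize up to $\lambda$ many distinct types over $M$ (and more over the intermediate models of a resolution), so ``the direction in which $N$ grows'' is not one type. Second, and more seriously, even for a fixed $a \in N \setminus M$ with $q = \gtp(a/M;N)$ and a prime triple $(a, M, M')$ with $M' \lea N$, weak orthogonality of $p$ and $q$ requires that $p$ have a \emph{unique} extension to $\gS(M')$, whereas failure of saturation only gives that $p$ is \emph{omitted} in $M'$. A type can be omitted in $M'$ while having two distinct nonalgebraic extensions there (the nonforking one and a forking one); this is precisely the distinction between $\K_{\neg p}$ and $\K_{\neg^\ast p}$ that the paper is careful to draw in Step 3. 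Your justification --- that weak orthogonality would keep $p$ omitted, ``which is exactly what the failure of saturation provides'' --- runs the implication backwards: you are using a consequence of weak orthogonality as evidence for it. The scenario to be excluded is that $p$ is omitted in $N$ yet fails to be weakly orthogonal to every type realized over every $M_i$ in a resolution; ruling this out (by showing that repeated non-weak-orthogonality eventually forces a realization of $p$) is essentially the entire content of the theorem, and it is exactly what your sketch assumes rather than proves.
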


Using the result of Grossberg and VanDieren, if in addition $\K$ is $\lambda$-tame, $\K$ will be categorical in every cardinal above $\lambda$. Therefore it is enough to prove unidimensionality. While step 2 was only happening locally in $\lambda$ and did not use tameness, tameness will again have a crucial use in the next step.

\subsection{Step 3: Proving unidimensionality}

Let us make a slight diversion from unidimensionality. Recall that we work in a universal class $\K$ categorical in a proper class of cardinals with a lot of structural properties (amalgamation and existence of good frames, even global). We want to show that all big-enough models are Galois-saturated. Let $M$ be a big model and assume it is not Galois-saturated, say it omits $p \in \gS (M_0)$, $M_0 \lea M$. Consider the class $\K_{\neg p}$ of all models $N \gea M_0$ that omit $p$. After adding constant symbols for $M_0$ and closing under isomorphisms, this is an AEC. We would like to show that it has arbitrarily large models, for then this means that there exists a categoricity cardinal $\lambda > \|M_0\|$ and $N \in \K$ of size $\lambda$ omitting $p$. This is a contradiction since we know that the model in the categoricity cardinal is Galois-saturated.

What are methods to show that a class has arbitrarily large models? A powerful one is again based on good frames: by definition, a good $\lambda$-frame has no maximal models in $\lambda$. If we can expand it to a good $(\ge \lambda)$-frame, then its underlying class $\K_{\ge \lambda}$ has no maximal models and hence models of arbitrarily large size. Recall that Theorem \ref{frame-transfer} gave mild conditions (tameness and weak amalgamation) under which a good frame can be transferred. Tameness was the key property there.

Moreover we know already that $\K$ itself has a good $\LS (\K)$-frame, amalgamation, and is $\LS (\K)$-tame. But it is not so clear that these properties transfer to $\K_{\neg p}$. Consider for example amalgamation: let $M_0 \lea M_\ell$, $\ell = 1,2$, and assume that $p \in \gS (M_0)$ is omitted in both $M_1$ and $M_2$. Even if in $\K$ there exists amalgams of $M_1$ and $M_2$ over $M_0$, it is not clear that any such amalgams will omit $p$. Similarly, even if $q_1, q_2 \in \gS (M)$ are Galois types in $\K_{\neg p}$ and they are equal in $\K$, there is no guarantee that they will be equal in $\K_{\neg p}$ (the amalgam witnessing it may not be a member of $\K_{\neg p}$). So it is not clear that $\K_{\neg p}$ is tame.

However we are interested in universal classes, so consider the last property if $\K$ is a universal class. Say $q_1 = \gtp (a_1 / M; N_1)$, $q_2 = \gtp (a_2 / M; N_2)$. If $q_1$ is equal to $q_2$ in $\K$, then since $\K$ has primes there exists $M_1 \lea N_1$ containing $a_1$ and $M$ and $f: M_1 \xrightarrow[M]{} N_2$ so that $f (a_1) = a_2$. If $N_1$ omits $p$, then $M_1$ also omits $p$ and so $q_1$ and $q_2$ are equal also in $\K_{\neg p}$. By the same argument, $\K_{\neg p}$ also has primes (in fact it is itself universal). Thus it has weak amalgamation. Similarly, since $\K$ is tame, $\K_{\neg p}$ is also tame\footnote{A technical remark: if we only knew that $\K$ was \emph{weakly} tame (i.e.\ tame for types over Galois-saturated models), we would not be able to conclude that $\K_{\neg p}$ was weakly tame: models that omit $p$ of size larger than $|\text{dom} (p)|$ are not Galois-saturated. Thus while many arguments in the study of tame AECs can be adapted to the weakly tame context, this one cannot.}.

The last problem to solve is therefore whether a good $\lambda$-frame in $\K$ is also a good $\lambda$-frame in $\K_{\neg p}$. This is where we use orthogonality calculus and unidimensionality. However the class we consider is slightly different than $\K_{\neg p}$: for $p \in \gS (M_0)$ nonalgebraic, we let $\K_{\neg^\ast p}$ be the class of $M$ such that $M_0 \lea M$ and $p$ has a \emph{unique} extension to $M$ (we add constant symbols for $M_0$ to make $\K_{\neg^\ast p}$ closed under isomorphisms). Note that $\K_{\neg^\ast p} \subseteq \K_{\neg p}$, as the unique extension must be the nonforking extension, which is nonalgebraic if $p$ is. Using orthogonality calculus, we can show:

\begin{thm}\label{not-unidim}
  Let $\K$ be an AEC which has primes and a categorical good $\lambda$-frame $\s$ for types at most $\lambda$. If $\K$ is \emph{not} unidimensional, then there exists $M \in \K_\lambda$ and a nonalgebraic $p \in \gS (M)$ such that $\s$ restricted to $\K_{\neg^\ast p}$ is still a good $\lambda$-frame.
\end{thm}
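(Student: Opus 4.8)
The plan is to use the non-unidimensionality hypothesis to fix a witness and then to verify, axiom by axiom, that $\s$ restricted to $\K_{\neg^\ast p}$ remains a good $\lambda$-frame; the only axioms requiring genuine work will be those asserting that $\K_{\neg^\ast p}$ is ``large enough'' (amalgamation, joint embedding, and no maximal models in $\lambda$), and these are exactly where orthogonality enters.

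Since $\K$ is not unidimensional, there are $M \in \K_\lambda$ and nonalgebraic $p, q \in \gS(M)$ that are orthogonal. I take this $M$ and this $p$ as the conclusion's $M$ and $p$, so that $\K_{\neg^\ast p}$ has base $M$ and consists of those $N \gea M$ in $\K_\lambda$ to which $p$ has a unique extension. First I would record the elementary points: $M \in \K_{\neg^\ast p}$, so the class is nonempty, and since every member extends the fixed base $M$ (over which the constants are added), joint embedding in $\K_{\neg^\ast p}$ reduces to amalgamation over $M$. Because membership is governed by a single Galois type and the forking of $\s$ is unchanged, all the ``downward'' frame axioms --- invariance, monotonicity, transitivity, uniqueness, symmetry, local character, stability in $\lambda$, and density of basic types --- are inherited verbatim from $\s$, as is the extension property (the relevant models are already in the class). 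Closure of $\K_{\neg^\ast p}$ under unions of $\lea$-increasing chains also follows quickly: if $p$ has a unique extension to each $N_i$, then any extension $r$ of $p$ to $N_\delta := \bigcup_i N_i$ restricts to the nonforking extension on each $N_i$, so by local character and transitivity $r$ does not fork over $M$ and hence is the unique nonforking extension.

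The heart of the argument is the following observation, which I would isolate as a lemma. For $N \in \K_{\neg^\ast p}$ write $p_N$ for the unique (hence nonforking) extension of $p$ to $N$. If $c$ realizes over $N$ the nonforking extension of a type orthogonal to $p$, then the prime model $N[c]$ over $|N| \cup \{c\}$ again lies in $\K_{\neg^\ast p}$. The point is that ``$p_N$ has a unique extension to $N[c]$'' is, by uniqueness of prime models, exactly the statement that $p_N$ is weakly orthogonal to $\gtp(c / N; N[c])$, and orthogonality (which by the categoricity hypothesis coincides with weak orthogonality) delivers precisely this for the nonforking extensions. Granting the lemma, no maximal models is immediate: given $N \in \K_{\neg^\ast p}$, realize the nonforking extension $q_N$ of $q$ (nonalgebraic, hence $q_N$ nonalgebraic) by a prime triple $(b, N, N^+)$; since $p$ and $q$ are orthogonal, the lemma yields $N^+ \in \K_{\neg^\ast p}$ with $N \lta N^+$. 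Amalgamation is obtained by iterating the lemma: given $N_0 \lea N_1, N_2$ in $\K_{\neg^\ast p}$, I would resolve $N_2$ over $N_0$ as a continuous chain of prime triples $(a_i, P_i, P_{i+1})$ (possible since $\K$ has primes), observe that each $\gtp(a_i / P_i; P_{i+1})$ is weakly orthogonal --- hence orthogonal --- to the nonforking extension of $p$ to $P_i$, because $p$ has a unique extension to $P_{i+1} \lea N_2 \in \K_{\neg^\ast p}$, and then copy the tower over $N_1$, realizing at each successor stage the nonforking extension of $\gtp(a_i / P_i; P_{i+1})$ by a prime triple. The lemma keeps every stage inside $\K_{\neg^\ast p}$, closure under unions handles the limits, and the union of the tower is the desired amalgam.

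The main obstacle is this amalgamation step, and more precisely the claim that membership in $\K_{\neg^\ast p}$ is preserved when one freely adjoins realizations of types orthogonal to $p$ --- equivalently, that the unique-extension property of $p$ is stable under nonforking amalgamation. This is the substantive use of the orthogonality calculus: everything rests on translating ``unique extension of $p$ over a prime extension'' into weak orthogonality, on the equivalence of weak orthogonality with orthogonality under categoricity, and on uniqueness of prime models, which lets one transport the witnessing prime triple. The inherited axioms and the no-maximal-models step, by contrast, are routine once the lemma is in hand.
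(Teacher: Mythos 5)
The paper itself states this theorem without proof (Section \ref{hist-sec} attributes it to Shelah's \emph{Classification Theory for AECs}, Claim III.12.39, and to the second author), so there is no in-text argument to compare against; but your proposal follows exactly the orthogonality-calculus route those sources take: fix orthogonal $p,q$ from non-unidimensionality, inherit the forking axioms, and use prime decompositions together with the equivalence of weak orthogonality and orthogonality under categoricity to get no maximal models and amalgamation inside $\K_{\neg^\ast p}$. Two points deserve repair. First, ``uniqueness of prime models'' is not available in this generality; what your key lemma actually needs is that weak orthogonality does not depend on the choice of the prime triple realizing $q$, and that follows from the mapping property in Definition \ref{prime-def} (embed one prime model into the other over the base and push two distinct extensions of $p$ forward to contradict uniqueness), not from any uniqueness of primes. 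Second, you never check that Galois types computed inside $\K_{\neg^\ast p}$ agree with the restrictions of Galois types computed in $\K$; without this, stability in $\lambda$ and the meaning of the inherited uniqueness, extension, and local character axioms for the restricted frame are not literally ``verbatim.'' This is precisely the point the paper makes (for $\K_{\neg p}$) in the paragraph preceding the theorem: equality of types in $\K$ is witnessed by an embedding of the prime model over $|N| \cup \{a\}$, and that prime model lies in $\K_{\neg^\ast p}$ because the class is closed under $\lea$-substructures containing the base. Both repairs use only the hypotheses you already invoke, so I would call the proposal correct modulo these two fixes.
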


We are ready to conclude: 

\begin{thm}\label{unidim-from-categ}
  Suppose that $\K$ is an AEC which has primes and a categorical good $\lambda$-frame for types at most $\lambda$. If $\K$ is categorical in some $\mu > \lambda$ and is $\lambda$-tame, then $\K_\lambda$ is unidimensional, and therefore $\K$ is categorical in all $\mu' > \lambda$.
\end{thm}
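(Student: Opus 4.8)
The plan is to prove unidimensionality of $\K_\lambda$ by contradiction, using the relativized class $\K_{\neg^\ast p}$ supplied by Theorem \ref{not-unidim} together with the frame transfer of Theorem \ref{frame-transfer}. First I would record two global consequences of the hypotheses. Since $\K$ has primes it has weak amalgamation: given $\gtp(a_1/M;N_1) = \gtp(a_2/M;N_2)$, take the prime model over $|M| \cup \{a_1\}$ inside $N_1$, and the prime property supplies exactly the atomic equivalence required. Hence Theorem \ref{frame-transfer} upgrades the good $\lambda$-frame $\s$ to a good $(\ge\lambda)$-frame on $\K_{\ge\lambda}$; in particular $\K_{\ge\lambda}$ has amalgamation, no maximal models, and is Galois-stable in every cardinal in $[\lambda,\mu)$. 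Combined with categoricity in $\mu$, the standard argument (build a Galois-saturated model of size $\mu$ from stability, then invoke uniqueness) shows that the model of size $\mu$ is Galois-saturated, so it realizes every $r \in \gS(M)$ with $\|M\| = \lambda < \mu$.

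Now suppose toward a contradiction that $\K_\lambda$ is \emph{not} unidimensional. Theorem \ref{not-unidim} yields $M \in \K_\lambda$ and a nonalgebraic $p \in \gS(M)$ such that $\s$ restricted to $\K_{\neg^\ast p}$ is still a good $\lambda$-frame. The crux is to rerun the frame transfer on this relativized class. I would check that $\K_{\neg^\ast p}$ inherits both weak amalgamation and $\lambda$-tameness from $\K$: the prime model over $|M'| \cup \{a\}$ computed in $\K$ lies below any $N \in \K_{\neg^\ast p}$ extending $M'$, and therefore itself belongs to $\K_{\neg^\ast p}$ (a second extension of $p$ to it would lift, by existence, to a second extension over $N$, contradicting $N \in \K_{\neg^\ast p}$). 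This simultaneously gives weak amalgamation for $\K_{\neg^\ast p}$ and shows that equality of Galois types in $\K$ coincides with equality in $\K_{\neg^\ast p}$ for domains in the subclass. Consequently $\lambda$-tameness of $\K$ passes to $\K_{\neg^\ast p}$, and Theorem \ref{frame-transfer} extends the good $\lambda$-frame on $\K_{\neg^\ast p}$ to a good $(\ge\lambda)$-frame; in particular $\K_{\neg^\ast p}$ has no maximal models and hence models of arbitrarily large size.

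This closes the contradiction. Pick $N \in \K_{\neg^\ast p}$ with $\|N\| = \mu$. Since $\K_{\neg^\ast p} \subseteq \K_{\neg p}$, the model $N$ omits $p$; but $N$ is, up to isomorphism, the unique model of size $\mu$, which the first paragraph showed is Galois-saturated and therefore realizes $p$ (as $\|M\| = \lambda < \mu$). This contradiction forces $\K_\lambda$ to be unidimensional. Finally, Theorem \ref{unidim-categ} gives categoricity in $\lambda^+$; since $\K_{\ge\lambda}$ has amalgamation, no maximal models, is $\lambda$-tame, and is categorical in both $\lambda$ (from the categorical good $\lambda$-frame) and $\lambda^+$, the Grossberg--VanDieren transfer (Theorem \ref{gv-upward}) yields categoricity in all $\mu' > \lambda$.

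I expect the main obstacle to be the step establishing that $\K_{\neg^\ast p}$ genuinely inherits tameness (and weak amalgamation). This is the one place where it is essential that $\K$ is \emph{tame} rather than merely \emph{weakly} tame: the models of $\K_{\neg^\ast p}$ of size larger than $\|\dom{p}\|$ are not Galois-saturated, so a weak-tameness hypothesis would not transfer to the subclass. Verifying that the prime witnesses stay inside $\K_{\neg^\ast p}$ — and hence that the identification of type-equality in $\K$ with type-equality in $\K_{\neg^\ast p}$ survives — is the delicate point on which the whole argument turns.
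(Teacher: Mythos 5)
Your proposal follows the paper's route almost exactly: Theorem \ref{not-unidim} to get $M$ and $p$, inheritance of weak amalgamation and $\lambda$-tameness by $\K_{\neg^\ast p}$ via primes (your downward-closure argument — two distinct extensions of $p$ to an intermediate model would lift to two distinct extensions over $N$ — is correct and is essentially the paper's ``argument above''), the frame transfer of Theorem \ref{frame-transfer} applied to the relativized class to get arbitrarily large models omitting $p$, the contradiction with Galois-saturation of the model in the categoricity cardinal, and the finish via Theorems \ref{unidim-categ} and \ref{gv-upward}. But there is one genuine gap: Theorem \ref{frame-transfer} is stated for an \emph{AEC} with weak amalgamation carrying a good $\lambda$-frame, and you never verify that $\K_{\neg^\ast p}$ satisfies the AEC axioms. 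The problematic axiom is closure under unions of increasing chains: if $\seq{M_i : i < \delta}$ is increasing in $\K_{\neg^\ast p}$ and $q_1, q_2 \in \gS(M_\delta)$ both extend $p$, then unique extension over each $M_i$ gives $q_1 \rest M_i = q_2 \rest M_i$ for all $i < \delta$, but in a general AEC this does \emph{not} imply $q_1 = q_2$ — that inference is exactly $\kappa$-locality, the locality property whose failure motivates the whole subject, so it cannot be waved through. (Contrast with $\K_{\neg p}$: omission of $p$ is trivially preserved under unions, but the unique-extension class $\K_{\neg^\ast p}$ is not obviously closed under them.)

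The paper devotes the substance of its proof to precisely this point, and the fix uses only tools you already set up in your first paragraph: by $\lambda$-tameness the good $\lambda$-frame on $\K$ transfers to a good $(\ge \lambda)$-frame on $\K_{\ge \lambda}$, so by local character ($\clc{1} = \aleph_0$) there is a single $i < \delta$ over which both $q_1$ and $q_2$ do not fork; since $M_i \in \K_{\neg^\ast p}$ forces $q_1 \rest M_i = q_2 \rest M_i$, uniqueness of nonforking extensions yields $q_1 = q_2$, hence $M_\delta \in \K_{\neg^\ast p}$. With that verification inserted before your application of Theorem \ref{frame-transfer} to $\K_{\neg^\ast p}$, your argument coincides with the paper's. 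Your closing remark correctly isolates why full (rather than weak) tameness is needed for the inheritance step, but the chain-closure verification is the other delicate point — and it is the one on which the paper's proof actually spends its lines.
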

\begin{proof}
  The last ``therefore'' follows from combining Theorem \ref{unidim-categ} and the Grossberg-VanDieren transfer (using tameness heavily) Theorem \ref{gv-upward}. To show that $\K_\lambda$ is unidimensional, suppose not. By Theorem \ref{not-unidim}, there exists $M \in \K_\lambda$ and a nonalgebraic $p \in \gS (M)$ such that there is a good $\lambda$-frame on $\K_{\neg^\ast p}$. By the argument above, $\K_{\neg^\ast p}$ is $\lambda$-tame and has weak amalgamation. But is $\K_{\neg^\ast p}$ an AEC? Yes! The only problematic part is if $\seq{M_i : i < \delta}$ is increasing in $\K_{\neg^\ast p}$, and we want to show that $M_\delta := \bigcup_{i < \delta} M_i$ is in $\K_{\neg^\ast p}$. Let $q_1, q_2 \in \gS (M_\delta)$ be extensions of $p$, we want to see that $q_1 = q_2$. By tameness, the good $\lambda$-frame of $\K$ transfers to a good $(\ge \lambda)$-frame. So we can fix $i < \delta$ such that $q_1, q_2$ do not fork over $M_i$. By definition of $\K_{\neg^\ast p}$, $q_1 \rest M_i = q_2 \rest M_i$. By uniqueness of nonforking extension, $q_1 = q_2$.

  By Theorem \ref{frame-transfer} (using that $\K_{\neg^\ast p}$ is tame), $\K_{\neg^\ast p}$ has a good $(\ge \lambda)$-frame. In particular, it has arbitrarily large models.  Thus, $\K$ has non-Galois-saturated models in every $\mu > \lambda$, hence cannot be categorical in any $\mu > \lambda$.
\end{proof}

We wrap up:

\begin{proof}[Proof of Theorem \ref{main-thm-2}]
  Let $\K$ be a universal class categorical in cardinals of arbitrarily high cofinality.
  \begin{enumerate}
    \item Just because it is a universal class, $\K$ has primes and is $\LS (\K)$-tame (recall Example \ref{examples-subsec}.(\ref{universalclasses})).
    \item By Theorem \ref{good-frame-categ}, there exists a good $\lambda$-frame on $\K$.
    \item By the upward frame transfer (Theorem \ref{frame-transfer}), $\K_{\ge \lambda}$ has amalgamation and in fact a good $(\ge \lambda)$-frame. This step uses $\lambda$-tameness.
    \item By orthogonality calculus, if $\K_{\lambda}$ is not unidimensional then there exists a type $p$ such that $\K_{\neg^\ast p}$ has a good $\lambda$-frame. 
    \item Since $\K$ has primes, $\K_{\neg^\ast p}$ is also $\lambda$-tame and has weak amalgamation, so by the upward frame transfer again (using tameness) it must have arbitrarily large models. So arbitrarily large models omit $p$, hence $\K$ has no Galois-saturated models of size above $\lambda$, so cannot be categorical above $\lambda$ (by stability, $\K$ has a Galois-saturated model in every categoricity cardinal). This is a contradiction, therefore $\K_{\lambda}$ is unidimensional.
    \item By Theorem \ref{unidim-categ}, $\K$ is categorical in $\lambda^+$.
    \item By the upward transfer of Grossberg and VanDieren (Theorem \ref{gv-upward}), $\K$ is categorical in all $\mu \ge \lambda$. This again uses tameness in a key way.
  \end{enumerate}
\end{proof}
\begin{remark}
  The proof can be generalized to abstract elementary classes which are tame and have primes. See Theorem \ref{event-categ-primes}.
\end{remark}

\section{Independence, stability, and categoricity in tame AECs} \label{tame-indep-sec}

We have seen that good frames are a crucial tool in the proof of Shelah's eventual categoricity conjecture in universal classes. In this section, we give the precise definition of good frames in a more general axiomatic independence framework. We survey when good frames and more global independence notions are known to exist (i.e.\ the best known answers to Question \ref{indep-quest}).

We look at what can be said in both strictly stable and superstable AECs. Along the way we look at stability transfers, and the equivalence of various definitions of superstability in tame AECs.

Finally, we survey the theory of categorical tame AECs and give the best known approximations to Shelah's categoricity conjecture in this framework.

\subsection{Abstract independence relations} \label{air-subsec}

To allow us to state precise results, we first fix some terminology. The terms used should be familiar to readers with experience in working with forking, either in the elementary or nonelementary context. One potentially unfamiliar notation: we sometimes refer to the pair $\is =(\K, \dnf)$ as an independence relation. This is particularly useful to deal with multiple classes as we can differentiate between the behavior of a possible forking relation on the class $\K$ compared to its behavior on the class $\Ksatp{\lambda}$ of $\lambda$-Galois-saturated models of $\K$.

\begin{defin}\label{indep-def}
  An \emph{independence relation} is a pair $\is = (\K, \nf)$, where:

    \begin{enumerate}
      \item $\K$ is an AEC\footnote{We may look at independence relations where $\K$ is not an AEC (e.g.\ it could be a class of Galois-saturated models in a strictly stable AEC).} with amalgamation (we say that $\is$ is \emph{on $\K$} and write $\K_{\is} = \K$).
      \item $\nf$ is a relation on quadruples of the form $(M, A, B, N)$, where $M \lea N$ are all in $\K$ and $A, B \subseteq |N|$. We write $\nf(M, A, B, N)$ or $\nfs{M}{A}{B}{N}$ instead of $(M, A, B, N) \in \nf$.
      \item The following properties hold:

        \begin{enumerate}
        \item \underline{Invariance}: If $f: N \cong N'$ and $\nfs{M}{A}{B}{N}$, then $\nfs{f[M]}{f[A]}{f[B]}{N'}$.
        \item \underline{Monotonicity}: Assume $\nfs{M}{A}{B}{N}$. Then:
          \begin{enumerate}
          \item Ambient monotonicity: If $N' \gea N$, then $\nfs{M}{A}{B}{N'}$. If $M \lea N_0 \lea N$ and $A \cup B \subseteq |N_0|$, then $\nfs{M}{A}{B}{N_0}$.
          \item Left and right monotonicity: If $A_0 \subseteq A$, $B_0 \subseteq B$, then $\nfs{M}{A_0}{B_0}{N}$.
          \item Base monotonicity: If $\nfs{M}{A}{B}{N}$ and $M \lea M' \lea N$, $|M'| \subseteq B \cup |M|$, then $\nfs{M'}{A}{B}{N}$.
          \end{enumerate}
        \item \underline{Left and right normality}: If $\nfs{M}{A}{B}{N}$, then $\nfs{M}{AM}{BM}{N}$.
        \end{enumerate}
    \end{enumerate}
    When there is only one relation to consider, we sometimes write ``$\nf$ is an independence relation on $\K$'' to mean ``$(\K, \nf)$ is an independence relation''.
\end{defin}
\begin{defin}
  Let $\is = (\K, \nf)$ be an independence relation. Let $M \lea N$, $B \subseteq |N|$, and $p \in \gS^{<\infty} (B; N)$ be given. We say that \emph{$p$ does not $\is$-fork over $M$} if whenever $p = \gtp (\ba / B; N)$, we have that $\nfs{M}{\ran (\ba)}{B}{N}$. When $\is$ is clear from context, we omit it. 
\end{defin}
\begin{remark}
  By the ambient monotonicity and invariance properties, this is well-defined (i.e.\ the choice of $\ba$ and $N$ does not matter).
\end{remark}

An independence relation can satisfy several natural properties:

\begin{defin}[Properties of independence relations]\label{indep-props-def}
  Let $\is = (\K, \nf)$ be an independence relation.

  \begin{enumerate}
    \item $\is$ has \emph{disjointness} if $\nfs{M}{A}{B}{N}$ implies $A \cap B \subseteq |M|$.
    \item $\is$ has \emph{symmetry} if $\nfs{M}{A}{B}{N}$ implies $\nfs{M}{B}{A}{N}$.

    \item $\is$ has \emph{existence} if $\nfs{M}{A}{M}{N}$ for any $A \subseteq |N|$.
    \item $\is$ has \emph{uniqueness} if whenever $M_0 \lea M \lea N_\ell$, $\ell = 1,2$, $|M_0| \subseteq B  \subseteq |M|$, $q_\ell \in \gS^{<\infty} (B; N_\ell)$, $q_1 \rest M_0 = q_2 \rest M_0$, and $q_\ell$ does not fork over $M_0$, then $q_1 = q_2$.
    \item $\is$ has \emph{extension} if whenever $p \in \gS^{<\infty} (M B; N)$ does not fork over $M$ and $B \subseteq C \subseteq |N|$, there exists $N' \gea N$ and $q \in \gS^{<\infty} (M C; N')$ extending $p$ such that $q$ does not fork over $M$.
    \item $\is$ has \emph{transitivity} if whenever $M_0 \lea M_1 \lea N$, $\nfs{M_0}{A}{M_1}{N}$ and $\nfs{M_1}{A}{B}{N}$ imply $\nfs{M_0}{A}{B}{N}$.
    \item $\is$ has the \emph{$<\kappa$-witness property} if whenever $M \lea N$, $A,B \subseteq |N|$, and $\nfs{M}{A_0}{B_0}{N}$ for all $A_0 \subseteq A$, $B_0 \subseteq B$ of size strictly less than $\kappa$, then $\nfs{M}{A}{B}{N}$. The \emph{$\lambda$-witness property} is the $(<\lambda^+)$-witness property.
  \end{enumerate}

\end{defin}

The following cardinals are also important objects of study:

\begin{defin}[Locality cardinals]\label{loc-card-def}
  Let $\is = (\K, \nf)$ be an independence relation and let $\alpha$ be a cardinal.

  \begin{enumerate}
    \item Let $\slc{\alpha} (\is)$ be the minimal cardinal $\mu \ge \alpha^+ + \LS (\K)^+$ such that for any $M \lea N$ in $\K$, any $A \subseteq |N|$ with $|A| \le \alpha$, there exists $M_0 \lea M$ in $\K_{<\mu}$ with $\nfs{M_0}{A}{M}{N}$. When $\mu$ does not exist, we set $\slc{\alpha_0} (\is) = \infty$.
      \item Let $\clc{\alpha} (\is)$ be the minimal cardinal $\mu \ge \alpha^+ + \aleph_0$ such that for any regular $\delta \ge \mu$, any increasing continuous chain $\seq{M_i : i \le \delta}$ in $\K$, any $N \gea M_\delta$, and any $A \subseteq |N|$ of size at most $\alpha$, there exists $i < \delta$ such that $\nfs{M_i}{A}{M_\delta}{N}$. When $\mu$ does not exist, we set $\clc{\alpha_0} (\is) = \infty$.
  \end{enumerate}

  We also let $\slc{<\alpha} (\is) := \sup_{\alpha_0 < \alpha} \slc{\alpha_0} (\is)$. Similarly define $\clc{<\alpha} (\is)$.   When clear, we may write $\kappa_\alpha(\nf)$, etc., instead of $\kappa_\alpha(\is)$.

\end{defin}
\begin{defin}
  Let us say that an independence relation $\is$ has \emph{local character} if $\slc{\alpha} (\is) < \infty$ for all cardinals $\alpha$.
\end{defin}

Compared to the elementary framework, we differentiate between two local character cardinals, $\kappa$ and $\bkappa$. The reason is that we do not in general (but see Theorem \ref{univ-classes-goodness}) know how to make sense of when a type does not fork over an arbitrary set (as opposed to a model). Thus we cannot (for example) define superstability by requiring that every type does not fork over a finite set: looking at unions of chains is a replacement.

We make precise when an independence relation is ``like forking in a first-order stable theory'':

\begin{defin}\label{stable-indep}
  We say that  $\is$ is a \emph{stable independence relation} if it is an independence relation satisfying uniqueness, extension, and local character.
\end{defin}

We could also define the meaning of a \emph{superstable independence relation}, but here several nuances arise so to be consistent with previous terminology we will call it a \emph{good} independence relation, see Definition \ref{good-indep-def}.

As defined above, independence relations are \emph{global objects}: they define an independence notion ``$p$ does not fork over $M$'' for $M$ of any size and $p$ of any length. This is a strong requirement. In fact, the following refinement of Question \ref{indep-quest} is still open:

\begin{question}\label{indep-question}
  Let $\K$ be a fully tame and short AEC with amalgamation. Assume that $\K$ is categorical in a proper class of cardinals. Does there exists a $\lambda$ and a stable independence relation $\is$ on $\K_{\ge \lambda}$?
  \end{question}

It is known that one can construct such an $\is$ with local character and uniqueness, but proving that it satisfies extension seems hard in the absence of compactness. Note in passing that $\is$ as above must be unique:

\begin{thm}[Canonicity of stable independence] \label{indep-canon-thm}
  If $\is$ and $\is'$ are stable independence relations on $\K$, then $\is = \is'$.
\end{thm}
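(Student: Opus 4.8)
The plan is to prove the two inclusions $\nf \subseteq \nf'$ and $\nf' \subseteq \nf$ separately, where I read ``$\nf \subseteq \nf'$'' as the assertion that whenever $p$ does not $\is$-fork over $M$, it also does not $\is'$-fork over $M$. First I would record that both relations have the \emph{existence} property, i.e.\ $\nfs{M}{A}{M}{N}$ for all $A \subseteq |N|$. This is not listed among the defining clauses of a stable independence relation, but it is a consequence of local character together with base monotonicity: local character produces a small $M_0 \lea M$ with $\nfs{M_0}{A}{M}{N}$, and base monotonicity then pushes the base back up to $M$. With existence available, the extension property lets me build, in either relation, a nonforking extension of any type over the base.

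The second step is an \emph{upgrade lemma}: once one inclusion, say $\nf \subseteq \nf'$, is known, equality follows at once. Indeed, suppose $p \in \gS^{<\infty}(N)$ does not $\is'$-fork over $M$. By existence and extension for $\is$ there is $q \in \gS^{<\infty}(N)$ with $q \rest M = p \rest M$ that does not $\is$-fork over $M$; by the assumed inclusion, $q$ then does not $\is'$-fork over $M$ either. Now $p$ and $q$ are two extensions of $p \rest M$ to $N$, both of which do not $\is'$-fork over $M$, so uniqueness for $\is'$ forces $p = q$, whence $p$ does not $\is$-fork over $M$. This gives $\nf' \subseteq \nf$, and therefore $\nf = \nf'$. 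Since the argument is symmetric in $\is$ and $\is'$, it suffices to establish a \emph{single} inclusion.

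The heart of the matter --- and the step I expect to be the main obstacle --- is producing that first inclusion, because the upgrade argument is genuinely asymmetric: to identify the $\is'$-nonforking extension $q$ of $p \rest M$ with $p$ itself, one must already know that $q$ does not $\is$-fork over $M$, and nothing said so far links $\is'$-independence back to $\is$-independence. Breaking this circularity is exactly the point at which local character has to be used. My plan here is to show that the relation with local character sits inside any relation with extension: given $p$ not $\is$-forking over $M$, I would take the $\is'$-nonforking extension $q$ of $p \rest M$ to $N$ and compare $p$ with $q$ along a sufficiently long increasing continuous resolution $\seq{M_i : i \le \delta}$ with $\bigcup_{i} M_i = N$, choosing $\delta$ regular and above the locality cardinals $\slc{\alpha}(\is)$, $\clc{\alpha}(\is)$ (and their analogues for $\is'$), where $\alpha = \ell(p)$. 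Using extension to carry nonforking extensions of both types up the resolution, and using local character to locate a stage past which each type is based on a small submodel, one aims to force the two candidate extensions to agree stagewise and then invoke continuity to conclude $p = q$. Making this precise --- handling types of arbitrary length via monotonicity, and arranging the resolution so that the locality cardinals of \emph{both} relations can be simultaneously accommodated --- is the technical core; it is here, and only here, that the local character clause in the definition of a stable independence relation is indispensable.
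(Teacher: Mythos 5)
Your preliminary reductions are correct and match what is standard here: existence does follow from local character plus base monotonicity (the survey records this in Proposition \ref{stable-indep-props}), and your ``upgrade lemma'' --- once $\nf \subseteq \nf'$ is known, take the $\is$-nonforking extension $q$ of $p \rest M$, push it into $\nf'$ via the inclusion, and collapse $q$ onto $p$ by uniqueness for $\is'$ --- is a valid and genuinely useful observation that reduces the theorem to a single inclusion. Note that the survey itself does not prove the theorem; it cites \cite[Corollary 5.19]{bgkv-apal}, so the comparison below is with that argument.

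The gap is in the step you yourself flag as the ``technical core'': your plan for the first inclusion does not contain the idea needed to break the circularity, and as sketched it would fail. Comparing $p$ (which is $\is$-nonforking over $M$) with the $\is'$-nonforking extension $q$ of $p \rest M$ along a resolution $\seq{M_i : i \le \delta}$ just relocates the original problem to each stage $i$: to conclude $p \rest M_i = q \rest M_i$ you would need both restrictions to be nonforking over a common base \emph{with respect to the same relation}, and nothing in extension, local character, or continuity supplies such a link --- uniqueness for $\is$ applies only to two $\is$-nonforking types, uniqueness for $\is'$ only to two $\is'$-nonforking types, and you have one of each. Choosing $\delta$ above the locality cardinals of both relations does not change this. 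The actual proof breaks the circularity by routing both relations through a third, concretely defined notion that mentions neither: one shows (using uniqueness) that $\is$-nonforking implies a non-splitting-type condition, derives symmetry for both relations from the failure of the order property (this is \cite[Corollary 5.18]{bgkv-apal}, quoted as part of Proposition \ref{stable-indep-props} here), and then uses independent (Morley) sequences together with local character of the \emph{other} relation and invariance to transfer independence across. That auxiliary combinatorial intermediary --- splitting, the order property, and symmetry --- is the missing ingredient, and without something playing its role the stagewise comparison cannot get started.
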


As seen in Example \ref{examples-subsec}.(\ref{stabletame}), we know that uniqueness and local character are enough to conclude some tameness and there are several relationships between the properties. We give one example:

\begin{prop}\label{stable-indep-props}
  Assume that $\dnf$ is a stable independence relation on $\K$.

  \begin{enumerate}
    \item $\dnf$ has symmetry, existence, and transitivity.
    \item\label{tameshort-witness} If $\K$ is fully $<\kappa$-tame and -type short, then $\dnf$ has the $<\kappa$-witness property.
    \item For every $\alpha$, $\clc{\alpha} (\dnf) \le \slc{\alpha} (\dnf)$.
    \item $\dnf$ has disjointness over sufficiently Galois-saturated models: if $M$ is $\LS (\K)^+$-Galois-saturated and $\nfs{M}{A}{B}{N}$, then $A \cap B \subseteq |M|$.
  \end{enumerate}
\end{prop}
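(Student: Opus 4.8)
The plan is to derive all four items from the three defining properties of a stable independence relation --- uniqueness, extension, and local character --- together with the monotonicity and normality axioms built into Definition \ref{indep-def}. For (1) I would dispatch \emph{existence} and \emph{transitivity} first, leaving symmetry as the crux. Existence is immediate: given $A \subseteq |N|$, apply local character at $\alpha = |A|$ to obtain $M_0 \lea M$ in $\K_{<\slc{\alpha}(\dnf)}$ with $\nfs{M_0}{A}{M}{N}$, and then base monotonicity (taking $M' = M$, which is legitimate since $|M| \subseteq |M|$) upgrades this to $\nfs{M}{A}{M}{N}$. For transitivity, suppose $M_0 \lea M_1 \lea N$, $\nfs{M_0}{A}{M_1}{N}$ and $\nfs{M_1}{A}{B}{N}$; after using right normality to assume $|M_1| \subseteq B$, extension applied to $\gtp(\ba/M_1;N)$ (which does not fork over $M_0$) produces a nonforking extension $q$ over the domain $B$, and base monotonicity makes $q$ nonfork over $M_1$ as well. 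Since $q$ and $\gtp(\ba/B;N)$ agree over $M_1$ and both do not fork over $M_1$, uniqueness (with base $M_1$) forces them equal, so $\gtp(\ba/B;N) = q$ does not fork over $M_0$.

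The hard part will be \emph{symmetry}, and I expect the purely formal route via canonicity (Theorem \ref{indep-canon-thm}) to fail: the transpose relation given by $\nfs{M}{B}{A}{N}$ inherits invariance, the left/right monotonicities, and normality, but \emph{not} obviously base monotonicity (which is asymmetric in the two sides), so one cannot simply show the transpose is a stable independence relation and quote canonicity. Instead I would run the classical stability-theoretic argument: assuming $\nfs{M}{a}{b}{N}$, use existence and extension to build a long $M$-independent sequence inside $\gtp(a/M)$ and then, using the finiteness of the locality cardinals $\slc{}$ and $\clc{}$ together with uniqueness, extract symmetry exactly as in the first-order proof that nonforking is symmetric. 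This is where the genuine work lies.

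For (2), I would first build via existence and extension the nonforking extension $q \in \gS^I(B \cup |M|; N')$ of $\gtp(\ba/M;N)$ (where $\ran \ba = A$), which does not fork over $M$; it then suffices to show that $q$ equals $\gtp(\ba / B \cup |M|; N')$, since restricting the latter to $B$ yields $\nfs{M}{A}{B}{N}$. Here full $<\kappa$-tameness and -type shortness (Proposition \ref{injective-map}) reduces this equality to the agreement of all small restrictions $q^{I_0}\rest C_0$ with $C_0 \in P_\kappa(B \cup |M|)$ and $I_0 \in P_\kappa I$. On each such small piece, the hypothesis of the witness property --- after adding $M$ on the right by normality and restricting --- guarantees that the corresponding restriction of $\gtp(\ba/B \cup |M|; N')$ does not fork over $M$, matching that of $q$; since the two agree over $M$, uniqueness identifies them, and reassembling the pieces gives $q = \gtp(\ba/B \cup |M|; N')$. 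For (3), I would show that $\slc{\alpha}(\dnf)$ itself witnesses the defining inequality for $\clc{\alpha}(\dnf)$: given a regular $\delta \ge \slc{\alpha}(\dnf)$, a continuous increasing chain $\seq{M_i : i \le \delta}$, and $A$ of size $\le \alpha$, local character yields $M_0 \lea M_\delta$ of size $<\slc{\alpha}(\dnf) \le \delta$ with $\nfs{M_0}{A}{M_\delta}{N}$; since $\cf{\delta} = \delta > \|M_0\|$ and the chain is continuous, $M_0 \lea M_i$ for some $i < \delta$, and base monotonicity gives $\nfs{M_i}{A}{M_\delta}{N}$, so $\clc{\alpha}(\dnf) \le \slc{\alpha}(\dnf)$.

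Finally, for (4), take $a \in A \cap B$; monotonicity gives $\nfs{M}{a}{a}{N}$, so the algebraic type ``$x = a$'' over $|M| \cup \{a\}$ is a nonforking extension of $p := \gtp(a/M;N)$. Transporting this fact by invariance along automorphisms of the monster fixing $M$ and invoking uniqueness shows that any realization of $p$ that is independent from a given realization over $M$ must coincide with it; hence, building a Morley sequence in $p$ by extension, every such sequence is constant, so $p$ is algebraic with a single realization. Since $M$ is $\LS(\K)^+$-Galois-saturated it realizes $p$ (equivalently, it is closed under the relevant algebraic closure), which forces $a \in |M|$. Thus $A \cap B \subseteq |M|$. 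Among the four items, symmetry in (1) is the real obstacle; the rest is bookkeeping built on uniqueness, extension, local character, and the monotonicity axioms.
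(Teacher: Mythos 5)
The paper itself only proves item (2) (the witness property); items (1), (3), (4) are left as folklore, and your treatments of existence, transitivity, and (3) are correct and standard (for (3), descending $M_0$ into the chain via regularity of $\delta$ plus coherence and then applying base monotonicity is exactly right). For (2) your strategy is essentially the paper's — produce a nonforking conjugate by extension, match small restrictions by uniqueness, glue by tameness and shortness — but there is one real wrinkle: you invoke Proposition \ref{injective-map} for the two types over the \emph{set} $B \cup |M|$, whereas full $<\kappa$-tameness and -type shortness as defined (Definition \ref{tame-var-def}) only governs types over \emph{models}. The paper's sketch dodges exactly this by first using symmetry (and extension) to reduce to the configuration $\nfs{M_0}{A}{M}{N}$ with the right-hand side equal to the model $M$; then the two types being compared live in $\gS^I(M)$ and only type-shortness over $M$ is needed. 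So symmetry, which you defer as ``the crux'' of (1), is actually consumed inside the proof of (2); your version either needs tameness/shortness over arbitrary sets or the same symmetry-based reduction.

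The second genuine gap is the last step of (4). Your argument that $p = \gtp(a/M;N)$ has a unique realization (via uniqueness of the nonforking extension to $M \cup \{a\}$ and invariance) is fine, but the conclusion ``since $M$ is $\LS(\K)^+$-Galois-saturated it realizes $p$'' does not follow: Galois-saturation only realizes types over subsets of $M$ of size at most $\LS(\K)$, and no amount of saturation makes a model realize a type over its entire universe unless the realization was already inside it — which is what you are trying to prove. The standard repair is to first descend to a small base: by local character find $M_0 \lea M$ with $\nfs{M_0}{a}{M}{N}$, use transitivity and monotonicity to get $\nfs{M_0}{a}{M_0 a}{N}$, rerun your uniqueness argument to see that $\gtp(a/M_0;N)$ already has $a$ as its unique realization, and only then apply saturation of $M$ to realize $\gtp(a/M_0;N)$ inside $M$, forcing $a \in |M|$. (Note this needs $M$ to be $\slc{1}(\dnf)$-saturated, which is why the locality cardinal, not just $\LS(\K)^+$, is the natural threshold.) Your sketch of symmetry itself is not a proof, but you correctly identify both that the canonicity route fails and that the classical order-property-style argument is the one used in the literature, which matches the paper's attribution.
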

\begin{proof}[Proof sketch for (\ref{tameshort-witness})] \
     By symmetry and extension it is enough to show that for a given $A$, $\nfs{M_0}{A_0}{M}{N}$ for all $A_0 \subseteq A$ of size less than $\kappa$ implies $\nfs{M_0}{A}{M}{N}$. By extension, pick $N' \gea N$ and $A' \subseteq |N'|$ so that $\nfs{M_0}{A'}{M}{N'}$ and $\gtp (\ba' / M_0; N') = \gtp (\ba / M_0; N)$ (where $\ba$, $\ba'$ are enumerations of $A$ and $A'$ respectively). By the uniqueness property, $\gtp (\ba' \rest I / M; N') = \gtp (\ba \rest I / M_0; N)$ for all $I \subseteq \dom{\ba}$ of size less than $\kappa$. Now use by shortness this implies $\gtp (\ba / M; N) = \gtp (\ba' / M; N')$, hence by invariance $\nfs{M_0}{A}{M}{N}$.
\end{proof}

In what follows, we consider several approximations to Question \ref{indep-question} in the stable and superstable contexts. We also examine consequences on categorical AECs. It will be convenient to \emph{localize} Definition \ref{indep-def} so that:

\begin{enumerate}
  \item The relation $\nf$ is only defined on types of certain lengths (that is, the size of the left hand side is restricted).
  \item The relation $\nf$ is only defined on types over domains of certain sizes (that is, the size of the right hand side and base is restricted).
\end{enumerate}

More precisely:

\begin{notation}
  Let $\F = [\lambda, \theta)$ be an interval of cardinals. We say that $\is = (\K, \nf)$ is a \emph{$(<\alpha, \F)$-independence relation} if it satisfies Definition \ref{indep-def} localized to types of length less than $\alpha$ and models in $\K_{\F}$ (so only amalgamation in $\F$ is required). We always require that $\theta \ge \alpha$ and $\lambda \ge \LS (\K)$.

$(\le \alpha, \F)$ means $(<\alpha^+, \F)$, and if $\F = [\lambda, \lambda^+)$, then we say that $\is$ is a $(\le \alpha, \lambda)$-independence relation. Similar variations are defined as expected, e.g.\ $(\le \alpha, \ge \lambda)$ means $(\le \alpha, [\lambda, \infty))$. 

We often say that $\is$ is a $(<\alpha)$-ary independence relation on $\K_\F$ rather than a $(<\alpha, \F)$-independence relation. We write $\alpha$-ary rather than $(\le \alpha)$-ary.
\end{notation}

The properties in Definition \ref{indep-props-def} can be adapted to such localized independence relations. For example, we say that $\is$ has \emph{symmetry} if $\nfs{M}{A}{B}{N}$ implies that for every $B_0 \subseteq B$ of size less than $\alpha$, $\nfs{M}{B_0}{A}{N}$.

Using this terminology, we can give the definition of a \emph{good $\lambda$-frame} (see Section \ref{frame-sec}), and more generally of a good $\F$-frame for $\F$ an interval of cardinals\footnote{Note that the definition here is different (but equivalent to) Shelah's notion of a \emph{type-full} good $\lambda$-frame, see the historical remarks for more.}:

\begin{defin}\label{good-frame-def}
  Let $\F = [\lambda, \theta)$ be an interval of cardinals. A \emph{good $\F$-frame} is a $1$-ary independence relation $\is$ on $\K_{\F}$ such that:
    \begin{enumerate}
      \item $\is$ satisfies disjointness, symmetry, existence, uniqueness, extension, transitivity, and $\clc{1} (\is) = \aleph_0$.
      \item $\K_{\F}$ has amalgamation in $\F$, joint embedding in $\F$, no maximal models in $\F$, and is Galois-stable in every $\mu \in \F$. Also of course $\K_{\F} \neq \emptyset$. 
    \end{enumerate}
    
    When $\F = [\lambda, \lambda^+)$, we talk of a good $\lambda$-frame, and when $\F = [\lambda, \infty)$, we talk of a good $(\ge \lambda)$-frame. As is customary, we may use the letter $\s$ rather than $\is$ to denote a good frame.
\end{defin}

\subsection{Stability}\label{stab-indep-sec} We compare results for stability in tame classes with those in general classes, summarized in Section \ref{genstab-subsec}.  At a basic level, tameness strongly connects types over domains of different cardinalities.  While a general AEC might be Galois-stable in $\lambda$ but not in $\lambda^+$ (see the Hart-Shelah example in Section \ref{counterex-ssec}), this cannot happen in tame classes:

\begin{theorem}\label{stab-transfer}
Suppose that $\K$ is an AEC with amalgamation which is $\lambda$-tame\footnote{For the first part, weak tameness suffices.} and Galois-stable in $\lambda$. Then:

\begin{enumerate}
\item $\K$ is Galois-stable in $\lambda^+$.
\item $\K$ is Galois-stable in every $\mu > \lambda$ such that $\mu = \mu^{\lambda}$.
\end{enumerate}
\end{theorem}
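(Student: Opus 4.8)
The plan is to use $\lambda$-tameness to reduce the computation of $|\gS(M)|$ to counting small restrictions, and to use the theory of $\lambda$-splitting (available since $\K$ is Galois-stable in $\lambda$) to show that each type is pinned down by a bounded amount of such data. The three inputs I would use throughout are: every type over a model in $\K_{\ge \lambda}$ fails to $\lambda$-split over some submodel of size $\lambda$ (Lemma \ref{stabsplit}); non-$\lambda$-splitting extensions to a universal model \emph{of the same size} are unique (Theorem \ref{gen-unique-dns-fact}); and universal extensions of size $\lambda$ exist (Lemma \ref{univ-exist}). The key preliminary step, where full $\lambda$-tameness enters, is a uniqueness upgrade across cardinalities: if $N_0 \lea N_1$ are in $\K_\lambda$ with $N_1$ universal over $N_0$, and $p, q \in \gS (M)$ for some $M \gea N_1$ of arbitrary size both fail to $\lambda$-split over $N_0$ with $p \rest N_1 = q \rest N_1$, then $p = q$. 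Indeed, were $p \neq q$, by $\lambda$-tameness they would differ on some $N_2 \lea M$ of size $\lambda$, which we may take to contain $N_1$; then $p \rest N_2$ and $q \rest N_2$ would be two non-$\lambda$-splitting extensions over $N_0$ of the single type $p \rest N_1 = q \rest N_1$ to a model of size $\lambda$, contradicting Theorem \ref{gen-unique-dns-fact}.

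For part (1), I would first pass to a $\lambda^+$-saturated extension $M^{\mathrm{sat}} \gea M$ of size $\lambda^+$, which exists by Galois-stability in $\lambda$ and amalgamation; choosing an extension to $M^{\mathrm{sat}}$ of each type over $M$ gives an injection $\gS (M) \hookrightarrow \gS (M^{\mathrm{sat}})$, so it suffices to bound $|\gS (M^{\mathrm{sat}})|$. By $\lambda^+$-model-homogeneity (Theorem \ref{mod-homog-sat}) and Lemma \ref{univ-exist}, fix a continuous resolution $\seq{M_i : i < \lambda^+}$ of $M^{\mathrm{sat}}$ with each $M_i \in \K_\lambda$ and each $M_{i+1}$ universal over $M_i$. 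Given $p \in \gS (M^{\mathrm{sat}})$, Lemma \ref{stabsplit} together with $\cf{\lambda^+} > \lambda$ yields a least $i(p) < \lambda^+$ with $p$ non-$\lambda$-splitting over $M_{i(p)}$; by monotonicity of splitting and Theorem \ref{gen-unique-dns-fact} applied along the chain (all steps of size $\lambda$), each $p \rest M_j$ for $j > i(p)$ is the unique non-splitting extension of $p \rest M_{i(p)+1}$, so $p$ is determined by the pair $(i(p), p \rest M_{i(p)+1})$. The number of such pairs is at most $\lambda^+ \cdot \sup_i |\gS (M_{i+1})| \le \lambda^+ \cdot \lambda = \lambda^+$. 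Observe that tameness is used here only to see that a type over the saturated model is determined by its size-$\lambda$ restrictions, so weak tameness suffices.

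For part (2), let $\mu = \mu^\lambda$ and $\|M\| = \mu$. As above I would pass to a $\lambda^+$-saturated extension of size $\mu$ (this needs only Galois-stability in $\lambda$, since at each stage there are at most $\mu^\lambda = \mu$ submodels of size $\le \lambda$, each carrying at most $\lambda$ types, so the size stays $\mu$). For each $N_0 \lea M$ in $\K_\lambda$, fix by $\lambda^+$-model-homogeneity and Lemma \ref{univ-exist} some $N_1 \lea M$ in $\K_\lambda$ universal over $N_0$. By Lemma \ref{stabsplit} every $p \in \gS (M)$ fails to $\lambda$-split over some such $N_0$, and by the uniqueness upgrade the restriction map $p \mapsto p \rest N_1$ is injective on the types that do not $\lambda$-split over $N_0$; hence there are at most $|\gS (N_1)| \le \lambda$ of them. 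Summing over the at most $\mu^\lambda = \mu$ choices of $N_0$ gives $|\gS (M)| \le \mu \cdot \lambda = \mu$.

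The main obstacle is precisely the uniqueness upgrade across cardinalities, together with ensuring that a universal model over the chosen splitting base is actually available inside the domain; the reduction to saturated domains is what makes the latter available, and full $\lambda$-tameness — rather than its same-size incarnation in Theorem \ref{gen-unique-dns-fact} — is exactly what propagates uniqueness from size $\lambda$ up to size $\mu$ in part (2). This also explains the asymmetry in the tameness hypothesis: part (1) only ever compares restrictions of size $\lambda$ along the chain and so gets by with weak tameness, whereas part (2) genuinely compares a type over $N_1 \in \K_\lambda$ with its extension to a model of size $\mu$.
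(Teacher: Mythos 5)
Your proposal is correct, and it follows essentially the route the paper points to: the survey does not prove Theorem \ref{stab-transfer} itself (it cites Grossberg--VanDieren and Baldwin--Kueker--VanDieren), but the standard argument is exactly yours --- local character of $\lambda$-splitting (Lemma \ref{stabsplit}), same-cardinality uniqueness (Theorem \ref{gen-unique-dns-fact}) upgraded by tameness to arbitrary domains (your ``uniqueness upgrade'' is precisely Theorem \ref{ns-uniq-tame-fact}, which you reprove by reduction to \ref{gen-unique-dns-fact} rather than by the paper's direct embedding argument), followed by counting types via their nonsplitting bases inside a suitably saturated extension. Your accounting of where weak tameness suffices (part (1), since the domain is saturated) versus where full $\lambda$-tameness is needed (part (2)) also matches the footnote in the statement.
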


There is also a partial stability spectrum theorem for tame AECs:

\begin{theorem}\label{stab-spectrum}
Let $\K$ be an AEC with amalgamation that is $\LS (\K)$-tame. The following are equivalent:
\begin{enumerate}
	\item $\K$ is Galois-stable in some cardinal $\lambda \ge \LS (\K)$.
	\item $\K$ does not have the order property (see Definition \ref{op-def}).
	\item There are $\mu \leq \lambda_0 < H_1$ such that $\K$ is Galois-stable in any $\lambda = \lambda^{<\mu} + \lambda_0$.
\end{enumerate}
\end{theorem}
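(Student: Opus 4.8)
The plan is to prove the cycle $(3) \Rightarrow (1) \Rightarrow (2) \Rightarrow (3)$, with the last implication carrying essentially all the weight. The implication $(3) \Rightarrow (1)$ is immediate once we check that a cardinal of the prescribed form exists: given $\mu \le \lambda_0 < H_1$, put $\lambda := \lambda_0^{<\mu}$; then $\lambda^{<\mu} = \lambda$ (a standard closure computation) and $\lambda \ge \lambda_0$, so $\lambda = \lambda^{<\mu} + \lambda_0$, and $(3)$ yields Galois-stability in $\lambda$, giving $(1)$.

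Next I would prove $(1) \Rightarrow (2)$ in contrapositive form; this direction needs no tameness. Suppose $\K$ has the $\kappa$-order property for some $\kappa$, hence (by the remark after Definition \ref{op-def}, that the order property of all lengths below the Hanf number suffices for the full property) the order witnessed by the distinguished $p$ can be stretched along any linear order. Using Ehrenfeucht--Mostowski models coming from the presentation theorem (Theorem \ref{pres-thm}), fix an arbitrary infinite $\lambda$ and a linear order $I$ of size $\lambda$ with $\operatorname{ded}(\lambda)$ Dedekind cuts, and realize $I$ by an order-indexed family $\seq{\ba_i : i \in I}$ inside a model $N$, with $M \lea N$ of size $\lambda$ containing all the $\ba_i$. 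Each cut of $I$ is realized (in some extension, using amalgamation) by an element whose Galois type over $M$ records on which side of each $\ba_i$ it lies, so distinct cuts give distinct types; hence $|\gS(M)| \ge \operatorname{ded}(\lambda) > \lambda$ and $\K$ is unstable in $\lambda$. Since $\lambda$ was arbitrary, stability in a single cardinal forces the failure of the order property.

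The heart of the matter is $(2) \Rightarrow (3)$, and here tameness enters through the Galois Morleyization. Since $\K$ is $\LS(\K)$-tame it is $<\kappa$-tame for $\kappa := \LS(\K)^+$, so by the length-one form of Theorem \ref{morleyization-thm} the map $\gtp(b/M;N) \mapsto \operatorname{tp}_{\text{qf-}\Ll_{\kappa,\kappa}(\bigL)}(b/\bigM;\bigN)$ is injective in the $<\kappa$-Galois Morleyization $\bigKp{\kappa}$. Thus counting $\gS(M)$ reduces to counting quantifier-free $\Ll_{\kappa,\kappa}$-types over $\bigM$, and I would import the local stability theory for this infinitary logic (the ``stability theory inside a model'' of \cite[Chapter V.A]{shelahaecbook2}). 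The failure of the order property in $\K$ translates into the absence of a definable order in $\bigKp{\kappa}$, and the attendant rank/splitting analysis bounds the number of such types: one defines $\mu$ as the least cardinal for which the relevant splitting has local character and $\lambda_0$ as a bound on the first cardinal of instability, so that $|\gS(M)| \le \|M\|^{<\mu} + \lambda_0$, i.e.\ Galois-stability in every $\lambda = \lambda^{<\mu} + \lambda_0$. That both invariants may be taken below $H_1 = \hanf{\LS(\K)}$ follows because the order property is a pseudo-elementary property detected by the first-order theory of the presentation theorem (Theorem \ref{pres-thm}), whose Hanf number is exactly $\beth_{(2^{\LS(\K)})^+} = H_1$; hence any instability is already witnessed below $H_1$.

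The main obstacle is precisely this last step: extracting from the purely combinatorial ``no order property'' hypothesis the two numerical invariants $\mu$ and $\lambda_0$ and certifying that they lie below $H_1$. This requires setting up the infinitary local stability machinery carefully (choosing the right notion of $\Delta$-splitting or rank in $\Ll_{\kappa,\kappa}$, and verifying it has local character off the order property) and tracking Hanf-number bounds throughout. Once stability is known in a cofinal-enough set of cardinals, Theorem \ref{stab-transfer} can be used to smooth the passage between successive cardinals and to tidy the final spectrum.
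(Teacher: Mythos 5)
Your proposal matches the paper's route: the survey's own justification is precisely that one passes through the $<\LS(\K)^+$-Galois Morleyization (the length-one case of Theorem \ref{morleyization-thm}) and imports the ``stability theory inside a model'' of \cite[Chapter V.A]{shelahaecbook2} to translate the classical order-property/stability equivalence, which is exactly your $(2)\Rightarrow(3)$, while your $(3)\Rightarrow(1)$ and the EM/$\operatorname{ded}(\lambda)$ argument for $(1)\Rightarrow(2)$ are the standard surrounding steps. The level of detail you leave to the imported infinitary machinery (the choice of rank/splitting notion and the Hanf-number bookkeeping giving $\mu\le\lambda_0<H_1$) is the same as what the survey itself defers to \cite{sv-infinitary-stability-afml}, so there is nothing further to flag.
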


The proof makes heavy use of the Galois Morleyization (Theorem \ref{morleyization-thm}) to connect ``stability theory inside a model'' (results about formal, syntactic types within a particular model) to Galois types in an AEC. This allows the translation of classical proofs connecting the order property and stability.

This achieves two important generalizations from the elementary framework.  First, it unites the characterizations of stability in terms of counting types and no order property from first-order, a connection still lacking in general AECs.  Second, it gives one direction of the stability spectrum theorem by showing that, given stability in any one place, there are many stability cardinals, and some of the stability cardinals are given by satisfying some cardinal arithmetic above the first stability cardinal. Still lacking from this is a converse saying that the stability cardinals are exactly characterized by some cardinal arithmetic.

Another important application of the Galois Morleyization in stable tame AECs is that \emph{averages} of suitable sequences can be analyzed. Roughly speaking, we can work inside the Galois Morleyization of a monster model and define the \emph{$\chi$-average over $A$ of a sequence $\BI$} to be the set of formulas $\phi$ over $A$ so that strictly less than $\chi$-many elements of $\BI$ satisfy $\phi$. If $\chi$ is big-enough and under reasonable conditions on $\BI$ (i.e.\ it is a Morley sequence with respect to nonsplitting), we can show that the average is complete and (if $\BI$ is long-enough), realized by an element of $\BI$. Unfortunately, a detailed study is beyond the scope of this paper, see the historical remarks for references.

Turning to independence relations in stable AECs, there are two main candidates.  The first is the familiar notion of splitting (see Definition \ref{splitting-def}).  Tameness simplifies the discussion of splitting by getting rid of the cardinal parameter: it is impossible for a type to $\lambda^+$-split over $M$ and also not $\lambda$-split over $M$ in a $(\lambda, \lambda^+)$-tame AEC, as the witness to $\lambda^+$-splitting could be brought down to size $\lambda$.  This observation allows for a stronger uniqueness result in non-splitting.  Rather than just having unique extensions in the same cardinality as in Theorem \ref{gen-unique-dns-fact}, we get a cardinal-free uniqueness result.

\begin{thm} \label{ns-uniq-tame-fact}
Suppose $\K$ is a $\LS (\K)$-tame AEC with amalgamation and that $M_0 \lea M_1 \lea M_2$ are in $\K_{\ge \LS (\K)}$ with $M_{1}$ universal over $M_0$.  If $p,q \in \gS(M_2)$ do not split over $M_0$ and $p \rest M_1 = q \rest M_1$, then $p = q$.
\end{thm}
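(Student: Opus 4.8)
The plan is to reduce the statement to the fixed-cardinality uniqueness result, Theorem \ref{gen-unique-dns-fact}, by using $\LS (\K)$-tameness to pull the putative difference between $p$ and $q$ down into a model of size $\mu := \|M_0\| = \|M_1\|$ (recall that a universal extension has the same cardinality as its base, and read ``does not split'' as ``does not $\mu$-split''). So first I would suppose toward a contradiction that $p \neq q$. By $\LS (\K)$-tameness there is a set $A \subseteq |M_2|$ with $|A| \le \LS (\K) \le \mu$ such that $p \rest A \neq q \rest A$.

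Next I would manufacture a model $N$ of size $\mu$ sitting between $M_1$ and $M_2$ and containing $A$. Applying the L\"owenheim--Skolem--Tarski axiom to $M_2$ and the set $|M_1| \cup A$ (which has size at most $\mu + \LS (\K) = \mu$) yields $N \lea M_2$ with $|M_1| \cup A \subseteq |N|$ and $\|N\| \le \mu$; since $M_1 \subseteq N$ we in fact get $\|N\| = \mu$, and the coherence axiom applied to $M_1 \subseteq N \lea M_2$ together with $M_1 \lea M_2$ gives $M_1 \lea N$. Because $A \subseteq |N|$ and $p \rest A \neq q \rest A$, we must have $p \rest N \neq q \rest N$ (otherwise restricting the common value to $A$ would contradict the choice of $A$).

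Now all three of $M_0, M_1, N$ lie in $\K_\mu$, with $M_0 \lea M_1 \lea N$ and $M_1$ universal over $M_0$. The types $p \rest N$ and $q \rest N$ in $\gS (N)$ do not $\mu$-split over $M_0$: any witness $N_1, N_2 \in \K_\mu$, $f : N_1 \cong_{M_0} N_2$ to $\mu$-splitting of $p \rest N$ over $M_0$ is, since $N \lea M_2$, also a witness to $\mu$-splitting of $p$ over $M_0$, against hypothesis (and symmetrically for $q$). The same monotonicity shows $p \rest M_1$ does not $\mu$-split over $M_0$, and by assumption $(p \rest N) \rest M_1 = p \rest M_1 = q \rest M_1 = (q \rest N) \rest M_1$. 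Thus $p \rest N$ and $q \rest N$ are two nonsplitting extensions to $N$ of the single type $p \rest M_1 \in \gS (M_1)$, so the uniqueness clause of Theorem \ref{gen-unique-dns-fact} forces $p \rest N = q \rest N$, contradicting the previous paragraph. Hence $p = q$.

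Two remarks on hypotheses and on where the real content lies. Theorem \ref{gen-unique-dns-fact} is stated with Galois-stability in $\mu$, but only its \emph{uniqueness} half is used, and that half needs merely amalgamation in $\mu$ (which follows from the global amalgamation assumed here) together with the universality of $M_1$ over $M_0$: one maps each $\mu$-sized intermediate $M_0 \lea N_1 \lea N$ into $M_1$ over $M_0$ and transfers the type across using invariance of nonsplitting under isomorphisms fixing $M_0$, so a nonsplitting type over $N$ is determined by its restriction to $M_1$. Consequently no stability hypothesis is required in the present statement. The one genuinely essential point is the one the surrounding discussion stresses: it is $\LS (\K)$-tameness, the strongest form, that renders the splitting parameter $\mu$ irrelevant, because it guarantees that a real difference between $p$ and $q$ is already visible on a submodel of size $\le \mu$, exactly where the fixed-cardinality theory applies; everything else is routine bookkeeping with monotonicity and coherence.
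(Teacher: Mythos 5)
Your overall strategy --- use $\LS(\K)$-tameness to localize the disagreement between $p$ and $q$ to something small, then exploit universality of $M_1$ over $M_0$ together with nonsplitting --- is exactly the right idea, and your side remark that only the uniqueness half of Theorem \ref{gen-unique-dns-fact} (which needs no stability) is relevant is correct. But there is a genuine gap at the very first step: you set $\mu := \|M_0\| = \|M_1\|$, justified by the parenthetical claim that a universal extension has the same cardinality as its base. That is not part of the hypothesis and is false in the situations the theorem is designed for: in Theorem \ref{stable-forking-def} the result is applied with $M_0 = M_0' \in \K_\mu$ and $M_1 = M_0$ a $\mu^+$-Galois-saturated model, so $\|M_1\| > \|M_0\|$; indeed the surrounding text advertises precisely a ``cardinal-free'' uniqueness statement. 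When $\|M_1\| > \|M_0\|$ your construction breaks down: the model $N$ you build must contain all of $M_1$, hence has size $\|M_1\|$, so $M_0$, $M_1$, $N$ do not all lie in a single $\K_\mu$ and Theorem \ref{gen-unique-dns-fact} (a fixed-cardinality statement, requiring $M_0 \in \K_\mu$ as well) no longer applies. There is no way to shrink $N$ back down while keeping $M_1 \lea N$, so the reduction genuinely fails rather than merely needing more care.

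The repair is to avoid ever packaging $M_1$ inside a $\mu$-sized model, and this is what the paper's (shorter) argument does: assuming $p \neq q$, tameness yields a small $M^- \lea M_2$ with $p \rest M^- \neq q \rest M^-$, which one may take to contain $M_0$, so $\|M^-\| = \|M_0\|$; universality of $M_1$ over $M_0$ gives $f \colon M^- \xrightarrow[M_0]{} M_1$; and non-splitting of $p$ and $q$ over $M_0$, applied to the pair $M^-$, $f[M^-]$ and the isomorphism $f$, gives $p \rest f[M^-] = f(p \rest M^-) \neq f(q \rest M^-) = q \rest f[M^-]$, contradicting $p \rest M_1 = q \rest M_1$ since $f[M^-] \lea M_1$. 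Note that this uses the definition of splitting directly instead of quoting Theorem \ref{gen-unique-dns-fact}, and it never needs $M_0$ and $M_1$ to have the same size. In the special case $\|M_0\| = \|M_1\|$ your argument is correct as written, so the issue is one of generality rather than of the underlying idea.
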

\begin{proof}[Proof sketch]
If $p \neq q$, then there is a small $M^- \lea M_2$ with $p \rest M^- \neq q \rest M^-$; Without loss of generality pick $M^-$ to contain $M_0$.  By universality, we can find $f: M^- \xrightarrow[M_0]{} M_1$.  By the nonsplitting, 
$$p \rest f(M^-) =  f(p \rest M^-) \neq f(q \rest M^-) = q \rest f(M^-)$$
Since $f(M^-) \lea M_1$, this contradicts the assumption they have equal restrictions. 
\end{proof}

Attempting to use splitting as an independence relation for $\K$ runs into the issue that several theorems require that the extension be \emph{universal} (such as the above theorem).  This can be mitigated by moving to the class of saturated enough models and looking at a localized version of splitting.

\begin{defin}\label{mu-forking-def} \
\begin{enumerate} 
	\item   Let $\K$ be an AEC with amalgamation. For $\mu > \LS (\K)$, let $\Ksatp{\mu}$ denote the class of $\mu$-Galois-saturated models in $\K_{\ge \mu}$.
	\item   Let $\K$ be an AEC with amalgamation and let $\mu \ge \LS (\K)$ be such that $\K$ is Galois-stable in $\mu$. For $M_0 \lea M$ both in $\Ksatp{\mu^+}$ and $p \in \gS (M)$, we say that \emph{$p$ does not $\mu$-fork over $M_0$} if there exists $M_0' \lea M_0$ with $M_0' \in \K_\mu$ such that $p$ does not $\mu$-split over $M_0'$ (see Definition \ref{splitting-def}). 
\end{enumerate}
\end{defin}

Note that, by the $\mu^+$-saturation of $M_0$, we have guaranteed that $M_0$ is a universal extension of $M_0'$.  This gives us the following result.

\begin{thm}\label{stable-forking-def}
  Let $\K$ be an AEC with amalgamation and let $\mu \ge \LS (\K)$ be such that $\K$ is Galois-stable in $\mu$ and $\K$ is $\mu$-tame. Let $\nf^\mu$ be the $\mu$-nonforking relation restricted to the class $\Ksatp{\mu^+}$.  Then
  \begin{enumerate}
  	\item $\nf^{\mu}$ is a 1-ary independence relation that further satisfies disjointness, existence, uniqueness, and transitivity {\emph when} all models are restricted to $\Ksatp{\mu^+}$ (in the precise language of Section \ref{indep-def}, this says that $(\Ksatp{\mu^+}, \nf^{\mu})$ is an independence relation with these properties).
	\item $\nf^{\mu}$ has set local character in $\Ksatp{\mu^+}$: Given $p \in \gS(M)$, there is $M_0 \in \Ksatp{\mu^+}$ such that $M_0 \lea M$ and $p$ does not $\mu$-fork over $M_0$.
	\item $\nf^{\mu}$ has a local extension property: If $M_0 \lea M$ are both Galois-saturated and $\|M_0\| = \|M\| \ge \mu^+$ and $p \in \gS (M_0)$, then there exists $q \in \gS (M)$ extending $p$ and not $\mu$-forking over $M_0$.
  \end{enumerate}

\end{thm}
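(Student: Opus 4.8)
The plan is to reduce each clause to the theory of $\mu$-splitting from Lemmas \ref{stabsplit} and \ref{ns-uniq-tame-fact}, using throughout the observation recorded after Definition \ref{mu-forking-def} that, since $M_0$ is $\mu^+$-saturated, $M_0$ is universal over every $M_0' \lea M_0$ in $\K_\mu$ (by Theorem \ref{mod-homog-sat}); thus whenever non-$\mu$-forking over $M_0$ is witnessed by such an $M_0'$ we are in the configuration $M_0' \lea M_0 \lea \cdots$ with $M_0$ universal over $M_0'$ needed to apply the splitting results. Invariance is immediate, and monotonicity and normality follow from the corresponding properties of $\mu$-splitting, using the elementary fact that non-$\mu$-splitting over $M_0'$ persists when $M_0'$ is enlarged inside the base to a larger member of $\K_\mu$ (an isomorphism fixing the larger model fixes $M_0'$). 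Existence is exactly the local character of splitting (Lemma \ref{stabsplit}): any $p \in \gS(M_0)$ fails to $\mu$-split over some $M_0' \lea M_0$ in $\K_\mu$. The same lemma yields set local character, part (2): take $M_0' \lea M$ in $\K_\mu$ with $p$ not $\mu$-splitting over $M_0'$, then enlarge $M_0'$ to a $\mu^+$-saturated $M_0 \lea M$ of size $\mu^+$, built inside $M$ using Galois-stability in $\mu$.

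For uniqueness, suppose $q_1, q_2 \in \gS(M)$ do not $\mu$-fork over $M_0$ and $q_1 \rest M_0 = q_2 \rest M_0$. Choosing witnesses $M_\ell' \lea M_0$ and absorbing them, by $\mu^+$-saturation, into a single $M_0' \lea M_0$ in $\K_\mu$, both $q_1$ and $q_2$ fail to $\mu$-split over $M_0'$; since $M_0$ is universal over $M_0'$, the cardinal-free uniqueness theorem \ref{ns-uniq-tame-fact} (applied at level $\mu$, with $M_0' \lea M_0 \lea M$) gives $q_1 = q_2$. This is the first essential appeal to $\mu$-tameness. Disjointness is a short separate argument: if some $a \in |M| \setminus |M_0|$ had non-$\mu$-forking type over $M_0$, embedding a size-$\mu$ submodel $N \ni a$ (with $M_0' \lea N$) back into $M_0$ over $M_0'$ and invoking non-$\mu$-splitting forces $a$ into $M_0$, a contradiction.

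The heart of the proof, which I expect to be the main obstacle, is the local extension property (part (3)); I would establish it before transitivity. Here $\|M_0\| = \|M\| =: \lambda$, and $M_0$ (saturated of size $\lambda$) is universal over $M_0'$ even for extensions of size $\lambda$, by resolving such an extension and embedding it into $M_0$ step by step via $\lambda$-model-homogeneity (Theorem \ref{mod-homog-sat}). Hence there is an $M_0'$-embedding $h \colon M \to M_0$, and I set $q := h^{-1}(p \rest h[M]) \in \gS(M)$, a genuine Galois type by Definition \ref{gtp-def}. I would then check, from non-$\mu$-splitting of $p$ over $M_0'$ and $\mu$-tameness alone, that $q$ does not $\mu$-split over $M_0'$ (given an $M_0'$-isomorphism between size-$\mu$ submodels of $M$, compose it with the relevant embeddings into $M_0$ and apply non-$\mu$-splitting of $p$) and that $q$ extends $p$ (on every $N \lea M_0$ in $\K_\mu$ one computes $q \rest N = p \rest N$, so $q \rest M_0 = p$ by $\mu$-tameness). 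The crux is this last point: the pullback a priori fixes only $M_0'$, not all of $M_0$, and it is precisely $\mu$-tameness together with non-$\mu$-splitting of $p$ that upgrades agreement over the small base $M_0'$ to agreement over the whole of $M_0$. This step fails without tameness and also explains the hypothesis $\|M_0\| = \|M\|$, which is what lets $M$ embed into $M_0$ over $M_0'$.

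Transitivity then follows by combining uniqueness and extension. Given $M_0 \lea M_1 \lea M_2$ with $p \rest M_1$ not $\mu$-forking over $M_0$ and $p := \gtp(a/M_2)$ not $\mu$-forking over $M_1$, let $q \in \gS(M_2)$ be a non-$\mu$-forking extension of $p \rest M_1$ over $M_0$; then $p$ and $q$ agree on $M_1$ and, after enlarging witnesses to a common $M_1' \lea M_1$ in $\K_\mu$, both fail to $\mu$-split over $M_1'$, so uniqueness (Theorem \ref{ns-uniq-tame-fact}) forces $p = q$, and $q$ does not $\mu$-fork over $M_0$ by construction. The only wrinkle is that the extension used here is to the possibly larger model $M_2 \gea M_1$; this is obtained by iterating the pullback construction of part (3) along a resolution of $M_2$ into saturated pieces and gluing the successive extensions with uniqueness — a careful but routine elaboration of the same idea.
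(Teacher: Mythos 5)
Your proposal is correct and follows essentially the same route as the paper's (much terser) proof sketch: translate the splitting toolkit --- local character (Lemma \ref{stabsplit}), the tame uniqueness theorem (Theorem \ref{ns-uniq-tame-fact}), and the pullback/extension argument behind Theorem \ref{gen-unique-dns-fact} --- into properties of $\mu$-nonforking over $\mu^+$-saturated bases, with transitivity obtained from extension plus uniqueness exactly as the paper indicates. The only local divergence is disjointness, which the paper deduces from the ``moreover'' (algebraicity-preservation) clause of Theorem \ref{gen-unique-dns-fact} while you argue it directly by mapping a small submodel containing the realization into $M_0$ over the witness; both work, and your handling of the genuine wrinkle in transitivity (extension to larger models via resolutions and coherent direct limits) is the standard fix the paper relies on elsewhere (cf.\ the proof of Theorem \ref{good-frame-transfer-2}).
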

\begin{proof}[Proof sketch]
  Tameness ensures that $\mu$-splitting and $\lambda$-splitting coincide when $\lambda \ge \mu$. The local extension property uses the extension property of splitting (see Theorem \ref{gen-unique-dns-fact}). Local character and uniqueness are also translations of the corresponding properties of splitting. Disjointness is a consequence of the moreover part in the extension property of splitting. Finally, transitivity is obtained by combining the extension and uniqueness properties of splitting.
\end{proof}

The second candidate for an independence relation, drawing from stable first-order theories, is a notion of coheir, which we call $<\kappa$-satisfiability.

\begin{defin}\label{kapcoheir-def}
  Let $M \lea N$ and $p \in \gS^{<\infty} (N)$.

  \begin{enumerate}
    \item We say that $p$ is a \emph{$<\kappa$-satisfiable over $M$} if for every $I \subseteq \ell (p)$ and $A \subseteq |N|$ both of size strictly less than $\kappa$, we have that $p^I \rest A$ is realized in $M$. 
    \item We say that $p$ is a \emph{$<\kappa$-heir over $M$} if for every $I \subseteq \ell (p)$  and every $A_0 \subseteq |M|$, $N_0 \lea N$, with $A_0 \subseteq |N_0|$ and $I, A_0, N_0$ all of size less than $\kappa$, there is some $f: N_0 \xrightarrow[A_0]{} M$ such that $$f (p^I \rest N_0) = p^I \rest f[N_0]$$
  \end{enumerate}
\end{defin}

$<\kappa$-satisfiable is also called $\kappa$-coheir.  As expected from first-order, these notions are dual\footnote{$A$ must be a model for the question to make sense.} and they are equivalent under the $\kappa$-order property of length $\kappa$.

$<\kappa$-satisfiability turns out to be an independence relation in the stable context.

\begin{theorem}\label{coheir-thm}
  Let $\K$ be an AEC and $\kappa > \LS(\K)$. Assume:
\begin{enumerate}
\item $\K$ has a monster model and is fully $<\kappa$-tame and -type short.
\item $\K$ does not have the $\kappa$-order property of length $\kappa$.
\end{enumerate}

Let $\dnf$ be the independence relation induced by $<\kappa$-satisfiability on the $\kappa$-Galois-saturated models of $\K$. Then $\dnf$ has disjointness, symmetry, local character, transitivity, and the $\kappa$-witness property. Thus \emph{if} $\dnf$ also has extension, then it is a stable independence relation on the $\kappa$-Galois-saturated models of $\K$.
\end{theorem}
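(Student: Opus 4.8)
The plan is to transport the classical theory of coheir in stable first-order theories into this setting via the Galois Morleyization. Fix a monster model $\sea$ for $\K$ and pass to its $<\kappa$-Galois Morleyization $\bigKp{\kappa}$, so that by Theorem \ref{morleyization-thm} (in its full tame-and-short form, which is exactly hypothesis (1)) a Galois type $\gtp(\bb/M;N)$ is determined by, and may be identified with, the quantifier-free $\Ll_{\kappa,\kappa}(\bigL)$-type of $\bb$ over $\widehat{M}$. Under this identification, ``$p$ is $<\kappa$-satisfiable over $M$'' in the sense of Definition \ref{kapcoheir-def} becomes precisely the statement that the associated syntactic type is $<\kappa$-satisfiable in $\widehat{M}$, and the $\kappa$-order property of length $\kappa$ of $\K$ corresponds to the (infinitary) order property for these formulas. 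Hypothesis (2) therefore says that the relevant ``stability theory inside $\sea$'' is stable, and each desired property of $\dnf$ is a translation of the corresponding statement about $<\kappa$-satisfiable types in a stable theory (as in \cite[Chapter V.A]{shelahaecbook2}). Working over $\kappa$-Galois-saturated models is what makes the condition ``every small restriction is realized in the base'' robust.

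The routine properties come essentially for free. Invariance, monotonicity, and normality are immediate from the definition. The $\kappa$-witness property is almost built into Definition \ref{kapcoheir-def}: $<\kappa$-satisfiability of $p = \gtp(\ba/B;N)$ over $M$ is by definition the conjunction of the conditions ``$p^{I}\rest A_0$ is realized in $M$'' as $I \subseteq \ell(p)$ and $A_0 \subseteq B$ range over sets of size $<\kappa$, so it holds as soon as it holds for all small $A_0$, $B_0$; full tameness and shortness guarantee that these small restrictions determine the type. Disjointness is direct: if $a \in A \cap B$ and $\gtp(a/B;N)$ is $<\kappa$-satisfiable over $M$, then its restriction to the singleton $\{a\} \subseteq B$ must be realized in $M$, and since that restriction encodes equality with $a$, this forces $a \in |M|$. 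Transitivity is the standard transitivity of $<\kappa$-satisfiability and translates directly.

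The substance is in symmetry and local character. For symmetry I would argue by contraposition inside the Galois Morleyization: if $\gtp(\ba/M\bb)$ is $<\kappa$-satisfiable over $M$ while $\gtp(\bb/M\ba)$ is not, then playing the failure of satisfiability on one side against its presence on the other lets one build a sequence $\seq{\ba_i \bb_i : i < \kappa}$ whose ordering is coded by a single small quantifier-free formula, i.e.\ a witness to the $\kappa$-order property of length $\kappa$, contradicting hypothesis (2). This is the infinitary analogue of the classical fact that coheir is symmetric in stable theories, and it is the main obstacle: one must run the alternating construction inside $\kappa$-saturated models, extract the indiscernible/Morley pattern using only quantifier-free $\Ll_{\kappa,\kappa}$-formulas, and control the index sets $I \subseteq \ell(p)$ so that a single small formula does the coding. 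Local character, $\slc{\alpha}(\dnf) < \infty$, is the translation of the local character of coheir in a stable theory: stability (the absence of the order property) bounds the number of Galois types, and a counting/tree argument then produces, for each $A$ of size $\le \alpha$ over a saturated $M$, a small $M_0 \lea M$ over which $\gtp(A/M)$ is $<\kappa$-satisfiable.

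Finally, for the concluding implication one also needs uniqueness, which holds by the stationarity of coheir under the no-order-property hypothesis (the translation of the first-order fact that a type over a model has a unique $<\kappa$-satisfiable extension): were two $<\kappa$-satisfiable extensions of a common type over $M_0$ to disagree, a small witness to the disagreement would, via tameness and shortness, again yield the order property. With uniqueness, local character, and the assumed extension property in hand, $\dnf$ meets Definition \ref{stable-indep} and is therefore a stable independence relation on the $\kappa$-Galois-saturated models of $\K$. I expect symmetry to be the crux, as it is the one place where hypothesis (2) is used in an essential, constructive way.
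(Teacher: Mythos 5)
Your proposal follows essentially the same route the paper intends: the survey gives no proof but points to the Galois Morleyization (Theorem \ref{morleyization-thm}) as the device for translating the first-order theory of coheir in stable theories into this setting, with hypothesis (2) playing the role of stability and symmetry obtained by contraposition against the order property, exactly as you describe. Your observation that uniqueness must also be supplied for the final ``stable independence relation'' conclusion is correct and matches the cited source, where uniqueness is indeed among the properties established.
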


If $\kappa = \beth_\kappa$, then it turns out that not having the $\kappa$-order property of length $\kappa$ is equivalent to not having the order property, which by Theorem \ref{stab-spectrum} is equivalent to stability. 

Note that the conclusion gives already that the AEC is stable. Similarly, the $<\kappa$-satisfiability relation analyzes a type by breaking it up into its $\kappa$-sized components, so the tameness and type shortness assumptions seem natural\footnote{Although it is open if they are necessary.}.  

Theorem \ref{coheir-thm} does not tell us if $<\kappa$-satisfiability has the extension property. At first glance, it seems to be a compactness result about Galois types. In fact:

\begin{thm}\label{coheir-ext-lc}
  Under the hypotheses of Theorem \ref{coheir-thm}, if $\kappa$ is a strongly compact cardinal, then $<\kappa$-satisfiability has the extension property.
\end{thm}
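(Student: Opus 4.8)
The plan is to realize a \emph{global coheir} as an ultrapower of the monster model, using a $\kappa$-complete ultrafilter extracted from strong compactness. Unravelling the extension property (Definition \ref{indep-props-def}) for the relation induced by $<\kappa$-satisfiability, we are given $M \lea N$ (both $\kappa$-Galois-saturated), a type $p \in \gS^{<\infty} (MB; N)$ that is $<\kappa$-satisfiable over $M$, and sets $B \subseteq C \subseteq |N|$; we must produce $N' \gea N$ and an extension $q \in \gS^{<\infty} (MC; N')$ still $<\kappa$-satisfiable over $M$. Working inside a monster model $\sea$, I would first pass to the $<\kappa$-Galois Morleyization: by full tameness and shortness and Theorem \ref{morleyization-thm}, the Galois type $p$ is coded by the quantifier-free $\Ll_{\kappa, \kappa} (\bigL)$-type $\hat{p}$ of a realization $\ba$ over $\widehat{B}$, and $<\kappa$-satisfiability of $p$ over $M$ translates into the statement that \emph{every} formula of $\hat{p}$ is realized in $\widehat{M}$ (a single quantifier-free $\Ll_{\kappa, \kappa}$-formula already packages a $<\kappa$-conjunction, so formula-by-formula realizability is exactly the realizability of every $<\kappa$-sized restriction).

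Next I would build the ultrafilter. Let $I := \dom{\ba}$ and put $X := \fct{I}{|\widehat{M}|}$. For a quantifier-free $\Ll_{\kappa, \kappa}$-formula $\phi (\bx_{I_0}; \bc)$ with $I_0 \subseteq I$ of size $<\kappa$ and $\bc$ from $C$, set $[\phi] := \{\bar{m} \in X : \bigN \models \phi (\bar{m} \rest I_0; \bc)\}$. The sets $[\phi]$ for $\phi \in \hat{p}$ (so with $\bc$ from $B$) generate a filter $F$ on $X$: because a $<\kappa$-conjunction of formulas of $\hat{p}$ is again a single formula of $\hat{p}$ (using regularity of $\kappa$) and hence realized in $\widehat{M}$, any $<\kappa$ of these generators have nonempty intersection, so the closure of $F$ under $<\kappa$-intersections and supersets is a proper $\kappa$-complete filter. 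Since $\kappa$ is strongly compact, $F$ extends to a $\kappa$-complete ultrafilter $U$ on $X$.

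Then I would form the ultrapower $\sea_U := \sea^X / U$. Because $U$ is $\kappa$-complete, $\kappa$-complete ultraproducts preserve membership in $\K$ (Example \ref{examples-subsec}.(\ref{tamelc}), via the presentation theorem), so $\sea_U$ is, over $\sea$, an extension of $\sea$ in $\K$ with the diagonal map $d$ a $\K$-embedding; moreover $\kappa$-completeness yields \L o\'s' theorem for quantifier-free $\Ll_{\kappa, \kappa}$-formulas of $\bigL$. Let $\bar{e}$ be the $U$-class of the identity family $\seq{\bar{m} : \bar{m} \in X}$. For every quantifier-free $\phi (\bx_{I_0}; \bc)$ with $\bc$ from $C$, \L o\'s' theorem gives $\sea_U \models \phi (\bar{e} \rest I_0; d (\bc))$ if and only if $[\phi] \in U$. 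Transporting $\gtp (\bar{e} / d[C]; \sea_U)$ back into the monster (the universality of $\sea$ lets us embed $\sea_U$ over $\sea$) and letting $N' \gea N$ be a $\kappa$-Galois-saturated model containing the image of $\bar{e}$, we obtain $q := \gtp (\bar{e} / C; N')$. This $q$ extends $p$, since $[\phi] \in F \subseteq U$ for $\phi \in \hat{p}$, and $q$ is $<\kappa$-satisfiable over $M$, since $[\phi] \in U$ forces $[\phi] \neq \emptyset$, i.e.\ $\phi$ is realized in $\widehat{M}$; this is exactly the required extension.

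The main obstacle is the transfer of the ultrapower machinery to the abstract setting: verifying that the $\kappa$-complete ultrapower of the monster is again a model of $\K$ with the diagonal embedding strong, and that \L o\'s' theorem holds for the \emph{infinitary} quantifier-free formulas of $\bigL$. Both rest on the $\kappa$-completeness of $U$ together with preservation of $\K$ under $\kappa$-complete ultraproducts; the delicate point is that $\bigL$ has predicates of infinite arity, so one must check that the presentation-theorem argument still applies and that \L o\'s' theorem survives for the $<\kappa$-ary connectives and the quantifier-free formulas actually used. Everything else---the $<\kappa$-intersection property of $F$ and the translation of $<\kappa$-satisfiability into formula-wise satisfiability in $\widehat{M}$---is bookkeeping once the Galois Morleyization of Theorem \ref{morleyization-thm} is in hand.
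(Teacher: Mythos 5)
Your proposal is correct and is essentially the standard proof of this result (the survey itself defers the proof to \cite[Theorem 8.2]{bg-v9}): extract a $\kappa$-complete filter from the realizations in $M$ of the small approximations of the type, extend it to a $\kappa$-complete ultrafilter by strong compactness, take the ultrapower of the monster model, and invoke \L o\'s' theorem for AECs with $\kappa$-complete ultrafilters \cite{tamelc-jsl} to see that the diagonal embedding is a $\K$-embedding and that the class of the identity function realizes the desired extension. Routing the argument through the Galois Morleyization is only a repackaging: the \L o\'s statement you flag for the infinite-arity predicates $R_p$ of $\bigL$ is exactly \L o\'s' theorem for Galois types of length less than $\kappa$, which is the content of Boney's large-cardinal tameness results, so your flagged obstacle is already known to go through.
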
 

Extension also holds in some nonelementary classes (such as averageable classes) and we will see that it ``almost'' follows from superstability (see Section \ref{global-indep-sec}).

The existence of a reasonable independence notion for stable classes can be combined with averages to obtain a result on chains of Galois-saturated models:

\begin{thm}\label{chainsat-stable}
  Let $\K$ be a $\LS (\K)$-tame AEC with amalgamation. If $\K$ is Galois-stable in some $\mu \ge \LS (\K)$, then there exists $\chi < H_1$ satisfying the following property:

  If $\lambda \ge \chi$ is such that $\K$ is Galois-stable in $\mu$ for unboundedly many $\mu < \lambda$, then whenever $\seq{M_i : i < \delta}$ is a chain of $\lambda$-Galois-saturated models and $\cf{\delta} \ge \chi$, we have that $\bigcup_{i < \delta} M_i$ is $\lambda$-Galois-saturated. 
\end{thm}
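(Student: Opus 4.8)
The plan is to prove that $M:=\bigcup_{i<\delta}M_i$ realizes every Galois type over a subset of size $<\lambda$, carrying out the argument through the theory of averages made available by the Galois Morleyization (Theorem \ref{morleyization-thm}). First I would fix $\chi$. Since $\K$ is Galois-stable somewhere, Theorem \ref{stab-spectrum} gives stability in unboundedly many cardinals below $H_1$, and the local character of nonsplitting (Lemma \ref{stabsplit}) then bounds below $H_1$ the size of a submodel over which a given type does not split. I would take $\chi<H_1$ to be this locality cardinal, enlarged if necessary so that averages of Morley sequences over small sets are complete and the cardinal arithmetic needed by the average machinery goes through.

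Now let $\cf{\delta}\ge\chi$; I may assume $\delta$ is regular. Fix $A\subseteq|M|$ with $|A|<\lambda$ and $p\in\gS(A;N)$; the goal is to realize $p$ in $M$. The easy case is $|A|<\delta$: then $A$ lies inside a single $M_i$, which is $\lambda$-Galois-saturated and hence realizes $p$. So assume $\delta\le|A|<\lambda$, i.e.\ $A$ genuinely meets cofinally many $M_i$. Using local character I would pick a base model $M'$ of size $<\chi$ over which $p$ does not split and, after replacing $p$ by its nonsplitting extension to $A\cup|M'|$ (costing fewer than $\chi$ new points), arrange $M'\subseteq A\subseteq M$; since $\|M'\|<\chi\le\delta$, the model $M'$ lands inside some $M_{i_0}$. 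Because the $M_i$ are $\lambda$-Galois-saturated, I can then build \emph{inside} $M$ a Morley sequence $\BI$ with respect to nonsplitting over $M'$: at each step fewer than $\delta$ points have been used, so they sit inside a single $M_j$, and the required nonsplitting extension is realized there. By construction $\BI$ is based on $p\rest M'$, so $\Av_\chi(\BI/A)$ is \emph{the} nonsplitting extension of $p\rest M'$ to $A$; by the cardinal-free uniqueness of nonsplitting extensions in tame classes (Theorem \ref{ns-uniq-tame-fact}) this coincides with $p$. Thus $p$ is exhibited as the average over $A$ of a sequence lying entirely inside $M$, and realizing $p$ reduces to showing that such an internally built average is itself realized inside the union.

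The main obstacle is exactly this last coordination: keeping the base $M'$ and the whole Morley sequence $\BI$ inside $M$ while also making $\BI$ long enough that its average over $A$ is realized by one of its own members. The tension is that realizing a \emph{complete} type over $A$ by a member of $\BI$ wants $|\BI|$ to exceed the number of small restrictions of $p$ over $A$, which scales with $|A|\ge\delta$, whereas keeping $\BI$ coherently inside the chain wants $|\BI|\le\delta$. This is where the nonsplitting base of size $<\chi$ does the real work: one reduces realization of $p$ to realization of a type pinned down by the \emph{small} model $M'$, so that the length of $\BI$ may be taken to depend only on $\chi$ (hence $\le\delta$) rather than on $|A|$, while tameness guarantees that this small-base type still determines $p$ on all of $A$. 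Verifying the completeness and realization properties of the averages, and checking that the construction of $\BI$ never leaves $M$, are the technical heart of the argument; the remaining steps are bookkeeping with local character and uniqueness.
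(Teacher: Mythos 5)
Your overall strategy --- reduce to a small nonsplitting base via local character, locate that base in a single $M_{i_0}$ using $\cf{\delta}\ge\chi$, build a Morley sequence with respect to nonsplitting, and identify $p$ with the average of that sequence --- is exactly the Harnik-style argument via averages that the paper's proof sketch prescribes, so the skeleton is right. You have also correctly isolated the crux: $\Av_\chi(\BI/A)$ is only guaranteed to be realized by a \emph{member} of $\BI$ when $\BI$ is long relative to $A$, since each small restriction of the average excludes up to $\chi$ elements of $\BI$ and the number of relevant small restrictions grows with $|A|$, so the exceptional sets can exhaust any sequence of length at most $|A|$.

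The gap is in your resolution of that crux. Taking $|\BI|$ to depend only on $\chi$ and ``reducing realization of $p$ to realization of a type pinned down by $M'$'' does not work: an element realizing $p\rest M'$ (or even $p$ restricted to a universal extension of $M'$) need not realize $p$, because the uniqueness of nonsplitting extensions (Theorem \ref{ns-uniq-tame-fact}) only identifies two types over the full domain that are \emph{both already known} not to split over $M'$; nothing forces the type over $A$ of such an element to be a nonsplitting extension. Tameness gives uniqueness of the candidate type, not its realization --- that would require a form of compactness. The correct resolution is the opposite of what you propose: there is no need to keep $\BI$ short or to thread it through the chain. Once the base $M'$ (of size $<\chi$; locating it is the only place $\cf{\delta}\ge\chi$ is used) sits inside a single $M_{i_0}$, the entire Morley sequence, of length roughly $(|A|+\chi)^+\le\lambda$, can be built \emph{inside that one model}: at every stage the construction only asks $M_{i_0}$ to realize a type over a set of size $<\lambda$, which its $\lambda$-Galois-saturation provides. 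Then $\Av_\chi(\BI/A)=p$ by uniqueness of nonsplitting extensions, and the sequence is long enough that one of its own members realizes the average; that member lies in $M_{i_0}\subseteq\bigcup_{i<\delta}M_i$. The remaining technical work --- verifying completeness of averages and keeping the count of small restrictions over $A$ below $\lambda$ --- is where the hypothesis of stability in unboundedly many $\mu<\lambda$ and the precise choice of $\chi$ enter.
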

\begin{proof}[Proof sketch]
First note that Theorem \ref{stab-transfer}.(2) and tameness imply that $\K$ is Galois-stable in stationary many cardinals.  Then, develop enough of the theory of averages (and also investigate their relationship with forking) to be able to imitate Harnik's first-order proof \cite{harniksat}.
\end{proof}

We will see that this can be vastly improved in the superstable case: the hypothesis that $\K$ be Galois-stable in $\mu$ for unboundedly many $\mu < \lambda$ can be removed and the Hanf number improved. Moreover, there is a proof of a version of the above theorem using only independence calculus and not relying on averages. Nevertheless, the use of averages has several other applications (for example getting solvability from superstability, see Theorem \ref{gvsuperstab}).

\subsection{Superstability}  As noted at the beginning of Section \ref{class-thy-sec}, Shelah has famously stated that superstability in AECs suffers from ``schizophrenia''.  However superstability is much better behaved in tame AECs than in general. Recall Definition \ref{ss-def} which gave a definition of superstability in a single cardinal using local character of splitting. Recall also that there are several other local candidates such as the uniqueness of limit models (Definition \ref{lim-def}) and the existence of a good frame (Section \ref{frame-sec} and Definition \ref{good-frame-def}). Theorem \ref{chainsat-stable} suggests another definition saying that the union of a chain of $\mu$-Galois-saturated models is $\mu$-Galois-saturated. As noted before, it is unclear whether these definitions are equivalent cardinal by cardinal, that is, $\mu$-superstability and $\lambda$-superstability for $\mu \neq \lambda$ are potentially different notions and it is not easy to combine them. With tameness, this difficulty disappears:

\begin{thm}\label{ss-implies-all}
  Assume that $\K$ is $\mu$-superstable, $\mu$-tame, and has amalgamation. Then for every $\lambda > \mu$:

  \begin{enumerate}
    \item\label{ss-1} $\K$ is $\lambda$-superstable.
    \item\label{ss-2} If $\seq{M_i : i < \delta}$ is an increasing chain of $\lambda$-Galois-saturated models, then $\bigcup_{i < \delta} M_i$ is $\lambda$-Galois-saturated.
    \item\label{ss-3} There is a good $\lambda$-frame with underlying class $\Ksatp{\lambda}$.
    \item\label{ss-4} $\K$ has uniqueness of limit models in $\lambda$. In fact, $\K$ also has uniqueness of limit models in $\mu$.
  \end{enumerate}
\end{thm}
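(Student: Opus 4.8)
The plan is to let the $\mu$-nonforking relation of Definition~\ref{mu-forking-def} do all the work and to bootstrap everything from level $\mu$ up to an arbitrary $\lambda>\mu$. By Theorem~\ref{stable-forking-def}, as soon as $\K$ is Galois-stable in $\mu$ and $\mu$-tame, the relation $\nf^\mu$ restricted to $\Ksatp{\mu^+}$ already satisfies disjointness, existence, uniqueness, transitivity, set local character, and the local extension property. So I would obtain the four conclusions in the order: first transfer Galois-stability and the no-long-splitting-chains clause to every $\lambda>\mu$ (giving~(\ref{ss-1})); then establish symmetry; and finally read off the good frame~(\ref{ss-3}), the behaviour of unions of saturated chains~(\ref{ss-2}), and the uniqueness of limit models~(\ref{ss-4}) from these.

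First I would transfer superstability. The key mechanism, as in the discussion preceding Theorem~\ref{ns-uniq-tame-fact}, is that $\mu$-tameness collapses the cardinal parameter in splitting: a type cannot $\lambda^+$-split over a $\mu$-sized $M$ without already $\mu$-splitting over it, so $\mu$-splitting and $\lambda$-splitting agree over small bases. To get Galois-stability in a given $\lambda>\mu$, I would use that by the set local character of Theorem~\ref{stable-forking-def} every $p\in\gS(M)$ over a $\lambda$-sized saturated $M$ does not $\mu$-fork over some $\mu$-sized $M_0\lea M$, and that by uniqueness $p$ is the unique nonforking extension of $p\rest M_0$; a counting argument (seeded by stability in $\mu$) then bounds $|\gS(M)|$ by $\lambda$. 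Theorem~\ref{stab-transfer} handles the successor cardinals and those with $\lambda=\lambda^\mu$, while the remaining (limit) cardinals are covered by this $\aleph_0$-scale local character. The no-long-splitting-chains clause of Definition~\ref{ss-def}.(\ref{split assm}) at level $\lambda$ is then recovered by reflecting any putative long $\lambda$-splitting chain down to a $\mu$-splitting chain via tameness, contradicting $\mu$-superstability. This yields~(\ref{ss-1}).

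The hard part will be symmetry, since none of the imported properties supplies it. I would prove that $\mu$-splitting (equivalently $\nf^\mu$ on saturated models) has VanDieren's symmetry property by showing that a failure of symmetry produces a configuration incompatible with superstability: either it iterates into a long splitting chain, contradicting Definition~\ref{ss-def}.(\ref{split assm}), or it codes the order property, contradicting stability through Theorem~\ref{stab-spectrum}. Tameness is exactly what lets one move the witnessing parameters between cardinals so the contradiction can be arranged at a convenient size; once symmetry holds at $\mu$, it transfers to every $\lambda$ because splitting is cardinal-independent. I expect this to be the most delicate step.

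With symmetry in hand the rest assembles quickly. For~(\ref{ss-3}), the relation $\nf^\mu$ considered on the $\lambda$-sized members of $\Ksatp{\lambda}$ inherits disjointness, existence, uniqueness, and transitivity from Theorem~\ref{stable-forking-def}; extension among these models follows from its local extension property (all the models having equal size $\lambda\ge\mu^+$); it gains symmetry from the previous step and has $\clc{1}=\aleph_0$ from the transferred local character, while the structural requirements (amalgamation, joint embedding, no maximal models, and stability throughout) come from~(\ref{ss-1}) and~(\ref{ss-2}); hence $\nf^\mu$ is a good $\lambda$-frame on $\Ksatp{\lambda}$. Conclusion~(\ref{ss-2}) is the superstable improvement of Theorem~\ref{chainsat-stable}: its ``stable in unboundedly many $\mu<\lambda$'' hypothesis is now automatic by~(\ref{ss-1}), and its cofinality restriction is removed using the stronger $\aleph_0$-local character, so that any type over a small subset of the union does not fork over some initial segment and is realized there. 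Finally,~(\ref{ss-4}) is immediate from Theorem~\ref{uq-limit-symmetry}: apply it with the symmetry of $\lambda$-splitting for uniqueness of limit models in $\lambda$, and with the symmetry of $\mu$-splitting for uniqueness in $\mu$.
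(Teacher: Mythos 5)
Your overall architecture matches the paper's: build the $\mu$-nonforking relation of Definition \ref{mu-forking-def} on sufficiently saturated models, observe (via the proof of Theorem \ref{stable-forking-def}) that it has every good-frame property except symmetry, use it to transfer no-long-splitting-chains and stability to get (\ref{ss-1}), prove symmetry by showing its failure yields the order property, and then feed symmetry of splitting into Theorem \ref{uq-limit-symmetry} for (\ref{ss-4}). That is exactly the paper's plan, and you correctly identify symmetry as the delicate step and correctly note that (\ref{ss-3}) cannot be closed off until (\ref{ss-2}) certifies that $\Ksatp{\lambda}$ is closed under unions of chains.

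The one place where you genuinely diverge is (\ref{ss-2}), and your route there has a gap. You propose a direct Harnik-style argument: given $p\in\gS(A;N)$ with $A\subseteq|M_\delta|$ small, extend $p$ over $M_\delta$, use $\clc{1}=\aleph_0$ to find $i<\delta$ over which the extension does not fork, and conclude it ``is realized there.'' That last clause is precisely the hard point: knowing that $q\supseteq p$ does not fork over $M_i$ does not by itself produce a realization of $p$ inside $M_\delta$, since $A\not\subseteq|M_i|$ and uniqueness only pins down $q$ rather than exhibiting an element realizing it. In the literature this realization step is exactly where either the theory of averages (as in Theorem \ref{chainsat-stable}) or the symmetry/limit-model machinery is required. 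The paper's sketch sidesteps this by reversing your dependency: it first gets uniqueness of limit models in $\lambda$ from symmetry (your step for (\ref{ss-4})), and then derives (\ref{ss-2}) \emph{from} (\ref{ss-4}), using that $(\lambda,\delta)$-limit models over a common base are isomorphic and that suitably long limit models are Galois-saturated. So you should either replace your local-character argument for (\ref{ss-2}) by the implication (\ref{ss-4})$\Rightarrow$(\ref{ss-2}), or supply the missing realization argument (which will end up re-proving a form of that implication). A smaller caveat: your counting argument for stability in $\lambda$ as stated bounds $|\gS(M)|$ by (number of $\mu$-sized bases)$\times\mu$, and the number of bases need not be $\le\lambda$; the paper instead derives stability in $\lambda$ from no-long-splitting-chains plus uniqueness via a resolution/tree argument, which is what you should cite or reproduce.
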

\begin{proof}[Proof sketch]
  Fix $\lambda > \mu$. We can first prove an approximation to (\ref{ss-3}) by defining forking as in Definition \ref{mu-forking-def} and following the proof of Theorem \ref{stable-forking-def}. We obtain an independence relation $\is$ on 1-types whose underlying class is $\Ksatp{\lambda}$ (at that point we do not yet know yet if it is an AEC), and which satisfies all the properties from the definition of a good frame\footnote{Note that tameness was crucial to obtain the uniqueness property.} (including the structural properties on $\K$) except perhaps symmetry.

  Still we can use this to prove that $\K$ satisfies (\ref{split assm}) in Definition \ref{ss-def}. Using just this together with uniqueness, we can show that $\K$ is Galois-stable in $\lambda$. Joint embedding follows from amalgamation and no maximal models holds by a variation on a part of the proof of Theorem \ref{frame-transfer}. Therefore (\ref{ss-1}) holds: $\K$ is $\lambda$-superstable. We can prove the symmetry property of the good $\lambda$-frame by proving that a failure of it implies the order property. This also give the symmetry property for splitting, and hence by Theorem \ref{uq-limit-symmetry} the condition (\ref{ss-4}), uniqueness of limit models in $\lambda$, holds. Uniqueness of limit models can in turn be used to obtain (\ref{ss-2}), hence the underlying class of $\is$ is really an AEC so (\ref{ss-3}) holds.
\end{proof}

Strikingly, a converse to Theorem \ref{ss-implies-all} holds. That is, several definitions of superstability are eventually equivalent in the tame framework:

\begin{theorem}\label{gvsuperstab}
Let $\K$ be a tame AEC with a monster model and assume that $\K$ is Galois-stable in unboundedly many cardinals.  The following are equivalent:
\begin{enumerate}
	\item For all high enough $\lambda$, the union of a chain of $\lambda$-Galois-saturated models is $\lambda$-Galois-saturated.
	\item \label{ulm} For all high enough $\lambda$, $\K$ has uniqueness of limit models in $\lambda$.
	\item For all high enough $\lambda$, $\K$ has a superlimit model of size $\lambda$.
	\item\label{solvable-cond} There is $\theta$ such that, for all high enough $\lambda$, $\K$ is $(\lambda, \theta)$-solvable.
	\item For all high enough $\lambda$, $\K$ is $\lambda$-superstable.
	\item For all high enough $\lambda$, there is $\kappa = \kappa_\lambda \leq \lambda$ such that there is a good $\lambda$-frame on $\K_\lambda^{\kappa\text{-sat}}$.
\end{enumerate}
Any of these equivalent statements also implies that $\K$ is Galois-stable in all high enough $\lambda$.
\end{theorem}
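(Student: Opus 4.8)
The plan is to prove the six conditions equivalent by organizing them around $\lambda$-superstability (condition (5)) as a central hub, and then closing the remaining gaps with a short cycle. The forward direction---superstability implies the others---is almost entirely packaged in Theorem \ref{ss-implies-all}: once $\K$ is $\mu$-superstable for a single $\mu$, the standing hypotheses (a monster model, hence amalgamation, together with tameness) upgrade this to $\lambda$-superstability, saturation of unions of chains of $\lambda$-Galois-saturated models, a good $\lambda$-frame on $\Ksatp{\lambda}$, and uniqueness of limit models, for every $\lambda > \mu$. This immediately yields that (5) implies (1), (2), and (6) on a tail. For the superlimit condition (3), I would observe that saturation of unions of saturated chains (condition (1)), combined with the uniqueness and universality of the saturated model, makes the saturated model of size $\lambda$ a superlimit. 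For solvability (4), I would build Ehrenfeucht-Mostowski models over a fixed $\theta$-sized template and use the stability spectrum (Theorem \ref{stab-spectrum}) together with the theory of averages to show the resulting EM-models are saturated, which is exactly what $(\lambda, \theta)$-solvability demands.

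For the converses, the strategy is to funnel every condition back to superstability by extracting the local character of $\mu$-splitting, i.e.\ condition (\ref{split assm}) of Definition \ref{ss-def}. The cleanest route is a cycle such as (4) $\Rightarrow$ (5) $\Rightarrow$ (2) $\Rightarrow$ (1) $\Rightarrow$ (3) $\Rightarrow$ (4), with (6) inserted at the natural place, since a good frame carries local character by definition and so sits comfortably between superstability and the frame-based arguments. The implication (4) $\Rightarrow$ (5) would use that a solvable class comes equipped with a dense system of superlimit models from which a local character bound for splitting can be read off. The implication (2) $\Rightarrow$ (1) I would route through VanDieren's symmetry property: by Theorem \ref{uq-limit-symmetry} uniqueness of limit models is tightly linked to symmetry of splitting, and symmetry is precisely the ingredient needed to prove that unions of chains of saturated models remain saturated.

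The main obstacle will be the implications that do not mention splitting at all---principally that (1) and (2) each imply superstability. Here I would argue contrapositively: if $\K$ fails to be $\mu$-superstable, there is an increasing continuous chain $\seq{M_i : i \le \delta}$ with $M_{i+1}$ universal over $M_i$ and a type $p \in \gS(M_\delta)$ that $\mu$-splits over every $M_i$. The delicate part is converting such a splitting chain into either a chain of saturated models whose union fails to be saturated (contradicting (1)) or a pair of non-isomorphic limit models (contradicting (2)). This conversion is subtle because it requires manufacturing the requisite saturation out of the splitting witnesses while keeping the splitting cofinal, and it is exactly where tameness is indispensable: the tameness-driven coincidence of $\mu$-splitting and $\lambda$-splitting (as in the discussion preceding Theorem \ref{ns-uniq-tame-fact}) is what lets one transport the splitting witnesses between cardinals without losing control of the chain.

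Finally, the closing clause---that any of the equivalent statements implies Galois-stability in all high enough $\lambda$---drops out once superstability is established, since $\lambda$-superstability includes Galois-stability in $\lambda$ by definition, and Theorem \ref{ss-implies-all}.(\ref{ss-1}) propagates superstability, and hence stability, upward along the tail.
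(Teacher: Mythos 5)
Your forward direction is fine and matches the paper: Theorem \ref{ss-implies-all} packages (5) $\Rightarrow$ (1), (2), (6), the saturated model becomes superlimit once unions of saturated chains are saturated, and solvability is obtained from EM models via averages. The genuine gap is in the converses, which you correctly identify as the main obstacle but then propose to handle by the wrong method. You argue contrapositively: from a failure of superstability extract a long splitting chain and convert it into a non-saturated union of saturated models or a pair of non-isomorphic limit models. That conversion is not merely ``delicate''---it is a nonstructure construction of exactly the kind the paper flags as undeveloped (its conclusion explicitly asks whether unsuperstability implies many models), and no argument is supplied for it. The paper's route (Lemma \ref{tame-superstab}) avoids this entirely by going \emph{direct}: assuming (1), one reduces the local character condition of Definition \ref{ss-def}.(\ref{split assm}) by a back-and-forth argument to chains of Galois-saturated models, uses hypothesis (1) to ensure the limit stages and the union are saturated, shows by a standard pigeonhole argument that the type is $<\kappa$-satisfiable in some $M_i$, and then uses tameness (via uniqueness of $<\kappa$-satisfiability) to conclude non-splitting over $M_i$. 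No counterexample chain is ever built; the bridge from (1) to splitting is $<\kappa$-satisfiability over saturated models, an ingredient absent from your plan.

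A secondary problem is your implication (2) $\Rightarrow$ (1) via Theorem \ref{uq-limit-symmetry}: that theorem runs from symmetry (plus superstability) to uniqueness of limit models, not the reverse, and invoking any version of it presupposes the superstability you are trying to establish, so as written the step is circular. The standard way to exploit (2) is to note that uniqueness forces the $(\lambda,\omega)$-limit model to be isomorphic to the saturated $(\lambda,\lambda)$-limit, which supplies the saturated chains needed to run the same direct $<\kappa$-satisfiability argument as above.
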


Note that the ``high enough'' threshold can potentially vary from item to item.  Also, note that the stability assumption in the hypothesis is not too important: in several cases, it follows from the assumption and, in others (such as the uniqueness of limit models), it is included to ensure that the condition is not vacuous. Finally, if $\K$ is $\LS (\K)$-tame, we can add in each of that $\lambda < H_1$ in each of the conditions (except in (\ref{solvable-cond}) where we can say that $\theta < H_1$).

Superlimit models and solvability both capture the notion of the AEC $\K$ having a ``categorical core'', a sub-AEC $\K_0$ that is categorical in some $\kappa$.  In the case of superlimits, $M \in \K_\kappa$ is superlimit if and only if $M$ is universal\footnote{That is, every model of size $\kappa$ embeds into $M$.}  and the class of models isomorphic to $M$ generates a nontrivial AEC. That is, the class: 
$$\{N \in \K_{\ge \kappa} \mid \forall N_0 \in P_{\kappa^+}^* (N) \exists  N_1 \in P_{\kappa^+}^* N : N_0 \lea N_1 \land N_1 \cong M\}$$
 is an AEC with a model of size $\kappa^+$\footnote{An equivalent definition: $M \in \K_{\kappa}$ is superlimit if and only if it is universal, has a proper extension isomorphic to it, and for any limit $\delta < \kappa^+$, and any increasing continuous chain $\seq{M_i : i \le \delta}$, if $M_i \cong M$ for all $i < \delta$, then $M_\delta \cong M$.}. $(\lambda, \kappa)$-solvability further assumes that this superlimit is isomorphic to $\EM_{L(\K)}(I, \Phi)$ for some proper $\Phi$ of size $\kappa$ and \emph{any} linear order $I$ of size $\lambda$.

Note that although we did not mention them in Section \ref{primer-no-tameness}, superlimits and especially solvable AECs play a large role in the study of superstability in general AECs (see the historical remarks).

The proof that superstability implies solvability relies on a characterization of Galois-saturated models using averages (essentially, a model $M$ is Galois-saturated if and only if for every type $p \in \gS (M)$, there is a long-enough Morley sequence $\BI$ inside $M$ whose average is $p$). We give the idea of the proof that a union of Galois-saturated models being Galois-saturated implies superstability. This can also be used to derive superstability from categoricity in the tame framework (without using the much harder proof of the Shelah-Villaveces Theorem \ref{shvi}). 

\begin{lem}\label{tame-superstab}
  Let $\K$ be an AEC with a monster model. Assume that $\K$ is $\LS (\K)$-tame and let $\kappa = \beth_\kappa > \LS (\K)$ be such that $\K$ is Galois-stable in $\kappa$. Assume that for all $\lambda \ge \kappa$ and all limit $\delta$, if $\seq{M_i : i < \delta}$ is an increasing chain of $\lambda$-Galois-saturated models, then $\bigcup_{i < \delta} M_i$ is $\lambda$-Galois-saturated. 

  Then $\K$ is $\kappa^+$-superstable.
\end{lem}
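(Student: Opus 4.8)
The plan is to verify the four clauses of Definition \ref{ss-def} for $\mu = \kappa^+$. Clauses (1)--(3) are cheap: $\kappa^+ \ge \LS (\K)$ is immediate, the existence of a monster model gives amalgamation, joint embedding and no maximal models in every cardinal (in particular in $\kappa^+$), and Galois-stability in $\kappa^+$ follows from Galois-stability in $\kappa$, $\LS (\K)$-tameness (which yields $\kappa$-tameness), and Theorem \ref{stab-transfer}.(1). So the whole content is clause (\ref{split assm}): along any increasing continuous $\seq{M_i : i \le \delta}$ in $\K_{\kappa^+}$ with $M_{i+1}$ universal over $M_i$, and any $p \in \gS (M_\delta)$, some initial segment must stop the splitting.

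First I would record two soft facts that let me work with a base of size $\kappa$ rather than $\kappa^+$. Since $\K$ is $\LS (\K)$-tame, $\kappa$-splitting and $\kappa^+$-splitting agree over a fixed base in $\K_\kappa$; concretely, if $M_0' \in \K_\kappa$ and $p$ does not $\kappa$-split over $M_0'$, then $p$ does not $\kappa^+$-split over $M_0'$. Moreover non-splitting is monotone in the base: if $M_0' \lea M'$ and $p$ does not $\kappa^+$-split over $M_0'$, then $p$ does not $\kappa^+$-split over $M'$ (any witness over $M'$ is a witness over $M_0'$, as an isomorphism fixing $M'$ fixes $M_0'$). Now, given the chain and $p \in \gS (M_\delta)$, Galois-stability in $\kappa$ together with Lemma \ref{stabsplit} produces $M_0' \in \K_\kappa$ with $M_0' \lea M_\delta$ over which $p$ does not $\kappa$-split.

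The dichotomy is on $\cf{\delta}$. If $\cf{\delta} > \kappa$, the argument is free and uses none of the saturation hypothesis: the set $|M_0'|$ has size $\kappa < \cf{\delta}$, hence is bounded, so $|M_0'| \subseteq |M_i|$ for some $i < \delta$; by the coherence axiom $M_0' \lea M_i$, and the two soft facts give that $p$ does not $\kappa^+$-split over $M_i$, as desired. (In particular, stability and tameness alone already yield local character of splitting along all chains of cofinality above $\kappa$.) The remaining, genuinely harder, case is $\cf{\delta} \le \kappa$ --- and after passing to a cofinal subchain I may assume $\delta = \cf{\delta}$ is a regular cardinal $\le \kappa$. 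Here the base $M_0'$ is spread across the whole chain and cannot be captured by any single $M_i$, so a new idea is needed; this is where the saturation hypothesis (and with it the assumption $\kappa = \beth_\kappa$) enters.

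For the hard case I would argue by contradiction with the saturation hypothesis, using averages. The idea is to develop, via the Galois Morleyization and the machinery behind Theorem \ref{stab-spectrum}, the theory of Morley sequences (with respect to non-$\kappa$-splitting) and their $\chi$-averages, together with the characterization that a model is $\kappa^+$-Galois-saturated exactly when every type over it is the average of a long-enough Morley sequence lying inside it. Assuming a counterexample chain of cofinality $\le \kappa$ with $p$ splitting over every $M_i$, the failure of local character is precisely what lets one build an increasing chain of $\kappa^+$-Galois-saturated models whose union omits a type over a subset of size $\le \kappa$: the splitting data over the successive $M_i$ feeds an ``escaping'' average that no single saturated level can realize. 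Since the union of this chain is a union of $\kappa^+$-Galois-saturated models, the hypothesis forces it to be $\kappa^+$-Galois-saturated, contradicting the omission, and hence no such chain exists. I expect the construction of the escaping type from the splitting data to be the main obstacle: it is the only step that uses the saturation hypothesis and the fixed-point assumption $\kappa = \beth_\kappa$ (needed to run the Ramsey-type arguments underlying averages), and it is what upgrades the soft, cofinality-based argument above into a genuine proof for the small cofinalities.
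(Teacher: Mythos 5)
Your reduction to clause (\ref{split assm}) of Definition \ref{ss-def}, the two monotonicity/tameness facts about splitting, and the case $\cf{\delta} > \kappa$ are all correct. The problem is that the case $\cf{\delta} \le \kappa$ --- which, as you say, is the entire content of the lemma --- is not actually proved: you describe a hoped-for contradiction (``the splitting data over the successive $M_i$ feeds an escaping average that no single saturated level can realize'') and then explicitly defer the construction of that escaping type as ``the main obstacle''. That construction \emph{is} the theorem. Turning a single chain $\seq{M_i : i \le \delta}$ in $\K_{\kappa^+}$ over which $p$ splits everywhere into an increasing chain of $\kappa^+$-Galois-saturated models whose union omits a type amounts to proving ``failure of superstability implies some union of saturated models is not saturated'', and nothing in the average machinery you cite (Theorem \ref{stab-spectrum}, the Galois Morleyization) delivers that implication at this stage; in Theorem \ref{gvsuperstab} the equivalence of these two conditions is obtained precisely by proving the \emph{direct} implication, i.e.\ this lemma, so your plan is circular in spirit. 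Realizing many types along a chain of saturated models while omitting a fixed one requires an orthogonality-type control that is simply not available before superstability is established.

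The paper uses the saturation hypothesis in the opposite direction. First, by a back-and-forth argument --- using stability to make successor stages of the chain Galois-saturated and the hypothesis to keep limit stages Galois-saturated --- one reduces to verifying clause (\ref{split assm}) for chains of $\kappa^+$-Galois-saturated models. Over such a chain one proves local character not for splitting but for $<\kappa$-satisfiability (coheir): a pigeonhole/order-property argument in the style of Makkai--Shelah shows that $p$ is $<\kappa$-satisfiable in some $M_i$ (this is where $\kappa = \beth_\kappa$ and stability enter). Tameness then gives uniqueness of $<\kappa$-satisfiable extensions, and from this one deduces that $p$ does not $\kappa^+$-split over $M_i$. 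If you replace your contradiction argument in the low-cofinality case by this direct coheir argument, the proof goes through; note that it then handles all cofinalities uniformly, so your (correct) high-cofinality case becomes superfluous.
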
 
\begin{proof}[Proof sketch]
  By tameness and Theorem \ref{stab-transfer}.(1), we have that $\K$ is Galois-stable in $\kappa^+$.  Thus, we only have to show Definition \ref{ss-def}.(\ref{split assm}), that there are no long splitting chains. There is a Galois-saturated model in $\kappa^+$ and, by a back and forth argument, it is enough to show  Definition \ref{ss-def}.(\ref{split assm}) when all the models are $\kappa^+$-Galois-saturated. 
  
Let $\delta < \kappa^{++}$ be a limit ordinal and let $\seq{M_i : i \le \delta}$ be an increasing continuous chain of Galois-saturated models in $\K_{\kappa^+}$; that we can make the models at limit stages Galois-saturated crucially uses the assumption. Let $p \in \gS (M_\delta)$. We need to show that there is $i < \delta$ such that $p$ does not $\kappa^+$-split over $M_i$.  By standard means, one can show that there is an $i < \delta$ such that $p$ is $<\kappa^+$ satisfiable in $M_i$.  Tameness  gives the uniqueness of $<\kappa$-satisfiability, which allows us to conclude that $p$ is $<\kappa$-satisfiable in $M_i$, which in turn implies that $p$ does not $\kappa^+$-split over $M_i$, as desired.
\end{proof}

\begin{remark}\label{rmk-chainsat}
  From the argument, we obtain the following intriguing consequence in first-order model theory\footnote{Hence showing that perhaps the study of AEC can also lead to new theorems in first-order model theory.}: if $T$ is a stable first-order theory, $\seq{M_i : i \le \delta}$ is an increasing continuous chain of $\aleph_1$-saturated models (so $M_i$ is $\aleph_1$-saturated also for limit $i$), then for any $p \in \Ss (M_\delta)$, there exists $i < \delta$ so that $p$ does not fork over $M_i$. This begs the question of whether any such chain exists in strictly stable theories. 
\end{remark}

We now go back to the study of good frames. One can ask when instead of a good $\lambda$-frame, we obtain a good $(\ge \lambda)$-frame (i.e.\ forking is defined for types over models of all sizes). It turns out that the proof of Theorem \ref{ss-implies-all} gives a good $(\ge \mu^+)$-frame on $\Ksatp{\mu^+}$. This still has the disadvantage of looking at Galois-saturated models. The next result starts from a good $\mu$-frame and shows that $\mu$-tameness can transfer it up (note that this was already stated as Theorem \ref{good-frame-transfer}):

\begin{theorem}\label{good-frame-transfer-2}
Assume $\K$ is an AEC with $\LS(\K) \leq \lambda$ and $\s$ is a good $\lambda$-frame on $\K$.  If $\K$ has amalgamation, then $\K$ is $\lambda$-tame if and only if there is a good $(\ge \lambda)$-frame $\geq \s$ on $\K$ that extends $\s$.
\end{theorem}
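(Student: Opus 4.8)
The plan is to prove the two directions separately. The forward direction is a direct instance of the tameness-from-forking argument of Example \ref{examples-subsec}.(\ref{stabletame}), and the reverse direction carries the real content. For the forward direction, suppose there is a good $(\ge\lambda)$-frame $\is \gea \s$ on $\K$. A good frame satisfies uniqueness and has local character, so I would argue as in Example \ref{examples-subsec}.(\ref{stabletame}): given $M \in \K$ and $p \neq q \in \gS(M)$, I may assume $\|M\| > \lambda$ (otherwise $A = |M|$ trivially witnesses tameness). By local character of $\is$ there are submodels of $M$ of size $\lea \lambda$ over which $p$, respectively $q$, do not fork; amalgamating them inside $M$ and using monotonicity yields $M_0 \lea M$ with $\|M_0\| = \lambda$ over which \emph{both} $p$ and $q$ do not fork. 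If $p \rest M_0 = q \rest M_0$, then $p$ and $q$ are two nonforking extensions to $M$ of the same type over $M_0$, so uniqueness forces $p = q$, a contradiction. Hence $p \rest M_0 \neq q \rest M_0$ with $\|M_0\| = \lambda$, and $\K$ is $\lambda$-tame.

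For the converse, I would define the candidate relation $\nf$ on $\K_{\ge\lambda}$ by declaring, for $M \lea N$ in $\K_{\ge\lambda}$ and $p \in \gS(N)$, that $p$ \emph{does not fork over} $M$ if and only if there is $M_0 \lea M$ with $\|M_0\| = \lambda$ such that for every $N_0 \lea N$ with $M_0 \lea N_0$ and $\|N_0\| = \lambda$, the restriction $p \rest N_0$ does not $\s$-fork over $M_0$. First I would check that this restricts to $\s$ on $\K_\lambda$, and that invariance, monotonicity, normality, disjointness, and existence transfer routinely from the corresponding properties of $\s$ together with the Löwenheim-Skolem-Tarski axiom, since every relevant configuration is approximated by its $\lambda$-sized pieces.

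The key step, and the only place tameness is essential, is uniqueness. Suppose $M \lea N$, that $p, q \in \gS(N)$ both do not fork over $M$, and that $p \rest M = q \rest M$. Fix a common $\lambda$-sized base $M_0 \lea M$ witnessing nonforking for both. To conclude $p = q$ it suffices, by $\lambda$-tameness, to show $p \rest N_0 = q \rest N_0$ for every $N_0 \lea N$ of size $\lambda$; enlarging $N_0$ I may assume $M_0 \lea N_0$. By the definition of $\nf$, both $p \rest N_0$ and $q \rest N_0$ do not $\s$-fork over $M_0$, and they agree on $M_0$ (as $p \rest M = q \rest M$). Since $M_0$ is $\lambda$-sized and $\s$ is a good $\lambda$-frame, $\s$-uniqueness gives $p \rest N_0 = q \rest N_0$. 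As $N_0$ was arbitrary, tameness yields $p = q$. This reduction of equality of large types to equality of all small restrictions, combined with $\s$-uniqueness on those restrictions, is exactly where tameness does its work.

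The remaining properties follow. I expect the extension property to be the main technical obstacle: given $M \lea N$ and a type over $M$ nonforking over a $\lambda$-sized $M_0 \lea M$, I would build its nonforking extension to $N$ by transfinite induction along a resolution of $N$ over $M$, using $\s$-extension on $\lambda$-sized pieces at successor steps and the uniqueness just established to glue the pieces coherently at limit steps (uniqueness is precisely what makes the limit well-defined in this directed-system argument). Local character $\clc{1}(\nf) = \aleph_0$ follows from $\clc{1}(\s) = \aleph_0$ via the same reduction to $\lambda$-sized chains. Having uniqueness, extension, and local character, the relation $\nf$ is a (localized) stable independence relation, so symmetry, existence, and transitivity come from the localized form of Proposition \ref{stable-indep-props}.(1). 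Finally, for the structural clauses of Definition \ref{good-frame-def}: amalgamation in $\K_{\ge\lambda}$ is assumed; joint embedding transfers from $\lambda$ using amalgamation and no maximal models; no maximal models in $\K_{\ge\lambda}$ holds because nonforking extensions are nonalgebraic, so every model admits a proper extension; and Galois-stability in every $\mu \gea \lambda$ follows from the $\lambda$-superstability provided by $\s$, together with tameness and amalgamation, by Theorem \ref{ss-implies-all}.(\ref{ss-1}). Assembling these yields the good $(\ge\lambda)$-frame extending $\s$.
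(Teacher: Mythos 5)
Your proposal is correct and, for the most part, follows the paper's own route: the forward direction is exactly the uniqueness-plus-local-character argument of Example \ref{examples-subsec}.(\ref{stabletame}); the candidate relation you define on $\K_{\ge\lambda}$ is the same canonical one the paper writes down; the identification of $\lambda$-tameness with the transfer of uniqueness is the crux in both arguments; and your treatment of extension (a coherent directed system of realizations along a resolution, glued at limits by uniqueness and local character, precisely because upper bounds of increasing chains of Galois types need not exist) is the paper's argument.

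The one genuine divergence is symmetry. You obtain it by observing that the extended relation is a (localized) stable independence relation and invoking Proposition \ref{stable-indep-props}.(1), whose proof runs through ``failure of symmetry implies the order property.'' This is valid and the paper explicitly acknowledges it as an option, but it is the \emph{non-local} route: deriving a contradiction from the order property requires building orders of length up to the Hanf number of $\lambda$ and exploiting stability in cardinals far above $\lambda$, so one concludes a property of the frame at $\lambda$ only by appealing to the behavior of $\K$ much higher up. The paper's official proof instead transfers symmetry locally via the machinery of independent sequences (cf.\ the discussion following the proof and Theorem \ref{dim-thm}), which is harder but keeps the argument confined near $\lambda$. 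For the theorem as stated your shortcut loses nothing; it would matter only if one wanted the frame transfer to be cardinal-by-cardinal self-contained. One last small point: when you fix a ``common $\lambda$-sized base $M_0$'' in the uniqueness step, you should note that enlarging the witnessing base is licensed by base monotonicity of $\s$ applied inside each $\lambda$-sized $N_0$; this is routine but is the step that makes a single $M_0$ serve both $p$ and $q$.
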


\begin{proof}[Proof sketch]
  That tameness is necessary is discussed in Example \ref{examples-subsec}.(\ref{stabletame}).
  
  For the other direction, it is easy to check that if there is any way to extends forking to models of size at least $\lambda$, the definition must be the following:
  \begin{center} $p \in \gS (M)$ does not fork over $M_0$ if and only if there exists $M_0' \lea M_0$ with $M_0' \in \K_{\lambda}$ and $p \rest M'$ does not fork over $M_0'$ for all $M' \lea M$ with $M' \in \K_\lambda$.
  \end{center}
Several frame properties transfer without tameness; however, the key properties of uniqueness, extension, stability, and symmetry can fail.
$\lambda$-tameness can be easily seen to be equivalent to the transfer of uniqueness from $\s$ to $\geq \s$.  Using uniqueness, extension and stability can easily be shown to follow.  Symmetry is harder and the proof goes through independent sequences (see below and the historical remarks).

As an example, we show how to prove the extension property. Note that one of the key difficulties in proving extension in general is that upper bounds of types need not exist; while this is trivial in first-order, such AECs are called compact (see Definition \ref{tame-var-def}).  To solve this problem, we use the forking machinery of the frame to build a chain of types with a canonical extension at each step.  This canonicity provides the existence of types.

  Let $M \in \K_{\ge \lambda}$ and let $p \in \gS (M)$. Let $N \gea M$. We want to find a nonforking extension of $p$ to $N$. By local character and transitivity, without loss of generality $M \in \K_{\lambda}$. We now work by induction on $\mu := \|N\|$. If $\mu = \lambda$, we know that $p$ can be extended to $N$ by definition of a good frame, so assume $\mu > \lambda$. Write $N = \bigcup_{i < \mu} N_i$, where $N_i \in \K_{\lambda + |i|}$. By induction, let $p_i \in \gS (N_i)$ be the nonforking extension of $p$ to $N_i$. Note that by uniqueness $p_j \rest N_i = p_i$ for $i \le j < \mu$. We want to take the ``direct limit'' of the $p_i$'s: build $\seq{f_i : i < \mu}$, $\seq{N_i' : i < \mu}$, $\seq{a_i : i < \mu}$ such that $p_i = \gtp (a_i / N_i; N_i')$, $f_i : N_i' \xrightarrow[N_i]{} N_{i + 1}'$ such that $f_i (a_i) = a_{i + 1}$. If this can be done, then taking the direct limit of the system induced by $\seq{f_i, a_i N_i' : i < \mu}$, we obtain $a_\mu, N_{\mu}'$ such that $\gtp (a_\mu / N_\mu; N_{\mu}')$ is a nonforking extension of $p$. How can we build such a system? The base and successor cases are no problem, but at limits, we want to take the direct limit and prove that everything is still preserved. This will be the case because of the local character and uniqueness property. 

\end{proof}

This should be compared to Theorem \ref{successful-cond} which achieves the more modest goal of transferring $\s$ to $\lambda^+$ (over Galois-saturated models and with a different ordering) with assumptions on the number of models and some non-ZFC hypotheses. 

An interesting argument in the proof of Theorem \ref{good-frame-transfer-2} is the transfer of the symmetry property. One could ignore that issue and use that failure of the order property implies symmetry, however this would make the argument non-local in the sense that we require knowledge about the AEC near the Hanf of $\lambda$ to conclude good property at $\lambda$. A more local (but harder) approach is to study \emph{independent sequences}.  

Given a good $(\geq \lambda)$-frame and $M_0 \lea M \lea N$, we want to say that a sequence $\seq{a_i \in N: i < \alpha}$ is independent in $(M_0, M, N)$ if and only if $\gtp (a_i / |M| \cup \{a_j : j < i\}; N)$ does not fork over $M_0$. However, forking behaves better for types over models so instead, we require that there is a sequence of models $M \lea N_i \lea N$ growing with the sequence $\seq{a_i : i < \alpha}$ such that $a_i \in |N_{i+1}| \backslash |N_i|$ and require $\gtp(a_i/N_i; N)$ does not fork over $M_0$.



The study of independent sequences shows that under tameness they themselves form (in a certain technical sense) a good frame. That is, from an independence relation for types of length one, we obtain an independence relation for types \emph{of independent sequences} of all lengths. One other ramification of the study of independence sequence is the isolation of a good notion of dimension: inside a fixed model, any two infinite maximal independent sets must have the same size.

\begin{thm}\label{dim-thm}
  Let $\K$ be an AEC, $\lambda \ge \LS (\K)$. Assume that $\K$ is $\lambda$-tame and has amalgamation. Let $\s$ be a good $(\ge \lambda)$-frame on $\K$. Let $M_0 \lea M \lea N$ all be in $\K_{\ge \lambda}$.

  \begin{enumerate}
    \item Symmetry of independence: For a fixed set $I$, $I$ is independent in $(M_0, M, N)$ if and only if \emph{all} enumerations are independent in $(M_0, M, N)$.
    \item Let $p \in \gS (M)$. Assume that $I_1$ and $I_2$ are independent in $(M_0, M, N)$ and every $a \in I_1 \cup I_2$ realizes $p$. If both $I_1$ and $I_2$ are $\subseteq$-maximal with respect to that property and $I_1$ is infinite, then $|I_1| = |I_2|$.
  \end{enumerate}
\end{thm}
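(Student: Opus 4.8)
The plan is to recognize the dependence relation induced by the frame on realizations of $p$ as a pregeometry (matroid) and then read off both statements from standard matroid facts, the real work being to verify the pregeometry axioms from the good-frame properties together with $\lambda$-tameness. So first I would set up the dependence relation: fixing $M_0 \lea M \lea N$, for a realization $a$ of $p$ and a subset $X$ of realizations of $p$ in $N$, I declare that \emph{$a$ depends on $X$} when, after amalgamating to obtain a model $N_X$ with $M \cup X \subseteq |N_X|$ over which (an enumeration of) $X$ is independent in $(M_0, M, \cdot)$ in the sense of the notion of independent sequence introduced above, the type $\gtp(a / N_X; \cdot)$ forks over $M_0$. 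Invariance and monotonicity of the frame, together with uniqueness and $\lambda$-tameness, make this independent of all the choices involved; reflexivity and monotonicity of the operator $\cl(X) := \{a : a \text{ depends on } X\}$ are then immediate from ambient and left/right monotonicity of $\s$.

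The two substantive pregeometry axioms are finite character and exchange. For finite character I would argue by descent on cardinality: if $a$ depends on an infinite $X$, then filtering $N_X$ by a continuous increasing chain $\seq{N^\xi : \xi \le \cf{|X|}}$ coming from a filtration of $X$ into pieces of size $<|X|$ and applying the local character $\clc{1}(\s) = \aleph_0$ of the frame yields some $\xi$ with $\gtp(a/N_X)$ not forking over $N^\xi$; by transitivity $a$ already depends on the corresponding piece of $X$, which has strictly smaller cardinality. Since the cardinals are well-ordered this process terminates after finitely many steps in a finite subset of $X$. Exchange is the usual consequence of symmetry and transitivity: if $a$ depends on $X \cup \{b\}$ but not on $X$, then working over a model containing $M \cup X$ and using that nonforking is symmetric forces $b$ to depend on $X \cup \{a\}$.

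Granting these, Part (1) follows because a sequence $\seq{a_i : i < \alpha}$ being independent in $(M_0,M,N)$ is equivalent, by symmetry and transitivity (with finite character used to reduce to finite subsequences), to the manifestly order-free condition that no $a_i$ depends on $\{a_j : j \neq i\}$; hence one enumeration is independent iff all are, and ``$I$ is independent as a set'' is well defined. For Part (2), $I_1$ and $I_2$ are then exactly bases of the pregeometry of realizations of $p$ inside $N$ (over $M$, relative to $M_0$). If $I_2$ were finite, maximality would give $I_1 \subseteq \cl(I_2)$, and an independent subset of the closure of a finite set is finite, contradicting $|I_1| \ge \aleph_0$; so $I_2$ is infinite. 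Now each $a \in I_1$ lies in $\cl(I_2)$ by maximality of $I_2$, hence depends on a finite $w_a \subseteq I_2$ by finite character. For a fixed finite $w$, the fiber $\{a \in I_1 : w_a = w\}$ is an independent subset of $\cl(w)$ and so has size at most $|w| < \aleph_0$. Thus $|I_1| \le |[I_2]^{<\aleph_0}| \cdot \aleph_0 = |I_2|$, and the symmetric argument gives $|I_2| \le |I_1|$, whence $|I_1| = |I_2|$.

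The main obstacle I expect is the verification of exchange and finite character in the AEC setting, where forking is only defined over models: one must repeatedly replace the sets $M \cup X$ by amalgamated models carrying an independent copy of $X$ and check --- using $\lambda$-tameness and uniqueness, as in the witness-property argument of Proposition \ref{stable-indep-props} --- that the resulting dependence relation is genuinely independent of all these choices. Once the closure operator $\cl$ is shown to be well defined and to satisfy exchange and finite character, both conclusions reduce to standard matroid theory.
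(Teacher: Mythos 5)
Your overall strategy --- organize the realizations of $p$ into a pregeometry and quote matroid theory --- is reasonable and is close in spirit to how Jarden--Sitton treat dimension; your derivation of Part (2) from the pregeometry axioms (counting fibers over finite supports in $I_2$) is standard and correct, and your descent-on-cardinality argument for finite character is essentially right \emph{once the dependence relation is known to be well defined}. But that well-definedness is precisely where the proof has a hole, and it is not the routine bookkeeping you suggest. Whether $\gtp(a/N_X;N)$ forks over $M_0$ depends a priori on the enumeration of $X$ and on the witnessing chain $\seq{N_i}$ whose union is $N_X$: two witnessing configurations for the same set $X$ need not be isomorphic over $M\cup X$, and a good frame only gives uniqueness for nonforking extensions of $1$-types over models, not a stationarity statement for the ``type of an independent sequence.'' Establishing that all choices give the same answer is essentially equivalent to Part (1) of the theorem, so your plan of reading Part (1) off the pregeometry is circular unless this is broken some other way. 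Your appeal to the witness-property argument of Proposition \ref{stable-indep-props}.(2) does not help: that argument uses full tameness \emph{and} type-shortness \emph{and} extension for types of arbitrary length, none of which are available for a $1$-ary frame with only $\lambda$-tameness for $1$-types.

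The missing ingredient, which is where the actual proof lives, is the \emph{continuity of independence}: if every proper initial segment of a sequence is independent in $(M_0,M,N)$, then so is the whole sequence. Shelah proves Theorem \ref{dim-thm} in $\K_\lambda$ assuming the frame is weakly successful; Jarden and Sitton show that continuity of independence alone suffices to get well-definedness, symmetry of independence, finite character, and the dimension theorem; and the content of the tame version is that continuity of independence can be \emph{derived from} $\lambda$-tameness (this is the main work of \cite{tame-frames-revisited-v5}, via the upward frame transfer and a nontrivial direct-limit argument, and it is also what repairs the symmetry transfer in Theorem \ref{good-frame-transfer-2}). Your proposal never isolates or proves this property --- you correctly flag the verification as ``the main obstacle I expect,'' but that obstacle is the theorem. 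Everything downstream of it (exchange, finite character, the matroid counting) is the easy part.
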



\subsection{Global independence and superstability}\label{global-indep-sec}

Combined with Theorem \ref{ss-implies-all}, Theorem \ref{good-frame-transfer-2} shows that every tame superstable AEC has a good $(\ge \lambda)$-frame. It is natural to ask whether this frame can also be extended in the other direction: to types of length larger than one. More precisely, we want to build a superstability-like global independence relation (i.e.\ the global version of a good frame):

\begin{defin}\label{good-indep-def}
  We say an independence relation $\dnf$ on $\K$ is \emph{good} if:

  \begin{enumerate}
    \item $\K$ is an AEC with amalgamation, joint embedding, and arbitrarily large models.
    \item $\K$ is Galois-stable in all $\mu \ge \LS (\K)$.
    \item $\dnf$ has disjointness, symmetry, existence, uniqueness, extension, transitivity, and the $\LS (\K)$-witness property.
    \item For all cardinals $\alpha > 0$:
      \begin{enumerate}
        \item $\slc{\alpha} (\nf) = (\alpha + \LS (\K))^+$.
        \item $\clc{\alpha} (\nf) = \alpha^+ + \aleph_0$.
      \end{enumerate}
  \end{enumerate}

  We say that an AEC $\K$ is \emph{good} if there exists a good independence relation on $\K$.
\end{defin}

We would like to say that if $\K$ is a $\LS (\K)$-superstable AEC with amalgamation that is fully tame and short, then there exists $\lambda$ such that $\K_{\ge \lambda}$ is good. At present, we do not know if this is true (see Question \ref{indep-question}). All we can conclude is a weakening of good:

\begin{defin}\label{almost-good-def}
  We say an independence relation $\nf$ is \emph{almost good} if it satisfies all the conditions of Definition \ref{good-indep-def} except it only has the following weakening of extension: If $p \in \gS^\alpha (M)$ and $N \gea M$, we can find $q \in \gS^\alpha (N)$ extending $p$ and not forking over $M$ provided that at least one of the following conditions hold:

  \begin{enumerate}
    \item $M$ is Galois-saturated.
    \item $M \in \K_{\LS (\K)}$.
    \item $\alpha < \LS (\K)^+$.
  \end{enumerate}

  An AEC $\K$ is \emph{almost good} if there is an almost good independence relation on $\K$.
\end{defin}
\begin{remark}
  Assume that $\is$ is an independence relation on $\K$ which satisfies all the conditions in the definition of good except extension, and it has extension for types over Galois-saturated models. Then we can restrict $\is$ to $\Ksatp{\LS (\K)^+}$ and obtain an almost good independence relation. Thus extension over Galois-saturated models is the important condition in Definition \ref{almost-good-def}.
\end{remark}

We can now state a result on existence of global independence relation: 

\begin{thm}\label{almost-good-thm}
  Let $\K$ be a fully $\LS (\K)$-tame and short AEC with amalgamation. Let $\lambda := \left(2^{\LS (\K)}\right)^{+4}$. If $\K$ is $\LS (\K)$-superstable, then $\Ksatp{\lambda}$ is almost good.
\end{thm}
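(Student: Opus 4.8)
The plan is to realize the relation as the canonical extension, to tuples of all lengths, of the length-one good frame coming from superstability, and then to read off ``almost good'' from the remark following Definition \ref{almost-good-def}: it suffices to produce an independence relation satisfying every clause of Definition \ref{good-indep-def} except extension, but having extension over Galois-saturated models, and then to restrict to a high-enough level of saturation. First I would fix the ambient class. Since $\K$ is $\LS(\K)$-superstable, $\LS(\K)$-tame and has amalgamation, Theorem \ref{ss-implies-all} gives that $\K$ is $\mu$-superstable (hence Galois-stable, hence without the order property by Theorem \ref{stab-spectrum}) for every $\mu \ge \LS(\K)$, that unions of chains of $\mu$-Galois-saturated models are $\mu$-Galois-saturated, and---by the remark after that theorem---that there is a good $(\ge \LS(\K)^+)$-frame $\s$ for length-one types with underlying class $\Ksatp{\LS(\K)^+}$. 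The chain-saturation statement makes $\Ksatp{\lambda}$ an AEC with amalgamation, joint embedding, arbitrarily large models and Galois-stability in all $\mu$, so the structural clauses (1)--(2) of Definition \ref{good-indep-def} hold; the value $\lambda = (2^{\LS(\K)})^{+4}$ is simply taken large enough to absorb the several successor steps used in the symmetry and extension arguments below.

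Next I would define $\nf$ on tuples of arbitrary length. Because $\K$ is fully $<\LS(\K)^+$-tame and short, every Galois type is determined by its restrictions of length and domain-size $\le \LS(\K)$ (Theorem \ref{morleyization-thm}), so I would declare $\gtp(\ba/B; N)$ to be nonforking over a model $M \lea N$ exactly when the enumeration $\ba$ is independent over $M$ in the sense of $\s$ applied coordinate-by-coordinate along a resolution (the notion of independent sequence discussed before Theorem \ref{dim-thm}), a general right-hand set $B$ being handled by right monotonicity and normality. Equivalently---and this is where full tameness and shortness is essential---nonforking is witnessed by the $\le \LS(\K)$-sized restrictions, so the $\LS(\K)$-witness property holds by fiat. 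Invariance, monotonicity and normality are immediate. Disjointness over the saturated models at issue, existence, uniqueness and transitivity transfer from the corresponding properties of $\s$ through the witness property, exactly as in the proof of Proposition \ref{stable-indep-props}; symmetry for long tuples follows from symmetry of $\s$ together with the enumeration-independence of Theorem \ref{dim-thm}.(1) (which also makes $\nf$ well-defined on sets). The locality cardinals come out at the prescribed values: $\clc{\alpha}(\nf) = \alpha^+ + \aleph_0$ by a pigeonhole argument over a chain of length $\alpha^+$ reducing to $\clc{1}(\s) = \aleph_0$, and $\slc{\alpha}(\nf) = (\alpha + \LS(\K))^+$ since a base witnessing independence need only absorb the $\le \alpha$ relevant coordinates and the $\LS(\K)$-sized base of $\s$. (An alternative candidate is $<\kappa$-satisfiability, Theorem \ref{coheir-thm}, which supplies all of these properties directly once $\kappa$ is a stability-cardinal fixed point; I would use whichever is more convenient.)

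The main obstacle is extension, and it is exactly here that ``good'' degrades to ``almost good''. For a type of length $< \LS(\K)^+$, or a type over a model of minimal size, extension reduces through the witness property to the extension property of the frame $\s$. For a type of arbitrary length over a \emph{Galois-saturated} base $M$, I would construct its nonforking extension as an increasing coherent chain of partial extensions and take a direct limit, imitating the existence argument in the proof sketch of Theorem \ref{good-frame-transfer-2}: uniqueness and local character guarantee the limit is well-defined and still nonforking, while the saturation of $M$ furnishes the realizations (upper bounds) that let the limit type exist. This yields an independence relation satisfying all of Definition \ref{good-indep-def} save full extension but possessing extension over Galois-saturated models, whence, by the remark after Definition \ref{almost-good-def}, its restriction to $\Ksatp{\lambda}$ is almost good. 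The reason one cannot do better is that for a long type over a base that is $\lambda$-Galois-saturated yet not saturated in its own cardinality there need be no upper bound at all for the approximating chain of types---the class is not compact---so the direct-limit construction breaks down and extension genuinely fails; restricting the extension clause to the three cases of Definition \ref{almost-good-def} is therefore the best possible, and completes the proof.
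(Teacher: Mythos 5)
There is a genuine gap, and it sits exactly at the step you dispose of in one sentence: passing from the length-one frame $\s$ to an independence relation for types of length up to $\LS(\K)^+$ (or $\kappa$) \emph{with uniqueness}. You assert that uniqueness, transitivity and symmetry for long tuples ``transfer from the corresponding properties of $\s$ through the witness property, exactly as in the proof of Proposition \ref{stable-indep-props}.'' But Proposition \ref{stable-indep-props}.(\ref{tameshort-witness}) runs in the opposite direction: it \emph{assumes} a stable independence relation (in particular one already having uniqueness and extension for long types) and \emph{derives} the $<\kappa$-witness property, and its proof sketch explicitly uses extension. So you cannot obtain the witness property ``by fiat'' from full tameness and shortness and then use it to pull uniqueness for long types out of uniqueness for $1$-types; that is circular. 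Full tameness and shortness says a long type is determined by its small restrictions; it does not say that a long type is \emph{nonforking} whenever its small restrictions are, nor that two nonforking long types with the same nonforking small restrictions over the base agree. Uniqueness for types of length $>1$ is precisely the content of the successfulness/domination machinery: the paper's proof shows the frame is ($\omega$-)successful via a Makkai--Shelah domination argument (Definition \ref{successful-def}) and then invokes Shelah's theorem that a successful frame can be lengthened to a $\kappa$-ary independence relation $\is'$ with extension, uniqueness, symmetry, and $\clc{\alpha}(\is')=\alpha^++\aleph_0$. None of this appears in your proposal, and a coordinate-by-coordinate definition via independent sequences does not substitute for it (Theorem \ref{dim-thm} itself is a downstream consequence of this machinery, not an input to it).

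A second missing ingredient is full model continuity. Even once the $\kappa$-ary relation $\is'$ exists at one cardinal, transferring it to models of all sizes requires full model continuity, which is obtained by moving to the good $\kappa^{+3}$-frame $\s^{+3}$ (still $\omega$-successful) and lengthening \emph{that}; this is the actual source of the ``$+4$'' in $\lambda=(2^{\LS(\K)})^{+4}$ (one successor to get the initial frame on sufficiently saturated models via splitting, three more for $\s^{+3}$), not a generic ``absorb several successor steps'' allowance. Your final paragraph on extension over Galois-saturated bases is closer in spirit to the paper (extension is indeed the clause that degrades ``good'' to ``almost good,'' and the restriction to $\Ksatp{\lambda}$ is justified by the remark after Definition \ref{almost-good-def}), but as written it rests on uniqueness and local character for long types that you have not actually established. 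To repair the argument you would need to (i) prove $\omega$-successfulness of the $1$-ary frame, (ii) cite or reprove Shelah's lengthening theorem for successful frames, and (iii) use full model continuity of the lengthened $\s^{+3}$ to globalize before defining forking for types of arbitrary length via $\kappa^{+3}$-sized restrictions.
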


We try to describe the proof. For simplicity, we will work with $<\kappa$-satisfiability, so will obtain a Hanf number approximately equal to a fixed point of the beth function. The better bound is obtained by looking at splitting but this makes the proof somewhat more complicated. So let $\kappa = \beth_\kappa > \LS (\K)$. We know that the $<\kappa$-satisfiability independence relation is an independence relation on $\Ksatp{\kappa}$ with uniqueness, local character, and symmetry (but not extension). Let $\is$ denote this relation independence relation. Furthermore we can show that $\clc{1} (\is) = \aleph_0$. In fact, $\is$ restricted to types of length one induces a good $\kappa$-frame $\s$ on $\Ksatp{\kappa}_\kappa$. We would like to extend $\s$ to types of length at most $\kappa$.

To do this, we need to make use of the notion of domination and successful frames\footnote{Note that the definitions here do not coincide with Shelah's, although they are equivalent in our context. The equivalence uses tameness again, including a result of Adi Jarden. See the historical remarks for more.}:

\begin{defin}\label{successful-def} Suppose $\nf$ is an independence relation on $\K$. Work inside a monster model\footnote{So if $\sea$ is the monster model, $a \nf_M B$ means $\nfs{M}{a}{B}{\sea}$.}.
  \begin{enumerate}
    \item For $M \lea N$ $\kappa$-Galois-saturated and $a \in |N|$, \emph{$a$ dominates $N$ over $M$} if for any $B$, $a \nf_M B$ implies $N \nf_M B$. 
    \item $\s$ is \emph{successful} if for every Galois-saturated $M \in \K_\kappa$, every nonalgebraic type $p \in \gS (M)$, there exists $N \gea M$ and $a \in |N|$ with $N \in \K_\kappa$ Galois-saturated such that $a$ dominates $N$ over $M$.
    \item $\s$ is \emph{$\omega$-successful} if $\s^{+n}$ is successful for all $n < \omega$. Here, $\s^{+n}$ is the good $\kappa^{+n}$ induced on the Galois-saturated models of size $\kappa^{+n}$ by $<\kappa$-satisfiability.
  \end{enumerate}
\end{defin}

An argument of Makkai and Shelah \cite[Proposition 4.22]{makkaishelah} shows that $\s$ is successful (in fact $\omega$-successful), and a deep result of Shelah shows that if $\s$ is successful, then we can extend $\s$ to a $\kappa$-ary independence relation $\is'$ which has extension, uniqueness, symmetry, and for all $\alpha \le \kappa$, $\clc{\alpha} (\is') = \alpha^+ + \aleph_0$. This completes the first step of the proof.  Note that we have taken $\is$ (which was built on $<\kappa$-satisfiability), restricted it to 1-types and then ``lengthened'' it to $\kappa$-ary types.  However, we do not necessarily get $<\kappa$-satisfiability back!  We do get, however, an independence relation with a better local character property.

From $\omega$-successfulness, we could extend the frame $\s$ to models of size $\kappa^{+n}$.  Now we would like to extend $\is'$ to models of all sizes above $\kappa$.  However, the continuity of $\is'$ is not strong enough. The missing property is:

\begin{defin}
  An independence relation $\is = (\K, \nf)$ has \emph{full model continuity} if for any limit ordinal $\delta$, for any increasing continuous chain $\seq{M_i^\ell : i \le \delta}$ with $\ell < 4$, and $M_i^0 \lea M_i^k \lea M_i^3$ for $k = 1,2$ and $i \le \delta$, if $\nfs{M_i^0}{M_i^1}{M_i^2}{M_i^3}$ for all $i < \delta$, then $\nfs{M_\delta^0}{M_\delta^1}{M_\delta^2}{M_\delta^3}$.

  Let us say that $\is$ is \emph{fully good} [\emph{almost fully good}] if it is good [almost good] and has full model continuity. As before, $\K$ is \emph{[almost] fully good} if it there is an [almost] fully good independence relation on $\K$.
\end{defin}

Another powerful result of Shelah \cite[III.8.19]{shelahaecbook} connects $\omega$-successful good frames with full model continuity.  Suppose that $\s$ is an $\omega$-successful good $\kappa$-frame (as we have).  We do not know that $\is'$ defined above has full model continuity, but it we move to the (still $\omega$-successful) good $\kappa^{+3}$-frame $\s^{+3}$ and ``lengthen'' this to an independence relation $\is'_{+3}$ on $\kappa^{+3}$-ary types, then $\is'_{+3}$ has full model continuity!

This allows us to transfer all of the nice properties of $\is'_{+3}$ to a $\kappa^{+3}$-ary independence relation $\is''$ on models of all sizes above $\kappa^{+3}$.  To get a truly global independence relation, we can define an independence relation $\is'''$ on types of \emph{all} lengths by specifying that $p \in \gS^{\alpha} (M)$ do not $\is'''$-fork over $M_0 \lea M$ if and only if $p \rest I$ does not $\is''$-fork over $M_0$ for every $I \subseteq \alpha$ with $|I| \le \kappa^{+3}$. With some work, we can show that $\is'''$ is almost fully good (thus ``fully'' can be added to the conclusion of Theorem \ref{almost-good-thm}).

What about getting the extension over property over all models (not just the Galois-saturated models). It is known how to do it by making one more locality hypothesis:

\begin{defin}[Type locality]\label{type-loc-def} \
  \begin{enumerate}
    \item Let $\delta$ be a limit ordinal, and let $\bar{p} := \seq{p_i : i < \delta}$ be an increasing chain of Galois types, where for $i < \delta$, $p_i \in \gS^{\alpha_i} (M)$ and $\seq{\alpha_i : i \le \delta}$ are increasing. We say $\bar{p}$ is \emph{$\kappa$-type-local} if $\cf{\delta} \ge \kappa$ and whenever $p, q \in \gS^{\alpha_\delta} (M)$ are such that $p^{\alpha_i} = q^{\alpha_i} = p_i$ for all $i < \delta$, then $p = q$.
    \item We say $\K$ is \emph{$\kappa$-type-local} if every $\bar{p}$ as above is $\kappa$-type-local.
  \end{enumerate}
\end{defin}

We think of $\kappa$-type-locality as the dual to $\kappa$-locality (Definition \ref{tame-var-def}.(3)) in the same sense that shortness is the dual to tameness. 

\begin{remark}
  If $\kappa$ is a regular cardinal and $\K$ is $<\kappa$-type short, then $\K$ is $\kappa$-type-local. In particular, if $\K$ is fully $<\aleph_0$-tame and -type short, then $\K$ is $\aleph_0$-type-local.
\end{remark}
\begin{remark}
If there is a good $\lambda$-frame on $\K$, then $\K_\lambda$ is $\aleph_0$-local (use local character and uniqueness), and thus assuming $\lambda$-tameness $\K$ is $\aleph_0$-local. This is used in the transfer of a good $\lambda$-frame to a good $(\ge \lambda)$-frame. Unfortunately, an analog for this fact is missing when looking at $\aleph_0$-type-locality, i.e.\ it is not clear that even a fully good AEC is $\aleph_0$-type-local.
\end{remark}

Using type-locality, we can start from a fully good $\LS (\K)$-ary independence relation on $\K$ and prove extension for types of all lengths. Thus we obtain the following variation of Theorem \ref{almost-good-thm}:

\begin{thm}\label{fully-good-thm}
  Let $\K$ be a fully $\LS (\K)$-tame and short AEC with amalgamation. Assume that $\K$ is $\aleph_0$-type-local. Let $\lambda := \left(2^{\LS (\K)}\right)^{+4}$. If $\K$ is $\LS (\K)$-superstable, then $\Ksatp{\lambda}$ is fully good.
\end{thm}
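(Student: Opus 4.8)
The plan is to deduce Theorem \ref{fully-good-thm} from Theorem \ref{almost-good-thm} by upgrading \emph{almost} fully good to fully good, the only gap being the full extension property. First I would invoke Theorem \ref{almost-good-thm}, together with the remark that ``fully'' may be added to its conclusion, to obtain an almost fully good independence relation $\dnf = \is'''$ on $\Ksatp{\lambda}$. Comparing Definitions \ref{good-indep-def} and \ref{almost-good-def}, this $\dnf$ already has every property demanded of a fully good relation---disjointness, symmetry, existence, uniqueness, transitivity, the $\LS(\K)$-witness property, full model continuity, and the exact locality cardinals---except that its extension property holds only in the three cases of Definition \ref{almost-good-def}: domain Galois-saturated, domain small, or type short. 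Thus it suffices to establish \emph{full} extension: for arbitrary $M \in \Ksatp{\lambda}$ (neither saturated nor small), arbitrary length $\alpha \geq \LS(\K)^+$, and any $N \gea M$, every $p \in \gS^\alpha(M)$ has an extension $q \in \gS^\alpha(N)$ that does not fork over $M$.

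Second, I would reduce a long type to its short restrictions. Recall that $\is'''$ is defined so that $q \in \gS^\alpha(N)$ does not fork over $M$ exactly when $q^I$ does not fork over $M$ for every small index set $I \subseteq \alpha$. For each such $I$ the restriction $p^I \in \gS^I(M)$ is a \emph{short} type, so by the extension already available through clause (3) of Definition \ref{almost-good-def} it has a nonforking extension $q_I \in \gS^I(N)$; by uniqueness these cohere, i.e.\ $(q_J)^I = q_I$ whenever $I \subseteq J$. The task becomes to realize this coherent family $\{q_I\}$ by a single tuple, that is, to produce $q \in \gS^\alpha(N')$ for some $N' \gea N$ with $q^I = q_I$ for all small $I$. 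By full tameness and shortness (Proposition \ref{injective-map}) such a $q$, if it exists, is unique, and it is then the desired nonforking extension.

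Third, I would construct $q$ by a direct limit carried out \emph{along the length} of the type, in the spirit of the domain-side extension argument sketched for Theorem \ref{good-frame-transfer-2}. Fix an increasing continuous exhaustion $\seq{I_i : i < \delta}$ of $\alpha$ by index sets, with $\cf{\delta}$ above the arity of the short pieces, and build an increasing continuous chain of realizations $\bar{b}^{I_i}$ inside increasing models $N'_i \gea N$, so that $\gtp(\bar{b}^{I_i} / N; N'_i)$ is the nonforking extension of $p^{I_i}$. At successor steps one amalgamates the already-built tuple with the nonforking extension of the short increment $I_{i+1} \setminus I_i$; at limit steps one sets $N'_i = \bigcup_{j < i} N'_j$ and takes the coordinatewise union of the tuples. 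The union-of-models axiom supplies an upper-bound type, while its coherence and uniqueness---the fact that the limit really is the single type whose restrictions are the prescribed $q_{I_i}$---is precisely what $\aleph_0$-type-locality (Definition \ref{type-loc-def}) guarantees. Nonforking of the resulting $q$ over $M$ then follows from the definition of $\is'''$ via small restrictions: every small $I$ lies inside some $I_i$, so $q^I = (q_{I_i})^I$ does not fork over $M$ by monotonicity, and full model continuity ensures the independence survives the limits of models.

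The main obstacle is exactly this limit step. In a general AEC an increasing chain of Galois types of strictly increasing length need not have a unique upper bound---the non-compactness phenomenon dual to the failure of tameness---and $\aleph_0$-type-locality is the hypothesis introduced to defeat it, playing for the length of a type the role that tameness plays for its domain. Without it the induction cannot be closed, and one is left only with the almost good relation of Theorem \ref{almost-good-thm}. A secondary point to verify carefully is the successor step, where amalgamating the long constructed tuple with the short nonforking extension of the new coordinates must be done so as to preserve both coherence with the previous stage and nonforking over $M$, which is where uniqueness and transitivity of $\dnf$ are used.
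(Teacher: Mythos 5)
Your proposal matches the paper's (one-sentence) proof sketch exactly: obtain the almost fully good relation from Theorem \ref{almost-good-thm}, note that the only missing property is extension for long types over arbitrary (non-saturated, non-small) models, and use $\aleph_0$-type-locality to lengthen the extension property by a direct limit along the index set of the type. The role you assign to type-locality --- guaranteeing that the increasing chain of nonforking extensions of growing length has a unique upper bound at limit stages, so that the limit tuple really realizes an extension of $p$ --- is precisely the point of that hypothesis.
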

\begin{remark}
  It is enough to assume that $\aleph_0$-type-locality holds ``densely'' in a certain technical sense. See the historical remarks. 
\end{remark}

Finally, we know of at least two other ways to obtain extension: using total categoricity and large cardinals. We collect all the results of this section in a corollary:

\begin{cor}\label{fully-good-cor}
  Let $\K$ be an AEC. Assume that $\K$ is $\LS (\K)$-superstable and fully $\LS (\K)$-tame and short.

  \begin{enumerate}
    \item If $\kappa > \LS (\K)$ is a strongly compact cardinal, then $\Ksatp{\kappa}$ is fully good.
    \item If either $\K$ is $\aleph_0$-type-local (e.g.\ it is fully $(<\aleph_0)$-tame and short) or $\K$ is totally categorical, then $\Ksatp{\lambda}$ is fully good, where $\lambda := \left(2^{\LS (\K)}\right)^{+4}$.
  \end{enumerate}
\end{cor}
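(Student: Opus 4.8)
The three cases share a common skeleton: under $\LS (\K)$-superstability together with full $\LS (\K)$-tameness and shortness, the construction behind Theorem \ref{almost-good-thm} (strengthened to \emph{almost fully} good, as noted after its proof) or the coheir analysis of Theorem \ref{coheir-thm} delivers \emph{every} axiom of a fully good independence relation except possibly the extension property over non-Galois-saturated models. The plan is therefore to set up this common part once and then, in each of the three cases, supply extension by a different device. First I would record the ambient structure: $\LS (\K)$-superstability gives amalgamation, joint embedding and no maximal models in $\LS (\K)$, and by Theorem \ref{ss-implies-all} and the frame transfer (Theorem \ref{good-frame-transfer-2}) these, together with tameness, propagate upward, so that $\K$ is Galois-stable in every cardinal and has a monster model on a tail. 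In particular $\Ksatp{\lambda}$ and $\Ksatp{\kappa}$ are AECs with amalgamation, and for the two $\Ksatp{\lambda}$ cases the good local character values $\slc{\alpha} = (\alpha + \LS (\K))^+$ and $\clc{\alpha} = \alpha^+ + \aleph_0$ together with full model continuity, as required by Definitions \ref{good-indep-def} and \ref{almost-good-def}, are already in place from Theorem \ref{almost-good-thm}, so only extension must be addressed.

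For the $\aleph_0$-type-local alternative in (2) there is nothing to do beyond citing Theorem \ref{fully-good-thm}, which gives precisely that $\Ksatp{\lambda}$ is fully good. The parenthetical instance is covered because full $<\aleph_0$-tameness and shortness implies $\aleph_0$-type-locality (the remark preceding Theorem \ref{fully-good-thm}). For the totally categorical alternative in (2), I would start from the almost fully good relation on $\Ksatp{\lambda}$ provided by Theorem \ref{almost-good-thm}; the only missing clause is extension of types over models that are not fully Galois-saturated. But under total categoricity every model of size $\ge \lambda$ is fully Galois-saturated: superstability is stability everywhere, so by the chain-saturation statement Theorem \ref{ss-implies-all}.(\ref{ss-2}) a Galois-saturated model of each such size exists, and categoricity forces the unique model of that size to be it. Hence clause (1) of the almost-good extension property (Definition \ref{almost-good-def}) applies to \emph{every} member of $\Ksatp{\lambda}$, so extension holds unconditionally and the relation is fully good.

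For (1), with $\kappa > \LS (\K)$ strongly compact, I would build the relation directly from $<\kappa$-satisfiability on $\Ksatp{\kappa}$. Full $\LS (\K)$-tameness and shortness yields full $<\kappa$-tameness and shortness (restrictions of size $\le \LS (\K) < \kappa$ suffice), and since a strongly compact $\kappa$ is a fixed point of the beth function, stability---which follows from superstability---is equivalent to the failure of the $\kappa$-order property of length $\kappa$; thus the hypotheses of Theorem \ref{coheir-thm} hold and $<\kappa$-satisfiability has disjointness, symmetry, local character, transitivity and the $\kappa$-witness property. Theorem \ref{coheir-ext-lc} then supplies extension, so by Theorem \ref{coheir-thm} the relation is a stable independence relation on $\Ksatp{\kappa}$ (in particular it has uniqueness); existence follows from Proposition \ref{stable-indep-props}.(1), the good local character values read off the $<\kappa$-satisfiability bound, and full model continuity follows from the $\kappa$-witness property together with local character, giving fully good.

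In all three cases the genuine obstacle is the same: every axiom other than extension is a soft consequence of the uniform superstability-plus-tameness machinery, whereas extension is exactly where the lack of compactness bites. Each hypothesis---$\aleph_0$-type-locality, total categoricity, or strong compactness---is a targeted means of defeating that single failure, and the art of the proof is to phrase the common part so that in each case the remaining work reduces cleanly to producing one nonforking extension over an arbitrary (rather than saturated) model.
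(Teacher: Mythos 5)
Your treatment of part (2) matches the paper's: the $\aleph_0$-type-local alternative is exactly an application of Theorem \ref{fully-good-thm}, and the totally categorical alternative is handled, as in the paper, by observing that every model of $\Ksatp{\lambda}$ is Galois-saturated so that clause (1) of the almost-good extension property (Definition \ref{almost-good-def}) applies everywhere to the almost fully good relation furnished by Theorem \ref{almost-good-thm}.

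Part (1), however, has a genuine gap. You propose to take $<\kappa$-satisfiability itself as the fully good relation and to read off the good local character values and full model continuity directly from it. But the paper is explicit (in the discussion following Theorem \ref{almost-good-thm}) that this is not how those properties are obtained: the almost fully good relation is built by restricting coheir to $1$-types, passing through the successful-frame/domination machinery and Shelah's result on $\omega$-successful frames to recover $\clc{\alpha} = \alpha^+ + \aleph_0$ for longer types and full model continuity, and ``we do not necessarily get $<\kappa$-satisfiability back.'' Theorem \ref{coheir-thm} only gives \emph{some} local character for coheir, not the sharp bounds demanded by Definition \ref{good-indep-def}, and nothing in the listed properties of coheir (witness property plus local character) yields full model continuity --- that is a separate, deep input. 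The correct use of strong compactness, and the route the paper takes, is the reverse of yours: keep the almost fully good relation from Theorem \ref{almost-good-thm} (which already has the right local character and full model continuity), note that coheir has extension by Theorem \ref{coheir-ext-lc}, and invoke a canonicity argument in the spirit of Theorem \ref{indep-canon-thm} to identify the two relations on $\Ksatp{\kappa}$, thereby transferring extension to the almost fully good relation and upgrading it to fully good. Your final paragraph correctly diagnoses extension as the sole obstacle, but your execution of case (1) replaces the relation that has all the other properties by one that is only known to have extension.
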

\begin{proof}[Proof sketch]
  By Theorem \ref{almost-good-thm}, $\Ksatp{\lambda}$ is almost good, and in fact (as we have discussed) almost fully good. If $\K$ is totally categorical, all the models are Galois-saturated and hence by definition of almost fully good, $\K$ is fully good. If $\K$ is $\aleph_0$-type-local, then apply Theorem \ref{fully-good-thm}. Finally, if $\kappa > \LS (\K)$ is strongly compact, then the extension property for $<\kappa$-satisfiability holds (see Theorem \ref{coheir-ext-lc}) and using a canonicity result similar to Theorem \ref{indep-canon-thm} one can conclude that $\Ksatp{\kappa}$ is fully good.
\end{proof}

Since the existence of a strongly compact cardinal implies full tameness and shortness (see Theorem \ref{tamelc-fact}), we can state a version of the first part of Corollary \ref{fully-good-cor} as follows: 

\begin{thm}\label{fully-good-strong-compact}
If $\K$ is an AEC which is superstable in every $\mu \ge \LS (\K)$ and $\kappa > \LS (\K)$ is a strongly compact cardinal, then $\Ksatp{\lambda}$ is fully good, where $\lambda := \left(2^{\LS (\K)}\right)^{+4}$.
\end{thm}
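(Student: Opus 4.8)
The plan is to extract all the locality we need from the strongly compact $\kappa$ and then feed it into the superstable independence machinery that underlies Corollary \ref{fully-good-cor}, using $\kappa$ once more at the end to repair extension. First I would invoke Theorem \ref{tamelc-fact}: since $\LS(\K) < \kappa$ and $\kappa$ is strongly compact, $\K$ is fully $<\kappa$-tame and -type short. In particular $\K$ is $<\kappa$-type short, and as $\kappa$ is regular the remark following Definition \ref{type-loc-def} gives that $\K$ is $\kappa$-type-local. Next, $\mu$-superstability for every $\mu \ge \LS(\K)$ builds in amalgamation, joint embedding, and no maximal models in each $\mu$, so $\K_{\ge \LS(\K)}$ has amalgamation. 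Two numerical points will matter: $\kappa$ is a strong limit, so $\lambda = \left(2^{\LS(\K)}\right)^{+4} < \kappa$; and the conclusion sought is a genuine strengthening of the threshold $\kappa$ appearing in Corollary \ref{fully-good-cor}.(\ref{universalclasses})-style statements, the improvement from $\kappa$ down to $\lambda$ being exactly what superstability in \emph{every} $\mu$ should buy.

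With this in place I would reconstruct the almost-good relation of Theorem \ref{almost-good-thm} on $\Ksatp{\lambda}$. The one hypothesis of that theorem not available verbatim is full $\LS(\K)$-tameness and shortness; we only have tameness at $\kappa$, which is weaker. The substitute is that superstability is assumed at \emph{every} cardinal rather than only at $\LS(\K)$, so there is no need for the upward transfer of Theorem \ref{ss-implies-all}: at each of the (finitely many, all $<\lambda<\kappa$) levels entering the splitting-based construction the superstability hypothesis directly supplies the good frame, its local character, and its $\omega$-successfulness, while uniqueness of nonforking extensions \emph{within each cardinal} is tameness-free by Theorem \ref{gen-unique-dns-fact}. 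Lengthening and propagating these frames as in the proof of Theorem \ref{almost-good-thm} produces an independence relation $\dnf$ on $\Ksatp{\lambda}$ that is almost fully good. Restricting $\dnf$ to types of length one yields a good $(\ge\lambda)$-frame, so $\dnf$ already has the full extension property for $1$-types over \emph{all} models of $\Ksatp{\lambda}$, not merely the saturated ones.

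It then remains to upgrade \emph{almost} fully good to fully good, that is, to establish extension for types of arbitrary length over arbitrary (not necessarily fully saturated) bases in $\Ksatp{\lambda}$; this is where $\kappa$ re-enters and is the main obstacle. Strong compactness gives, via Theorem \ref{coheir-ext-lc}, the extension property for $<\kappa$-satisfiability on $\Ksatp{\kappa}$, where by Theorem \ref{coheir-thm} $<\kappa$-satisfiability is a stable (indeed good) independence relation. By canonicity of stable independence (Theorem \ref{indep-canon-thm}), $<\kappa$-satisfiability agrees with $\dnf$ on the $\kappa$-saturated models, so $\dnf$ inherits full extension for all lengths at that level. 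The delicate point I expect to absorb most of the work is pushing this down to the merely $\lambda$-saturated models and up to types of unbounded length simultaneously: I would combine the $1$-type extension coming from the good frame with $\kappa$-type-locality, playing the role that $\aleph_0$-type-locality plays in Theorem \ref{fully-good-thm}, to glue short nonforking extensions into nonforking extensions of long types, using the witness property of $\dnf$ (Proposition \ref{stable-indep-props}) to reduce consistency of the gluing to its small restrictions. Verifying that the glued extension is genuinely nonforking over a base that is only $\lambda$-saturated — rather than over a fully saturated base, where extension is built into almost goodness — is precisely the step where care is required. Once full extension is secured, $\dnf$ satisfies every clause of Definition \ref{good-indep-def} together with full model continuity, so $\Ksatp{\lambda}$ is fully good.
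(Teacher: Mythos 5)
Your first and last steps do track the paper: the proof there is simply to invoke Theorem \ref{tamelc-fact} to obtain full $<\kappa$-tameness and -type shortness and then to cite the first part of Corollary \ref{fully-good-cor}, whose strongly compact case is handled exactly as you describe at the end (extension for $<\kappa$-satisfiability via Theorem \ref{coheir-ext-lc}, then canonicity). The genuine gap is in your middle step, where you propose to run the construction of Theorem \ref{almost-good-thm} at the level $\lambda = \left(2^{\LS (\K)}\right)^{+4} < \kappa$ with ``superstable in every $\mu$'' standing in for full $\LS(\K)$-tameness and shortness. That substitution does not work. $\mu$-superstability (Definition \ref{ss-def}) is a local character statement about splitting; it does not ``directly supply'' a good frame, and it certainly does not supply $\omega$-successfulness, which in the paper's development comes from the Makkai--Shelah domination argument for $<\kappa$-satisfiability and therefore already presupposes full tameness and type shortness at the level where the frame lives. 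Moreover, Theorem \ref{gen-unique-dns-fact} gives uniqueness of nonsplitting extensions only within a single cardinal and only over universal extensions, whereas the lengthening and upward propagation performed in Theorem \ref{almost-good-thm} rest on cross-cardinal uniqueness; by Theorem \ref{good-frame-transfer-2} (see also Example \ref{examples-subsec}.(\ref{stabletame})) that uniqueness transfer is \emph{equivalent} to $\lambda$-tameness, and $<\kappa$-tameness for $\kappa > \lambda$ does not imply $\lambda$-tameness. So superstability at every cardinal cannot close this hole.

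That said, you have put your finger on a real imprecision in the survey's statement: the locality supplied by the strongly compact lives at $\kappa$, while the Hanf number $\left(2^{\LS (\K)}\right)^{+4}$ is computed from $\LS(\K)$, so a literal application of Corollary \ref{fully-good-cor} only yields the conclusion at a threshold computed from $\kappa$ (equivalently, for $\K_{\ge \kappa}$, whose Löwenheim-Skolem-Tarski number is $\kappa$). The intended repair is to apply the corollary where tameness is actually available, not to attempt a tameness-free reconstruction of the frame machinery below $\kappa$.
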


Note that in all of the results above, we are restricting ourselves to classes of sufficiently saturated models. This is related to the fact that the uniqueness property is required in the definition of a good independence relation, i.e.\ all types must be stationary. But what if we relax this requirement? Can we obtain an independence relation that specifies what it means to fork over an arbitrary set? A counterexample of Shelah \cite[Section 4]{hyttinen-lessmann} shows that this cannot be done in general. However this is possible for universal classes:

\begin{thm}\label{univ-classes-goodness}
  If $\K$ is an almost fully good universal class, then:

  \begin{enumerate}
    \item $\K$ is fully good.
    \item We can define $\nfs{A_0}{A}{B}{N}$ (for $A_0$ an arbitrary set) to hold if and only if $\nfs{\cl^{N} (A_0)}{\cl^{N} (A_0 A)}{\cl^{N} (A_0 B)}{N}$. Here $\cl^N$ is the closure under the functions of $N$. This has the expected properties (extension, existence, local character).
    \item This also has the finite witness property: $\nfs{A_0}{A}{B}{N}$ if and only if $\nfs{A_0}{A'}{B'}{N}$ for all $A' \subseteq A$, $B' \subseteq B$ finite.
  \end{enumerate}
\end{thm}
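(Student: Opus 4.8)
The whole argument rests on two features of a universal class $\K$: by Theorem \ref{tame-uc} it is fully $<\aleph_0$-tame and -type short, and it carries a closure operation $\cl$ that is \emph{finitary} (each element of $\cl^N(A_0)$ is a term in finitely many elements of $A_0$), \emph{absolute} ($\cl^{N_0}(A_0) = \cl^N(A_0)$ whenever $A_0 \subseteq N_0 \lea N$), and \emph{$\K$-valued} ($\cl^N(A_0) \lea N$). For part (1), note that the only clause separating \emph{almost} good from good is the extension property: by Definition \ref{almost-good-def}, the given relation $\nf$ already has extension for types of length $\le \LS(\K)$ over \emph{all} models, so its restriction to such lengths is a fully good $\LS(\K)$-ary independence relation. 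Since $\K$ is $<\aleph_0$-type short it is $\aleph_0$-type-local (the remark after Definition \ref{type-loc-def}), so I can run the type-locality upgrade from the proof of Theorem \ref{fully-good-thm}: build a nonforking extension of a type of arbitrary length coordinate by coordinate, taking the canonical (nonforking) extension at each successor and invoking $\aleph_0$-type-locality to pass through limits. This yields extension for all lengths, so $\nf$ is good; full model continuity is untouched, so it is in fact fully good.

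Before handling parts (2)--(3) I would record two facts about $\nf$ on \emph{models}. First, a fully good relation is in particular a stable independence relation, so by Proposition \ref{stable-indep-props}.(\ref{tameshort-witness}) together with full $<\aleph_0$-tameness and shortness, $\nf$ has the $<\aleph_0$-witness property. Second, and this is the crux, I need a \emph{closure-invariance} lemma: for models $M_0 \lea M \lea N$ and a set $D \subseteq |N|$, if $\nfs{M_0}{D}{M}{N}$ then $\nfs{M_0}{\cl^N(M_0 D)}{M}{N}$ (and symmetrically on the right). The point is that a single generated element $e = \tau(\bar m_0, \bar d)$, with $\bar m_0 \in M_0$ and $\bar d$ enumerating part of $D$, can be adjoined on the left: the nonforking extension $q$ to $M$ of $\gtp(\bar d e / M_0; N)$ restricts on the $\bar d$-coordinates, by uniqueness, to $\gtp(\bar d / M; N)$, and since any $M_0$-fixing map commutes with $\tau$ the $e$-coordinate of $q$ is forced to equal $\tau(\bar m_0, \bar d) = e$; hence $q = \gtp(\bar d e / M; N)$, which therefore does not fork over $M_0$. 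Finitely many generated elements are handled the same way, and the $<\aleph_0$-witness property assembles these into the full closure. This lemma is where full extension (from part (1)), uniqueness, and the term structure of universal classes are all used at once, and I expect it to be the main obstacle.

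Part (2) is then bookkeeping through $\cl$. Well-definedness and invariance of the set relation are inherited from those of $\nf$ on models. Existence, $\nfs{A_0}{A}{A_0}{N}$, unfolds to $\nfs{\cl^N(A_0)}{\cl^N(A_0 A)}{\cl^N(A_0)}{N}$, which is model-existence with base $\cl^N(A_0)$. For local character of $p = \gtp(\ba / B; N)$, put $A := \ran(\ba)$ and apply model local character ($\slc{\ell(\ba)}(\nf) < \infty$) inside the model $\cl^N(B)$ to obtain a small submodel $M_0 \lea \cl^N(B)$ with $\nfs{M_0}{A}{\cl^N(B)}{N}$; finitariness of $\cl$ gives $B_0 \subseteq B$ with $|B_0| \le \|M_0\|$ and $M_0 \lea \cl^N(B_0)$, so base monotonicity yields $\nfs{\cl^N(B_0)}{A}{\cl^N(B)}{N}$, and the closure-invariance lemma upgrades the left side to $\cl^N(B_0 A)$, giving exactly $\nfs{B_0}{A}{B}{N}$ with $B_0$ small. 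Extension over sets reduces to the model extension property applied over the base $\cl^N(A_0)$ with domain replaced by $\cl^N(A_0 C)$, the left realizations being preserved by extension.

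For part (3), the finite witness property of the set relation transfers from the $<\aleph_0$-witness property of $\nf$ on models. One direction is monotonicity. Conversely, assume $\nfs{A_0}{A'}{B'}{N}$ for all finite $A' \subseteq A$, $B' \subseteq B$. To conclude $\nfs{\cl^N(A_0)}{\cl^N(A_0 A)}{\cl^N(A_0 B)}{N}$ it suffices, by the model witness property, to check $\nfs{\cl^N(A_0)}{C}{D}{N}$ for arbitrary finite $C \subseteq \cl^N(A_0 A)$, $D \subseteq \cl^N(A_0 B)$; by finitariness of $\cl$ there are finite $A'' \subseteq A$, $B'' \subseteq B$ with $C \subseteq \cl^N(A_0 A'')$ and $D \subseteq \cl^N(A_0 B'')$, and the hypothesis $\nfs{A_0}{A''}{B''}{N}$ together with left and right monotonicity delivers $\nfs{\cl^N(A_0)}{C}{D}{N}$. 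This is precisely where the genuinely universal-class input, the finitary and canonical closure, rescues a forking relation over arbitrary sets that, by Shelah's counterexample \cite[Section 4]{hyttinen-lessmann}, cannot exist in a general AEC.
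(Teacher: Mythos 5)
The survey itself gives no proof of this theorem: it is stated and attributed (in Section \ref{hist-sec}) to \cite[Appendix C]{ap-universal-v9}, with only the remark that admitting intersections suffices, so there is no in-paper argument to compare yours against line by line. Judged on its own terms, your reconstruction is coherent and assembles exactly the ingredients the survey supplies: Theorem \ref{tame-uc} for full $<\aleph_0$-tameness and shortness, the remark after Definition \ref{type-loc-def} for $\aleph_0$-type-locality, the machinery behind Theorem \ref{fully-good-thm} for part (1), and Proposition \ref{stable-indep-props}.(\ref{tameshort-witness}) for the $<\aleph_0$-witness property. Your closure-invariance lemma is correctly identified as the crux of parts (2) and (3), and its proof is sound: in a universal class equality of Galois types over a model is witnessed by an isomorphism of the generated substructures, such an isomorphism commutes with the term functions, so the image of $\tau(\bar m_0,\bar d)$ is forced, and uniqueness plus the witness property assemble the full closure. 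Finitariness of $\cl$ is likewise the right input for part (3).

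Two caveats. First, in part (1) the phrase ``taking the canonical (nonforking) extension at each successor'' compresses the genuinely hard step: when appending a coordinate to a nonforking type whose length already exceeds $\LS(\K)$, extension is at that stage only available for types of length at most $\LS(\K)$, so the successor step must be routed through symmetry, transitivity over the generated model $\cl(M\bb)$, and extension for short types; there is no one-step canonical extension on the left. Since you explicitly defer to the proof of Theorem \ref{fully-good-thm} (which the survey also does not spell out), this reads as a citation rather than a gap, but you should be aware that this is where the work lives. Second, your argument for part (3) genuinely requires the closure to be finitary, so it does not by itself deliver the generalization in the remark following the theorem (classes merely admitting intersections); that is consistent with the theorem as stated, but the source's argument must be organized somewhat differently to cover that case.
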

\begin{remark}
  It is enough to assume that $\K$ admits intersections, i.e.\ for any $N \in \K$ and any $A \subseteq |M|$, $\bigcap\{M \lea N \mid A \subseteq |M|\} \lea N$.
\end{remark}

\subsection{Categoricity}\label{categ-tame-subsec}

One of the first marks made by tame AEC was the theorem by Grossberg and VanDieren \cite{tamenessthree} that tame AECs (with amalgamation) satisfy an \emph{upward} categoricity transfer from a successor (see Theorem \ref{gv-upward}). Combining it with Theorem \ref{shelah-succ-downward}, we obtain that tame AECs satisfy Shelah's eventual categoricity conjecture from a successor:

\begin{theorem}\label{succ-transfer}
  Let $\K$ be an $H_2$-tame AEC with amalgamation. If $\K$ is categorical in \emph{some} \emph{successor} $\lambda \ge H_2$, then $\K$ is categorical in \emph{all} $\lambda' \ge H_2$.
\end{theorem}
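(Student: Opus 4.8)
The plan is to combine the two categoricity transfers already recorded above: Shelah's downward transfer from a successor in AECs with amalgamation (Theorem \ref{shelah-succ-downward}) and the Grossberg--VanDieren upward transfer for tame AECs (Theorem \ref{gv-upward}). The idea is that the downward transfer produces \emph{two consecutive} categoricity cardinals near $H_2$, which is exactly the input the upward transfer consumes.

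First I would secure the structural hypotheses needed for the upward transfer. Since $\K$ is categorical in $\lambda$, any two models of size $\lambda$ are isomorphic, so $\K$ has joint embedding in $\lambda$; as $\lambda \ge H_2 \ge \LS(\K)$ and $\K$ has amalgamation, Corollary \ref{cor-jep-from-ap} yields some $\chi < H_1$ and a sub-AEC $\K^\ast \subseteq \K$ with $\LS(\K^\ast) = \LS(\K)$, having amalgamation, joint embedding, and \emph{no maximal models}, and with $\K_{\ge \chi} = (\K^\ast)_{\ge \chi}$. Passing to $\K^\ast$ costs nothing for the conclusion because $\chi < H_1 < H_2$, so $\K$ and $\K^\ast$ agree in every size $\ge H_2$; in particular $H_2$-tameness for $\K^\ast$ is inherited from $\K$ on this tail.

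Next I would invoke Theorem \ref{shelah-succ-downward}: amalgamation together with categoricity in the successor $\lambda \ge H_2$ gives categoricity of $\K$ in all $\mu \in [H_2, \lambda]$. The one genuinely delicate bookkeeping point is that this interval contains two consecutive cardinals, and this is where I would use that $H_2 = \beth_{(2^{H_1})^+}$ is a beth number indexed by the infinite cardinal $(2^{H_1})^+$, which as an ordinal is a limit; hence $H_2$ is a \emph{limit} cardinal. Since $\lambda$ is a \emph{successor} cardinal with $\lambda \ge H_2$, we must have $\lambda \ge H_2^+$, so $\{H_2, H_2^+\} \subseteq [H_2, \lambda]$ and $\K$, hence also $\K^\ast$, is categorical in both $H_2$ and $H_2^+$.

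Finally, $\K^\ast$ has amalgamation and no maximal models, is $H_2$-tame, and is categorical in $H_2$ and $H_2^+$; Theorem \ref{gv-upward} applied with base cardinal $H_2$ then gives that $\K^\ast$ is categorical in all $\mu \ge H_2$, and since $\K_{\ge H_2} = (\K^\ast)_{\ge H_2}$ this is precisely categoricity of $\K$ in all $\lambda' \ge H_2$. The substantive content is of course hidden inside the two cited transfers (Shelah's argument builds weak tameness from Ehrenfeucht--Mostowski models below the categoricity cardinal, while Grossberg--VanDieren run an upward induction using $\lambda$-tameness); within the present plan the only real obstacle is the interfacing, namely guaranteeing the two consecutive categoricity cardinals through the limit-cardinal observation and extracting the no-maximal-models hypothesis on a tail, neither of which follows from amalgamation alone.
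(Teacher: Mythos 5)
Your proposal is correct and is essentially the paper's own argument: the paper obtains Theorem \ref{succ-transfer} precisely by combining Shelah's downward transfer (Theorem \ref{shelah-succ-downward}) with the Grossberg--VanDieren upward transfer (Theorem \ref{gv-upward}). Your bookkeeping --- that $H_2$ is a limit cardinal so a successor $\lambda \ge H_2$ forces $\lambda \ge H_2^+$, giving the two consecutive categoricity cardinals, and the passage via Corollary \ref{cor-jep-from-ap} to a tail-equivalent subclass with no maximal models --- correctly fills in the interfacing details the paper leaves implicit.
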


Recall that categoricity implies superstability below the categoricity cardinal (Theorem \ref{shvi}). A powerful result is that assuming tameness, superstability also holds above, while this need not be true without tameness; recall the discussion after Theorem \ref{shvi}. In particular, Question \ref{sat-categ} has a positive answer: the model in the categoricity cardinal is Galois-saturated.

\begin{thm}\label{ss-categ}
  Let $\K$ be a $\LS (\K)$-tame AEC with amalgamation and no maximal models. If $\K$ is categorical in some $\lambda > \LS (\K)$, then:

  \begin{enumerate}
    \item $\K$ is superstable in every $\mu \ge \LS (\K)$.
    \item For every $\mu > \LS (\K)$, there is a good $\mu$-frame with underlying class $\Ksatp{\mu}$.
    \item The model of size $\lambda$ is Galois-saturated.
  \end{enumerate}
\end{thm}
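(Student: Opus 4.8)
The plan is to get parts (1) and (2) almost immediately from the superstability transfer machinery, and to spend the real effort on (3), which reduces to the existence of a Galois-saturated model of size $\lambda$.

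\emph{Parts (1) and (2).} First I would invoke the Shelah-Villaveces theorem (Theorem \ref{shvi}): since $\K$ has amalgamation, no maximal models, and is categorical in $\lambda > \LS(\K)$, it is $\mu$-superstable for every $\mu$ with $\LS(\K) \le \mu < \lambda$; in particular $\K$ is $\LS(\K)$-superstable. Now apply Theorem \ref{ss-implies-all} with base cardinal $\mu = \LS(\K)$, which is legitimate because $\K$ is $\LS(\K)$-tame and has amalgamation. Conclusion (\ref{ss-1}) gives that $\K$ is $\lambda'$-superstable for every $\lambda' > \LS(\K)$, which together with $\LS(\K)$-superstability yields superstability in every $\mu \ge \LS(\K)$, proving (1). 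Conclusion (\ref{ss-3}) gives, for every $\mu > \LS(\K)$, a good $\mu$-frame with underlying class $\Ksatp{\mu}$, which is exactly (2). I will record two by-products for use below: superstability in every cardinal gives Galois-stability in every $\mu \ge \LS(\K)$, and by conclusion (\ref{ss-2}) the union of an increasing chain of $\mu$-Galois-saturated models is $\mu$-Galois-saturated.

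\emph{Part (3), regular case.} Because $\K$ is categorical in $\lambda$, all models of size $\lambda$ are isomorphic, so it suffices to exhibit \emph{one} Galois-saturated model of size $\lambda$. If $\lambda$ is regular I would build an increasing continuous chain $\seq{M_i : i < \lambda}$ in $\K_{<\lambda}$ with $M_{i+1}$ realizing every type in $\gS(M_i)$: this is possible since Galois-stability in $\|M_i\|$ bounds $|\gS(M_i)| \le \|M_i\| < \lambda$, amalgamation lets these types be realized in an extension, and no maximal models keeps the construction inside $\K_{<\lambda}$. Then $M_\lambda := \bigcup_{i<\lambda} M_i$ has size $\lambda$, and since $\lambda$ is regular any $A$ of size $< \lambda$ lies in some $M_i$, so each $p \in \gS(A)$ extends to a type over $M_i$ realized in $M_{i+1}$; hence $M_\lambda$ is $\lambda$-Galois-saturated.

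\emph{Part (3), singular case} is the main obstacle. Writing $\kappa := \cf{\lambda}$, I would fix regular cardinals $\mu_i \in (\LS(\K), \lambda)$ for $i < \kappa$, strictly increasing and cofinal in $\lambda$ with each $\mu_i > \kappa$, and build an increasing continuous chain $\seq{N_i : i < \kappa}$ in which each $N_i$ is Galois-saturated of size $\mu_i$. Successor steps use the regular case (a saturated model of regular size $\mu_{i+1}$ extending $N_i$), while at a limit stage $i$ the chain theorem (conclusion (\ref{ss-2})) applied to each tail $\seq{N_j : j_0 \le j < i}$ shows $N_i = \bigcup_{j<i} N_j$ is $\mu_{j_0}$-Galois-saturated for every $j_0 < i$; as $\|N_i\| = \sup_{j<i} \mu_j$ is a \emph{limit} cardinal, this upgrades to full $\|N_i\|$-saturation. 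The same argument at the top, applied to the tails of $\seq{N_i : i < \kappa}$, shows $N := \bigcup_{i<\kappa} N_i$ is $\mu_{i_0}$-Galois-saturated for every $i_0 < \kappa$; since $\lambda$ is a (singular, hence limit) cardinal, any set of size $< \lambda$ has size below some $\mu_{i_0}$, so $N$ is $\lambda$-Galois-saturated of size $\lambda$. By categoricity the model of size $\lambda$ is isomorphic to $N$, and hence Galois-saturated. The delicate point throughout is exactly this upgrading of ``$\mu'$-saturated for all $\mu' < \theta$'' to ``$\theta$-saturated'' at limit cardinals, which is what lets the chain theorem (\ref{ss-2}) carry the singular case.
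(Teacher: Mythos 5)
Your proof is correct. Parts (1) and (2) are handled exactly as in the paper: Shelah--Villaveces gives $\LS(\K)$-superstability and Theorem \ref{ss-implies-all} transfers everything up. For part (3), however, you take a genuinely different route. The paper's argument is shorter and uses categoricity more aggressively: since $\K$ is Galois-stable in $\lambda$ (from $\lambda$-superstability), for each $\mu < \lambda$ one builds a $\mu^+$-Galois-saturated model \emph{of size $\lambda$} by a chain of length $\mu^+$ of models of size $\lambda$, each realizing all types over its predecessor; categoricity then identifies all these models with \emph{the} model of size $\lambda$, which is therefore $\mu^+$-saturated for every $\mu < \lambda$, hence $\lambda$-saturated. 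This needs nothing beyond stability in $\lambda$ and amalgamation --- in particular it makes no use of the chain theorem \ref{ss-implies-all}.(\ref{ss-2}) and requires no case split on $\cf{\lambda}$. You instead construct a single $\lambda$-Galois-saturated model directly, which forces the regular/singular dichotomy and, in the singular case, leans on (\ref{ss-2}) to keep limit stages saturated, together with the observation that ``$\mu'$-saturated for all $\mu' < \theta$'' upgrades to ``$\theta$-saturated'' at limit cardinals. Your construction is sound (and has the mild advantage of only using stability \emph{below} $\lambda$ in the regular case), but it imports heavier machinery than the problem requires; the paper's trick of building many partially saturated models and letting categoricity glue them together is the more economical argument and is worth internalizing.
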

\begin{proof} \
  \begin{enumerate}
  \item By Theorem \ref{shvi}, $\K$ is superstable in $\LS (\K)$. Now apply Theorem \ref{ss-implies-all}.
  \item As above, using Theorem \ref{ss-implies-all}.
  \item $\K$ is $\lambda$-superstable, so in particular Galois-stable in $\lambda$. It is not hard to build a $\mu^+$-Galois-saturated model in $\lambda$ for every $\mu < \lambda$ so the result follows from categoricity.
  \end{enumerate}
\end{proof}

Theorem \ref{ss-categ} allows one to show that a tame AEC categorical in some cardinal is categorical in a closed unbounded set of cardinals of a certain form. This already plays a key role in Shelah's proof of Theorem \ref{shelah-succ-downward}. The key is what we call Shelah's omitting type theorem, a refinement of Morley's omitting type theorem.  Note that a version of this theorem is also true without tameness, but removing the tameness assumption changes the condition on $p$ being omitted to requiring that the small approximations to $p$ be omitted\footnote{In the sense that each element omits \emph{some} small approximation of $p$.}.

\begin{thm}[Shelah's omitting type theorem]\label{shelah-omit-type}
  Let $\K$ be a $\LS (\K)$-tame AEC with amalgamation. Let $M_0 \lea M$ and let $p \in \gS (M_0)$. Assume that $p$ is omitted in $M$. If $\|M_0\| \ge \LS (\K)$ and $\|M\| \ge \beth_{\left(2^{\LS (\K)}\right)^+} (\|M_0\|)$, then 
  there is a non-$\LS (\K)^+$-Galois-saturated model in every cardinal.
\end{thm}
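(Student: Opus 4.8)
The plan is to combine Shelah's presentation theorem (Theorem \ref{pres-thm}) with Morley's method of omitting types — extracting a large set of indiscernibles and passing to Ehrenfeucht--Mostowski models — and to use $\LS(\K)$-tameness to convert the omission of the \emph{Galois} type $p$ into omission of a controlled, essentially syntactic family of small restrictions. First I would set the stage: write $\K = \text{PC}(T', \Gamma, L)$, fix an $L'$-expansion $M'$ of $M$ with $M' \models T'$ omitting $\Gamma$ and $M' \rest L = M$, and add $\|M_0\|$-many constants naming the elements of $M_0$. The expanded language has size $\|M_0\| \ge \LS(\K)$, while the data needed to distinguish small restrictions of Galois types is governed by the quantifier-free $\Ll_{\LS(\K)^+, \LS(\K)^+}(\bigL)$-types, of which there are at most $2^{\LS(\K)}$; this bookkeeping is exactly what produces the length $(2^{\LS(\K)})^+$ and the base $\|M_0\|$ in the Hanf number $\beth_{(2^{\LS(\K)})^+}(\|M_0\|)$.

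The conceptual heart is the translation step, and it is where tameness does all the work. By $\LS(\K)$-tameness (Definition \ref{tamedef}), for $a$ in any model $N \gea M_0$ we have $\gtp(a/M_0; N) = p$ \emph{iff} $\gtp(a/M_0'; N) = p \rest M_0'$ for every $M_0' \lea M_0$ with $\|M_0'\| \le \LS(\K)$. Hence the hypothesis ``$M$ omits $p$'' is upgraded, for free, to the statement that \emph{every element of $M$ omits some small restriction of $p$} — precisely the stronger form of the hypothesis that the tameness-free version of the theorem must assume outright, as noted in the footnote to the statement. Via the Galois Morleyization (Theorem \ref{morleyization-thm}) this omission can be recorded as the omission of a suitable set $\Sigma$ of quantifier-free $\bigL$-formulas over the named constants, putting it in the form the EM machinery can process.

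Next I would invoke the Erd\H{o}s--Rado theorem and Morley's method: since $\|M\| \ge \beth_{(2^{\LS(\K)})^+}(\|M_0\|)$, the expanded $M'$ contains a large set of $L'$-indiscernibles over $M_0$ from which one extracts a blueprint $\Phi$ such that, for every linear order $I$, the model $\EM_{L(\K)}(I, \Phi)$ models $T'$ and omits $\Gamma$ (so its $L$-reduct lies in $\K$ and contains $M_0$), and moreover every element of $\EM_{L(\K)}(I, \Phi)$ still omits some small restriction of $p$. Taking $|I| = \lambda$ then yields a model $N_\lambda \in \K_\lambda$ omitting $p$, for every cardinal $\lambda$.

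The remaining step — and the place where I expect the main difficulty — is to deduce that $N_\lambda$ is not merely non-$\|M_0\|^+$-Galois-saturated (which is immediate, since it omits a type over $M_0$) but genuinely non-$\LS(\K)^+$-Galois-saturated. This requires \emph{localizing} the omission to a single submodel $M_* \lea N_\lambda$ with $\|M_*\| \le \LS(\K)$ together with a Galois type over it realized by no element of $N_\lambda$. The tool is that, by indiscernibility, $N_\lambda$ realizes only few Galois types over $M_0$ (each determined by a term of $\Phi$ together with an order type of finitely many indiscernibles), whereas an $\LS(\K)^+$-Galois-saturated model would — via Theorem \ref{mod-homog-sat} and tameness — have to realize the full small-restriction family of $p$ and hence, by the equivalence above, realize $p$ itself, contradicting the construction. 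Coordinating this counting argument with the EM structure so that the non-syntactic omission of $p$ both survives to all cardinalities \emph{and} pins down to a domain of size $\LS(\K)$ is the delicate point; tameness is used twice here, first to transport the omission and then to convert the resulting failure of saturation into an omitted type over a small set.
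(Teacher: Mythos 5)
The survey does not actually prove Theorem \ref{shelah-omit-type}; it cites \cite[Lemma II.1.6]{sh394} and \cite[Proposition 3.3]{makkaishelah}, and your overall architecture --- presentation theorem, tameness to upgrade ``$M$ omits $p$'' to ``every element of $M$ omits some $\LS(\K)$-sized restriction of $p$'', then Erd\H{o}s--Rado and Ehrenfeucht--Mostowski models --- is the right one and matches those sources. You have also correctly located where the difficulty lives. But your resolution of that difficulty contains a genuine gap.

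The gap is the final inference: ``an $\LS(\K)^+$-Galois-saturated model would have to realize the full small-restriction family of $p$ and hence, by the equivalence above, realize $p$ itself.'' The equivalence you invoke is about a \emph{single} element $a$: $\gtp(a/M_0)=p$ iff $a$ realizes $p\rest M_0'$ for \emph{all} small $M_0'\lea M_0$. An $\LS(\K)^+$-Galois-saturated model only guarantees that each small restriction $p\rest M_0'$ is realized by \emph{some} element, possibly a different one for each $M_0'$; nothing forces these realizations to cohere into one element realizing $p$. That coherence would require an upper bound for the directed system of small restrictions together with $\|M_0\|^+$-saturation --- and $\LS(\K)$-tameness is a locality principle (at most one upper bound, Definition \ref{tame-var-def}.(3)), not a compactness principle (at least one upper bound); the paper explicitly notes that global compactness of Galois types has large-cardinal strength. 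A decisive sanity check: elementary classes are $(<\aleph_0)$-tame, and your inference applied there would say that every $\aleph_1$-saturated model of a first-order theory realizes every type over every submodel, i.e.\ is fully saturated, which is false. What the proof actually has to produce is a \emph{single} small $N_0\lea M_0$ such that the \emph{single} type $p\rest N_0$ is omitted in $\EM(I,\Phi)$ for every $I$. This uniformization cannot be done after the fact (different elements of $M$ omit different small restrictions, and there are up to $\|M_0\|^{\LS(\K)}$ candidates, too many to homogenize over in $(2^{\LS(\K)})^+$ steps of the na\"ive Morley argument); it has to be woven into the Erd\H{o}s--Rado thinning, which is also where the base $\|M_0\|$ of the $\beth$-iteration is consumed and why the skeleton ends up indiscernible only over an $\LS(\K)$-sized piece of $M_0$ rather than over all of $M_0$ (full indiscernibility over $M_0$ would cost $\beth_{(2^{\|M_0\|})^+}$, which is not available). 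Relatedly, your counting claim that $\EM(I,\Phi)$ realizes few Galois types over all of $M_0$ presupposes exactly that unavailable indiscernibility. So the skeleton of your argument is sound, but the step that actually delivers the stated conclusion --- failure of $\LS(\K)^+$-saturation witnessed over a small model, in every cardinal --- is missing.
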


\begin{cor}\label{omit-type-cor}
  Let $\K$ be a $\LS (\K)$-tame AEC with amalgamation and no maximal models. If $\K$ is categorical in \emph{some} $\lambda > \LS (\K)$, then $\K$ is categorical in all cardinals of the form $\beth_{\delta}$, where $\left(2^{\LS (\K)}\right)^+$ divides $\delta$.
\end{cor}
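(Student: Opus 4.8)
The plan is to reduce categoricity at each target cardinal to the statement that \emph{every} model of that size is Galois-saturated, and then to forbid type-omitting via the contrapositive of Shelah's omitting type theorem (Theorem \ref{shelah-omit-type}), using the saturation of the model in the categoricity cardinal (Theorem \ref{ss-categ}.(3)) as the anchor.

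First I would isolate the cardinal arithmetic behind the special form. Writing $\sigma := \left(2^{\LS (\K)}\right)^+$, such a $\delta$ is a limit ordinal $\sigma \cdot \gamma$, so $\mu := \beth_\delta$ is the supremum of the $\beth_{\delta_0}$ with $\delta_0 < \delta$. Given $\nu < \mu$, choose $\delta_0 < \delta$ with $\nu \le \beth_{\delta_0}$ and write $\delta_0 = \sigma \cdot \gamma_0 + \rho$ with $\rho < \sigma$; since $\sigma$ is additively indecomposable, $\delta_0 + \sigma = \sigma \cdot \gamma_0 + \sigma = \sigma \cdot (\gamma_0 + 1) \le \sigma \cdot \gamma = \delta$, whence $\beth_\sigma (\nu) \le \beth_{\delta_0 + \sigma} \le \beth_\delta = \mu$. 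Thus the target cardinals are exactly the ones closed under the operation $\nu \mapsto \beth_\sigma (\nu)$ that governs Theorem \ref{shelah-omit-type}; note also that the least such $\mu$ is $H_1 > \LS (\K)$, so every target $\mu$ satisfies $\mu > \LS (\K)$.

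Next I would set up the main dichotomy. By Theorem \ref{ss-categ}, categoricity in $\lambda > \LS (\K)$ makes the unique model of size $\lambda$ Galois-saturated, hence $\LS (\K)^+$-Galois-saturated, so there is \emph{no} non-$\LS (\K)^+$-Galois-saturated model of size $\lambda$. Reading Theorem \ref{shelah-omit-type} contrapositively, this rules out any type-omitting witness of the relevant size: for every $M_0 \in \K$ (so $\|M_0\| \ge \LS (\K)$), every $p \in \gS (M_0)$, and every $N \gea M_0$ with $\|N\| \ge \beth_\sigma (\|M_0\|)$, the model $N$ must \emph{realize} $p$, since otherwise $N$ would be the hypothesis of Theorem \ref{shelah-omit-type} and would produce a non-$\LS (\K)^+$-Galois-saturated model in every cardinal, in particular in $\lambda$. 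Now fix a target $\mu = \beth_\delta$ and an arbitrary $N \in \K_\mu$. For any $M_0 \lea N$ with $\|M_0\| < \mu$ and any $p \in \gS (M_0)$, the closure computation gives $\beth_\sigma (\|M_0\|) \le \mu = \|N\|$, so $N$ realizes $p$; hence $N$ realizes all Galois types over its submodels of size $< \mu$, which (via Theorem \ref{mod-homog-sat}, reducing saturation over small sets to saturation over small submodels using amalgamation) means $N$ is $\mu$-Galois-saturated. Since $N$ was arbitrary, every model of size $\mu$ is saturated. Because categoricity in $\lambda$ yields joint embedding in $\lambda$, Corollary \ref{cor-jep-from-ap} gives joint embedding above some $\chi < H_1 \le \mu$, so any two $\mu$-Galois-saturated models of size $\mu$ are universal and model-homogeneous, hence isomorphic by the usual back-and-forth. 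Therefore $I (\K, \mu) = 1$.

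The main obstacle, and the only genuinely nontrivial ingredient, is Theorem \ref{shelah-omit-type} itself, which packages the Ehrenfeucht--Mostowski/Morley-style combinatorics; everything else is bookkeeping around it together with the ``saturation in the categoricity cardinal'' from Theorem \ref{ss-categ}. The one careful point is the reduction of saturation over arbitrary small sets to saturation over small submodels, so that Theorem \ref{shelah-omit-type} (stated for types over models) applies; this is routine given amalgamation and Theorem \ref{mod-homog-sat}.
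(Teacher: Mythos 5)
Your proposal is correct and follows essentially the same route as the paper: the paper's proof is exactly the contradiction form of your dichotomy (a non-Galois-saturated model of size $\beth_\delta$ would, via Theorem \ref{shelah-omit-type} and the divisibility hypothesis, produce a non-$\LS(\K)^+$-Galois-saturated model of size $\lambda$, contradicting Theorem \ref{ss-categ}.(3)), with the cardinal arithmetic, the reduction to types over submodels, and the uniqueness of saturated models left implicit. Your write-up simply makes that bookkeeping explicit.
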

\begin{proof}
  Let $\delta$ be divisible by $\left(2^{\LS (\K)}\right)^+$. If there is a model $M \in \K_{\beth_\delta}$ which is not Galois-saturated, then by Shelah's omitting type theorem we can build a non $\LS (\K)^+$-Galois-saturated model in $\lambda$. This contradicts Theorem \ref{ss-categ}.
\end{proof}

In section \ref{universal-class-sec}, a categoricity transfer was proven \emph{without} assuming that the categoricity cardinal is a successor. As hinted at there, this generalizes to tame AECs that \emph{have primes} (recall from Definition \ref{prime-def} that an AEC has primes if there is a prime model over every set of the form $M \cup \{a\}$):

\begin{thm}\label{event-categ-primes}
  Let $\K$ be an AEC with amalgamation and no maximal models. Assume that $\K$ is $\LS (\K)$-tame and has primes. If $\K$ is categorical in some $\lambda > \LS (\K)$, then $\K$ is categorical in all $\lambda' \ge \min (\lambda, H_1)$.
\end{thm}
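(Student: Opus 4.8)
The plan is to reduce everything to Theorem \ref{unidim-from-categ}, whose hypotheses already package the orthogonality/unidimensionality apparatus together with the Grossberg--VanDieren upward transfer. The only genuine work is to manufacture a \emph{categorical} good frame at the cardinal $\mu_\ast := \min(\lambda, H_1)$ and to exhibit a categoricity cardinal strictly above it; tameness and primes are then fed in essentially verbatim.

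First I would apply Theorem \ref{ss-categ}: from $\LS(\K)$-tameness, amalgamation, no maximal models, and categoricity in $\lambda > \LS(\K)$, we obtain that $\K$ is superstable in every $\mu \geq \LS(\K)$, that for every $\mu > \LS(\K)$ there is a good $\mu$-frame with underlying class $\Ksatp{\mu}$, and that the model of size $\lambda$ is Galois-saturated. Next, Corollary \ref{omit-type-cor} (Shelah's omitting type theorem) gives that $\K$ is categorical in every $\beth_\delta$ with $(2^{\LS(\K)})^+ \mid \delta$; these cardinals form a proper class, and taking $\delta = (2^{\LS(\K)})^+$ shows in particular that $\K$ is categorical in $H_1 = \beth_{(2^{\LS(\K)})^+}$. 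So when $\lambda > H_1$ the class is already categorical in $H_1$.

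With these in hand I would set $\mu_\ast := \min(\lambda, H_1)$ and observe that $\K$ is categorical in $\mu_\ast$ with $\mu_\ast > \LS(\K)$: if $\lambda \leq H_1$ this is the hypothesis, and if $\lambda > H_1$ it is the previous paragraph (and $H_1 > \LS(\K)$ always). Since $\mu_\ast$ is itself a categoricity cardinal above $\LS(\K)$, re-applying Theorem \ref{ss-categ} with $\mu_\ast$ in the role of the categoricity cardinal shows the unique model of size $\mu_\ast$ is Galois-saturated; hence the underlying class of the good $\mu_\ast$-frame on $\Ksatp{\mu_\ast}$ coincides at size $\mu_\ast$ with all of $\K_{\mu_\ast}$. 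Thus $\K$ carries a \emph{categorical} good $\mu_\ast$-frame. Because the categoricity cardinals are unbounded, I can also fix a categoricity cardinal $\mu > \mu_\ast$: if $\mu_\ast = H_1 < \lambda$ take $\mu = \lambda$, and otherwise use the proper class supplied by Corollary \ref{omit-type-cor}.

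Finally I would invoke Theorem \ref{unidim-from-categ} with $\lambda := \mu_\ast$: by hypothesis $\K$ has primes, it is $\mu_\ast$-tame since $\mu_\ast \geq \LS(\K)$, it carries a categorical good $\mu_\ast$-frame, and it is categorical in $\mu > \mu_\ast$. The conclusion yields that $\K_{\mu_\ast}$ is unidimensional and that $\K$ is categorical in all $\mu' > \mu_\ast$; combined with categoricity in $\mu_\ast$ itself, this is categoricity in all $\lambda' \geq \min(\lambda, H_1)$, as desired. The step I expect to require the most care is exactly the passage from the good frame on the \emph{saturated} models $\Ksatp{\mu_\ast}$ delivered by Theorem \ref{ss-categ} to a good frame on the \emph{full} class $\K_{\mu_\ast}$ demanded by the prime-model and unidimensionality machinery; this forces the choice of $\mu_\ast$ as a categoricity cardinal (so that every model of that size is saturated), and is precisely why $H_1$ must enter the bound through Corollary \ref{omit-type-cor}.
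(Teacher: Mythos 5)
Your overall route is the one the paper intends: the survey only sketches Theorem \ref{event-categ-primes} (via the remark following the proof of Theorem \ref{main-thm-2} that the universal-class argument generalizes to tame AECs with primes), and your reduction --- Theorem \ref{ss-categ} for superstability, saturation, and a good frame; Corollary \ref{omit-type-cor} to place a categoricity cardinal at $H_1$ and to supply categoricity cardinals above $\mu_\ast := \min(\lambda, H_1)$; then Theorem \ref{unidim-from-categ} to finish --- is exactly that generalization. Your two supporting observations (that $\mu_\ast$ is itself a categoricity cardinal, and that saturation of the model of size $\mu_\ast$ identifies the underlying class $\Ksatp{\mu_\ast} \cap \K_{\mu_\ast}$ of the frame with all of $\K_{\mu_\ast}$, so the frame is categorical) are correct and are precisely the details the survey leaves implicit.

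There is, however, one genuine gap at the last step. Theorems \ref{not-unidim} and \ref{unidim-from-categ} require a categorical good $\mu_\ast$-frame \emph{for types of length at most $\mu_\ast$}, whereas Theorem \ref{ss-categ} (per Definition \ref{good-frame-def}) delivers only a $1$-ary frame, i.e.\ nonforking for types of length one. Plain $\LS(\K)$-tameness does not let you lengthen the frame: the survey's results in that direction (e.g.\ Theorem \ref{almost-good-thm}) need \emph{full} tameness and shortness, which you are not assuming. The missing ingredient is that the primes hypothesis is what buys the longer types: in the papers the survey cites, prime triples are used to produce domination triples, making the frame successful in the sense of Definition \ref{successful-def}, after which Shelah's machinery extends nonforking to types of length up to $\mu_\ast$. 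This is a substantive step --- it is where primes do work beyond merely making Definition \ref{perp-def} available --- and your proposal invokes Theorem \ref{unidim-from-categ} without supplying it. (The survey's own wrap-up of Theorem \ref{main-thm-2} elides the same point, but since the hypothesis ``for types at most $\lambda$'' is stated explicitly in the theorems you cite, it has to be discharged.)
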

\begin{remark}\label{event-categ-primes-rmk}
  A partial converse is true: if a fully tame and short AEC with amalgamation and no maximal models is categorical on a tail, then it has primes on a tail.
\end{remark}

We deduce Shelah's categoricity conjecture in homogeneous model theory (see Section \ref{examples-subsec}.(\ref{hommod})):

\begin{cor}\label{categ-homog}
  Let $D$ be a homogeneous diagram in a first-order theory $T$. If $D$ is categorical in a $\lambda > |T|$, then $D$ is categorical in all $\lambda' \ge \min (\lambda, \hanf{|T|})$.
\end{cor}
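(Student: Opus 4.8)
The plan is to realize the homogeneous diagram $D$ as an abstract elementary class and then verify, one by one, the four hypotheses of Theorem \ref{event-categ-primes}. Let $\K := \K_D$ be the class of $D$-models (those models of $T$ all of whose finite-tuple types lie in $D$), ordered by elementary substructure. This is an AEC in the language $L (T)$, so $\LS (\K) = |T|$: downward Löwenheim-Skolem stays inside $\K$ because any small elementary substructure of the $D$-homogeneous monster $\sea$ elementarily embeds back into $\sea$ and therefore realizes only $D$-types. With this identification $H_1 = \hanf{\LS (\K)} = \hanf{|T|}$, and the hypothesis $\lambda > |T|$ is exactly $\lambda > \LS (\K)$.

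From the facts recalled in Example \ref{examples-subsec}.(\ref{hommod}) I may take three of the four hypotheses for granted: amalgamation, joint embedding, and no maximal models hold for free (they come from the $D$-homogeneity of $\sea$ and the existence of proper $D$-extensions), and Galois types coincide with first-order syntactic types, which makes $\K$ fully $<\aleph_0$-tame and short and in particular $\LS (\K)$-tame. The only remaining hypothesis is that $\K$ \emph{has primes}, and this is where the bulk of the work lies. First I would extract stability from categoricity: since $\K$ is $\LS (\K)$-tame, has amalgamation and no maximal models, and is categorical in some $\lambda > \LS (\K)$, Theorem \ref{ss-categ} applies and shows that $\K$ is superstable, hence Galois-stable, in every $\mu \ge \LS (\K)$. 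Translated back, the diagram $D$ is stable in every power. I then invoke the theory of stable finite diagrams: over a stable $D$ there is a $D$-primary model over every set, in particular over $|M| \cup \{a\}$ whenever $M \lea N$ and $a \in |N| \setminus |M|$. Since Galois types are syntactic types, such a $D$-primary model satisfies precisely the prime-triple condition of Definition \ref{prime-def}, so $\K$ has primes.

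With all four hypotheses in hand, Theorem \ref{event-categ-primes} yields immediately that $\K$ is categorical in all $\lambda' \ge \min (\lambda, H_1) = \min (\lambda, \hanf{|T|})$, which is the conclusion of the corollary. The main obstacle is thus the existence of primes; everything else reduces to citing the standard structural facts about homogeneous diagrams and matching up the Löwenheim-Skolem-Tarski number. Within that obstacle, the genuinely substantive input is the existence of $D$-primary models over sets in a stable diagram, while the verification that a $D$-primary model witnesses a prime triple in the Galois-type formulation of Definition \ref{prime-def} is routine bookkeeping, made harmless by the fact that Galois and syntactic types agree in this setting.
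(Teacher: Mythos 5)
Your overall route is exactly the paper's: Corollary \ref{categ-homog} is meant to be read off from Theorem \ref{event-categ-primes} by viewing the $D$-models as an AEC $\K$ with $\LS(\K)=|T|$ and importing amalgamation, no maximal models, and the syntactic character of Galois types (hence full $<\aleph_0$-tameness and shortness) from Example \ref{examples-subsec}.(\ref{hommod}). Your identification of the Löwenheim-Skolem-Tarski number, your use of Theorem \ref{ss-categ} to get superstability (hence stability) in every $\mu \ge |T|$, and the final application of Theorem \ref{event-categ-primes} are all in order.

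The gap is in the one step you yourself flag as carrying the weight: that ``over a stable $D$ there is a $D$-primary model over every set'' which ``satisfies precisely the prime-triple condition of Definition \ref{prime-def}.'' The classical existence theorem for primary models in stable finite diagrams produces $(D,\mu)$-primary models, and these are prime only relative to $(D,\mu)$-homogeneous (equivalently, $\mu$-Galois-saturated) targets: the construction realizes types determined by restrictions to sets of size $<\mu$, and embedding it into $N'$ requires $N'$ to realize those restrictions, which an arbitrary $D$-model need not do. Definition \ref{prime-def}, by contrast, quantifies over \emph{arbitrary} $N'$ realizing the given type. Already in the degenerate case where $D$ is the set of all types (so $\K$ is elementary), a stable theory that is not totally transcendental need not have prime models over sets of the form $|M|\cup\{a\}$; one only gets $F^a_{\aleph_0}$-prime models, prime among $a$-saturated models. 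So ``stable $\Rightarrow$ has primes in the sense of Definition \ref{prime-def}'' is false as stated, and stability alone cannot be the input. The repair is to run the argument inside $\Ksatp{\mu}$ for a suitable $\mu$: there every model is $\mu$-Galois-saturated, so the $(D,\mu)$-primary models do witness Definition \ref{prime-def}, Theorem \ref{event-categ-primes} applies to give categoricity of the saturated models in all large enough cardinals, and one then transfers categoricity back to $\K$ itself using the saturation of the model in the categoricity cardinal (Theorem \ref{ss-categ}) together with Shelah's omitting type theorem (Theorem \ref{shelah-omit-type}; cf.\ Corollary \ref{omit-type-cor} and the proof sketch of Theorem \ref{succ-transfer-2}) --- this last step is where the threshold $\hanf{|T|}$ actually enters. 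As written, your proposal skips this detour and therefore does not establish the hypothesis ``$\K$ has primes'' for the class to which it applies Theorem \ref{event-categ-primes}.
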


Using a similar argument, we can also get rid of the hypothesis that $\K$ has primes if the categoricity cardinal is a successor. This allows us to obtain a downward transfer for tame AECs which improves on Theorem \ref{succ-transfer} (there $H_1$ was $H_2$). The price to pay is to assume more tameness.

\begin{thm}\label{succ-transfer-2}
  Let $\K$ be a $\LS (\K)$-tame AEC with amalgamation and no maximal models. If $\K$ is categorical in a \emph{successor} $\lambda > \LS (\K)^+$, then $\K$ is categorical in all $\lambda' \ge \min (\lambda, H_1)$.
\end{thm}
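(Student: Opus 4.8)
The plan is to mirror the universal-class argument of Section~\ref{universal-class-sec}, using the successor hypothesis in place of the existence of primes, and then to sharpen the resulting categoricity spectrum down to $H_1$ by an omitting-types argument that exploits \emph{full} $\LS(\K)$-tameness. First I would apply Theorem~\ref{ss-categ}: as $\K$ is $\LS(\K)$-tame, has amalgamation and no maximal models, and is categorical in $\lambda>\LS(\K)$, it is superstable in every $\mu\ge\LS(\K)$, carries a good $\mu$-frame on $\Ksatp{\mu}$ for every $\mu>\LS(\K)$, and its model of size $\lambda$ is Galois-saturated. Write $\lambda=\lambda_0^+$, so that $\lambda_0\ge\LS(\K)^+>\LS(\K)$, and work with the good $\lambda_0$-frame $\s$ on $\Ksatp{\lambda_0}$; since the saturated model of size $\lambda_0$ is unique, $\s$ is a categorical good frame in the sense of Section~\ref{universal-class-sec}.

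The core step is to establish that $\s$ is unidimensional. In the presence of primes this is handled by Theorem~\ref{not-unidim}, which turns a failure of unidimensionality into a nonalgebraic $p\in\gS(M)$ (with $M\in\K_{\lambda_0}$) for which the restricted frame on $\K_{\neg^\ast p}$ remains good; transferring that frame upward via Theorem~\ref{frame-transfer} (using that $\K_{\neg^\ast p}$ inherits $\lambda_0$-tameness and weak amalgamation) produces arbitrarily large models omitting $p$, contradicting the saturation of the categoricity model. Without primes the notion of (weak) orthogonality of Definition~\ref{perp-def} is unavailable, so the task is to reach the same contradiction from the successor form $\lambda=\lambda_0^+$: a failure of unidimensionality should yield a Vaughtian pair at $\lambda_0$, and because $\lambda$ is the successor of $\lambda_0$ and the model of size $\lambda=\lambda_0^+$ is saturated, the standard Vaughtian-pair construction produces a non-Galois-saturated model of size $\lambda$, contradicting Theorem~\ref{ss-categ}. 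With unidimensionality in hand, the frame transfer of Theorem~\ref{good-frame-transfer-2} gives a good $(\ge\lambda_0)$-frame and forces categoricity in $\lambda_0^{++}=\lambda^+$; having categoricity in the consecutive cardinals $\lambda$ and $\lambda^+$, the Grossberg--VanDieren transfer (Theorem~\ref{gv-upward}, where tameness is used decisively) yields categoricity in all $\lambda'\ge\lambda$.

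It remains to descend to $H_1$ when $\lambda>H_1$. Unidimensionality means that saturation is governed by a single type over a model of size $\LS(\K)$, so a non-Galois-saturated model of any size $\mu\ge H_1$ omits a type over some $M_0$ with $\|M_0\|=\LS(\K)$; Shelah's omitting type theorem (Theorem~\ref{shelah-omit-type}), applicable precisely because $\mu\ge\beth_{\left(2^{\LS(\K)}\right)^+}=H_1$, then yields a non-$\LS(\K)^+$-Galois-saturated model in cardinality $\lambda$, contradicting Theorem~\ref{ss-categ}. Hence every model of size $\mu\ge H_1$ is Galois-saturated, so $\K$ is categorical in every such $\mu$. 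This is the sharpening behind Corollary~\ref{omit-type-cor}; the use of full $\LS(\K)$-tameness, rather than the weak tameness extracted from categoricity in Theorem~\ref{succ-transfer} (whose Hanf-number bookkeeping only reaches $H_2$), is exactly what lowers the bound to $H_1$. Combining the two directions gives categoricity in all $\lambda'\ge\min(\lambda,H_1)$.

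The main obstacle is the primeless unidimensionality step: one must replace the orthogonality calculus of Theorem~\ref{not-unidim} by a genuine Vaughtian-pair analysis, and it is the successor form $\lambda=\lambda_0^+$ that makes this analysis possible --- the limit case being exactly what Section~\ref{universal-class-sec} was forced to treat using primes. A secondary point requiring care is the reduction, in the omitting-types step, of non-saturation at a general $\mu\ge H_1$ to an omitted type over a model of size $\LS(\K)$; this is not automatic (a model may realize the small restriction through a forking extension) and is where unidimensionality, together with $\LS(\K)$-tameness and the transferred good frame, is used to collapse all relevant types to a single small-domain type.
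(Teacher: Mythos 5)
Your overall architecture --- a good frame on the $\LS(\K)^+$-Galois-saturated models, unidimensionality extracted from the successor categoricity, an upward transfer, and Shelah's omitting type theorem to reach $H_1$ --- matches the paper's. But there is a genuine gap at the central step. You discard orthogonality on the grounds that Definition \ref{perp-def} needs primes, and propose to replace the orthogonality calculus by a Vaughtian-pair analysis carried out at the single cardinal $\lambda_0$ (where $\lambda = \lambda_0^+$). The paper does not discard orthogonality: it works with a primeless (good-frame) version of it, and the successor hypothesis enters only through the equivalence ``$\s$ is $\mu$-unidimensional if and only if $\Ksatp{\LS (\K)^+}$ is categorical in $\mu^+$,'' applied at $\mu = \lambda_0$. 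The load-bearing step is then the \emph{cross-cardinal transfer}: unidimensionality in some $\mu$ implies unidimensionality in all $\mu'$, proved by relating forking to orthogonality and using tameness. Your sketch has no substitute for this. Unidimensionality at $\lambda_0$ only yields categoricity (of the saturated class) at $\lambda_0^+ = \lambda$, which you already know; to ``force categoricity in $\lambda_0^{++} = \lambda^+$'' (the input Grossberg--VanDieren needs) you need unidimensionality at $\lambda$, and transferring ``no Vaughtian pairs'' from $\lambda_0$ to $\lambda$ is exactly the step the paper flags as requiring compactness in the first-order case; the frame transfer of Theorem \ref{good-frame-transfer-2} by itself does not provide it.

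The same missing transfer resurfaces in your descent to $H_1$. Shelah's omitting type theorem does show that every model of size at least $H_1$ is $\LS(\K)^+$-Galois-saturated (otherwise one manufactures a non-$\LS(\K)^+$-Galois-saturated model of size $\lambda$), but $\LS(\K)^+$-saturation of a model of size $\mu$ is far from full saturation, and Grossberg--VanDieren only covers $\mu \ge \lambda$. To conclude categoricity at $\mu \in [H_1, \lambda)$ you need $\Ksatp{\LS (\K)^+}$ to be categorical in $\mu$, which again comes from unidimensionality at all the relevant cardinals below $\lambda$ --- i.e., from the downward direction of the same orthogonality-based transfer. You flag this as a ``secondary point requiring care,'' but it is the same primary obstruction: without the forking--orthogonality calculus (or some genuine replacement for it), neither the upward step to $\lambda^+$ nor the downward step to $H_1$ goes through.
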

\begin{proof}[Proof sketch]
  Let us work in a good $(\ge \LS (\K)^+)$-frame $\s$ on $\Ksatp{\LS (\K)^+}$ (this exists by Theorems \ref{shvi}, \ref{ss-implies-all}, and \ref{good-frame-transfer-2}). As in Section \ref{orthog-univ}, we can define what it means for two types $p$ and $q$ to be orthogonal (written $p \perp q$) and say that $\s$ is \emph{$\mu$-unidimensional}\footnote{In this framework, this definition need not be equivalent to categoricity in the next successor but we use it for illustrative purpose.} if no two types are orthogonal. We can show that $\s$ is $\mu$-unidimensional if and only if $\Ksatp{\LS (\K)^+}$ is categorical in $\mu^+$, and argue by studying the relationship between forking and orthogonality that $\s$ is unidimensional in some $\mu$ if and only if it is unidimensional in all $\mu'$ (this uses tameness, since we are moving across cardinals). Thus $\Ksatp{\LS (\K)^+}$ is categorical in every successor cardinal, hence also (by a straightforward argument using Galois-saturated models) in every limit. We conclude by using a version of Morley's omitting type theorem to transfer categoricity in $\Ksatp{\LS (\K)^+}$ to categoricity in $\K$ (this is where the $H_1$ comes from).
\end{proof}

What if we do not want to assume that the AEC has primes or that it is categorical in a successor? Then the best known results are essentially Shelah's results from Section \ref{frame-sec}. We show how to obtain a particular case using the results presented in this section:

\begin{thm}\label{categ-conj-tame-short}
  Assume Claim \ref{claim-xxx} and $2^{\theta} < 2^{\theta^+}$ for every cardinal $\theta$. Let $\K$ be a fully $\LS (\K)$-tame and short AEC with amalgamation and no maximal models. If $\K$ is categorical in some $\lambda \ge H_1$, then $\K$ is categorical in all $\lambda' \ge H_1$.
\end{thm}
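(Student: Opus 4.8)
The plan is to realize $\K$ (on a tail) as the class of saturated models of an $\omega$-successful good frame placed \emph{low} in the cardinal hierarchy, run Shelah's transfer (Claim \ref{claim-xxx}) on that frame, and then push categoricity back onto all of $\K$ above $H_1$ using that large models are Galois-saturated. The improvement of the Hanf number to $H_1$ (compared with the $\hanf{\aleph_{\LS(\K)^+}}$ of Theorem \ref{categ-transfer-wgch}) should come precisely from the fact that full tameness and shortness let us build the frame, and certify its $\omega$-successfulness, below $H_1$, rather than first having to manufacture weak tameness near $\aleph_{\LS(\K)^+}$. First I would record the structural consequences of categoricity. Being fully $\LS(\K)$-tame and short, $\K$ is in particular $\LS(\K)$-tame, so from categoricity in $\lambda>\LS(\K)$ together with amalgamation and no maximal models, Theorem \ref{ss-categ} gives that $\K$ is $\mu$-superstable for every $\mu\ge\LS(\K)$, that there is a good $\mu$-frame on $\Ksatp{\mu}$ for every $\mu>\LS(\K)$, and that the model of size $\lambda$ is Galois-saturated. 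By Theorem \ref{ss-implies-all} each $\Ksatp{\mu}$ is closed under unions of chains and hence is an AEC, categorical in $\mu$ since saturated models of size $\mu$ are unique.

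Next I would fix a beth fixed point $\kappa$ with $\LS(\K)<\kappa<H_1$ (one exists as $H_1$ is a sufficiently closed limit cardinal of cofinality $(2^{\LS(\K)})^+$, so also $\kappa^{+\omega}<H_1\le\lambda$). On $\Ksatp{\kappa}$ I take the good $\kappa$-frame $\s$ induced by $<\kappa$-satisfiability, as produced in the proof of Theorem \ref{almost-good-thm}. Here full tameness and shortness, together with superstability, are essential: by the Makkai--Shelah domination argument recalled around Definition \ref{successful-def} (\cite{makkaishelah}), $\s$ is $\omega$-successful, and by the tameness-dependent equivalence of this notion with Shelah's, $\s$ is $\omega$-successful in the sense consumed by Claim \ref{claim-xxx}. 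The weak GCH hypothesis $2^{\theta}<2^{\theta^+}$ supplies exactly the arithmetic $2^{\kappa^{+n}}<2^{\kappa^{+(n+1)}}$ needed to invoke the Claim with parameter $\kappa$. To seed the Claim I also need $\Ksatp{\kappa^{+\omega}}$ categorical in some cardinal above $\kappa^{+\omega}$: the saturated model of size $\lambda$ is $\kappa^{+\omega}$-saturated and is the unique model of $\K$ of size $\lambda$, so $\K_\lambda=(\Ksatp{\kappa^{+\omega}})_\lambda$ is a single isomorphism type and $\lambda>\kappa^{+\omega}$. Claim \ref{claim-xxx} then yields that $\Ksatp{\kappa^{+\omega}}$ is categorical in every $\lambda''>\kappa^{+\omega}$.

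It remains to transfer this to categoricity of $\K$ above $H_1$, for which I would use Shelah's omitting type theorem (Theorem \ref{shelah-omit-type}). Suppose some $M\in\K_{\lambda'}$ with $\lambda'\ge H_1$ fails to be $\kappa^{+\omega}$-Galois-saturated; then it omits a type over some $M_0\lea M$ with $\|M_0\|<\kappa^{+\omega}<H_1$. A short cardinal computation gives $\beth_{\left(2^{\LS(\K)}\right)^+}(\|M_0\|)=\beth_{\left(2^{\LS(\K)}\right)^+}=H_1\le\lambda'=\|M\|$ (the first equality because $\left(2^{\LS(\K)}\right)^+$ is additively indecomposable and $\|M_0\|<H_1$), so Theorem \ref{shelah-omit-type} produces a non-$\LS(\K)^+$-Galois-saturated model in every cardinal, in particular in $\lambda$, where the unique model is saturated---a contradiction. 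Hence every model of size $\ge H_1$ is $\kappa^{+\omega}$-saturated, so $\K_{\lambda'}=(\Ksatp{\kappa^{+\omega}})_{\lambda'}$, a single isomorphism type for each $\lambda'>\kappa^{+\omega}$; thus $\K$ is categorical in every $\lambda'\ge H_1$.

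The main obstacle is the middle step: producing the $\omega$-successful good frame low enough (below $H_1$) and certifying that it is $\omega$-successful in the exact technical sense that Claim \ref{claim-xxx} requires. This rests on the Makkai--Shelah successfulness argument and on the tameness-dependent equivalence between the successful/$\omega$-successful notions used here and Shelah's; it is here that full tameness and shortness do the real work, since in their absence one is forced up to $\aleph_{\LS(\K)^+}$ to obtain even weak tameness. Everything else---the structural consequences of categoricity, the seeding of the Claim by the top categoricity cardinal, and the omitting-types transfer back---should be comparatively routine given the cited machinery.
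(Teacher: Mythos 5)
Your overall architecture matches the paper's proof: derive superstability, build an $\omega$-successful good frame below $H_1$, feed Claim \ref{claim-xxx} with the saturated model in the categoricity cardinal, and transfer back via omitting types. However, there is a genuine gap in the middle step, precisely the one you flag as the crux. You place the frame at a beth fixed point $\kappa = \beth_\kappa$ with $\LS(\K) < \kappa < H_1$, justifying its existence by saying $H_1$ is ``sufficiently closed.'' This is false: $H_1 = \beth_{(2^{\LS(\K)})^+}$ is a strong limit but is \emph{not} closed under $\alpha \mapsto \beth_\alpha$, and in fact contains no beth fixed point above $\LS(\K)$. Indeed, $\beth_\kappa < \beth_{(2^{\LS(\K)})^+}$ forces $\kappa \le 2^{\LS(\K)}$, while any beth fixed point $\kappa > \LS(\K)$ satisfies $\kappa = \beth_\kappa \ge \beth_{\LS(\K)^+} > 2^{\LS(\K)}$. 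So the first beth fixed point above $\LS(\K)$ already exceeds $H_1$, and the $<\kappa$-satisfiability frame cannot be placed where you need it. This matters for your endgame: your application of Theorem \ref{shelah-omit-type} requires $\|M_0\| < \kappa^{+\omega} < H_1$ so that $\beth_{(2^{\LS(\K)})^+}(\|M_0\|) = H_1 \le \lambda'$; with $\kappa$ above $H_1$ you would only obtain categoricity on a tail starting near $\hanf{\kappa^{+\omega}}$, losing exactly the Hanf-number improvement that is the content of the theorem. The paper avoids this by invoking the proof of Theorem \ref{almost-good-thm} with its \emph{splitting-based} construction, which yields the $\omega$-successful good $\mu$-frame at some $\mu$ around $(2^{\LS(\K)})^{+4} < H_1$; the coheir-based route is described there only as a simplification that degrades the bound to a beth fixed point.

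Once the frame is correctly placed below $H_1$, the rest of your argument goes through, and your conclusion step is actually a legitimate simplification of the paper's: instead of deducing categoricity on a tail, hence in a successor above $H_1$, and then invoking the downward transfer of Theorem \ref{succ-transfer-2}, you apply Theorem \ref{shelah-omit-type} directly (using that the model of size $\lambda$ is Galois-saturated, from Theorem \ref{ss-categ}) to show every model of size $\ge H_1$ is $\mu^{+\omega}$-Galois-saturated, so that $\K_{\lambda'} = \left(\Ksatp{\mu^{+\omega}}\right)_{\lambda'}$ for all $\lambda' \ge H_1$. Alternatively, if you insist on the coheir-based frame above $H_1$, you can still recover the theorem, but only by reinstating the paper's detour through Theorem \ref{succ-transfer-2}.
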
 
\begin{proof}[Proof sketch]
  By Theorem \ref{shvi}, $\K$ is $\LS (\K)$-superstable. By the proof of Theorem \ref{almost-good-thm}, we can find an $\omega$-successful good $\mu$-frame (see Definition \ref{successful-def}) on $\Ksatp{\mu}_{\mu}$ for some $\lambda < H_1$. By Claim \ref{claim-xxx}, $\Ksatp{\mu^{+\omega}}$ is categorical in every $\mu' > \mu^{+\omega}$. Using a version of Morley's omitting type theorem, we get that $\K$ must be categorical on a tail of cardinals, hence in a successor above $H_1$. By Theorem \ref{succ-transfer-2}, $\K$ is categorical in all $\lambda' \ge H_1$.
\end{proof}
\begin{remark}
  Slightly different arguments show that the locality assumption can be replaced by only $\LS (\K)$-tameness. Moreover, it can be shown that categoricity in some $\lambda > \LS (\K)$ implies categoricity in all $\lambda' \ge \min (\lambda, H_1)$.
\end{remark}

The proof shows in particular that almost fully good independence relations can be built in fully tame and short categorical AECs:

\begin{thm}\label{fully-good-indep}
  Let $\K$ be a fully $\LS (\K)$-tame and -type short AEC with amalgamation and no maximal models. If $\K$ is categorical in a $\lambda \ge \left(2^{\LS (\K)}\right)^{+4}$, then:
  \begin{enumerate}
    \item $\K_{\ge \min (\lambda, H_1)}$ is almost fully good.
    \item If $\K$ is fully $<\aleph_0$-tame and -type short, then $\K_{\ge \min (\lambda, H_1)}$ is fully good.
  \end{enumerate}
\end{thm}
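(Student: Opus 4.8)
The plan is to deduce the statement from the independence relation already available on the class $\Ksatp{\lambda_0}$ of $\lambda_0$-Galois-saturated models, where $\lambda_0 := \left(2^{\LS (\K)}\right)^{+4}$, after showing that categoricity forces $\K_{\ge \min (\lambda, H_1)}$ to be a tail of $\Ksatp{\lambda_0}$.

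First I would extract superstability from categoricity. As $\K$ is fully $\LS (\K)$-tame and short (so in particular $\LS (\K)$-tame), has amalgamation, and has no maximal models, Theorem \ref{ss-categ} gives that $\K$ is $\LS (\K)$-superstable and that the model of size $\lambda$ is Galois-saturated. Feeding $\LS (\K)$-superstability into the apparatus behind Theorem \ref{almost-good-thm} — the (indeed $\omega$-)successful good frame coming from $<\kappa$-satisfiability and the full model continuity extracted after passing to $\s^{+3}$ — yields that $\Ksatp{\lambda_0}$ is \emph{almost fully good}. For part (2), the stronger hypothesis that $\K$ is fully $<\aleph_0$-tame and short makes $\K$ $\aleph_0$-type-local, so Corollary \ref{fully-good-cor}.(2) (via Theorem \ref{fully-good-thm}) upgrades the conclusion on $\Ksatp{\lambda_0}$ to \emph{fully good}.

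The crux is to prove that every $N \in \K_{\ge \min (\lambda, H_1)}$ is already $\lambda_0$-Galois-saturated, so that $\K_{\ge \min (\lambda, H_1)} = \left(\Ksatp{\lambda_0}\right)_{\ge \min (\lambda, H_1)}$. Suppose not: some $N$ of size $\mu \ge \min (\lambda, H_1)$ omits a type $p \in \gS (M_0)$ with $M_0 \lea N$ and $\LS (\K) \le \|M_0\| < \lambda_0$. If $\mu \ge \lambda$, choose $N^- \lea N$ of size $\lambda$ with $M_0 \lea N^-$; then $N^-$ still omits $p$, so it is not Galois-saturated, contradicting categoricity in $\lambda$ and Theorem \ref{ss-categ}.(3). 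Otherwise $\min (\lambda, H_1) \le \mu < \lambda$, which forces $\mu \ge H_1$; since $\|M_0\| < H_1$ we have $\mu \ge H_1 = \beth_{\left(2^{\LS (\K)}\right)^+} (\|M_0\|)$, so Shelah's omitting type theorem (Theorem \ref{shelah-omit-type}) produces a non-$\LS (\K)^+$-Galois-saturated model in \emph{every} cardinal, in particular in $\lambda$, again contradicting the saturation of the model of size $\lambda$.

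It then remains to restrict the (almost) fully good relation from $\Ksatp{\lambda_0}$ to the tail $\K_{\ge \min (\lambda, H_1)}$: amalgamation, joint embedding, no maximal models, stability, and each forking property (disjointness, symmetry, existence, uniqueness, transitivity, the witness property, full model continuity, and the almost-extension of Definition \ref{almost-good-def} — full extension in part (2)) pass to a tail, as do the locality cardinals once $\LS$ is recomputed. I expect the main obstacle to be exactly the saturation transfer of the previous paragraph: one must verify that the threshold $\min (\lambda, H_1)$ is precisely what lets the push-down argument handle models of size $\ge \lambda$ and the omitting type theorem handle models of size in $[H_1, \lambda)$, and that the values $\slc{\alpha}$ and $\clc{\alpha}$ demanded by Definition \ref{good-indep-def} survive the passage to the tail.
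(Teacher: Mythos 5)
Your proposal is correct and follows essentially the same route as the paper's (very terse) proof, which just points back to the argument for Theorem \ref{categ-conj-tame-short} together with the remark that Corollary \ref{omit-type-cor} gives categoricity in $H_1$: superstability from categoricity, the (almost) fully good relation on $\Ksatp{\left(2^{\LS(\K)}\right)^{+4}}$ from Theorems \ref{almost-good-thm}/\ref{fully-good-thm}, and the identification of $\K_{\ge \min(\lambda, H_1)}$ with a tail of that class. Your direct appeal to Shelah's omitting type theorem for models of size in $[H_1, \lambda)$ is exactly the content of the proof of Corollary \ref{omit-type-cor}, so this is the intended argument, just written out in full.
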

\begin{proof}
  As in the proof above. Note that by Corollary \ref{omit-type-cor}, $\K$ is categorical in $H_1$.
\end{proof}

Using large cardinals, we can remove all the hypotheses except categoricity:

\begin{cor}\label{lc-cor}
  Let $\K$ be an AEC. Let $\kappa > \LS (\K)$ be a strongly compact cardinal. If $\K$ is categorical in a $\lambda \ge \hanf{\kappa}$, then:
  \begin{enumerate}
    \item $\K_{\ge \lambda}$ is fully good.
    \item If $2^{\theta} < 2^{\theta^+}$ for every cardinal $\theta$ and Claim \ref{claim-xxx} hold, then\footnote{The same conclusion holds assuming only that $\kappa$ is a measurable cardinal. Moreover, if $\K$ is axiomatized by an $L_{\kappa, \omega}$ theory, we can replace $\hanf{\kappa}$ with $\hanf{\kappa + \LS (\K)}$ and do not need to assume that $\kappa > \LS (\K)$.} $\K$ is categorical in all $\lambda' \ge \hanf{\kappa}$.
  \end{enumerate}
\end{cor}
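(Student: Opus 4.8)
The plan is to recognize Corollary \ref{lc-cor} as the instance of Theorem \ref{fully-good-indep} (for part (1)) and of Theorem \ref{categ-conj-tame-short} (for part (2)) in which the tameness/shortness threshold is the strongly compact cardinal $\kappa$ rather than $\LS(\K)$, the large cardinal doing two jobs: it hands us full tameness and shortness for free, and it supplies the extension property that in the ZFC statements had to come from $\aleph_0$-type-locality. Accordingly the Hanf number $H_1 = \hanf{\LS(\K)}$ is replaced by $\hanf{\kappa}$, which explains the hypothesis $\lambda \ge \hanf{\kappa}$. First I would assemble the structural hypotheses at base $\kappa$. Since $\kappa > \LS(\K)$ is strongly compact, Theorem \ref{tamelc-fact} gives that $\K$ is fully $<\kappa$-tame and -type short. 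A strongly compact cardinal is measurable, so Theorem \ref{tameness-from-categ-2} yields amalgamation in $\K_{[\kappa, \lambda)}$, and the $\kappa$-complete ultraproduct argument behind Theorem \ref{tamelc-fact} extends amalgamation to all of $\K_{\ge \kappa}$. As $\lambda \ge \hanf{\kappa}$ the class has arbitrarily large models, and categoricity in $\lambda$ gives joint embedding, so by Corollary \ref{cor-jep-from-ap} the class $\K_{\ge \kappa}$ has amalgamation, joint embedding, and no maximal models. By the Shelah-Villaveces Theorem \ref{shvi} applied with $\mu = \kappa$, $\K$ is $\kappa$-superstable; since $<\kappa$-tameness implies $\kappa$-tameness, Theorem \ref{ss-implies-all} promotes this to $\mu$-superstability for every $\mu \ge \kappa$.

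For part (1) I would run the construction of Theorem \ref{almost-good-thm} at base $\kappa$: from full $<\kappa$-tameness and shortness together with superstability one obtains an almost fully good $\kappa$-ary independence relation, and by the categoricity-on-a-club argument of Theorem \ref{fully-good-indep}.(1) this lives on the tail $\K_{\ge \hanf{\kappa}} \supseteq \K_{\ge \lambda}$. The one clause separating almost good from good is extension, and here strong compactness finishes the job exactly as in Corollary \ref{fully-good-cor}.(1): by Theorem \ref{coheir-ext-lc} the relation of $<\kappa$-satisfiability has the extension property, and a canonicity argument in the spirit of Theorem \ref{indep-canon-thm} identifies it with the relation just built, upgrading almost fully good to fully good. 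Restricting to the tail $\ge \lambda$ gives that $\K_{\ge \lambda}$ is fully good.

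For part (2) I would rerun the proof of Theorem \ref{categ-conj-tame-short} with $\kappa$ in place of $\LS(\K)$: $\kappa$-superstability yields an $\omega$-successful good $\mu$-frame (as in the proof of Theorem \ref{almost-good-thm}) for some $\mu < \hanf{\kappa}$; under the weak GCH hypothesis and Claim \ref{claim-xxx}, the class $\Ksatp{\mu^{+\omega}}$ is then categorical in all cardinals above $\mu^{+\omega}$; a version of Morley's omitting type theorem pushes this to categoricity of $\K$ on a tail, in particular in some successor $\ge \hanf{\kappa}$; and the successor downward transfer Theorem \ref{succ-transfer-2} (again at base $\kappa$) yields categoricity in all $\lambda' \ge \hanf{\kappa}$.

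The main obstacle is bookkeeping rather than a new idea: one must check that each cited theorem is genuinely insensitive to replacing $\LS(\K)$ by the threshold $\kappa$, and in particular that the saturation parameters (such as $(2^{\kappa})^{+4}$) and the Hanf number $\hanf{\kappa}$ all sit below the categoricity cardinal $\lambda$, which is guaranteed by $\lambda \ge \hanf{\kappa}$. The genuinely delicate step is the upgrade from almost fully good to fully good in part (1): reconciling the independence relation produced by the successful-frame machinery with $<\kappa$-satisfiability, so that the strong-compactness extension property of the latter can be transported to the former via canonicity.
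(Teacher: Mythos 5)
Your proposal is correct and follows essentially the same route as the paper, which derives amalgamation and no maximal models via (a result similar to) Theorem \ref{tameness-from-categ-2}, full $<\kappa$-tameness and -type shortness from Theorem \ref{tamelc-fact}, and then cites Corollary \ref{fully-good-cor} for part (1) and Theorem \ref{categ-conj-tame-short} for part (2). You have merely unfolded those two citations (superstability from Theorem \ref{shvi}, the almost-fully-good relation from Theorem \ref{almost-good-thm}, and the coheir extension property plus canonicity to upgrade to fully good), which matches how the paper itself proves them.
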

\begin{proof}[Proof sketch]
  By a result similar to Theorem \ref{tameness-from-categ-2}, $\K_{\ge \kappa}$ has amalgamation and no maximal models. By Theorem \ref{tamelc-fact}, $\K$ is fully $<\kappa$-tame and -type short. Now Corollary \ref{fully-good-cor} gives the first part. Theorem \ref{categ-conj-tame-short} gives the second part.
\end{proof}

\section{Conclusion}\label{conclusion-sec}

The classification theory of tame AECs has progressed rapidly over the last several years.  The categoricity transfer results of Grossberg and VanDieren indicated that tameness (along with amalgamation, etc.) is a powerful tool to solve questions that currently seem out of reach for general AECs.

Looking to the future, there are several open question and lines of research that we believe deserve to be further explored.
\begin{enumerate}
	\item {\bf Levels of tameness}\\
	This problem is less grandiose than other concerns, but still concerns a basic unanswered question about tameness: are there nontrivial relationships between the parametrized versions of tameness in Definition \ref{tame-var-def}? For example, does $\kappa$-tameness for $\alpha$-types imply $\kappa$-tameness for $\beta$-types when $\alpha < \beta$?  This question reveals a divide in the tameness literature: some results only use tameness for 1-types (such as the categoricity transfer of Grossberg and VanDieren Theorem \ref{gv-upward} and the deriving a frame from superstability Theorem \ref{ss-implies-all}), while others require full tameness and shortness (such as the development of $<\kappa$-satisfiability Theorem \ref{coheir-thm}). Answering this question would help to unify these results.

        Another stark divide is revealed by examining the list of examples of tame AECs in Section \ref{examples-subsec}: the list begins with general results that give some form of locality at a cardinal $\lambda$.  However, once the list reaches concrete classes of AECs, every example turns out to be $<\aleph_0$-tame (often this is a result of a syntactic characterization of Galois types, but not always).  This suggests the question of what lies between or even if the general results can be strengthened down to $<\aleph_0$-tameness.  For the large cardinal results, this seems impossible: the large cardinal $\kappa$ should give no information about the low properties of $\K$ below it because this cardinal disappears in $\K_\kappa$.  The other results also seem unlikely to have this strengthening, but no counter example is known.  Indeed, the following is still open: Is there an AEC $\K$ that is $\aleph_0$-tame but not $<\aleph_0$-tame?

	\item {\bf Dividing lines}\\
	This direction has two prongs.  The first prong is to increase the number of dividing lines.  So far, the classification of tame AECs (and AECs in general) has focused on the superstable or better case with a few forays into strictly stable \cite{bg-v9, limit-strictly-stable-v3}.  Towards the goal of proving Shelah's Categoricity Conjecture, this focus makes sense.  However, this development pales in comparison to the rich structure of classification theory in first-order\footnote{Part of this structure is represented graphically at \url{http://forkinganddividing.com} by Gabe Conant.}.  Exploring the correct generalizations of NIP, NTP${}_2$, etc.\ may help fill out the AEC version of this map. It might be that stronger locality hypotheses than tameness will have to be used: as we have seen already in the superstable case, it is only known how to prove the existence of a global independence relation assuming full $(<\aleph_0)$-tameness and shortness.
	
	The other prong is to turn classification results into true dividing lines. In the first-order case, dividing lines correspond to nice properties of forking on one side and to chaotic non-structure results on the other. In AECs, the non-structure side of dividing lines is often poorly developed and most results either revolve around the order property or use non-ZFC combinatorial principle. While these combinatorial principles seem potentially necessary in arbitrary AECs\footnote{For instance, result the statement ``Categoricity in $\lambda$ and less than the maximum number of models in $\lambda^+$ implies $\lambda$-AP'' holds under weak diamond, but fails under Martin's axiom \cite[Conclusion I.6.13]{shelahaecbook}.}, a reasonable hope is that tame AECs will allow the development of stronger ZFC nonstructure principles. For example, Shelah claims that in AECs with amalgamation, the order property (or equivalently in the tame context stability, see Theorem \ref{stab-spectrum}) implies many models on a tail of cardinals. However there is no known analog for superstability: does unsuperstability imply many models?
	
	\item {\bf Interaction with other fields}\\
	Historically, examples have not played a large role in the study of AECs.  Examples certainly exist because $\Ll_{\kappa, \omega}$ sentences provide them, but the investigation of specific classes is rarely carried out\footnote{A large exception to this is the study of quasiminimal classes (see Example \ref{examples-subsec}.(\ref{quasimin})) by Zilber and others, which are driven by questions from algebra.}.  A better understanding of concrete examples would help advance the field in two ways. First, nontrivial applications would help provide more motivation for exploring AECs\footnote{It should be noted that some prominent AEC theorists disagree with this as a motivating principle.}. Second, interesting applications can help drive the isolation of new AEC properties that might, \emph{a priori}, seem strange.
	
	This interaction has the potential to go the other way as well: one can attempt to study a structure or a class of structures by determining where the first-order theory lies amongst the dividing lines and using the properties of forking there.  However, if the class is not elementary, then the first-order theory captures new structures that have new definable objects.  These definable objects might force the elementary class into a worse dividing line.  However, AECs offer the potential to look at a narrower, better behaved class.  For instance, an interesting class might only have the order property up to some length $\lambda$\footnote{Like example the algebraically closed valued fields of rank one, Example \ref{examples-subsec}.(\ref{valfields}).} or only be able to define short and narrow (but infinite) trees.  Looking at the first-order theory loses this extra information and looking at the class as an AEC might move it from an unstable elementary class to a stable AEC.

	  \item {\bf Reverse mathematics of tameness}\\
	  The compactness theorem of first-order logic is equivalent to a weak version of the axiom of choice (Tychonoff's theorem for Hausdorff spaces). If we believe that tameness is a natural principle, then maybe the first-order version of ``tameness'' is also, in the choiceless context, equivalent to some topological principle: what is this principle?
	  
            \item\label{open-q-natural} {\bf How ``natural'' is tameness?}\\
              We have seen that all the known counterexamples of tameness are pathological. Is this a general phenomenon? Are there natural mathematical structures that are, in some sense, well-behaved and should be amenable to a model-theoretic analysis, but are not tame? Would this example then satisfy a weaker version of tameness? 
            
     	\item {\bf Categoricity and tameness}\\
	We have seen that tameness helps with Shelah's Categoricity Conjecture, but there are still unanswered questions about eliminating the successor assumption and amalgamation property. For example, does the categoricity conjecture hold in fully $<\aleph_0$-tame and -type short AECs with amalgamation?

  Going the other way, what is the impact of categoricity on tameness?  Grossberg has conjectured that amalgamation should follow from high enough categoricity.  Does something like this hold for tameness?	
      \item {\bf On stable and superstable tame AECs} \\
        From the work discussed in this survey, several more down to earth questions arise:

              \begin{enumerate}
              \item Can one build a global independence relation in a fully tame and short superstable AEC? See also Question \ref{indep-question}.
              \item Is there a stability spectrum theorem for tame AECs (i.e.\ a converse to Theorem \ref{stab-spectrum})?
              \item In superstable tame AECs, can one develop the theory of forking further, say by generalizing geometric stability theory to that context?
              \end{enumerate}
\end{enumerate}

\section{Historical remarks} \label{hist-sec}

\subsection{Section \ref{primer-no-tameness}}

Abstract elementary classes were introduced by Shelah \cite{sh88}; see Grossberg \cite{grossberg2002} for a history. Shelah \cite{sh88} (an updated version appears in \cite[Chapter I]{shelahaecbook}) contains most of the basic results in this Section \ref{primer-no-tameness}, including Theorem \ref{pres-thm}. Notation \ref{hanf-notation} is due to Baldwin and is used in \cite[Chapter 14]{baldwinbook09}. Galois types are implicit  in \cite{sh300-orig} where Theorem \ref{mod-homog-sat} also appears. Existence of universal extensions (Lemma \ref{univ-exist}) is also due to Shelah and has a similar proof (\cite[I.2.2.(4)]{sh394}). 

Splitting (Definition \ref{splitting-def}) is introduced by Shelah in \cite[Definition 3.2]{sh394}. Lemma \ref{stabsplit} is \cite[Claim 3.3]{sh394}. The extension and uniqueness properties of splitting (Theorem \ref{gen-unique-dns-fact}) are implicit in \cite{sh394} but were first explicitly stated by VanDieren \cite[I.4.10, I.4.12]{vandierennomax}. The order property is first defined for AECs in \cite[Definition 4.3]{sh394}. 

Definition \ref{ss-def} is implicit already in \cite{shvi635}, but there amalgamation in $\mu$ is not assumed (only a weak version: the density of amalgamation bases). It first appears\footnote{With minor variations: joint embedding and existence of a model in $\mu$ is not required.} explicitly (and is given the name ``superstable''\footnote{This can be seen as a somewhat unfortunate naming, as there are several potentially non-equivalent definitions of superstability in AECs. Some authors use ``no long splitting chains'', but this omits the conditions of amalgamation, no maximal models, and joint embedding in $\mu$. Perhaps it is best to think of the definition as a weak version of having a good $\mu$-frame.}) in \cite[Definition 7.12]{grossberg2002}. Limit models appear in \cite[Definition 4.1]{kosh362} under the name ``$(\theta, \sigma)$-saturated''. The ``limit'' terminology is used starting in \cite{shvi635}. The reader should consult \cite{gvv-toappear-v1_2} for a history of limit models and especially the question of uniqueness. Theorem \ref{uq-limit-symmetry} is due to VanDieren \cite{vandieren-symmetry-v4-toappear}.

Shelah's eventual categoricity conjecture can be traced back to a question of Łoś \cite{los-conjecture} which eventually became Morley's categoricity theorem \cite{morley-cip}. See the introduction to \cite{ap-universal-v9} for a history.  Conjecture \ref{conj-inf-logic} appears as an open problem in \cite{shelahfobook78}. Theorem \ref{sh87-thm} is due to Shelah \cite{sh87a, sh87b}. Conjecture \ref{categ-conj-aec} appears as \cite[Question 6.14]{sh702}. Theorem \ref{shelah-succ-downward} is the main result of \cite{sh394}. Theorem \ref{shvi} appears in \cite[Theorem 2.2.1]{shvi635} under GCH but without amalgamation. Assuming amalgamation (but in ZFC), the proof is similar, see \cite[Corollary 6.3]{gv-superstability-v3}. The proof of Shelah and Villaveces omits some details. A clearer version will appear in \cite{shvi-note}. An easier proof exists if the categoricity cardinal has high-enough cofinality, see \cite[Lemma 6.3]{sh394}. Question \ref{sat-quest} is stated explicitly as an open problem in \cite[Problem D.1.(2)]{baldwinbook09}. Theorems \ref{sat-categ} and \ref{uq-limit-categ} are due to VanDieren and the second author \cite[Section 7]{vv-symmetry-transfer-v3}. 

Good frames are the main concept in Shelah's book on classification theory for abstract elementary classes \cite{shelahaecbook}. The definition appears at the beginning of Chapter II there, which was initially circulated as Shelah \cite{sh600}. There are some minor differences with the definition we give here, see the notes for Section \ref{tame-indep-sec} for more. Question \ref{baldwin-question} originates in the similar question Baldwin asked for $L(Q)$ \cite[Question 21]{one-hundred-problems}. For AECs, this is due to Grossberg (see the comments around \cite[Problem 5]{sh576}). A version also appears as \cite[Problem 6.13]{sh702}
. Theorem \ref{shelah-local-good-frame} is due to Shelah \cite[Theorem VI.0.2]{shelahaecbook2}. A weaker version with the additional hypothesis that the weak diamond ideal is not $\lambda^{++}$-saturated appears is proved in Shelah \cite{sh576}, see \cite[Theorem II.3.7]{shelahaecbook}. Corollary \ref{baldwin-q-answer} is the main result of \cite{sh576}. Theorem \ref{successful-cond} is the main result of \cite[Chapter II]{shelahaecbook}. Claim \ref{claim-xxx} is implicit in \cite[Discussion III.12.40]{shelahaecbook} and a proof should appear in \cite{sh842}. Theorem \ref{good-frame-categ} is due to Shelah and appears in \cite[Section IV.4]{shelahaecbook}. Shelah claims that categoricity in a proper class of cardinals is enough but this is currently unresolved, see \cite{categ-infinitary-v2} for a more in-depth discussion. Theorems \ref{good-frame-categ}, \ref{good-frame-categ-2}, and \ref{categ-transfer-wgch} appear in \cite[Section IV.7]{shelahaecbook}. However we have not fully checked Shelah's proofs. A stronger version of Theorem \ref{good-frame-categ} has been shown by VanDieren and the second author in \cite[Section 7]{vv-symmetry-transfer-v3}, while \cite[Section 11]{downward-categ-tame-v4} gives a proof of Theorems \ref{good-frame-categ-2} and \ref{categ-transfer-wgch} (with alternate proofs replacing the hard parts of Shelah's argument). 

\subsection{Section \ref{tame-sec}}

The version of Definition \ref{tamedef} using types over sets is due to the second author \cite[Definition 2.22]{sv-infinitary-stability-afml}. Type-shortness was isolated by the first author \cite[Definition 3.3]{tamelc-jsl}. Locality and compactness appear in \cite{non-locality}. Proposition \ref{injective-map} is folklore. As for Proposition \ref{easy-implications}, the first part appears as \cite[Theorem 3.5]{tamelc-jsl}, the second and third first appear in Baldwin and Shelah \cite{non-locality}, and the third is implicit in \cite{sh394} and a short proof appears in \cite[Lemma 11.5]{baldwinbook09}. 

In the framework of AECs, the Galois Morleyization was introduced by the second author \cite{sv-infinitary-stability-afml} and Theorem \ref{morleyization-thm} is proven there. After the work was completed, we were informed that a 1981 result of Rosický \cite{rosicky81} also uses a similar device to present concrete categories as syntactic classes.  That tameness can be seen as a topological principle (Theorem \ref{tameness-topo}) appears in Lieberman \cite{lieberman2011}. 

On Section \ref{examples-subsec}:

\begin{enumerate}
	\item Tameness could (but historically was not) also have been extracted from the work of Makkai and Shelah on the model theory of $L_{\kappa, \omega}$, $\kappa$ a strongly compact \cite{makkaishelah}. There the authors prove that Galois types are, in some sense, syntactic \cite[Proposition 2.10]{makkaishelah}\footnote{This was another motivation for developing the Galois Morleyization.}. The first author \cite{tamelc-jsl} generalized these results to AECs and later observations in \cite{btr-almost-compact-v2-toappear, lc-tame-v2} slightly weakened the large cardinal hypotheses.
	\item Theorem \ref{tameness-from-categ} is due to Shelah. The first part appears essentially as \cite[II.2.6]{sh394} and the second is \cite[IV.7.2]{shelahaecbook}. The statement given here appears as \cite[Theorem 8.4]{downward-categ-tame-v4}.
        \item Theorem \ref{tameness-from-categ-2} is essentially \cite[Conclusion 3.7]{sh472}.
	\item This is folklore and appears explicitly on \cite[p.~15]{superior-aec}.
	\item The study of the classification theory of universal classes starts with \cite{sh300-orig} (an updated version appears as \cite[Chapter V]{shelahaecbook2}), where Shelah claims a main gap for this framework (the details have not fully appeared yet). Theorem \ref{tame-uc} is due to the first author \cite{tameness-groups}. A full proof appears in \cite[Theorem 3.7]{ap-universal-v9}.
	\item Finitary AECs were introduced by Hyttinen and Kesälä \cite{finitary-aec}. That $\aleph_0$-Galois-stable $\aleph_0$-tame finitary AECs are $<\aleph_0$-tame is Theorem 3.12 there. The categoricity conjecture for finitary AECs appears in \cite{categ-finit}. The beginning of a geometric stability theory for finitary AECs appears in \cite{group-config-kangas-apal}.
        \item Homogeneous model theory was introduced by Shelah in \cite{sh3}. See \cite{grle-homog} for an exposition.  The classification theory of this context is well developed, see, for instance \cite{lessmann2000, hs-independence, hysh676,buechler-lessmann, hlsh821}. For connections with continuous logic, see \cite{berenstein-buechler, shus837}.
	\item Averageable classes are introduced by the first author in \cite{gammault-v2}.
	\item A summary of continuous first-order logic in its current form can be found in \cite{fourguys-metric}.  Metric AECs were introduced in \cite{maec-hihy} and tameness there is first defined in \cite{za12}.
        \item Quasiminimal classes were introduced by Zilber \cite{zil05}. See \cite{quasimin} for an exposition and \cite{quasimin-five} for a proof of the excellence axiom (and hence of tameness).
	\item That the $\lambda$-saturated models of a first-order superstable theory forms an AEC is folklore. That it is tame is easy using that the Galois types are the same as in the original first-order class.
        \item Superior AECs are introduced in \cite{superior-aec}.
        \item Hrushovski fusions are studied as AECs in \cite{vzhrush}.
	\item This appears in \cite{bet}.
	\item This is analyzed in  \cite{cacvf}.
\end{enumerate}

The Hart-Shelah example appears in \cite{hs-example}, where the authors show that it is categorical in $\aleph_0, \ldots, \aleph_n$ but not in $\aleph_{n + 1}$. The example was later extensively analyzed by Baldwin and Kolesnikov \cite{bk-hs} and the full version of Theorem \ref{hart-shelah-thm} appears there. 
The Baldwin-Shelah example appears in \cite{non-locality}. The Shelah-Boney-Unger example was first introduced by Shelah \cite{sh932} for a measurable cardinal and adapted by Boney and Unger \cite{lc-tame-v2} for other kinds of large cardinals.

\subsection{Section \ref{universal-class-sec}}

The categoricity transfer for universal classes is due to the second author \cite{ap-universal-v9}\footnote{Previous version of this preprint claimed the full categoricity conjecture but gaps have been found and a complete solution has been delayed to a sequel.}. This section presents a proof incorporating ideas from the later paper \cite{categ-primes-v3}. If not mentioned otherwise, results and definitions there are due to the second author. 

Lemma \ref{galois-ext} is folklore when atomic equivalence is transitive but is \cite[Theorem 4.14]{ap-universal-v9} in the general case. As for Theorem \ref{good-frame-transfer}, one direction is folklore. The other direction (tameness implies that the good frame can be extended) is due to the authors, see the notes on Theorem \ref{good-frame-transfer-2} below. The version with weak amalgamation (Theorem \ref{frame-transfer}) is due to the second author.

Theorem \ref{gv-upward} is due to Grossberg and VanDieren \cite{tamenessthree}. Definition \ref{prime-def} is due to Shelah \cite[Definition III.3.2]{shelahaecbook}. The account of orthogonality and unidimensionality owes much to Shelah's development in \cite[Sections III.2,III.6]{shelahaecbook} but differs in some technical points explained in details in \cite{categ-primes-v3}. Theorem \ref{unidim-categ} is due to Shelah \cite[III.2.3]{shelahaecbook}. Theorem \ref{not-unidim} is due to Shelah with stronger hypotheses \cite[Claim III.12.39]{shelahaecbook} and to the second author as stated \cite[Theorem 2.15]{categ-primes-v3}. 

\subsection{Section \ref{tame-indep-sec}}

Question \ref{indep-quest} is implicit in \cite[Remark 4.9]{sh394}. A more precise statement appears as \cite[Question 7.1]{bgkv-apal}.

The presentation of abstract independence given here appears in \cite{indep-aec-v6-toappear}\footnote{There independence relations are not required to satisfy base monotonicity.}. The definition of a good frame given here (Definition \ref{good-frame-def}) also appears in \cite[Definition 8.1]{indep-aec-v6-toappear}. Compared to Shelah's original definition (\cite[Definition II.2.1]{shelahaecbook}), the definition given here is equivalent \cite[Remarks 3.5, 8.2]{indep-aec-v6-toappear} except for three minor differences:

\begin{itemize}
  \item The existence of a superlimit model is not required. This has been adopted in most subsequent works on good frames, including e.g.\ \cite{jrsh875}.
  \item Shelah's definition requires forking to be defined for types over models only. However it is possible to close the definition to types over sets (see for example \cite[Lemma 5.4]{bgkv-apal}).
  \item Shelah defines forking only for a subclass of all types which he calls \emph{basic}. They are required to satisfy a strong density property (if $M \lta N$, then there is a basic type over $M$ realized in $N$). If the basic types are all the (nonalgebraic) types, Shelah calls the good frame \emph{type-full}. In the tame context, a type-full good frame can always be built (see \cite[Remark 3.10]{gv-superstability-v3}). Even in Theorem \ref{shelah-local-good-frame}, the frame can be taken to be type-full (see \cite[Claim III.9.6]{shelahaecbook}). The bottom line is that in all cases where a good frames is known to exist, a type-full one also exists.
\end{itemize}

Question \ref{indep-question} appears (in a slightly different form) as \cite[Question 7.1]{bgkv-apal}. Theorem \ref{indep-canon-thm} is Corollary 5.19 there\footnote{Of course the general idea of looking at forking as an abstract independence relation which turns out to be canonical is not new (see for example Lascar's proof that forking is canonical in superstable theories \cite[Theorem 4.9]{lascar76}).}. As for Proposition \ref{stable-indep-props}, all are folklore except (\ref{tameshort-witness}) which appears as \cite[Lemma 4.5]{indep-aec-v6-toappear} and symmetry which in this abstract framework is \cite[Corollary 5.18]{bgkv-apal} (in the first-order case, the result is due to Shelah \cite{shelahfobook78} and uses the same method of proof: symmetry implies failure of the order property).

Galois stability was defined for the first time in \cite{sh394}. The second part of Theorem \ref{stab-transfer} is due to Grossberg and VanDieren \cite[Corollary 6.4]{tamenessone}. Later the argument was refined by Baldwin, Kueker, and VanDieren \cite{b-k-vd-spectrum} to prove the first part. Theorem \ref{stab-spectrum} is due to the second author \cite[Theorem 4.13]{sv-infinitary-stability-afml}. 

Averages in the nonelementary framework were introduced by Shelah (for stability theory inside a model) in \cite{sh300-orig}, see \cite[Chapter V.A]{shelahaecbook2}. They are further used in the AEC framework in \cite[Chapter IV]{shelahaecbook}. The Galois Morleyization is used by the authors to translate Shelah's results from stability theory inside a model to tame AECs in \cite[Section 5]{bv-sat-v3}. They are further used in \cite{gv-superstability-v3}.

That tameness can be used to obtain a global uniqueness property for splitting (Theorem \ref{ns-uniq-tame-fact}) is due to Grossberg and VanDieren \cite[Theorem 6.2]{tamenessone}. $<\kappa$-satisfiability was introduced as $\kappa$-coheir in the AEC framework by Grossberg and the first author \cite{bg-v9}. This was strongly inspired from the work of Makkai and Shelah \cite{makkaishelah} on coheir in $L_{\kappa, \omega}$, $\kappa$ a strongly compact. A weakening of Theorem \ref{coheir-thm} appears in \cite{bg-v9}, assuming that coheir has the extension property. The version stated here is due to the second author and appears as \cite[Theorem 5.15]{sv-infinitary-stability-afml}. Theorem \ref{coheir-ext-lc} is \cite[Theorem 8.2]{bg-v9}. The definition of $\mu$-forking (\ref{mu-forking-def}) is due to the second author and appears in \cite{ss-tame-jsl}. Theorem \ref{stable-forking-def} is proven in \cite[Section 7]{indep-aec-v6-toappear}. Theorem \ref{chainsat-stable} is due to the authors \cite[Theorem 6.10]{bv-sat-v3}.

Theorem \ref{ss-implies-all}.(\ref{ss-1}) is due to the second author \cite[Proposition 10.10]{indep-aec-v6-toappear}. Theorem \ref{ss-implies-all}.(\ref{ss-2}) is due to VanDieren and the second author \cite[Corollary 6.10]{vv-symmetry-transfer-v3} (an eventual version appears in \cite{bv-sat-v3}, and an improvement of VanDieren \cite{vandieren-chainsat-apal} can be used to obtain the full result). Theorem \ref{ss-implies-all}.(\ref{ss-3}-\ref{ss-4}) are also due to VanDieren and the second author \cite{vv-symmetry-transfer-v3}, although (\ref{ss-3}) and (\ref{ss-4}) were observed by the second author in \cite{ss-tame-jsl} in the categorical case (i.e.\ when we know that the union of a chain of $\lambda$-Galois-saturated models is $\lambda$-Galois-saturated). 

Theorem \ref{gvsuperstab} and Remark \ref{rmk-chainsat} are due to Grossberg and the second author \cite{gv-superstability-v3}. The notion of a superlimit model appears already in Shelah's original paper on AECs \cite{sh88} (see \cite[Chapter I]{shelahaecbook}). Shelah introduces solvability in \cite[Definition IV.1.4]{shelahaecbook}. Lemma \ref{tame-superstab} appears in \cite[Lemma 3.8]{gv-superstability-v3}. When $\kappa$ is strongly compact, it can be traced back to Makkai-Shelah \cite[Proposition 4.12]{makkaishelah}. 

Theorem \ref{good-frame-transfer-2} is due to the authors and appears in full generality in \cite{tame-frames-revisited-v5}. Rami Grossberg told us that he privately conjectured the result in 2006 and told it to Adi Jarden and John Baldwin (see also the account in the introduction to \cite{jarden-tameness-apal}). In \cite[Theorem 8.1]{ext-frame-jml}, the first author proved the theorem with an additional assumption of tameness for \emph{two} types used to transfer symmetry. Later, \cite{tame-frames-revisited-v5} showed that symmetry transfer holds without this extra assumption. At about the same time as \cite{tame-frames-revisited-v5} was circulated, Adi Jarden gave a proof of symmetry from tameness assuming an extra property called the continuity of independence (he also showed that this property followed from the existence property for uniqueness triples). The argument in \cite{tame-frames-revisited-v5} shows that the continuity of independence holds under tameness and hence also completes Jarden's proof.

Independent sequences 
were introduced by Shelah in the AEC framework \cite[Definition III.5.2]{shelahaecbook}. A version of Theorem \ref{dim-thm} for models of size $\lambda$ is proven as \cite[III.5.14]{shelahaecbook} with the assumption that the frame is weakly successful. This is weakened in \cite{jasi}, showing that the so-called continuity property of independence is enough. In \cite{tame-frames-revisited-v5}, the continuity property is proven from tameness and hence Theorem \ref{dim-thm} follows, see \cite[Corollary 6.10]{tame-frames-revisited-v5}.

Definition \ref{good-indep-def} is due to the second author \cite[Definition 8.1]{indep-aec-v6-toappear}. The definition of almost good (Definition \ref{almost-good-def}) is implicit there and made explicit in \cite[Definition A.2]{ap-universal-v9}. Fully good and almost fully good are also defined there. Theorem \ref{almost-good-thm} and Theorem \ref{fully-good-thm} are due to the second author. A statement with a weaker Hanf number is the main result of \cite{indep-aec-v6-toappear} (the proof uses ideas from Shelah \cite[Chapter III]{shelahaecbook} and Adi Jarden \cite{jarden-tameness-apal}). The full result is proven in \cite[Appendix A]{ap-universal-v9}. What it means for a frame to be successful (Definition \ref{successful-def}) is due to Shelah \cite[Definition III.1.1]{shelahaecbook} but we use the equivalent definition from \cite[Section 11]{indep-aec-v6-toappear}. Type locality (Definition \ref{type-loc-def}) is introduced by the second author in \cite[Definition 14.9]{indep-aec-v6-toappear}. Corollary \ref{fully-good-cor} and Theorem \ref{fully-good-strong-compact} is implicit in \cite{indep-aec-v6-toappear} (with the Hanf number improvement in \cite[Appendix A]{ap-universal-v9}). Theorem \ref{univ-classes-goodness} is due to the second author \cite[Appendix C]{ap-universal-v9}.

Theorem \ref{succ-transfer} appears in \cite{tamenesstwo}. A version of Theorem \ref{ss-categ} is already implicit in \cite[Section 10]{indep-aec-v6-toappear}. Shelah's omitting type theorem (Theorem \ref{shelah-omit-type}) appears in its AEC version as \cite[Lemma II.1.6]{sh394} and has its roots in \cite[Proposition 3.3]{makkaishelah}, where a full proof already appears. Corollary \ref{omit-type-cor} is due to the second author \cite[Theorem 9.8]{downward-categ-tame-v4}. The categoricity conjecture for tame AECs with primes (Theorem \ref{event-categ-primes}) is due to the second author (the result as stated here was proven in a series of papers \cite{ap-universal-v9, categ-primes-v3, downward-categ-tame-v4}, see \cite[Corollary 10.9]{downward-categ-tame-v4}). The converse from Remark \ref{event-categ-primes-rmk} is stated in \cite{prime-categ-v6-toappear}. The categoricity conjecture for homogeneous model theory is more or less implicit in \cite{sh3} and is made explicit by Hyttinen in \cite{hyt-categ-homog} (when the language is countable, this is due to Lessmann \cite{lessmann2000}\footnote{In that case, a stronger statement can be proven: if $D$ is categorical in some uncountable cardinal, then it is categorical in all uncountable cardinals.}). More precisely, Hyttinen proves that categoricity in a $\lambda > |T|$ with $\lambda \neq |T|^{+\omega}$ implies categoricity in all $\lambda' \ge \min (\lambda, h (|T|))$. Corollary \ref{categ-homog} is stronger (as it includes the case $\lambda = |T|^{+\omega}$) and is due to the second author \cite[Theorem 0.2]{categ-primes-v3}. Theorem \ref{succ-transfer-2} is due to the second author \cite[Corollary 10.6]{downward-categ-tame-v4}. Theorem \ref{categ-conj-tame-short} is due to the second author (although the main idea is due to Shelah, and the only improvement given by tameness is the Hanf number, see Theorem \ref{categ-transfer-wgch}). With full tameness and shortness, a weaker version appears in \cite[Theorem 1.6]{indep-aec-v6-toappear}, and the full version using only tameness is \cite[Corollary 11.9]{downward-categ-tame-v4}. The second part of Corollary \ref{lc-cor} also appears there.

\subsection{Section \ref{conclusion-sec}}

Several of these questions have been in the air and there is some overlap with the list \cite[Appendix D]{baldwinbook09}. The question about the length of tameness appears in the first author's Ph.D.\ thesis \cite{willthesis}. A question about examples of tameness and nontameness appear in \cite[Appendix D.2]{baldwinbook09}. Whether failure of superstability implies many models is conjectured in \cite{sh394} (see the remark after Claim 5.5 there) and further discussed at the end of \cite{gv-superstability-v3}. 

The idea of exploring the reverse mathematics of tameness (and the specific question of what tameness corresponds to if compactness is the Tychnoff theorem for Hausdorff spaces) was communicated to the second author by Rami Grossberg. That tameness follows from categoricity was conjectured by Grossberg and VanDieren \cite[Conjecture 1.5]{tamenessthree}. That one can build a global independence relation in a fully tame and short superstable AEC is conjectured by the second author in \cite[Section 15]{indep-aec-v6-toappear}.

\bibliographystyle{amsalpha}
\bibliography{survey-tame-aecs}

\end{document}